\numberwithin{equation}{section}
\theoremstyle{definition}\newtheorem{thm}{Theorem}[section]
\theoremstyle{definition}\newtheorem*{thm*}{Theorem}
\newtheorem{lem}[thm]{Lemma}
\theoremstyle{definition} \newtheorem{defn}[thm]{Definition}
\newtheorem*{defn*}{Definition}
\newtheorem{ex}[thm]{Example}
\newtheorem*{lem*}{Lemma}
\newtheorem{cor}[thm]{Corollary}
\newtheorem*{cor*}{Corollary}
\newtheorem*{rem}{Remark}
\newtheorem{remark}[thm]{Remark}
\newtheorem*{conj*}{Conjecture}
\newtheorem{prop}[thm]{Proposition}
\newcommand{\CC}{\mathbb{C}}
\newcommand{\NN}{\mathbb{N}}	
\newcommand{\ZZ}{\mathbb{Z}}
\newcommand{\KK}{\mathbb{K}}
\newcommand{\I}{\mathcal{I}}
\newcommand{\K}{\mathcal{K}}
\newcommand{\RP}{\mathcal{RP}}
\newcommand{\FP}{\mathcal{FP}}
\newcommand{\RFP}{\mathcal{FP}}
\newcommand{\NFP}{\FP^+}
\DeclareMathOperator{\word}{word}
\DeclareMathOperator{\init}{in}
\DeclareMathOperator{\Ess}{Ess}
\DeclareMathOperator{\dom}{dom}
\DeclareMathOperator{\rad}{rad}
\DeclareMathOperator{\GL}{GL}
\DeclareMathOperator{\Mat}{Mat}
\DeclareMathOperator{\rank}{rank}
\DeclareMathOperator{\sort}{sort}
\DeclareMathOperator{\link}{lk}
\DeclareMathOperator{\del}{del}
\DeclareMathOperator{\pf}{pf}
\DeclareMathOperator{\sgn}{sgn}
\newcommand{\ch}{\operatorname{char}}
\newcommand{\setsgn}[2]{\sgn\!\tbinom{#1}{#2}}
\newcommand{\Hilb}{\mathcal{H}}
\newcommand{\Umat}{\mathcal{U}}
\newcommand{\SSU}{\mathcal{U}^{\text{ss}}}
\newcommand{\SSu}{u^{\text{ss}}}
\newcommand{\openX}{\mathring{X}}
\newcommand{\openSSX}{\mathring{X}^{\text{ss}}}
\newcommand{\SSX}{X^{\text{ss}}}
\newcommand{\SSM}{\operatorname{Mat}^{\text{ss}}}
\newcommand{\subword}{\Sigma}
\newcommand{\fkS}{\mathfrak{S}}
\DeclareMathOperator{\Sp}{Sp}
\renewcommand{\O}{\operatorname{O}}
\newcommand{\FPF}{\operatorname{FPF}}
\renewcommand{\ss}{\operatorname{ss}}
\newcommand{\SSI}{I^{\text{ss}}}
\newcommand{\SSD}{D^{\text{ss}}}
\newcommand{\SSr}{{\bf r}^{\text{ss}}}
\newcommand{\SSJ}{J^{\text{ss}}}
\def\r{{\bf r}}
\def\s{{\bf s}}
\def\SD{\mathsf{SD}}
\def\Gfpf{\mathfrak{G}^{\mathsf{Sp}}}
\def\cP{\mathcal{P}}
\def\untwist{\mathsf{untwist}}
\newcommand{\ltriang}{\raisebox{-0.5pt}{\tikz{\draw (0,0) -- (.25,0) -- (0,.25) -- (0,0);}}}
\def\quand{\quad\text{and}\quad}
\def\DesR{\mathrm{Des}_R}
\def\DesL{\mathrm{Des}_L}
\def\be{\begin{equation}}
\def\ee{\end{equation}}
\def\fpf{\mathsf{FPF}}
\def\ellfpf{\ell_{\fpf}}
\newcommand{\Ifpf}{\mathcal{I}^{\fpf}}
\def\cM{\mathcal{M}}
\def\OC{\mathsf{OC}}
\def\dbl{\mathsf{dbl}}
\def\cX{\mathcal{X}}
\newcommand{\J}[1]{\mathsf{Ideal}(#1)}
\def\deltafpf{\delta_{\fpf}}
\def\mon{\mathsf{mon}}
\begin{document}

\title{
Gr\"obner geometry for skew-symmetric matrix Schubert varieties}

 \author{
Eric Marberg 
\\ {\tt eric.marberg@gmail.com}
\and
Brendan Pawlowski 
\\ {\tt br.pawlowski@gmail.com}
}

\date{}

\maketitle

\begin{abstract}
Matrix Schubert varieties are the closures of the orbits of $B\times B$ acting on 
    all $n\times n$ matrices, where $B$ is the group of invertible 
    lower triangular matrices.
Extending work of Fulton,    Knutson and Miller identified a Gr\"obner  basis for the 
prime ideals of these varieties.
They also showed that the corresponding initial ideals are Stanley-Reisner ideals of  shellable simplicial complexes, and derived a related primary decomposition in terms of reduced pipe dreams. These results lead to a geometric proof of the Billey-Jockusch-Stanley formula for a Schubert polynomial, among many other applications.

We define skew-symmetric matrix Schubert varieties to be the nonempty intersections of matrix Schubert varieties with the subspace of skew-symmetric matrices.
In analogy with Knutson and Miller's work, we describe a natural generating set for the prime ideals of these varieties. We then compute a related Gr\"obner basis. Using these results, we identify a primary decomposition for the corresponding initial ideals involving certain fpf-involution pipe dreams. We show that these initial ideals are likewise the Stanley-Reisner ideals of shellable simplicial complexes. As an  application, we give a geometric proof of an explicit generating function for symplectic Grothendieck polynomials.
Our methods differ from Knutson and Miller's and can be used to give new proofs of some of their results, as we explain at the end of this article.
\end{abstract}

\tableofcontents

\section{Introduction}
Let $\KK$ be an algebraically closed field and write $B_n \subseteq \GL_n := \GL_n(\KK)$ for the Borel group of $n \times n$ invertible lower triangular matrices over $\KK$. The \emph{flag variety} over $\KK$ is the quotient $\GL_n/B_n$ equipped with the structure of a projective variety. If $\KK = \CC$, then a subvariety $X \subseteq \GL_n/B_n$ determines a class $[X]$ in the cohomology ring $ H^*(\GL_n/B_n, \ZZ)$, which Borel identified with a quotient of $\ZZ[x_1, x_2,\ldots, x_n]$. One can therefore ask for a polynomial representing $[X]$.

When $K \subseteq \GL_n$ is an algebraic subgroup, the Zariski closures of the $K$-orbits on $\GL_n/B_n$ give a particularly interesting family of subvarieties. For $K = B_n$, there are finitely many orbits, naturally indexed by the elements of the symmetric group $S_n$. The closures of these orbits are the \emph{Schubert varieties} and their cohomology classes can be represented by the family of \emph{Schubert polynomials} $\{\fkS_w : w \in S_n\}$. Other subgroups $K$ acting with finitely many orbits (the so-called \emph{spherical subgroups}) include the symplectic group $\Sp_n$ when $n$ is even, the orthogonal group $\O_n$, and the block diagonal subgroups $\GL_p \times \GL_{q}$ for $p+q=n$.

The $K$-orbits on $\GL_n/B_n$ are in bijection with the $B_n$-orbits on $\GL_n/K$, as well as with the $B_n \times K$-orbits on $\GL_n$,
and it can be fruitful to consider the orbits from these alternate points of view. For example, take $K = B_n$ and consider the $B_n \times B_n$-action on the space of all $n \times n$ matrices $\Mat_{n \times n} := \Mat_{n \times n}(\KK)$, by $(g,h) \cdot A = g A h^T$. The orbits for this action are called \emph{matrix Schubert cells}, and their Zariski closures are the \emph{matrix Schubert varieties}. 

The $B_n\times B_n$ orbits contained in the open dense subset $ \GL_n\subset \Mat_{n\times n}$ 
are in bijection with Schubert varieties  via pulling back along the quotient map $\GL_n \to \GL_n/B_n$. It is useful to consider the full space $\Mat_{n \times n}$ since matrix Schubert varieties are then affine subvarieties of an affine space, and one can work with their ideals using tools of commutative algebra that are unavailable in the flag variety setting.

Let $u_{ij}$ for $i,j \in [n]:= \{1,2,\dots,n\}$ be commuting indeterminates and 
write $\Umat$ for the $n\times n$ matrix with $u_{ij}$ in position $(i,j)$.
We identify the coordinate ring $\KK[\Mat_{n \times n}]$ with $\KK[u_{ij} : 1 \leq i, j \leq n]$. Given a closed subvariety $X \subseteq \Mat_{n \times n}$, let $I(X) \subseteq \KK[\Mat_{n \times n}]$ be the ideal of polynomials vanishing on $X$. If $A$ is an $n\times n$ matrix and $R, C \subseteq  \{1,2,\ldots,n\}$, then we write $A_{RC}$ for the submatrix $[A_{ij}]_{(i,j) \in R \times C}$. Finally, let $X_A$ denote the closure of the $B_n \times B_n$-orbit of  $A \in \Mat_{n \times n}$.
 Each orbit closure $X_A$ is a subvariety, and one can describe the 
ideal $I(X_A)$ explicitly as follows.

 \begin{thm}[{\cite[Thms. A and B]{KnutsonMiller}}] \label{thm:KM-ideal}
    For each $A \in \Mat_{n \times n}$,  the collection of minors $\det(\Umat_{RC})$, for all $R \subseteq [i]$ and $C \subseteq [j]$ with $|R| = |C| = \rank A_{[i][j]}+1$ for some $i,j \in [n]$, generates the (prime) ideal $I(X_A)$ and forms a Gr\"obner basis with respect to any \emph{antidiagonal term order}.
 \end{thm}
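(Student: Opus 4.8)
The plan is to reduce the entire statement to a single identity between multigraded Hilbert series and then to establish that identity by a twofold recursion, one half geometric and one half combinatorial. Since $X_A$ is determined by the rank function $(i,j)\mapsto\rank A_{[i][j]}$, standard manipulations (padding $A$ with additional $1$'s to obtain a genuine permutation matrix of some larger size $N\ge n$, and checking that the listed minors and an antidiagonal term order restrict compatibly between the two) reduce the theorem to the case $A=w$ for a permutation $w\in S_N$; here one uses the classification of $B\times B$-orbits by rank functions together with Fulton's description of their closures by the inequalities $\rank M_{[i][j]}\le\rank A_{[i][j]}$. So assume $A=w$. Let $J$ be the ideal generated by the listed minors. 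On the dense orbit one has $\rank\Umat_{[i][j]}=\rank w_{[i][j]}$, so each listed minor vanishes on $X_w$ and hence $J\subseteq I(X_w)$; moreover $I(X_w)$ is prime because $X_w$, being the closure of an orbit of a connected group, is irreducible.

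Fix an antidiagonal term order $\prec$ (one for which the leading term of each $\det(\Umat_{RC})$ is the product of its antidiagonal entries) and let $\mathcal A\subseteq\KK[\Umat]$ be the squarefree monomial ideal generated by those antidiagonal terms $\init_\prec\det(\Umat_{RC})$. Each listed minor is multihomogeneous for the $\ZZ^N\times\ZZ^N$-grading of $\KK[\Umat]$ in which $u_{ij}$ records its row and column indices (all monomials of $\det(\Umat_{RC})$ share the same row and column indices), so passing to an initial ideal preserves the multigraded Hilbert series; combined with the containments $\mathcal A\subseteq\init_\prec J\subseteq\init_\prec I(X_w)$ and $J\subseteq I(X_w)$ this yields $\mathrm{Hilb}(\KK[\Umat]/\mathcal A)\ge\mathrm{Hilb}(\KK[\Umat]/\init_\prec J)=\mathrm{Hilb}(\KK[\Umat]/J)\ge\mathrm{Hilb}(\KK[\Umat]/I(X_w))$ coefficientwise. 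Consequently it suffices to prove the single identity $\mathrm{Hilb}(\KK[\Umat]/\mathcal A)=\mathrm{Hilb}(\KK[\Umat]/I(X_w))$: this forces all four quotients to share a Hilbert function, whereupon $\mathcal A\subseteq\init_\prec J$ with equal Hilbert functions gives $\mathcal A=\init_\prec J$ (so the listed minors form a Gröbner basis), and $J\subseteq I(X_w)$ with equal Hilbert functions gives $J=I(X_w)$ (so the listed minors generate the prime ideal).

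To prove this identity I would show that both quotients have $K$-polynomial equal to the Grothendieck polynomial $\fkG_w$, by matching recursions. On the geometric side, use the base case $w=w_0$, where $X_{w_0}$ is a coordinate subspace with an explicit product $K$-polynomial; for a covering relation $\ell(s_iw)=\ell(w)-1$ in (one-sided) weak order, sweeping $X_w$ by the corresponding minimal parabolic $P_i$ is a $\PP^1$-bundle-type construction producing $X_{s_iw}$ and inducing an isobaric divided-difference operator $\pi_i$ on $K$-polynomials, so $\mathcal K(\KK[\Umat]/I(X_{s_iw}))=\pi_i\,\mathcal K(\KK[\Umat]/I(X_w))$. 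On the combinatorial side, $\mathcal A$ is the Stanley--Reisner ideal of a simplicial complex whose facets are the complements of the reduced pipe dreams of $w$; the operation of \emph{mitosis} carries reduced pipe dreams of $w$ to those of $s_iw$ and, at the level of the generating function $\sum_D(-1)^{|D|-\ell(w)}x^{\mathrm{wt}(D)}$ over pipe dreams, realizes the same operator $\pi_i$ with the same base case at $w_0$. Since every permutation is reached from $w_0$ by a chain of such covers, the two $K$-polynomials agree; exhibiting a shelling of the pipe-dream complex additionally shows $\KK[\Umat]/\mathcal A$ is Cohen--Macaulay.

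The main obstacle is the combinatorial core of the last step: proving that $\mathcal A$ is precisely the Stanley--Reisner ideal of the pipe-dream complex (so that one controls all of its non-faces), that this complex is shellable, and—above all—that mitosis is a well-defined, weight-tracking bijection, so that the pipe-dream generating function genuinely obeys the same divided-difference recursion as the geometric $K$-polynomial. By contrast, the geometric half (Fulton's rank description of $X_w$, irreducibility, the parabolic sweeping and its effect in $K$-theory, and the base case at $w_0$) is comparatively formal once those combinatorial facts are in hand.
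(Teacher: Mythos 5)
Your proposal is correct as a strategy, but it reconstructs the \emph{original} Knutson--Miller proof rather than the one this paper gives. What you describe---reducing to a Hilbert-series identity, computing both sides' $K$-polynomials, and matching them by a Demazure operator recursion along weak Bruhat order, with the combinatorial side driven by mitosis and a subword-complex shelling---is exactly the ``Bruhat induction'' of \cite{KnutsonMiller} and \cite{SubwordComplexes}. The paper explicitly departs from that method. Its Section~\ref{new-proofs-sect} proof of Theorem~\ref{thm:KM-ideal} instead inducts on \emph{strong} Bruhat order using the Lascoux--Sch\"utzenberger transition recurrences: at an outer corner $(p,q)$ of $\dom(w)$, one proves the two containments of Lemma~\ref{lem:transition-containment} (that $I(X_w)+(u_{pq})\subseteq\bigcap_{v\in\Phi(w,p)}I(X_v)$ and $J_w+(u_{pq})=\bigcap_{v\in\Phi(w,p)}J_v$), sandwiches the initial ideals via Proposition~\ref{prop:init-facts}, and then uses the graded Nakayama-style Lemma~\ref{lem:hilbert} (together with the Hilbert-series Lemma~\ref{lem:plus-vs-intersection}) to peel off the variable $u_{pq}$ and close the induction. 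In particular, the paper never needs mitosis, divided-difference operators, or the global $K$-polynomial identity; those are replaced by the local combinatorics of Theorem~\ref{thm:bruhat-minimal-intersection} (intersections of monomial ideals $J_w$) and the geometry of Corollary~\ref{cor:Xw-transitions}.

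Both routes are sound. Yours has the virtue of producing the Grothendieck polynomial explicitly along the way, and the base case $w=w_0$ with $X_{w_0}$ a coordinate subspace is correct. But be aware that the burden you identify as ``the main obstacle''---that $\mathcal{A}$ equals the Stanley--Reisner ideal of the pipe-dream complex, that this complex is shellable, and that mitosis tracks weights so that $\sum_D(-1)^{|D|-\ell(w)}x^{\mathrm{wt}(D)}$ obeys the $\pi_i$ recursion---is precisely the content of \cite[\S3]{KnutsonMiller} and \cite{SubwordComplexes}, and is not small; and the geometric half is less formal than you suggest, since one must check that the parabolic sweep really induces $\pi_i$ at the level of scheme-theoretic $K$-polynomials, not merely on set-theoretic orbit closures. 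The paper's motivation for abandoning your route is stated in the introduction: the mitosis/divided-difference recursion does not adapt cleanly to the skew-symmetric setting (nor to the symmetric one, where the primary decomposition is not even pure), whereas the transition-recurrence induction does. If your goal is only to reprove Theorem~\ref{thm:KM-ideal}, your plan works; if you want a method that also yields the skew-symmetric Theorems~\ref{thm:fpf-ideal-intro}--\ref{thm:fpf-subword-complexes-intro}, you should follow the paper's transition argument instead.
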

 
In this statement, 
 a \emph{term order} on a polynomial ring means a total ordering of the monomials in which $1$ is minimal and multiplication by a fixed monomial preserves order.
 For the ring $\KK[\Mat_{n \times n}]=\KK[u_{ij} : 1 \leq i, j \leq n]$, a term order
 is \emph{antidiagonal} if the initial term (defined as the maximal monomial with nonzero coefficient) of any minor $\det(\Umat_{RC})$ is the product of the antidiagonal entries of $\Umat_{RC}$.
  The prototypical example of an antidiagonal term order on $\KK[\Mat_{n \times n}]$ is  \emph{(graded) reverse lexicographic order}, whose definition is reviewed
 in Example~\ref{lex-term2}. 

 If $I$ is an ideal, then a generating set $S \subseteq I$ is a \emph{Gr\"obner basis} when the initial terms of the elements of $S$ are a generating set for the \emph{initial ideal} $\init(I)$ generated by the initial terms of all elements of $I$.
   The assertion in Theorem~\ref{thm:KM-ideal} that the minors $\det(\Umat_{RC})$ generate $I(X_A)$ is originally due to Fulton \cite[Prop. 3.3]{FultonEssentialSet}.
   Knutson and Miller \cite{KnutsonMiller} reprove this result using different techniques to obtain the stronger statement that 
   these minors form a Gr\"obner basis. 

 If a subvariety $X \subseteq \Mat_{n \times n}(\CC)$ is invariant under the left $B_n$-action, then it defines a class $[X]_{B_n}$ in the equivariant cohomology ring $H_{B_n}^*(\Mat_{n \times n}(\CC)) \simeq \ZZ[x_1, \ldots, x_n]$. The polynomial $[X]_{B_n}$ can also be computed algebraically as the \emph{multidegree} of the ideal $I(X)$, and this definition works more generally over any algebraically closed field $\KK$. 
 
 Identify a permutation $w \in S_n$ with the $n \times n$ permutation matrix with 1 in each position $(i, w(i))$ and 0 everywhere else. The $B_n \times B_n$-orbits contained in $\GL_n \subseteq \Mat_{n \times n}$ are precisely the closures of the matrix Schubert varieties $\{X_w : w \in S_n\}$. For simplicity, and to emphasize the connection to Schubert varieties in $\GL_n/B_n$, we state the next two theorems only for those orbits; both statements can be generalized to all matrix Schubert varieties without serious difficulty.
When we write $\init(I(X_w))$ we mean the initial ideal under any fixed antidiagonal term order.

 \begin{thm}[{\cite[Thms. A and B]{KnutsonMiller}}]\label{thm:KM-primary-decomp} 
If $w \in S_n$ then $[X_w]_{B_n}$ is equal to the polynomial $\fkS_w$,
and the initial ideal of $I(X_w)$ has primary decomposition
    $\init(I(X_w))=\bigcap_{D \in \RP(w)} (u_{ij} : (i,j) \in D)$,
    where $\RP(w)\subseteq \binom{[n]\times [n]}{\ell(w)}$ is the set of \emph{(reduced) pipe dreams} for $w$ (see \S\ref{subsec:pipe-dreams}),
    and 
     $(u_{ij} : (i,j) \in D)$ denotes the (prime) ideal of $\KK[\Mat_{n \times n}]$ generated by $u_{ij}$ 
for all $(i,j) \in D$.
\end{thm}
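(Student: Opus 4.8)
The plan is to derive both statements from an explicit description of $\init(I(X_w))$. By Theorem~\ref{thm:KM-ideal} the generating minors of $I(X_w)$ form an antidiagonal Gr\"obner basis, so $\init(I(X_w))$ equals the squarefree monomial ideal $J_w$ generated by the monomials $\prod_{(a,b)\in A}u_{ab}$, where $A$ ranges over the antidiagonals of all submatrices $\Umat_{RC}$ with $R\subseteq[i]$, $C\subseteq[j]$ and $|R|=|C|=r_{ij}(w)+1$, writing $r_{ij}(w):=\rank\bigl(w_{[i][j]}\bigr)$. A squarefree monomial ideal is radical, and its irredundant primary decomposition reads $J_w=\bigcap_D P_D$ with $P_D:=(u_{ij}:(i,j)\in D)$, the intersection running over the inclusion-minimal subsets $D\subseteq[n]\times[n]$ that meet every generating antidiagonal $A$. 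So the primary-decomposition claim becomes the purely combinatorial assertion that these minimal covers are exactly the reduced pipe dreams of $w$, and in particular that they all have cardinality $\ell(w)$.

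I would isolate that assertion as a lemma and prove it through the pipe-dream dictionary. View $D\subseteq[n]\times[n]$ as a choice of crossing tiles in the $n\times n$ grid, the remaining positions being elbow tiles, and follow the pipes to read off a permutation; recall that $D$ is a reduced pipe dream for this permutation exactly when it has no ``removable'' crossing. The heart of the matter is a Lindstr\"om--Gessel--Viennot-type comparison inside each northwest rectangle $[i]\times[j]$: if $D$ misses a generating antidiagonal $A$ lying in $[i]\times[j]$, then all $r_{ij}(w)+1$ positions of $A$ are elbow tiles, and this produces a family of $r_{ij}(w)+1$ pairwise noncrossing pipe segments confined to $[i]\times[j]$, forcing $\rank\bigl(w_{[i][j]}\bigr)\ge r_{ij}(w)+1$ --- a contradiction; conversely, if $D$ meets every generating antidiagonal, a pipe-counting argument gives $|D|\ge\ell(w)$, with equality precisely when following the pipes of $D$ produces $w$ with no removable crossing, i.e.\ when $D\in\RP(w)$. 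Pinning down the correspondence between ``longest empty antidiagonal in a northwest rectangle'' and the rank function of a pipe dream is where I expect the real work to lie; the relevant path combinatorics is close to that of Fomin--Kirillov, Bergeron--Billey, and the pipe-dream chapters of Miller--Sturmfels.

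For the multidegree statement, recall that $J_w=\init(I(X_w))$ is homogeneous for the $\ZZ^n$-grading in which $u_{ij}$ has degree $e_i$, and that it has the same multigraded Hilbert series as $I(X_w)$; hence $[X_w]_{B_n}$, which the excerpt identifies with the multidegree of $I(X_w)$, is also the multidegree of $J_w$. Since $J_w$ is radical with all of its minimal primes $P_D$ of the same codimension $\ell(w)$, the multidegree is additive over them, and each $\KK[\Mat_{n\times n}]/P_D$ is a complete intersection on the regular sequence $\{u_{ij}:(i,j)\in D\}$, so its multidegree is the product of the weights of those variables, namely $\prod_{(i,j)\in D}x_i$. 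Therefore
\[
[X_w]_{B_n}\ =\ \sum_{D\in\RP(w)}\ \prod_{(i,j)\in D}x_i .
\]
It remains to identify this sum with $\fkS_w$. If the Schubert polynomial is defined by the Billey--Jockusch--Stanley formula this is immediate; if it is defined by divided differences, one instead verifies that $w\mapsto\sum_{D\in\RP(w)}\prod_{(i,j)\in D}x_i$ specializes to $x_1^{n-1}x_2^{n-2}\cdots x_{n-1}$ at $w=w_0$ and is intertwined with $\partial_i$ whenever $\ell(ws_i)<\ell(w)$ --- the latter by a sign-reversing involution on pipe dreams, or geometrically via the $\GL_2$-relation between $X_w$ and $X_{ws_i}$ --- and concludes by downward induction on length.
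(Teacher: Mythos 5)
Your overall strategy is sound and natural: take the Gr\"obner basis result (Theorem~\ref{thm:KM-ideal}) as given, note that $\init(I(X_w))$ is then the squarefree monomial ideal generated by antidiagonal products, decompose via vertex covers of those antidiagonals, identify the minimal covers with $\RP(w)$, and read off the multidegree by additivity over minimal primes. The multidegree step is correct and self-contained once the decomposition is established. However, this is a genuinely different route from the paper's. The paper (Section~\ref{new-proofs-sect}) does \emph{not} assume Theorem~\ref{thm:KM-ideal} but reproves it alongside the decomposition, by downward induction on $|D(w)|$ using the transition recurrence $J_w + (u_{pq}) = \bigcap_{v\in\Phi(w,p)} J_v$ (Lemma~\ref{lem:transition-containment}), Knutson's bijection $\RP(w)\to\bigsqcup_{v\in\Phi(w,p)}\RP(v)$, and the Hilbert-series cancellation trick of Lemma~\ref{lem:hilbert}; the only ``direct'' combinatorial input it needs is the one-sided containment $J_w\subseteq\bigcap_{D\in\RP(w)}(u_{ij}:(i,j)\in D)$, which it reduces (via a $321$-avoiding comparison permutation and ladder moves) to checking antidiagonal lengths in the bottom pipe dream of a rectangle.

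The gap in your proposal is exactly where you flag it: the assertion that the inclusion-minimal covers of the generating antidiagonals coincide with $\RP(w)$ and all have size $\ell(w)$. This is the hard combinatorial content of the theorem, not a detail. Your LGV sketch as written is not correct as stated --- having $r_{ij}(w)+1$ elbow tiles on an antidiagonal in $[i]\times[j]$ does not by itself yield $r_{ij}(w)+1$ pipe segments \emph{confined} to the rectangle, since pipes through an elbow can freely leave and re-enter; and in the converse direction, ``$D$ meets every generating antidiagonal'' does not immediately identify which permutation the wiring of $D$ produces, so the claimed equality $|D|=\ell(w)$ iff $D\in\RP(w)$ requires justification beyond pipe counting. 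The equivalence you want is exactly Knutson and Miller's subword-complex facet identification, and its proof requires an argument of roughly the length and delicacy of the ladder-move/bottom-pipe-dream comparison that the paper spells out in its proof of Theorem~\ref{thm:primary-decomposition}. Until that lemma is proved (in both directions, including the equicardinality claim that makes your multidegree-additivity step valid), the proposal is a correct plan but not a proof.
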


As an application of this theorem, by using an algebraic interpretation of $[X_w]_{B_n}$ as the multidegree of  $I(X_w)$, Knutson and Miller 
provide a geometric proof of 
the Billey-Jockusch-Stanley formula \cite[Thm. 1.1]{BJS} for
the Schubert polynomial $\fkS_w$. 
Knutson and Miller extract more detailed information about the ideals $I(X_w)$, such as their $K$-polynomials, 
from this related result:

\begin{thm}[\cite{SubwordComplexes}] \label{thm:subword-complexes} 
For each $w \in S_n$, 
the ideal $\init(I(X_w))$ is square-free (that is, it contains $f$ if it contains $f^2$) and equal to the Stanley-Reisner ideal of a 
 shellable simplicial complex.
\end{thm}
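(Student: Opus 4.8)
The plan is to deduce the square-free statement directly from the Gr\"obner basis in Theorem~\ref{thm:KM-ideal}, to read the facets of the resulting Stanley--Reisner complex off the primary decomposition in Theorem~\ref{thm:KM-primary-decomp}, and then to prove shellability by identifying this complex (up to coning) with a \emph{subword complex} and shelling subword complexes in general. Concretely, by Theorem~\ref{thm:KM-ideal} the rank-condition minors $\det(\Umat_{RC})$ form a Gr\"obner basis of $I(X_w)$ under any antidiagonal term order, so $\init(I(X_w))$ is generated by their initial terms; by the defining property of an antidiagonal order, each such initial term is the product of the antidiagonal entries of a submatrix, i.e.\ a product of \emph{distinct} variables $u_{ij}$, hence a squarefree monomial. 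A monomial ideal generated by squarefree monomials is automatically radical and equals the Stanley--Reisner ideal $I_{\Delta_w}$ of the complex $\Delta_w$ on vertex set $[n]\times[n]$ whose faces are the subsets containing no such antidiagonal. This settles the square-free assertion, and it remains to show $\Delta_w$ is shellable.

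Next I would pin down the facets of $\Delta_w$. For any simplicial complex $\Delta$ one has $I_\Delta=\bigcap_F (x_i : i\notin F)$ with the intersection over the facets $F$ of $\Delta$, and this is an irredundant prime decomposition. Comparing it with the decomposition $\init(I(X_w))=\bigcap_{D\in\RP(w)}(u_{ij}:(i,j)\in D)$ of Theorem~\ref{thm:KM-primary-decomp}---which is irredundant because all $D$ have size $\ell(w)$, so the corresponding primes are pairwise incomparable---shows that the facets of $\Delta_w$ are exactly the complements $([n]\times[n])\setminus D$ for $D\in\RP(w)$. In particular $\Delta_w$ is pure of dimension $n^2-\ell(w)-1$, hence determined by this facet list.

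Now fix the word $Q$ in the simple transpositions of $S_n$ read off the staircase shape underlying pipe dreams (as in \S\ref{subsec:pipe-dreams}), ordered so that subwords $P\subseteq Q$ spelling reduced words for $w$ correspond, via $P\mapsto Q\setminus P$, bijectively to $\RP(w)$. Because every reduced pipe dream lies inside the staircase, each box of $[n]\times[n]$ outside it lies in every facet of $\Delta_w$, so $\Delta_w$ is an (iterated) cone over the subword complex $\Delta(Q,w)$, whose facets are by definition the sets $Q\setminus P$ for $P$ a reduced word for $w$. Since coning preserves shellability, it suffices to prove that an arbitrary subword complex $\Delta(Q,\pi)$ is shellable. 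I would do this by induction on the length of $Q$, proving the stronger statement that $\Delta(Q,\pi)$ is vertex-decomposable: writing $Q=(Q',s)$ and letting $q$ be the vertex of the last letter, the deletion $\del_q\Delta(Q,\pi)$ equals $\Delta(Q',\pi)$, while the link $\link_q\Delta(Q,\pi)$ equals $\Delta(Q',\pi s)$ when $\ell(\pi s)<\ell(\pi)$ (the remaining cases being degenerate: $q$ is a cone point, or lies in no facet). Both are subword complexes of the shorter word $Q'$, hence vertex-decomposable by induction with trivial base cases, and a vertex-decomposable complex is shellable (indeed Cohen--Macaulay).

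The main obstacle is this last inductive step: one must verify carefully that the link and the deletion of the final vertex of a subword complex are themselves subword complexes, which hinges on how appending the letter $s$ affects which subwords remain reduced words for $\pi$, and one must handle the degenerate cases (when the last vertex belongs to every facet, or to none) so that the vertex-decomposition recursion is well founded. A secondary point is to make the pipe-dream/subword dictionary precise enough that the antidiagonals of the Knutson--Miller minors are matched \emph{on the nose} with complements of reduced subwords of $Q$---so that the two descriptions of the facets of $\Delta_w$ are literally the same family of sets, not merely equinumerous ones.
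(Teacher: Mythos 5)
Your strategy matches what the paper relies on: the theorem is stated as a citation to Knutson and Miller's subword-complex paper, and the paper's own arguments for the skew-symmetric analogue (Theorem~\ref{thm:SR-ideal}, Lemma~\ref{lem:subword-link}, and Theorem~\ref{thm:shellable}) follow exactly this template --- Gr\"obner-basis leading terms give a squarefree monomial ideal, the primary decomposition of Theorem~\ref{thm:KM-primary-decomp} identifies the facets, and vertex-decomposability of the resulting subword complex is proved by induction on the last letter. The squarefree/radical argument, the facet identification via the irredundant pure decomposition, and the coning reduction to the staircase are all correct as you state them.

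One genuine slip: you have the link and deletion formulas swapped. With the convention that faces of $\Delta(Q,\pi)$ are subsets $P\subseteq Q$ such that $Q\setminus P$ contains a reduced word for $\pi$, and $\sigma$ is the last letter of $Q$ with $Q'=Q\setminus\{\sigma\}$, one gets $\link_\sigma\Delta(Q,\pi)=\Delta(Q',\pi)$ (the element stays the same, only the word shrinks), while $\del_\sigma\Delta(Q,\pi)=\Delta(Q',\pi\sigma)$ when $\ell(\pi\sigma)<\ell(\pi)$; if $\ell(\pi\sigma)>\ell(\pi)$ then $\sigma$ is a cone vertex and link and deletion coincide. Indeed $P\in\link_\sigma$ iff $Q\setminus(P\cup\{\sigma\})=Q'\setminus P$ contains a reduced word for $\pi$, whereas $P\in\del_\sigma$ iff $(Q'\setminus P)\cup\{\sigma\}$ does, and the latter condition collapses (via the subword property of Bruhat order) to $Q'\setminus P$ containing a reduced word for $\pi\sigma$. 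This is precisely the pattern in Lemma~\ref{lem:subword-link} and the proof of Theorem~\ref{thm:shellable}, where the link keeps $z$ fixed and it is the \emph{deletion} that replaces $z$ by $s_qzs_q$. Once you correct the swap, the induction closes and the remainder of your outline is the same route taken in the cited source and mirrored in this paper's skew-symmetric setting.
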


Our goal in this paper is to prove similar theorems about the analogues of matrix Schubert varieties in another 
space of fundamental interest, namely,
the subspace of all skew-symmetric matrices in $\Mat_{n\times n}$. 
The rest of this introduction outlines our main new results in this direction.

Over an arbitrary field $\KK$,  an $n\times n$ matrix $A$ 
is \emph{skew-symmetric} if $A_{ij} = -A_{ji}$ and $A_{ii} = 0$ for all $i,j \in [n]$; note that the second condition is redundant if $\ch(\KK) \neq 2$. 
Let $\SSM_n := \SSM_n(\KK) $ denote the subset of such matrices in $\Mat_{n\times n}(\KK)$.
We define
the \emph{skew-symmetric matrix Schubert variety} associated to $A \in \SSM_n$ 
to be the intersection $\SSX_A := X_A \cap \SSM_n$.

Identify the coordinate ring $\KK[\SSM_n]$ with $\KK[u_{ij} : n \geq i > j \geq 1]$, and write $\SSU$ for the $n \times n$ skew-symmetric matrix with $\SSU_{ij} = u_{ij} = -\SSU_{ji}$ for $i > j$
and $\SSU_{ii} = 0$ for all $i$.
Each $\SSX_A$ is the zero locus of the obvious skew-symmetric
analogue of the minors in Theorem~\ref{thm:KM-ideal}, that is, the family consisting of $\det(\SSU_{RC})$ for all $R \subseteq [i]$ and $C \subseteq [j]$ with $|R| = |C| = \rank A_{[i][j]}+1$ for some $i,j\in [n]$.
These polynomials do not always generate the ideal $I(\SSX_A)$, however.
There is nevertheless a natural generating set for $I(\SSX_A)$, which we describe as follows:

\begin{thm}[See Definition~\ref{ssi-def} and Theorem~\ref{ss-main-thm2}] \label{thm:fpf-ideal-intro} For each $A \in \SSM_n$, the collection of  Pfaffians $\pf(\SSU_{RR})$, as $R$ ranges over all even sized subsets of $[n]$ such that $R \subseteq [i]$ and $|R \cap [j]| > \rank A_{[i][j]}$ for some $i,j \in [n]$ with $i\geq j$, generate the (prime) ideal $I(\SSX_A)$.
\end{thm}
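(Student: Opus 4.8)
The plan is to set $J \subseteq \KK[\SSM_n]$ equal to the ideal generated by the listed Pfaffians and to prove $J = I(\SSX_A)$ by establishing three things: $J \subseteq I(\SSX_A)$ (easy), $V(J) = \SSX_A$ as sets, and $J$ is radical; together these give $J = I(V(J)) = I(\SSX_A)$. For the first inclusion, suppose $R$ is even with $R \subseteq [i]$ and $|R \cap [j]| > r := \rank A_{[i][j]}$ for some $i \ge j$. At any $M \in \SSX_A = X_A \cap \SSM_n$ we have $\rank M_{[i][j]} \le r$, since by Theorem~\ref{thm:KM-ideal} the variety $X_A$ is the locus $\{M : \rank M_{[i][j]} \le \rank A_{[i][j]}$ for all $i,j\}$. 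Hence the columns of $M_{RR}$ indexed by $R \cap [j]$ --- which are exactly the columns of the submatrix $M_{R,\,R\cap[j]}$ of $M_{[i][j]}$, a matrix of rank $\le r < |R\cap[j]|$ --- are linearly dependent, so $\pf(M_{RR})^2 = \det(M_{RR}) = 0$ and $\pf(\SSU_{RR})$ vanishes on the reduced variety $\SSX_A$. For the primality claim in the statement we use separately that $\SSX_A$ is irreducible, which follows by identifying it with the closure of a single $B_n$-orbit on $\SSM_n$ under the congruence action $g \cdot M = gMg^{\mathsf T}$.

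For $V(J) = \SSX_A$, the containment $\supseteq$ is immediate from $J \subseteq I(\SSX_A)$. Conversely, let $M$ be a skew-symmetric matrix annihilating every listed Pfaffian; we must check $\rank M_{[i][j]} \le \rank A_{[i][j]}$ for all $i,j$, and by skew-symmetry it suffices to treat $i \ge j$. Suppose instead $\rank M_{[i][j]} \ge r+1$ with $r = \rank A_{[i][j]}$. Choosing $r+1$ columns in $[j]$ whose images under the alternating form $M_{[i][i]}$ are independent produces $C \subseteq [j]$ with $|C| = r+1$ and $\KK^C \cap \ker M_{[i][i]} = 0$. A routine induction --- repeatedly adjoining a single index from $[i]$ to raise the rank of the restricted alternating form, which is possible as long as that form is degenerate and which must terminate since $r+1 \le \rank M_{[i][j]} \le \rank M_{[i][i]}$ bounds the rank --- yields $R$ with $C \subseteq R \subseteq [i]$, $|R|$ even, and $\pf(M_{RR}) \ne 0$. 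Since $|R \cap [j]| \ge |C| = r+1 > r$, this $R$ is one of the index sets appearing in the theorem, contradicting our hypothesis.

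It remains to prove that $J$ is radical, and this is where the real work lies. The route is to fix an antidiagonal term order on $\KK[\SSM_n]$ and to show that $\init(J)$ is a square-free monomial ideal: concretely, that the listed Pfaffians --- suitably enlarged to a Gr\"obner basis, the extra elements being superfluous for generation --- have square-free antidiagonal leading terms generating $\init(J)$. An ideal whose initial ideal is radical is itself radical, which then closes the argument. Establishing this Gr\"obner basis property --- equivalently, identifying the antidiagonal initial ideal with the Stanley--Reisner ideal of the complex of fpf-involution pipe dreams --- is the main obstacle, and is precisely where the argument must depart from Knutson and Miller's subword-complex method; I would attack it by a Gr\"obner degeneration reducing the statement to a combinatorial cancellation, exploiting the structure of the relevant one-sided ladder Pfaffian ideals. (For radicality alone one could instead invoke known primality results for ladder Pfaffian ideals, but the explicit combinatorial description of $\init(J)$ is what feeds into the shellability and primary-decomposition results obtained elsewhere in the paper.)
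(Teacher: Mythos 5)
Your overall strategy is sound in outline, and it genuinely differs from the paper's route: you reduce to showing $J \subseteq I(\SSX_A)$, $V(J) = \SSX_A$, and $J$ radical, then invoke the Nullstellensatz. The paper instead shows $\SSI_z \subseteq I(\SSX_z)$, proves $\SSJ_z = \init(\SSI_z) = \init(I(\SSX_z))$ by a Bruhat-order induction using transition formulas (Theorem~\ref{ss-main-thm}, Lemma~\ref{lem:transition-containment-ss}, Lemma~\ref{containment-ss-lem}), and concludes via Proposition~\ref{prop:init-facts}(d) (two nested ideals with equal initial ideals must coincide). Your first two steps are handled differently from the paper's: for $V(J) \subseteq \SSX_A$, you attack directly via the structure of alternating forms, whereas the paper's Lemma~\ref{lem-pf2} reduces to the case of a monomial skew-symmetric matrix via the $B_n$-action. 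Your ``routine induction'' that yields an $R$ with $\pf(M_{RR}) \neq 0$ is not quite routine: after adjoining an index $a$, you must re-verify that $\ker M_{R'R'} \cap \ker M_{[i][i]} = 0$ in order to keep going, and this is not automatic from your stated invariant $\KK^C \cap \ker M_{[i][i]} = 0$. It can be fixed (e.g., by a quotient-by-radical argument), but as written it has a hole.

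The serious gap is the radicality step, which you rightly identify as ``the main obstacle'' and then do not prove. Establishing that the antidiagonal initial ideal of $J = \SSI_z$ is the squarefree monomial ideal $\SSJ_z$ is the entire technical content of the paper's Sections 3 and 4: it requires the untwisting lemmas (Lemmas~\ref{untwist-lem1}, \ref{untwist-lem2}), the Pfaffian sum identity (Proposition~\ref{pfaffian-sum-prop}), the rank-table machinery, and the Bruhat-induction via transition formulas. A one-sentence gesture toward ``Gr\"obner degeneration reducing to a combinatorial cancellation'' does not constitute a proof. Moreover, your suggested fallback --- invoking ``known primality results for ladder Pfaffian ideals'' --- does not work: the paper's Remark~\ref{rem:pfaffian-ideals} notes that the ideals considered by De Negri--Sbarra and Raghavan--Upadhyay form a \emph{strict} subfamily of the ideals $\SSI_z$ (indexed by $z$ whose essential sets satisfy a monotonicity condition), so their results cannot be cited to cover the general case. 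What you have written is a valid reduction of the theorem to the Gr\"obner basis claim plus a somewhat informal set-theoretic argument, but the Gr\"obner claim itself is where the theorem's difficulty lives, and it is left unproved.
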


In another point of departure from ordinary matrix Schubert varieties,
finding a generating set for $I(\SSX_A)$ does not immediately lead to a Gr\"obner basis. 
For example, the generating set  in the preceding theorem is generally \emph{not} a Gr\"obner basis for $I(\SSX_A)$ relative to an antidiagonal term order. This is because the initial ideal $\init(I(\SSX_A))$ can have generators divisible by $u_{ab}u_{ac}$ for some $a, b, c \in \NN$, which cannot be the leading term of any Pfaffian $\pf(\SSU_{RR})$; see Example~\ref{ex:SSI-generators}. 

Our second main result resolves the nontrivial problem of finding a Gr\"obner basis for $I(\SSX_A)$. Specifically, in 
Theorem~\ref{ss-main-thm3} we show that a Gr\"obner basis for $I(\SSX_A)$ with respect to the reverse lexicographic term order is provided by the Pfaffians of the
block diagonal matrices  \[\left[ \begin{array}{ll} \SSU_{CC} & \SSU_{CR} \\ \SSU_{RC} & 0
    \end{array}  \right]\]
    for certain subsets $R,C\subseteq [n]$.
Experimental evidence suggests that these Pfaffians may also form a Gr\"obner basis for other antidiagonal term orders. This more general claim does not follow from our present methods, however,
and will not be pursued in this article.

Suppose $n$ is even and $z \in S_n$ is a fixed-point-free involution,
that is, a permutation with $z(z(i)) = i \neq z(i)$ for all $i \in [n]$.
Associated to such a permutation is 
a set $\FP(z)$ of \emph{fpf-involution pipe dreams}, whose elements are certain subsets
of $\{ (i,j) \in [n] \times [n] : i > j\}$; see Definition~\ref{fp-def}
for the full details.
We let $\SSX_z := \SSX_A$ where $A$ is the skew-symmetric $n\times n$ matrix with
 $A_{ij} = 1$ if $z(j) = i<j =z(i)$ and $A_{ij} = -1$ if $z(j) = i>j = z(i)$.
Whenever we write $\init(I(\SSX_z))$, we mean the initial ideal under the reverse lexicographic term order defined in Example~\ref{lex-term2}.

\begin{thm}[See Theorems~\ref{ss-main-thm} and \ref{thm:primary-decomposition-ss}] \label{thm:fpf-primary-decomp-intro} For each fixed-point-free involution $z \in S_n$,
the initial ideal of $I(\SSX_z)$ has primary decomposition $\init(I(\SSX_z))=\bigcap_{D \in \RFP(z)} (u_{ij} : (i,j) \in D)$,
where $(u_{ij} : (i,j) \in D)$ denotes the (prime) ideal of $\KK[\SSM_n]$ generated by $u_{ij}$ 
for all $(i,j) \in D$.
 \end{thm}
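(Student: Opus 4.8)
\noindent\emph{Proof plan.}
The plan is to read off $\init(I(\SSX_z))$ from the Gr\"obner basis of Theorem~\ref{ss-main-thm3} and then match its minimal primes with $\RFP(z)$ combinatorially. By that theorem the reverse-lexicographic initial ideal of $I(\SSX_z)$ is generated by the initial terms of the Pfaffians $\pf\!\left[\begin{smallmatrix}\SSU_{CC}&\SSU_{CR}\\ \SSU_{RC}&0\end{smallmatrix}\right]$ over the relevant pairs $R,C\subseteq[n]$. The first step is to verify that, under reverse-lexicographic order, the initial term of each such Pfaffian is a \emph{squarefree} monomial $u^E:=\prod_{(i,j)\in E}u_{ij}$ supported on an explicit ``antidiagonal'' set $E\subseteq\{(i,j):i>j\}$ --- squarefree in the sense of containing no $u_{ij}^2$, though monomials like $u_{ab}u_{ac}$ do occur, as the remarks after Theorem~\ref{thm:fpf-ideal-intro} already anticipate. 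This is a direct computation from the definition of the Pfaffian, and is precisely the point at which the antidiagonal hypothesis on the term order is used, paralleling the analysis behind Theorem~\ref{thm:KM-ideal}. Granting it, $\init(I(\SSX_z))$ is a squarefree monomial ideal, hence the Stanley--Reisner ideal $I_{\Delta_z}$ of the simplicial complex $\Delta_z$ on vertex set $\{(i,j):i>j\}$ whose faces are the subsets containing none of the sets $E$.

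Since the minimal-prime decomposition of a Stanley--Reisner ideal is the standard one, $I_{\Delta_z}=\bigcap_{F}\,(u_{ij}:(i,j)\notin F)$ over the facets $F$ of $\Delta_z$, and since each ideal $(u_{ij}:(i,j)\in D)$ is prime, the theorem is equivalent to the combinatorial assertion that $\RFP(z)$ coincides with the set of complements of facets of $\Delta_z$ --- equivalently, with the inclusion-minimal subsets $D$ of $\{(i,j):i>j\}$ that meet every antidiagonal $E$. Two inclusions then remain.

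For the inclusion that each $D\in\RFP(z)$ is such a transversal --- equivalently $\init(I(\SSX_z))\subseteq(u_{ij}:(i,j)\in D)$ --- I would argue in either of two ways. Geometrically, one attaches to each fpf-involution pipe dream $D$ the coordinate subspace $L_D:=V\big((u_{ij}:(i,j)\in D)\big)\subseteq\SSM_n$, and shows $L_D$ lies in the initial scheme $\operatorname{Spec}\big(\KK[\SSM_n]/\init I(\SSX_z)\big)$: indeed $L_D$ degenerates from a linear space whose generic point lies in the skew-symmetric matrix Schubert cell of $z$, via a pipe-dream parametrization of that cell adapted to the skew-symmetric setting (keeping track of the identifications $\SSU_{ij}=-\SSU_{ji}$ and $\SSU_{ii}=0$ and of the rank inequalities cutting out $\SSX_z$ in Theorem~\ref{thm:fpf-ideal-intro}); since $I(L_D)=(u_{ij}:(i,j)\in D)$ this gives the claim. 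Alternatively, and perhaps more cleanly, one checks by a direct inspection of the block-Pfaffian antidiagonals that each $D\in\RFP(z)$ meets every $E$.

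For the reverse inclusion --- that these $D$ exhaust all minimal transversals, equivalently $\bigcap_{D\in\RFP(z)}(u_{ij}:(i,j)\in D)\subseteq\init I(\SSX_z)$ --- I would invoke the Gr\"obner basis once more: a monomial avoids $\init I(\SSX_z)$ precisely when its support is a face of $\Delta_z$, so it suffices to prove that every face of $\Delta_z$ is contained in the complement of some $D\in\RFP(z)$, equivalently that every set meeting all of $\RFP(z)$ contains some antidiagonal $E$. This exhaustiveness is the step I expect to be the main obstacle: it requires producing enough fpf-involution pipe dreams to account for every facet of $\Delta_z$, and I would attack it by induction, using a transition recursion for fpf-involution pipe dreams (chute and ladder moves, or an fpf-analogue of Lascoux--Sch\"utzenberger transition) matched on the geometric side with a flat degeneration relating $\SSX_z$ to the skew-symmetric matrix Schubert varieties of smaller fixed-point-free involutions, with Theorem~\ref{ss-main-thm3} controlling the initial ideals along the way. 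One could instead hope to close the argument by showing the two radical ideals have equal Hilbert series via a multidegree computation; but since this paper aims to \emph{deduce} the generating function for symplectic Grothendieck polynomials from the present decomposition, I would keep the identification of $\RFP(z)$ with the minimal transversals self-contained, so as not to argue in a circle.
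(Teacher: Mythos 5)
Your plan correctly identifies the architecture of the argument: read off $\init(I(\SSX_z))$ from a Gr\"obner basis, observe that it is a squarefree monomial ideal, interpret the desired equality as a Stanley--Reisner identification, and then prove a bijection between $\RFP(z)$ and the facet-complements. This matches the paper's ideology (and your caution about circularity with the multidegree argument is well-placed). But there is one terminological imprecision and, more importantly, a genuine gap in the hard direction.

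First the small point. The initial terms of the Gr\"obner basis elements $f_{A'B'}$ in Theorem~\ref{ss-main-thm3} are not ``antidiagonals'' in the usual sense: they are the squarefree monomials $\SSu_{A'B'}$, which --- as you yourself note --- may contain pairs like $u_{ab}u_{ac}$, something an antidiagonal of a skew-symmetric submatrix can never do. The paper proves $\init(f_{A'B'}) = \pm\SSu_{A'B'}$ only under an extra ``untwisted'' hypothesis on $(A',B')$ (Lemma~\ref{untwist-lem1}), and only for the specific reverse lexicographic order, not for a generic antidiagonal term order. So the step you describe as ``a direct computation from the definition of the Pfaffian'' is in fact delicate, and this is exactly why the paper confines its Gr\"obner statement to revlex.

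Now the gap. You correctly flag exhaustiveness --- that $\bigcap_{D\in\RFP(z)}(u_{ij}:(i,j)\in D)\subseteq\init I(\SSX_z)$ --- as ``the main obstacle,'' but you leave it entirely unproven, gesturing at ``a transition recursion\dots matched on the geometric side with a flat degeneration.'' This is not a minor detail to fill in; it is where the real content lives. The paper closes it by a specific mechanism: an outer corner $(p,q)$ of $\dom(z)$ gives a transition Lemma~\ref{lem:transition-containment-ss} ($\SSJ_z+(u_{pq})=\bigcap_{v\in\Psi(z,p)}\SSJ_v$), paired with the nontrivial bijection (from \cite[Thm. 4.33]{HMPdreams}) $D\mapsto D\sqcup\{(p,q)\}$ between $\FP(z)$ and $\bigcup_{v\in\Psi(z,p)}\FP(v)$, and then an inductive descent made rigorous by the graded-ideal comparison Lemma~\ref{lem:hilbert}. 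You would need all of these ingredients, and none of them are supplied. Moreover, even your ``easy'' inclusion --- that $\SSJ_z\subseteq(u_{ij}:(i,j)\in D)$ for each $D\in\RFP(z)$ --- is not a direct inspection in the paper: Lemma~\ref{lem:Jw-in-JD-ss} derives it from Knutson--Miller's Theorem B for ordinary matrix Schubert varieties, applied to the symmetric pipe dream $E\supseteq D$ of the ordinary permutation $y$ with $z=\FPF_m(y)$. Your alternative ``pipe-dream parametrization of the cell'' route would be a new construction not present in the paper, and you do not exhibit it. In short: correct skeleton, but the two decisive lemmas are missing.
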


The varieties $\SSX_z$ as $z$ ranges over the fixed-point-free involutions in $S_n$ are exactly the $B_n$-orbit closures in $\SSM_n \cap \GL_n$,
which is nonempty only if $n$ is even. For simplicity, we have stated Theorem~\ref{thm:fpf-primary-decomp-intro} only for these special cases of $\SSX_A$.  Theorem~\ref{thm:primary-decomposition-ss} below will extend this result to all skew-symmetric matrix Schubert varieties in $\SSM_n$ and all positive integers $n$.

Using Theorem~\ref{thm:fpf-primary-decomp-intro},
one can show that $[\SSX_z]_{B_n} = \sum_{D \in \RFP(z)} \prod_{(i,j) \in D} (x_i + x_j)$ under the identification $H_{B_n}^*(\SSM_n) \simeq \ZZ[x_1, \ldots, x_n]$. This formula was proven combinatorially in \cite{HMPdreams}. It was also shown in \cite{HMPdreams} that the polynomials $[\SSX_z]_{B_n}$ are the same as the \emph{fpf-involution Schubert polynomials} introduced by Wyser and Yong \cite{WyserYongOSp}, which represent the ordinary cohomology classes of the $\Sp_n(\CC)$-orbit closures on $\GL_n(\CC)/B_n$. Briefly, the connection to our situation is that $\Sp_n(\CC)$-orbits on $\GL_n(\CC)/B_n$ are in bijection with $B_n$-orbits on $\GL_n(\CC)/\Sp_n(\CC)$, which can be identified with $\SSM_n(\CC) \cap \GL_n(\CC)$.

Finally, we prove a skew-symmetric version of Theorem~\ref{thm:subword-complexes}. As with Theorem~\ref{thm:fpf-primary-decomp-intro}, we state the next result just for the special case when $\SSX_A \subseteq \SSM_n \cap \GL_n$, but we will
extend the theorem in Section~\ref{subsec:K} to all skew-symmetric matrix Schubert varieties.

\begin{thm}[See Theorems~\ref{thm:SR-ideal} and \ref{thm:shellable}]
 \label{thm:fpf-subword-complexes-intro}
For each fixed-point-free involution
 $z \in S_n$, the ideal $\init(I(\SSX_z))$ is square-free and equal to the Stanley-Reisner ideal of a shellable simplicial complex. 
\end{thm}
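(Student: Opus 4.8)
The first assertion is a formal consequence of the primary decomposition already in hand. By Theorem~\ref{thm:fpf-primary-decomp-intro} (and its generalization Theorem~\ref{thm:primary-decomposition-ss}), $\init(I(\SSX_z))$ is a finite intersection of the monomial prime ideals $(u_{ij}:(i,j)\in D)$, each generated by a subset of the variables, so it is radical; in particular it contains $f$ whenever it contains $f^2$, which is the square-free property in the statement. Via the standard dictionary between radical monomial ideals and simplicial complexes, $\init(I(\SSX_z))$ is then the Stanley--Reisner ideal of the complex $\Delta_z$ on the vertex set $\{(i,j):n\geq i>j\geq 1\}$ whose facets are the complements $\{(i,j):i>j\}\setminus D$ as $D$ ranges over $\RFP(z)$; since all of these pipe dreams have the same cardinality, $\Delta_z$ is pure. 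This proves the square-free claim and Theorem~\ref{thm:SR-ideal}, and reduces us to showing that $\Delta_z$ is shellable.

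\textbf{Step 2: shellability by vertex decomposition.}
I would prove the stronger statement that $\Delta_z$ is vertex-decomposable, which yields shellability (and Cohen--Macaulayness) by the theorem of Provan and Billera. The natural setting for the induction is the class of all skew-symmetric matrix Schubert varieties (in the generality of Section~\ref{subsec:K}), possibly further enlarged to such varieties sitting inside a coordinate subspace of $\SSM_n$, with the induction running on the number of variables. The base case is when the initial complex is a full simplex, which occurs exactly when there is a single pipe dream. For the inductive step one fixes a distinguished vertex $v=u_{ab}$ --- for instance the box $(a,b)$ that is largest, in the order on boxes induced by the reverse lexicographic term order, among those occurring in some pipe dream. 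If $v$ lies in every facet then $\Delta_z$ is a cone with apex $v$ and one passes to $\del_{\Delta_z}(v)$; otherwise $v$ is a genuine shedding vertex, and the condition that no facet of $\link_{\Delta_z}(v)$ is a facet of $\del_{\Delta_z}(v)$ is automatic from purity together with $\dim\link_{\Delta_z}(v)=\dim\del_{\Delta_z}(v)-1$. It then remains to see that $\del_{\Delta_z}(v)$ and $\link_{\Delta_z}(v)$ are themselves vertex-decomposable, which I would obtain from the inductive hypothesis by identifying them --- up to joining with a simplex on the boxes occurring in no relevant pipe dream --- as the initial complexes of the skew-symmetric matrix Schubert varieties $\SSX_z\cap\{u_{ab}=0\}$ (the deletion, where one has imposed one further rank bound) and of a variety obtained by ``absorbing'' the box $(a,b)$ (the link), the latter attached to an fpf-involution $z^{+}$ with $\ellfpf(z^{+})=\ellfpf(z)-1$. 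This is Theorem~\ref{thm:shellable}.

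\textbf{Step 3: the main obstacle.}
The crux --- and the part I expect to require the most work --- is the recursive identification used in Step~2. It rests on two compatible ingredients. The first is a commutative-algebra statement: that setting $u_{ab}=0$ commutes with passing to initial ideals, that the Gr\"obner basis produced in Theorem~\ref{ss-main-thm3} specializes as expected under this substitution, and that there is a companion statement realizing the link as an initial ideal. This is delicate in the skew-symmetric setting because --- in contrast with the ordinary matrix Schubert case of Theorem~\ref{thm:KM-ideal} --- the Pfaffian generators of Theorem~\ref{thm:fpf-ideal-intro} do not restrict to Pfaffians under coordinate specialization, so one must argue with the block-Pfaffian Gr\"obner basis rather than with the naive generating set. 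The second ingredient is a combinatorial lemma, a skew-symmetric analogue of the chute/ladder recursion for reduced pipe dreams: the elements of $\RFP(z)$ avoiding $(a,b)$ should be exactly the fpf-involution pipe dreams of the ``$u_{ab}=0$'' variety, and those containing $(a,b)$, after $(a,b)$ is deleted, should be exactly $\RFP(z^{+})$. An alternative, closer to Knutson and Miller, would be to exhibit $\Delta_z$ directly as a pullback of one of their subword complexes --- for instance through a doubling map from fpf-involution words to ordinary reduced words --- and to invoke the known vertex-decomposability of subword complexes from \cite{SubwordComplexes}; but the direct induction above stays within the self-contained framework built up in the rest of this paper.
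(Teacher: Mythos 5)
Your Step 1 is correct and is exactly what the paper does for Theorem~\ref{thm:SR-ideal}: square-freeness and the Stanley--Reisner identification are immediate consequences of the primary decomposition of Theorem~\ref{thm:primary-decomposition-ss} together with \cite[Thm.~1.7]{MillerSturmfels}.

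For shellability your overall strategy (vertex decomposition) is the right one, but your plan for the inductive step diverges from the paper's in ways that introduce both unnecessary difficulty and an apparent sign error. The paper's key move is to enlarge the class of objects \emph{combinatorially} rather than geometrically: it defines the fpf-subword complex $\subword(z,Q)$ for an \emph{arbitrary} subset $Q\subseteq\ltriang_n$, and runs the induction on $|Q|$. With $\square$ taken to be the \emph{last} element of $Q$ in pipe-dream reading order (largest row index, then smallest column --- this is not the same as the variable $u_{ab}$ maximal in reverse-lex, so your choice of shedding vertex is different), Lemma~\ref{lem:subword-link} gives $\link_{\{\square\}}\subword(z,Q)=\subword(z,Q\setminus\{\square\})$ with the \emph{same} $z$, and the proof of Theorem~\ref{thm:shellable} shows $\del_{\{\square\}}\subword(z,Q)=\subword(z',Q\setminus\{\square\})$ where $z'\in\{z,\, s_qzs_q\}$ has $\ellfpf(z')\leq\ellfpf(z)$. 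Your proposal assigns the ``absorbed'' involution $z^+$ with $\ellfpf(z^+)=\ellfpf(z)-1$ to the \emph{link} and the coordinate slice $\SSX_z\cap\{u_{ab}=0\}$ to the \emph{deletion}; this is backwards. Setting $u_{ab}=0$ cuts the variety down (increases codimension, shrinks the complex), which is the behavior of the link, not the deletion, and the deletion (same dimension) is the one that absorbs a letter and passes to a shorter involution. Moreover, $\SSX_z\cap\{u_{ab}=0\}$ is typically reducible (Corollary~\ref{cor:SSXw-transitions}), so you would have to carry unions of skew-symmetric matrix Schubert varieties through the induction, which is exactly the kind of complication that working directly with $\subword(z,Q)$ is designed to avoid.

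The delicate commutative-algebra ingredient you flag in Step 3 --- that restriction to $u_{ab}=0$ commutes with initial ideals and that the block-Pfaffian Gr\"obner basis specializes as hoped --- is genuinely hard and is simply not needed. Once the identification $\SSJ_z = I_{\subword(z,\ltriang_n)}$ is in hand from Step 1, everything in the shellability argument is finite combinatorics of fpf-involution words: the two cases in the proof of Theorem~\ref{thm:shellable} are decided by whether $\ellfpf(s_qzs_q)>\ellfpf(z)$ or not, and the facts used are standard manipulations of reduced/involution words plus \cite[Lem.~4.31]{HMPdreams}. Your alternative route of pulling back Knutson--Miller's subword complexes via a doubling map is not what the paper does either, though it is closer in spirit to what the paper does directly (which is to redo the Knutson--Miller subword-complex induction within the fpf-involution-word calculus). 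In short: right theorem, right structure, wrong choice of vertex, swapped roles of link and deletion, and a detour into commutative algebra that the paper's $\subword(z,Q)$ formalism renders unnecessary.
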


We use this result to give a new geometric proof of 
a combinatorial formula \cite[Thm. 4.5]{MacdonaldNote}  for 
the $B_n$-equivariant $K$-theory representative of $\SSX_z$; see Theorem~\ref{4.5-thm}.

Ideals generated by Pfaffians of a generic skew-symmetric matrix have been well-studied \cite{DeNegri,DeNegriSbarra,HerzogTrung,JonssonWelker,RaghavanUpadhyay}, and there is some overlap between our results and prior work. For example, Herzog and Trung showed in \cite{HerzogTrung} that for a fixed positive integer $r$, the Pfaffians of all $2r \times 2r$ submatrices of a generic skew-symmetric matrix form a Gr\"obner basis with respect to an appropriate term order. De Negri and Sbarra \cite{DeNegriSbarra} considered a larger family of ideals which, translated into our language, turns out to be a subfamily of the ideals $I(\SSX_A)$ for $A \in \SSM_n$; see Remark~\ref{rem:pfaffian-ideals} for a precise description. They observed that the Pfaffian generators of Theorem~\ref{thm:fpf-ideal-intro} need not form a Gr\"obner basis with respect to an antidiagonal term order, and then classified the ideals that do enjoy this property. Raghavan and Upadhyay \cite{RaghavanUpadhyay} examined the same family of ideals, computing their initial ideals and realizing the latter as Stanley-Reisner ideals of shellable complexes, as we do in Theorems~\ref{thm:fpf-primary-decomp-intro} and \ref{thm:fpf-subword-complexes-intro}. There is no easy way to translate between their results and ours, however, because they use term orders which are far from antidiagonal.

The techniques we use to prove Theorems~\ref{thm:fpf-ideal-intro} and \ref{thm:fpf-primary-decomp-intro} differ from those in \cite{KnutsonMiller}, and in fact lead to new proofs of Theorems~\ref{thm:KM-ideal} and \ref{thm:KM-primary-decomp}. We discuss these applications in Section~\ref{new-proofs-sect}. Knutson and Miller use a method in \cite{KnutsonMiller} called \emph{Bruhat induction}, which is an induction on weak Bruhat order on $S_n$ leveraging divided difference recurrences for Schubert classes. We instead induct on (strong) Bruhat order and use the transition recurrences of Lascoux and Sch\"utzenberger from \cite{LascouxSchutzenbergerTransitions}. A similar inductive approach to studying matrix Schubert varieties appears in \cite{HPW}.

The questions studied here are equally interesting to consider for 
\emph{symmetric matrix Schubert varieties}, where $\SSM_n$ is replaced by the space of symmetric matrices in $\Mat_{n\times n}$. 
A number of technical difficulties arise in that setting, however.
For example, the expected analogue of
Theorem~\ref{thm:fpf-primary-decomp-intro} no longer gives a pure primary decomposition into ideals of the same dimension, and this poses a fundamental obstruction to the techniques of Knutson and Miller from \cite{KnutsonMiller}. 
We hope that a variation of our new inductive approach can
be adapted in future work to avoid these issues.

\section*{Acknowledgements}
The first author was partially supported by Hong Kong RGC Grant ECS 26305218.
We thank Zach Hamaker for a much improved proof of Proposition~\ref{prop:rM-equals-rw} and for many other helpful conversations. We are also grateful to Allen Knutson for explaining his perspective on Schubert transition recurrences, which inspired some of the key techniques used here.

\section{Preliminaries}

Throughout, we write $\ZZ$ for the set of integers, $\NN = \{1,2,3,\dots\}$ 
for the set of natural numbers, and $[n] = \{ i \in \ZZ : 0 < i \leq n\}$ for the first $n$
positive integers.

\subsection{Initial ideals}

In this section we work over an arbitrary field $\KK$. (Outside this section, we will always assume that $\KK$ is algebraically closed.)
Suppose $x_1,x_2,\dots,x_N$ are commuting variables, and consider the polynomial ring $\KK[{\bf x}] := \KK[x_1,x_2,\dots,x_N]$. 
For us, a \emph{monomial} in $\KK[{\bf x}]$ is an element  $x_{i_1}x_{i_2}\dots x_{i_l}$ for some 
possibly empty sequence of (not necessarily distinct) indices $i_1,i_2,\dots,i_l \in [N]$.

A \emph{term order} on $\KK[{\bf x}] $ is a total order on the set of  all monomials,
such that $1$ is the unique minimum and such that if $\mon_1, \mon_2, \mon_3$ are monomials and $\mon_1 \leq \mon_2$, 
then $\mon_1  \mon_3 \leq \mon_2 \mon_3$. 
If $N=1$ then there is a unique term order, namely, $1 < x_1<x_1^2 < \dots$.

\begin{ex} \label{lex-term1}
The \emph{lexicographic term order} on $\KK[{\bf x}]$ declares
that  $x_1^{a_1}\cdots x_N^{a_N} \leq x_1^{b_1} \cdots x_N^{b_N}$ whenever $(a_1, \ldots, a_N) \leq (b_1, \ldots, b_N)$ in lexicographic order.
The \emph{(graded) reverse lexicographic term order} declares that 
$x_1^{a_1}\cdots x_N^{a_N} \leq x_1^{b_1} \cdots x_N^{b_N}$ whenever $\sum_i a_i \leq \sum_i b_i$ and $(a_N, \ldots, a_1) \geq (b_N, \ldots, b_1)$
in lexicographic order; note the double reversal. 
\end{ex}

Fix a term order and suppose $f = \sum_{\mon} c_\mon \cdot \mon \in \KK[{\bf x}]$ where the sum is over monomials $\mon$ and each $c_\mon \in \KK$.  
If $f$ is nonzero, then 
its \emph{initial term} (or \emph{leading term}) is the maximal monomial $\mon$ such that $c_\mon \neq 0$.
If $f=0$ then its initial term is also defined to be zero. In either case, we write $\init(f)$ for the corresponding initial term.

The \emph{initial ideal} of an ideal $I$ in $\KK[{\bf x}]$ is then $\init(I) := \KK\text{-}\mathrm{span}\{\init(f) : f \in I\}$. This abelian group is itself an ideal
 in $\KK[{\bf x}]$.
A \emph{Gr\"obner basis} $G$ for an ideal $I\subseteq \KK[{\bf x}]$, relative to a fixed term order,
is a generating set  whose set of initial terms $\{ \init(g) :g \in G\}$ generates $\init(I)$.

\begin{ex}\label{lex-term2}
In our applications, we will usually take $x_1,x_2,\dots,x_N$ to be 
either the commuting variables $u_{ij}$ indexed by all positions $(i,j) \in [m]\times [n]$ for some $m$ and $n$,
or the subset of these variables indexed by positions strictly below the main diagonal.

An \emph{antidiagonal term order} on $\KK[u_{ij} : (i,j) \in [m]\times [n]]$
is one with the property that the initial monomial of the determinant of any square submatrix $A$ of $[u_{ij}]_{(i,j) \in [m]\times [n]}$ is the product of the antidiagonal entries of $A$. 
Under an antidiagonal term order, one has, for example,
    \begin{equation*}
   \init\left(  \det \begin{bmatrix} u_{11} & u_{12} & u_{13} \\ u_{21} & u_{22} & u_{23} \\ u_{31} & u_{32} & u_{33} \end{bmatrix}\right)=u_{13}u_{22}u_{31}.
    \end{equation*}
The prototypical example of an antidiagonal term order is the (graded) reverse lexicographic order 
from Example~\ref{lex-term2}
with $u_{ij}$ identified with $x_{n(i-1) + j}$.
This means that we order the variables $u_{ij}$ lexicographically, so that $u_{ij} < u_{i'j'}$
if $i<i'$ or if $i=i'$ and $j<j'$.
Then, we declare that $\mon_1 < \mon_2$  if either $\deg (\mon_1)< \deg (\mon_2)$
or $\deg (\mon_1 )= \deg (\mon_2)$ and the following holds: there is some variable $u_{ij}$ whose exponent $e_1$ in $\mon_1$
differs from its exponent $e_2$ in $\mon_2$, and 
when $u_{ij}$ is the (lexicographically) largest such variable one has $e_1> e_2$.
If $\mon_1$ and $\mon_2$ are both square-free of the same degree,
then we have $\mon_1 < \mon_2$ if and only there is some variable $u_{ij}$ that does not divide both monomials, and 
the largest such variable divides $\mon_1$ but not $\mon_2$.
We refer to this order as the \emph{reverse lexicographic term order} on $\KK[u_{ij} : (i,j) \in [m]\times [n]]$.
\end{ex}

\begin{rem}
Unless otherwise indicated, all results concerning initial ideals for subrings of $\KK[u_{ij} : (i,j) \in [m]\times [n]]$
will be relative to the reverse lexicographic term order just described.
\end{rem}

We note a few basic facts in the generic setting of $\KK[{\bf x}]$ with a fixed term order.

\begin{lem}\label{lem:finite-chains} Every strictly descending chain of monomials in a term order on $\KK[{\bf x}]$ is finite. \end{lem}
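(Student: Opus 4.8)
The plan is to reduce the statement to a standard well-ordering fact about the monoid $\NN^N$. First I would observe that a term order on $\KK[{\bf x}]$ induces a total order on $\NN^N$ via the correspondence $x_1^{a_1}\cdots x_N^{a_N} \leftrightarrow (a_1,\dots,a_N)$, and that this order refines the componentwise partial order: if $(a_i) \leq (b_i)$ componentwise, then writing $x^{\bf b} = x^{\bf a}\cdot x^{{\bf b}-{\bf a}}$ and using $1 \leq x^{{\bf b}-{\bf a}}$ together with the multiplicative compatibility axiom gives $x^{\bf a} \leq x^{\bf b}$. So it suffices to show there is no infinite strictly descending chain in any total order on $\NN^N$ that refines the componentwise order.

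Next I would invoke Dickson's Lemma: any subset $S \subseteq \NN^N$ has only finitely many componentwise-minimal elements, and every element of $S$ dominates one of them. Suppose for contradiction that ${\bf a}^{(1)} > {\bf a}^{(2)} > {\bf a}^{(3)} > \cdots$ is an infinite strictly descending chain in the term order. Apply Dickson's Lemma to the set $S = \{{\bf a}^{(k)} : k \geq 1\}$ to get finitely many minimal elements; one of them, say ${\bf a}^{(m)}$, must be dominated componentwise by infinitely many — in particular by some ${\bf a}^{(k)}$ with $k > m$. But then ${\bf a}^{(m)} \leq {\bf a}^{(k)}$ componentwise forces ${\bf a}^{(m)} \leq {\bf a}^{(k)}$ in the term order, contradicting that the chain is strictly descending past index $m$.

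Alternatively, to keep the write-up self-contained if citing Dickson's Lemma is undesirable, I would argue by induction on $N$. For $N=1$ the only term order is $1 < x_1 < x_1^2 < \cdots$, which is a well-order, so there are no infinite descending chains. For the inductive step, I would want to leverage that any infinite descending chain, after passing to a subsequence, becomes non-increasing in the last coordinate (since the values of the last coordinate form a chain of naturals that can only strictly decrease finitely often), and then handle the tail where the last coordinate is constant by the $N=1$ case applied to $x_N$ combined with the inductive hypothesis on the remaining variables — though making the bookkeeping between the term order and these coordinate projections fully rigorous is a little delicate.

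The main obstacle is precisely this last point: ensuring that the reduction from "term order on $\KK[{\bf x}]$" to "total refinement of componentwise order on $\NN^N$" is airtight, and then that the chosen combinatorial lemma (Dickson, or the induction) is applied correctly. In practice the cleanest route is the Dickson's Lemma argument, since the compatibility axiom of a term order does all the work of relating the term order to the componentwise order, and Dickson's Lemma is a black box that immediately yields the contradiction. I expect the final proof to be just a few lines once Dickson's Lemma is granted.
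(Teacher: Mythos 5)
Your first argument (the Dickson's Lemma route) is correct and is essentially the same proof as in the paper: both establish that the term order refines componentwise domination via the axiom $1 \leq \mon$, and then apply Dickson's Lemma to rule out an infinite strictly descending chain. The paper uses the formulation of Dickson's Lemma that any infinite sequence in $\NN^N$ contains indices $i<j$ with $v_i\leq v_j$ componentwise, while you use the equivalent "finitely many minimal elements" form, but the argument is the same.
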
 
    \begin{proof}
    Since $1 \leq \mon$ for any monomial,  it follows that $\mon_1 \leq \mon_2$ whenever 
    $\mon_1, \mon_2 \in \KK[{\bf x}]$ are monomials with $\mon_1 \mid \mon_2
    $. 
        Dickson's lemma asserts that if $v_1, v_2, \ldots$ is an infinite sequence in $\NN^N$, then there are indices $i < j$ such that $v_i \leq v_j$ component-wise. Applying this to the exponent vectors of an infinite sequence of monomials $\mon_1, \mon_2, \ldots \in \KK[{\bf x}]$ shows that there are always 
        indices $i < j$ with $\mon_i \mid \mon_j$, so an infinite sequence is not strictly descending.
    \end{proof}

    \begin{prop} \label{prop:init-facts} Suppose $I$ and $J$ are ideals in $\KK[{\bf x}]$. Fix a term order on $\KK[{\bf x}]$. Then:
        \begin{enumerate}[(a)]
            \item If $I \subseteq J$, then $\init(I) \subseteq \init(J)$.
            \item It holds that $\init(I)+\init(J) \subseteq \init(I+J)$.
            \item It holds that $\init(I \cap J) \subseteq \init(I) \cap \init(J)$.
            \item If $I \subseteq J$ and $\init( I) = \init (J)$, then $I = J$.
        \end{enumerate}
    \end{prop}
        \begin{proof}
        Part (a) is clear. It implies $\init(I) \subseteq \init(I+J)$ and $\init(J) \subseteq \init(I+J)$, so part (b) follows. Similarly, part (a) implies $\init(I \cap J) \subseteq \init(I)$ and $\init(I \cap J) \subseteq \init(J)$, so part (c) follows.
    
       For part (d), suppose the hypotheses hold and $f \in J$. Since $\init(f) \in \init (I)$, there is some $g \in I$ with $\init(g) = \init(f)$. For some $c \in \KK$, $cf-g$ will therefore have a smaller initial term than $f$. Since $cf-g \in J$, we can iterate this process. Lemma~\ref{lem:finite-chains} implies that the process must terminate, and when it does we will have written $f$ as a linear combination of elements of $I$, so $J \subseteq I$.
        \end{proof}

\begin{lem} \label{lem:plus-vs-intersection} Suppose $I$ and $J$ are homogeneous ideals in $\KK[{\bf x}]$. 
If $\init(I+J) = \init(I) + \init(J)$ relative to a given term order on $\KK[{\bf x}]$, then $\init(I \cap J) = \init(I) \cap \init(J)$. \end{lem}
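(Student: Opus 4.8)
The plan is to combine the hypothesis $\init(I+J) = \init(I)+\init(J)$ with a dimension (Hilbert series) count, exploiting that all four ideals $I$, $J$, $I+J$, $I\cap J$ are homogeneous so that their quotients are graded vector spaces with well-defined Hilbert functions. The starting point is the short exact sequence of graded $\KK[{\bf x}]$-modules
\[
0 \longrightarrow \KK[{\bf x}]/(I\cap J) \longrightarrow \KK[{\bf x}]/I \oplus \KK[{\bf x}]/J \longrightarrow \KK[{\bf x}]/(I+J) \longrightarrow 0,
\]
which in each graded degree gives the numerical identity
\[
H_{I\cap J}(t) = H_I(t) + H_J(t) - H_{I+J}(t),
\]
where $H_L$ denotes the Hilbert function of $\KK[{\bf x}]/L$. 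The same exact sequence applies to the initial ideals $\init(I)$, $\init(J)$, $\init(I)\cap\init(J)$, $\init(I)+\init(J)$, giving
\[
H_{\init(I)\cap\init(J)}(t) = H_{\init(I)}(t) + H_{\init(J)}(t) - H_{\init(I)+\init(J)}(t).
\]
I would invoke the standard fact that passing to an initial ideal under a term order preserves the Hilbert function (the monomials not in $\init(L)$ form a $\KK$-basis of $\KK[{\bf x}]/L$; this follows from Lemma~\ref{lem:finite-chains} by the usual normal-form argument, exactly as in the proof of Proposition~\ref{prop:init-facts}(d)). Hence $H_{\init(L)} = H_L$ for each of $L = I, J, I+J$, and by hypothesis $H_{\init(I)+\init(J)} = H_{\init(I+J)} = H_{I+J}$.

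Combining the three displayed facts yields $H_{\init(I\cap J)} = H_{\init(I)\cap\init(J)}$, i.e. the two ideals $\init(I\cap J)$ and $\init(I)\cap\init(J)$ have the same Hilbert function. On the other hand, Proposition~\ref{prop:init-facts}(c) gives the containment $\init(I\cap J) \subseteq \init(I)\cap\init(J)$. A containment of homogeneous ideals with equal Hilbert functions forces equality: the quotient $(\init(I)\cap\init(J))/\init(I\cap J)$ is a graded vector space whose Hilbert function is identically zero, so it vanishes. This gives $\init(I\cap J) = \init(I)\cap\init(J)$, as desired.

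The only real subtlety is bookkeeping: I must make sure the Hilbert-function identities are genuine equalities in each graded degree (finite-dimensional pieces, so no convergence issues), and I must be careful that homogeneity is used in exactly two places — to split the cohomology of the exact sequence degree by degree, and to conclude that a graded module with zero Hilbert function is zero. The fact that $\init$ preserves Hilbert functions is the load-bearing input; if one wants the excerpt to be self-contained one should spell out the normal-form argument, but since it is standard and mirrors Proposition~\ref{prop:init-facts}(d), I would simply cite it. I do not anticipate a genuine obstacle here — this is the classical argument that $\init$ interacts well with intersections once it interacts well with sums — so the main "difficulty" is purely expository.
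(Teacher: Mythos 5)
Your proof is correct and follows essentially the same approach as the paper: both rely on the fact that passing to an initial ideal preserves Hilbert functions, combine this with the short exact sequence relating $I\cap J$, $I\oplus J$, and $I+J$, and conclude via the containment $\init(I\cap J)\subseteq\init(I)\cap\init(J)$. The only cosmetic difference is that you track Hilbert functions of the quotient rings $\KK[{\bf x}]/L$ while the paper works with the ideals themselves and directly shows $\Hilb\bigl((\init(I)\cap\init(J))/\init(I\cap J)\bigr)=0$; these are equivalent bookkeeping choices.
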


    \begin{proof}
        The \emph{Hilbert series} of a graded vector space $V = \bigoplus_{n \geq 0} V_n$ with each $\dim V_n$ is finite is the generating function $\Hilb(V) = \sum_{n \geq 0} \dim(V_n)t^n \in \ZZ[[t]]$. 
       A homogeneous ideal $I$ in $\KK[{\bf x}]$ is a graded vector space of finite graded dimension, as is its initial ideal,
       and it holds that $\Hilb(\init (I)) = \Hilb(I)$ \cite[Theorem 15.3]{Eisenbud}. 
        Since 
        $\init(I \cap J) \subseteq \init(I) \cap \init(J)$ by Proposition~\ref{prop:init-facts}, we 
        may consider the quotient ${\init(I) \cap \init(J)}/{\init(I \cap J)}$,
        which is itself a graded vector space.
        
        If $0 \to U \to V \to W \to 0$ is a short exact sequence of degree-preserving linear maps between graded vector spaces, then $\Hilb(U) = \Hilb(V) - \Hilb(W)$. 
        Using this fact with the exact sequences
        $ 0 \to A \hookrightarrow B \to B /A \to 0$ (when $A\subseteq B$) and
        $0 \to A \cap B \xrightarrow{x \mapsto (x,-x)} A \oplus B \to A + B \to 0,$ it is straightforward to check that
        $  \Hilb\left(\tfrac{\init(I) \cap \init(J)}{\init(I \cap J)}\right) = \Hilb(I\oplus J)  - \Hilb(I \cap J) - \Hilb(\init( I) + \init (J))$.
        If $\init(I + J) = \init(I) + \init(J)$, then the last term becomes 
        $\Hilb(\init(I+J)) = \Hilb(I+J) = \Hilb(I\oplus J) - \Hilb(I\cap J) $ and the whole expression simplifies to zero,
        which can only happen if $\init(I) \cap \init(J) = \init(I \cap J)$.
    \end{proof}

 Given an element $f \in \KK[{\bf x}]$, let $(f)$ denote the principal ideal it generates.

    \begin{lem} \label{lem:hilbert} Suppose $J \subseteq I$ are homogeneous ideals in $\KK[{\bf x}]$. Let $f \in \KK[{\bf x}]$ be a non-constant homogeneous polynomial such that $I \cap (f) = fI$ and $I + (f) = J + (f)$. Then $I = J$. \end{lem}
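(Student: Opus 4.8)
The plan is to compare Hilbert series, exactly as in the proof of Lemma~\ref{lem:plus-vs-intersection}, but now exploiting the hypothesis $I \cap (f) = fI$ to pin down $\Hilb(I + (f))$ and $\Hilb(J + (f))$ precisely. First I would observe that multiplication by $f$ gives a degree-raising isomorphism of graded vector spaces $I \xrightarrow{\sim} fI = I \cap (f)$, shifting degrees by $d := \deg(f)$; since $f$ is a non-constant homogeneous polynomial and $\KK[{\bf x}]$ is a domain, this map is injective, and its image is $fI$ by definition. Hence $\Hilb(I \cap (f))(t) = t^d\,\Hilb(I)(t)$.

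Next, apply the short exact sequence $0 \to I \cap (f) \xrightarrow{x \mapsto (x,-x)} I \oplus (f) \to I + (f) \to 0$ to get
\[
\Hilb(I + (f)) = \Hilb(I) + \Hilb((f)) - \Hilb(I \cap (f)) = \Hilb(I) + \Hilb((f)) - t^d\,\Hilb(I).
\]
The key point here is that the correction term $t^d\,\Hilb(I)$ is determined entirely by $I$, not by how $I$ sits inside the ambient ring. Now I would like to run the same computation with $J$ in place of $I$. Since $J \subseteq I$, we have $J \cap (f) \subseteq I \cap (f) = fI$; I would argue that in fact $J \cap (f) = fJ$: if $g \in J \cap (f)$ then $g = fh$ for some homogeneous $h$, and since $g \in fI$ with $f$ a nonzerodivisor we get $h \in I$; but actually I need $h \in J$. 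This is the one spot that needs care — a priori $h$ need only lie in $I$. The fix is to use the hypothesis $I + (f) = J + (f)$: I will instead show directly that $\Hilb(J + (f)) = \Hilb(I+(f))$ forces $J = I$, using a dimension count in each graded degree together with $J \subseteq I$.

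So the cleaner route is: from $I + (f) = J + (f)$ we get $\Hilb(I + (f)) = \Hilb(J + (f))$. Writing $K := J \cap (f)$, the exact sequence for $J$ gives $\Hilb(J + (f)) = \Hilb(J) + \Hilb((f)) - \Hilb(K)$, and comparing with the formula for $\Hilb(I+(f))$ yields $\Hilb(J) - \Hilb(K) = \Hilb(I) - t^d\,\Hilb(I)$. Meanwhile $K = J \cap (f) \subseteq I \cap (f) = fI \cong I[-d]$, and also $fJ \subseteq K$, so $t^d\,\Hilb(J) \le \Hilb(K) \le t^d\,\Hilb(I)$ coefficientwise. Substituting $\Hilb(K) = \Hilb(J) - \Hilb(I) + t^d\,\Hilb(I)$ into the left inequality gives $t^d\,\Hilb(J) \le \Hilb(J) - \Hilb(I) + t^d\,\Hilb(I)$, i.e. $\Hilb(I) - \Hilb(J) \le t^d(\Hilb(I) - \Hilb(J))$; since $J \subseteq I$ makes $\Hilb(I) - \Hilb(J)$ a power series with nonnegative coefficients, and multiplication by $t^d$ strictly raises the degree of the lowest nonzero term, this is impossible unless $\Hilb(I) - \Hilb(J) = 0$. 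Finally $\Hilb(I) = \Hilb(J)$ together with $J \subseteq I$ and finiteness of each graded piece forces $J = I$. The main obstacle is precisely the bookkeeping around $K = J \cap (f)$ — making sure one only uses the containments $fJ \subseteq K \subseteq fI$ and never needs the false-looking claim $K = fJ$ — after which everything reduces to the elementary fact that a nonzero nonnegative power series cannot be fixed under multiplication by $t^d$ for $d \ge 1$.
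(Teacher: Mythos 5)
Your proof is correct, and it takes a genuinely different route from the paper's. The paper argues element-by-element by induction on degree: given $i \in I_n$ with $I_m = J_m$ for $m < n$, it writes $i = j + fg$ using $I + (f) = J + (f)$, observes $fg = i - j \in I \cap (f) = fI$ so $g \in I_{n-d}$, applies the inductive hypothesis to get $g \in J$, and concludes $i \in J_n$. Your version replaces this with Hilbert-series bookkeeping: the hypothesis $I \cap (f) = fI$ pins down $\Hilb(I+(f))$ via inclusion-exclusion, the equality $I+(f)=J+(f)$ transfers this to $\Hilb(J+(f))$, and the sandwiching $fJ \subseteq J \cap (f) \subseteq fI$ forces $\Hilb(I)-\Hilb(J) \leq t^d(\Hilb(I)-\Hilb(J))$ coefficientwise, which is impossible for a nonzero nonnegative series. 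Both arguments are ultimately powered by the same observation about lowest degrees — your contradiction at the minimal $m$ with $\dim I_m > \dim J_m$ is the Hilbert-series shadow of the paper's induction — but yours is nicely uniform with the style of Lemma~\ref{lem:plus-vs-intersection} and cleanly sidesteps the tempting but unnecessary (and a priori unjustified) claim that $J \cap (f) = fJ$, whereas the paper's is more elementary, requiring no Hilbert-series apparatus at all. One small presentational remark: your inequality $\Hilb(K) \leq t^d\,\Hilb(I)$ is not actually used in the final substitution, only the lower bound $t^d\,\Hilb(J) \leq \Hilb(K)$ matters, so the argument could be slightly trimmed.
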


\begin{proof} Write $I = \bigoplus_{n \geq 0} I_n$ where $I_n$ is the set of homogeneous elements of $I$ with degree $n$.
Write
 $J = \bigoplus_{n \geq 0} J_n$ where $J_n$ is defined similarly. Choose $i \in I_n$ for some $n$, and assume by induction that $I_m = J_m$ for $m < n$. By hypothesis we can write $i = j + fg$ for some $j \in J$ and $g \in \KK[{\bf x}]$. 
 The homogeneous parts of $j$ and $fg$ outside degree $n$ must 
sum to zero. As $f$ is homogeneous, we can therefore assume that $j$ is homogeneous of degree $n$ and that $g$ is homogeneous of degree $n-\deg(f)$.
Now we have $fg = i-j \in I \cap (f) = fI$, so $g \in I_{n-\deg f}$. Since $f$ is non-constant,
we may assume by induction that $g \in J_{n-\deg f}$, and hence $i = j + fg \in J_n$.
\end{proof}

\subsection{Permutations}

Let $S_{\infty}$ be the group of permutations of the positive integers $\NN$ that fix all but finitely many points. 
We realize $S_n$ as the subgroup of permutations $w \in S_\infty$ with $w(i) =i$ for all $i>n$.
Let $s_i = (i,i+1)$ for $i \in \NN$. Then $S_\infty$ and $S_n$ are Coxeter groups relative to the
simple generating sets
$\{ s_i : i \in \NN\}$ and $\{ s_i : i \in [n-1]\}$, respectively.

Most of the time we will work with elements of $S_\infty$ or $S_n$,
but some relevant constructions make sense for arbitrary permutations $w$ of $\NN$.
For example, the descent sets of
$w$ are defined as
\be\DesR(w) := \{ i : w(i) > w(i+1)\}\quand \DesL(w) :=\DesR(w^{-1}),\ee
and the \emph{Rothe diagram} of $w$ is  
\be D(w) := \{(i,j) \in \NN\times \NN : 0 < j < w(i)\text{ and } 0 < i < w^{-1}(j)\}.\ee
For example, $D(3142) = \{(1,1),(1,2),(3,2)\}$ is the set of $\underline{0}$'s in the permutation matrix $\left[\begin{smallmatrix} \underline{0} & \underline{0} & 1 & 0 \\ 1 & 0 & 0 & 0 \\ 0 & \underline{0} & 0 & 1 \\ 0 & 1 & 0 & 0 \end{smallmatrix} \right]$.
A permutation $w$ of $\NN$ belongs to $ S_\infty$ if and only if $D(w)$ is a finite set, in which case the length function of $S_\infty$
 has value  
$\ell(w) = |D(w)|$.
Given integers $m,n\geq 0$, we define
\be        S^{m,n}_{\infty} := \{w \in S_{\infty} : \DesR(w) \subseteq [m]\text{ and }\DesL(w) \subseteq [n]\}.\ee
This set (which is not a subgroup) has a simple characterization in terms of Rothe diagrams:

\begin{prop}\label{rothe-prop}
One has $S^{m,n}_\infty= \left\{ w \in S_\infty : D(w) \subseteq [m]\times [n]\right\}$.
\end{prop}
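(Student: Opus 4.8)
The plan is to prove the two inclusions separately, translating between the descent-set description of $S_\infty^{m,n}$ and the positions of $D(w)$ inside $[m]\times[n]$. Recall that $D(w) = \{(i,j) : j < w(i)\text{ and }i < w^{-1}(j)\}$, so a position $(i,j)$ lies in $D(w)$ exactly when the entry $1$ of the permutation matrix in row $i$ is strictly to the right of column $j$ and the entry $1$ in column $j$ is strictly below row $i$; equivalently, $(i,j)\in D(w)$ iff there is an inversion ``caused'' by rows $\le i$ and columns $\le j$ in the appropriate sense. The key elementary observation I would isolate first is: for a row index $i$, the row $i$ of $D(w)$ is nonempty if and only if $w(i) > \min\{w(1),\dots,w(i-1)\}$ is not forced to be small — more usefully, row $i$ of $D(w)$ is nonempty iff $w(i)$ is not the smallest value among $w(1),\dots,w(i)$, and the largest column index appearing in row $i$ is $w(i) - \#\{k \le i : w(k) < w(i)\}$, which is positive precisely when $i$ is not a left-to-right minimum of $w$. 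Dually for columns of $D(w)$ via $w^{-1}$.

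For the inclusion $S_\infty^{m,n} \subseteq \{w : D(w)\subseteq[m]\times[n]\}$: suppose $D(w)\not\subseteq [m]\times[n]$, so there is $(i,j)\in D(w)$ with $i>m$ or $j>n$. If $i > m$, then since row $i$ of $D(w)$ is nonempty, $i$ is not a left-to-right minimum, i.e.\ there exists $k < i$ with $w(k) > w(i)$; taking such a $k$ as large as possible and using that $(i,j)\in D(w)$ forces things appropriately, I would argue there must be a descent $w(r) > w(r+1)$ with $r \ge i > m$... more cleanly: I would show directly that if $D(w)$ has a cell in row $i$ then $i-1 \in \DesR(w)$ or some later descent exists — actually the cleanest route is the standard fact that the \emph{last} row containing a cell of $D(w)$ is $\max \DesR(w)$ (and symmetrically the last column containing a cell is $\max\DesL(w)$). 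I would prove this last fact as the technical core: if $r = \max\DesR(w)$ then $w(r) > w(r+1)$ means $(r, w(r+1)) \in D(w)$ (one checks $w(r+1) < w(r)$ and $r < w^{-1}(w(r+1)) = r+1$), so row $r$ is occupied; and for $i > r$, $w$ restricted to $\{i, i+1, \dots\}$ is increasing, which forces $w(i) = \min\{w(i), w(i+1), \dots\}$, hence (combined with $w\in S_\infty$ so eventually $w$ is the identity) $w(i)$ is actually the smallest value among all $w(k)$ with $k \ge i$, giving $w^{-1}(j) \le i$ for every $j < w(i)$... wait, one needs $i < w^{-1}(j)$ to fail — indeed if $j < w(i)$ then $j = w(k)$ for some $k$, and if $k > i$ that contradicts $w$ increasing past $i$ together with $w(k) = j < w(i)$, so $k \le i$, hence $w^{-1}(j) = k \le i$ and $(i,j)\notin D(w)$. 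Thus no row $> r$ is occupied. Symmetrically no column $> \max\DesL(w)$ is occupied (apply the row statement to $w^{-1}$, using $D(w^{-1}) = D(w)^{\mathsf T}$).

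Given that technical core, both inclusions are immediate: $D(w)\subseteq [m]\times[n]$ iff every occupied row is $\le m$ and every occupied column is $\le n$ iff $\max\DesR(w) \le m$ and $\max\DesL(w)\le n$ iff $\DesR(w)\subseteq[m]$ and $\DesL(w)\subseteq[n]$ (with the usual convention that the max of the empty set is $0$, corresponding to $D(w) = \varnothing$, i.e.\ $w = \mathrm{id}$, which trivially lies in both sets). I expect the main obstacle to be the careful bookkeeping in the claim ``$\max\DesR(w)$ is the index of the last occupied row of $D(w)$'' — in particular verifying that a row strictly above $\max\DesR(w)$ can still be occupied (it generally is), while a row strictly below cannot, which is where the finiteness of the support of $w\in S_\infty$ and the ``increasing tail'' argument are used. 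Everything else is a direct unwinding of the definitions of $D(w)$, $\DesR$, and $\DesL$.
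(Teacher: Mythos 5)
Your argument is correct but takes a genuinely different route from the paper. The paper's proof is a one-line observation: $D(w)$ is the complement in $\NN \times \NN$ of the union of the ``hooks'' $\{(i,y): y \geq w(i)\} \cup \{(x,w(i)): x \geq i\}$ over all $i$, from which the containment criterion is left as an exercise. You instead isolate and prove the concrete lemma that the index of the last nonempty row of $D(w)$ is $\max\DesR(w)$ (with $\max\emptyset = 0$), and dually for columns via $D(w^{-1}) = D(w)^{\mathsf T}$. Your verification of that lemma is sound: if $r = \max\DesR(w)$ then $(r, w(r+1)) \in D(w)$ because $w(r+1) < w(r)$ and $w^{-1}(w(r+1)) = r+1 > r$; and for $i > r$ the restriction of $w$ to $\{r+1, r+2, \dots\}$ is increasing, so any $j < w(i)$ has $w^{-1}(j) < i$ (strictly --- you wrote ``$\le i$'', a harmless typo since either inequality rules out $(i,j) \in D(w)$), forcing row $i$ to be empty. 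The two approaches buy different things: the hook picture is a clean one-shot description of $D(w)$ that the paper reuses implicitly elsewhere, whereas your last-row/last-column lemma is a self-contained explicit statement that makes the translation between $\Ess$, $D(w)$, and descent sets fully transparent --- it is in fact the same kind of statement the paper later cites for the skew-symmetric analogue (Proposition~\ref{ssd-prop} invokes \cite[Lemma 5.2]{HMP5}, which says exactly that the last nonempty row of $\SSD(z)$ is the largest visible descent). So your route is arguably the more parallel one. The only stylistic issue is the exploratory middle paragraph where you start two other arguments before settling on the correct one; in a final write-up that detour should be excised.
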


We include a short, self-contained proof.

\begin{proof}
Let $w \in S_\infty$.
For $(i,j) \in \NN \times \NN$, consider the set of pairs $(i,y)$ with $j \leq y$
plus all pairs $(x,j)$ with $i \leq x$. The result follows as a straightforward exercise
on observing that $D(w)$ is the complement in $\NN \times \NN$
of the union of these ``hooks'' as $(i,j)$ ranges over all pairs with $j=w(i)$.
\end{proof}

A \emph{partial permutation matrix} is a $(0,1)$-matrix with at most one $1$ in each row and column. The \emph{$m \times n$ partial permutation matrix} of an arbitrary permutation $w$ of $ \NN$
is the $m \times n$  partial permutation matrix
whose nonzero entries occur in the positions $(i,j) \in [m]\times [n]$ with $j  = w(i)$. 
We denote this matrix by $w_{[m][n]}$.
One can check that the (non-injective) map
 $w \mapsto  w_{[m][n]}$ 
restricts to a bijection from $S_\infty^{m,n}$ to the set of all $m \times n$ partial permutation matrices.

\begin{rem}
This notion of partial permutation matrix is ubiquitous in the literature.
Note that the map sending $w \in S_n$ to its $n\times n$ partial permutation matrix
is an antiautomorphism, however.
\end{rem}

\subsection{Involutions}

We write $\I_n := \{ w \in S_n: w^2=1\}$ for the set of involutions in $S_\infty$ fixing all $i >n$,
and $\Ifpf_n$ for the set of elements in $\I_n$ with no fixed points in $[n]$.
The latter set is nonempty only if $n$ is even, in which case it is the $S_n$-conjugacy class
of $(1,2)(3,4)\cdots(n-1,n)$.
Define $\Ifpf_\infty$ to be the set of permutations that are conjugate by an element of $S_\infty$ to the  involution
\be\label{1fpf-eq}
 1_\fpf := (1,2)(3,4)(5,6)(7,8)\cdots,\text{ i.e., the map sending } i \mapsto i - (-1)^i.
 \ee  More explicitly, the set $\Ifpf_\infty$ consists of the fixed-point-free involutions of $\NN$
 that agree with $1_\fpf$ at all sufficiently large inputs---in other words, all products of the form
\[
 z \cdot(2m+1,2m+2)(2m+3,2m+4)(2m+5,2m+6)\cdots
 \] where $z \in \Ifpf_{2m}$ and $m \geq 0$.
 Note that the sets $\Ifpf_n\subset \I_n \subset S_n \subset S_\infty$ are disjoint from $\Ifpf_\infty$.

\begin{defn}
    Suppose $z \in \I_n$ has fixed points $i_1 < i_2< \cdots < i_k$ in $[n]$. The \emph{fixed-point-free standardization} of $z$ is 
    the product $  \FPF_n(z)  := z \cdot z' \cdot z''$ where 
\[ \begin{aligned} z ' &= (i_1,n+1)(i_2,n+2)\cdots(i_k,n+k) \\
z '' &= (n+k+1,n+k+2)(n+k+3,n+k+4)(n+k+5,n+k+6)\cdots.
\end{aligned}
\]
Since $k$ and $n$ have the same parity, $n+k+1$ is odd so $\FPF_n(z) \in \Ifpf_\infty$.
\end{defn}

For example, if $z = (1,4) \in \I_5$ then $\FPF_5(z) = (1,4)(2,6)(3,7)(5,8)(9,10)(11,12)\cdots$.
If $y \in \Ifpf_m$ and $z \in \Ifpf_n$ for $m \leq n$, then $\FPF_m(y) = \FPF_n(z)$
if and only if $z = y s_{m+1}s_{m+3}\cdots s_{n-1}$.
There are more interesting situations in which one can have $\FPF_m(y) = \FPF_n(z)$ 
if $y$ or $z$ have fixed points in $[m]$ and $[n]$.

There are at least two natural ways of characterizing the image of  $\FPF_n : \I_n \to \Ifpf_\infty$.
First, say that a positive integer $i$ is a \emph{visible descent} of $z \in \Ifpf_\infty$ if $z(i+1) < \min\{i,z(i)\}$.

\begin{prop} \label{prop:fpf-injection} 
The map $\FPF_n$ is a bijection from $\I_n$ to the set  of elements in $ \Ifpf_\infty$ with no visible descents greater than $n$.
\end{prop}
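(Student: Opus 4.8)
The statement to prove is Proposition~\ref{prop:fpf-injection}: that $\FPF_n\colon \I_n \to \Ifpf_\infty$ is a bijection onto the set of $z \in \Ifpf_\infty$ having no visible descent $> n$. The plan is to verify three things: (i) the image of $\FPF_n$ lands in the claimed target set; (ii) $\FPF_n$ is injective; (iii) every fixed-point-free involution with no visible descent exceeding $n$ arises as $\FPF_n(w)$ for a unique $w \in \I_n$, and that $w$ can be recovered explicitly. I would organize everything around an explicit inverse construction, since that simultaneously handles injectivity and surjectivity and makes the image description transparent.

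First I would unwind the definition. Given $z = \FPF_n(w)$ with $w \in \I_n$ having fixed points $i_1 < \cdots < i_k$ in $[n]$, the extra factors $z' = (i_1,n+1)\cdots(i_k,n+k)$ and $z'' = (n+k+1,n+k+2)(n+k+3,n+k+4)\cdots$ only introduce two-cycles $\{i_a, n+a\}$ and then pair up everything past $n+k$ consecutively. I would check directly that for $i > n$, the pair $\{i, z(i)\}$ is never a visible descent: the cycles $(n+a-1, n+a)$-type created by $z''$ obviously aren't (they look locally like $1_\fpf$), and for a two-cycle $\{i_a, n+a\}$ coming from $z'$ one checks $z(n+a) = i_a \le n < n+a$, and $z((n+a)+1)$ is either another $i_b \le n$ or lies in the $z''$-region; in all cases the inequality $z(i+1) < \min\{i, z(i)\}$ fails because $z(i) = i_a$ is small. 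This gives (i). The key structural observation I want to extract is: the visible descents of $z$ that are $\le n$ encode exactly where the non-fixed-point structure of $w$ lives, while positions $> n$ are "inert."

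For injectivity and surjectivity I would describe the inverse map $\untwist$-style (the paper has a macro \untwist, suggesting this is how they do it). Given $z \in \Ifpf_\infty$ with no visible descent $> n$, I claim there is a canonical $n$ and a canonical way to delete the "tail": the integers $> n$ that $z$ pairs among themselves in the $1_\fpf$-pattern get deleted, and any integer $j > n$ with $z(j) \le n$ signals that $z(j)$ is a fixed point of the underlying $w \in \I_n$; so we set $w(z(j)) = z(j)$ for those, and $w = z$ on the remaining positions of $[n]$. One must check this $w$ is a well-defined involution in $\I_n$ (the positions in $[n]$ not hit this way are permuted among themselves by $z$ as an involution — this uses that $z$ is an involution and that the "no visible descent $>n$" hypothesis forces the pairing past $n$ to be essentially $1_\fpf$ except for the "hooks back into $[n]$"). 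Then $\FPF_n(w) = z$ and $\FPF_n(w') = \FPF_n(w) \Rightarrow w = w'$ both follow by inspection.

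The main obstacle, I expect, is pinning down precisely why "no visible descent $> n$" forces the structure past position $n$ to be so rigid — i.e. why $z$ restricted to $\{n+1, n+2, \dots\}$, modulo the finitely many two-cycles that reach back into $[n]$, must coincide with $1_\fpf$ shifted appropriately, with no "crossings" among the large indices. This is a combinatorial lemma about fixed-point-free involutions: if $i > n$ and $\{i, z(i)\}$ with $z(i) > i$ is a "crossing" (i.e. there's $j$ with $i < j < z(i)$ and $z(j) \notin (i, z(i))$), one should be able to locate a visible descent $> n$, contradicting the hypothesis. I would prove this by taking the minimal such crossing and analyzing the value $z(i+1)$ directly against $\min\{i, z(i)\}$. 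Once this rigidity lemma is in hand, the bijection and the image characterization are essentially bookkeeping, and the "unique $w$" claim is immediate from the explicit inverse. A secondary, more routine check is that $\FPF_n(w)$ really has no visible descents $\le n$ coming from fixed points either — but that is false in general (the underlying $w$ can have real descents $\le n$), so I must be careful: the statement only constrains descents $> n$, and indeed the visible descents of $\FPF_n(w)$ in $[n]$ are exactly the visible descents of $w$, which is the content I'd want to record as a remark rather than something needing proof here.
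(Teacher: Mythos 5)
Your plan follows essentially the same route as the paper: verify that $\FPF_n$ lands in the target set, recover the preimage by stripping off the tail cycles to get injectivity, and prove surjectivity by building the inverse explicitly, with the crux being a rigidity lemma saying that the absence of visible descents past $n$ forces $z$ to behave like $1_\fpf$ beyond position $n$ apart from finitely many transpositions reaching back into $[n]$. The paper formulates this rigidity as two short claims (if $a$ is maximal with $z(a)+1<a$ then a visible descent occurs in $[z(a),a)$, hence $z(a)\geq n$ implies $a=z(a)+1$; and the values $z(n+1),\dots,z(n+i)$ landing in $[n]$ must be increasing, else a visible descent in $\{n+1,\dots,2n-1\}$ appears), whereas you propose to phrase it in terms of a ``minimal crossing'' argument — same content, different packaging. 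The only place I would push back slightly is on treating the rigidity lemma as a separate theorem to be located and proved: in the actual write-up it reduces to the two short observations above, so the obstacle you flagged is real but smaller than you anticipate.
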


\begin{proof} If we write $\FPF_n(z) = (a_1,b_1)\cdots (a_p,b_p)(c_1,n+1)(c_2,n+2)\cdots$ where $a_i, b_i \in [n]$ for each $i$, then $z$ can be recovered as $\FPF_n(z)\cdot (c_1,n+1)(c_2,n+2) \cdots = (a_1,b_1)\cdots (a_p,b_p),$  so $\FPF_n$ is injective.
Next, observe that if $y \in \I_n$ then $\FPF_n(y)$ has no visible descents greater than $n$.

Assume conversely that $z \in \Ifpf_\infty$ has no visible descents greater than $n$.
One has $z(a) \in \{a-1,a+1\}$ for all sufficiently large integers $a$.
If $a$ is maximal with $z(a)+1<a$,
then some $i$ with $z(a) \leq i  <a $ must have 
$z(i+1) < \min\{i,z(i)\}$. 
Therefore if $n\leq z(a) < a$ then $a=z(a) +1$.
Let $i$ be maximal with $z(n+i) \leq n$, or set $i=0$ if no
such integer exists.  Then we must have $z(n+1)<z(n+2)<\dots<z(n+i)\leq n$ since otherwise some element of $ \{n+1,n+2,\dots,2n-1\}$ would 
be a visible descent. In this case $z = \FPF_n(y)$ 
for the element $y \in \I_n$ with
 $y(j) = z(j)$ for integers $j \in [n]\setminus\{z(n+1),z(n+2),\dots,z(n+i)\}$ and $y(j) =j$ for all other $j$.
\end{proof}

\begin{rem}
The image $\FPF_n(\I_n)$ of the map in Proposition~\ref{prop:fpf-injection}
turns out to be the natural skew-symmetric analogue of $S^{m,n}_\infty$ and 
will frequently recur throughout this article. We stick to the slightly cumbersome
notation ``$\FPF_n(\I_n)$'' when referring to this, since anything more concise risks confusion with the set of all fixed-point-free involutions
in $S_n$.
\end{rem}

Next, define the \emph{skew-symmetric Rothe diagram} of an arbitrary involution $z=z^{-1}$ of $\NN$ to be
\[
\SSD(z)  := \{ (i,j) \in \NN\times \NN : 0 < j < z(i)\text{ and } j < i < z(j) \}
.
\] 
Since we assume $z=z^{-1}$, we can also write $\SSD(z) = D(z) \cap \{(i,j) \in \NN \times \NN : i > j\}$.
The set $\SSD(z)$ is denoted as $\hat D_{\FPF}(z)$ in \cite{HMP5} and as $D^{\Sp}(z)$ in \cite{MP2019,MP2019b}.

\begin{prop}\label{ssd-prop}
It holds that $\FPF_n(\I_n) = \left\{ z \in \Ifpf_\infty : \SSD(z) \subseteq [n]\times [n]\right\}$.
\end{prop}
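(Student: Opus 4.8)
The plan is to prove the set equality $\FPF_n(\I_n) = \left\{ z \in \Ifpf_\infty : \SSD(z) \subseteq [n]\times [n]\right\}$ by a double inclusion, leveraging the characterization of $\FPF_n(\I_n)$ from Proposition~\ref{prop:fpf-injection} in terms of visible descents. The key dictionary to establish is: \emph{for $z \in \Ifpf_\infty$, one has $\SSD(z) \subseteq [n]\times[n]$ if and only if $z$ has no visible descents greater than $n$.} Given this, the result is immediate from Proposition~\ref{prop:fpf-injection}.

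First I would unpack the two conditions concretely. Recall $\SSD(z) = D(z) \cap \{(i,j) : i > j\}$, so $(i,j) \in \SSD(z)$ means $i > j$, $j < z(i)$, and $i < z(j)$; equivalently (using $z = z^{-1}$) it records an ``inversion below the diagonal.'' A visible descent is an integer $i$ with $z(i+1) < \min\{i, z(i)\}$. The forward direction of the dictionary (no visible descents $> n$ $\Rightarrow$ $\SSD(z) \subseteq [n]\times[n]$): suppose $(i,j) \in \SSD(z)$ with $i > n$; I want a contradiction, or rather I want to produce a visible descent exceeding $n$. The natural move is to walk down from row $i$: among rows $j \le r \le i$ consider where $z$ ``turns over.'' Since $z(j) > i > j$ but we are looking at position $i$ with $z(i) > j$, one argues as in the proof of Proposition~\ref{prop:fpf-injection} that some $r$ in an appropriate range satisfies $z(r+1) < \min\{r, z(r)\}$ and $r \ge n$ — actually $r > n$ once one is careful, because the presence of $(i,j) \in \SSD(z)$ with $i > n$ forces the configuration of values to be non-monotone somewhere strictly past $n$. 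This is essentially the maximality argument already used in Proposition~\ref{prop:fpf-injection}, repackaged.

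For the reverse direction (a visible descent $i > n$ $\Rightarrow$ $\SSD(z) \not\subseteq [n]\times[n]$): if $i > n$ is a visible descent, then $z(i+1) < \min\{i, z(i)\}$. Set $a = i+1 > n$ and $b = z(i+1) = z(a)$. Then $b < i < a$, so $b < a$ and $b \le n < a$. I claim $(a, b) \in \SSD(z)$: indeed $b < z(a) = $ wait — $z(a) = b$, so I should instead check the pair $(a', b')$ carefully. Since $z(a) = b < i < a$, we have an inversion: $a > b$ and $z(a) = b$; but for membership in $\SSD(z)$ we need $b < z(a)$, which fails since $z(a) = b$. The correct pair is obtained from the \emph{other} index: we need some $(i',j')$ with $i' > j'$, $j' < z(i')$, $i' < z(j')$. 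Take $j' = b$ and $i' = $ something with $z(b) > i'$. Since $z(b) = a > n$ and $z$ sends $b \mapsto a$, and $i < a = z(b)$ with $b < i$ (as $b < i$), the pair $(i, b)$ should be tested: $i > b$? yes since $b < i$. Is $b < z(i)$? We have $z(i) > z(i+1) = b$ from the visible descent condition (since $z(i+1) < z(i)$), so yes. Is $i < z(b) = a$? Yes, since $i < a$. So $(i, b) \in D(z)$ with $i > b$, hence $(i,b) \in \SSD(z)$, and $i > n$, giving $\SSD(z) \not\subseteq [n]\times[n]$.

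The main obstacle I expect is the forward direction: turning ``$\SSD(z)$ reaches past row $n$'' into ``there is a visible descent past $n$'' requires the same delicate descent-walking argument as in Proposition~\ref{prop:fpf-injection}, and one must be careful about boundary cases (e.g. ensuring the produced visible descent is strictly greater than $n$, not merely $\ge n$, and handling the eventual periodicity $z(a) \in \{a-1, a+1\}$ for large $a$). A clean alternative would be to prove the contrapositive of this direction together with the converse simultaneously: show directly that the largest visible descent of $z$ (if any) equals the largest row index appearing in $\SSD(z)$, or more precisely that $\max\{i : i \text{ visible descent}\} < n$ iff $\SSD(z) \subseteq [n]\times[n]$, by relating both quantities to the largest $i$ with $z(i) < \max\{z(1),\dots,z(i-1)\}$ restricted below the diagonal. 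Once the dictionary is in hand, invoking Proposition~\ref{prop:fpf-injection} closes the proof in one line.
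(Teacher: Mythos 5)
Your approach mirrors the paper's in structure: both reduce the Proposition to the dictionary
\[
\SSD(z) \subseteq [n]\times[n] \iff z \text{ has no visible descent exceeding } n,
\]
and then quote Proposition~\ref{prop:fpf-injection}. The difference is that the paper obtains the dictionary by citing \cite[Lem.~5.2]{HMP5}, which asserts that the largest row index in $\SSD(z)$ equals the largest visible descent of $z$, whereas you try to prove it from scratch.

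Your reverse direction is correct and complete: if $r > n$ is a visible descent and $b := z(r+1)$, then $r > b$, $b < z(r)$, and $r < r+1 = z(b)$, so $(r,b) \in \SSD(z)$ lies outside $[n]\times[n]$.

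Your forward direction has a genuine gap. You want to show that $(i,j) \in \SSD(z)$ with $i > n$ forces a visible descent past $n$, and you propose to ``walk among rows $j \le r \le i$.'' But any visible descent produced this way can be as small as $j$, which may well be $\le n$; your own parenthetical (``once one is careful... $r > n$'') flags exactly this, and the clause ``forces the configuration... to be non-monotone somewhere strictly past $n$'' is a hope, not an argument. The right move is to work at the boundary row of $\SSD(z)$ rather than inside $[j,i]$. Let $R$ be the largest row index appearing in $\SSD(z)$ (finite since $z \in \Ifpf_\infty$), and take any $j$ with $(R,j) \in \SSD(z)$, so $j < R < z(j)$ and $j < z(R)$. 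Then $z(j) \ge R+1$. If $z(j) = R+1$, then $z(R+1) = j < \min\{R,z(R)\}$ and $R$ is a visible descent. If $z(j) > R+1$, then the pair $(R+1,j)$ satisfies $j < R+1 < z(j)$; since $R+1$ exceeds every row index of $\SSD(z)$ we must have $(R+1,j) \notin \SSD(z)$, so the remaining defining condition $j < z(R+1)$ must fail, giving $z(R+1) \le j$; and $z(R+1) = j$ is impossible here (it would force $z(j) = R+1$), so $z(R+1) < j < \min\{R,z(R)\}$. In either case $R$ is a visible descent, and $R \ge i > n$ automatically. This closes your forward direction and is, in substance, what \cite[Lem.~5.2]{HMP5} supplies for the paper.
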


\begin{proof}
By \cite[Lemma 5.2]{HMP5}, the index of the last nonempty row in $\SSD(z)$ is the largest visible descent of $z$,
so this result follows from Proposition~\ref{prop:fpf-injection}.
\end{proof}

\section{Skew-symmetric matrix Schubert varieties}\label{ss-sect}

Here, we give the definitions and then examine the basic properties of our key objects of interest,
which consist of the \emph{skew-symmetric matrix Schubert varieties} and some related ideals.
Throughout, $m$ and $n$ are fixed positive integers and $\KK$ is any algebraically closed field.
We write $\Mat_{m \times n}=\Mat_{m \times n}(\KK)$ for the set of $m\times n$ matrices over $\KK$, and $A^T$ for the transpose of
 $A$.

In this setting, an element $A \in \Mat_{n\times n}$ is \emph{skew-symmetric} if $A^T = - A$ and all diagonal entries of $A$ are zero.
The second condition is redundant if $\ch(\KK)\neq 2$.
Let $\SSM_n = \SSM_n(\KK)$ be the set of skew-symmetric  matrices in $\Mat_{n \times n}$.
Finally, write $\GL_n=\GL_n(\KK)$ for the $n\times n$ general linear group over $\KK$ 
and write $B_n$ for its Borel subgroup of invertible lower triangular matrices.

\subsection{Matrix Schubert varieties }

The main results in this article concern certain varieties of skew-symmetric matrices.
One is led to study such spaces for their analogy with the following classical objects:
 
\begin{defn}
    Given a permutation $w$ of $\NN$, 
    the associated \emph{$m\times n$ matrix Schubert cell} $\openX_{w} $
    and
     \emph{$m\times n$ matrix Schubert variety} $X_{w}$ 
    are the subsets of $\Mat_{m\times n}$ given by
   \[ \begin{aligned}
    \openX_w &:= \{ A \in \Mat_{m\times n} : \text{$\rank A_{[i][j]} = \rank w_{[i][j]}$ for $(i,j) \in [m]\times [n]$}\},
    \\
    X_w &:= \{ A \in \Mat_{m\times n} : \text{$\rank A_{[i][j]} \leq \rank w_{[i][j]}$ for $(i,j) \in[m]\times [n]$}\}.
\end{aligned}
\]
\end{defn}

Note that $\rank w_{[i][j]}$ is just the number of nonzero entries in the $i\times j$ partial permutation matrix of $w$.
The product group $\GL_m \times \GL_n$ acts on $\Mat_{m\times n}$,
by the formula $(g,h) : A \mapsto gAh^T$
and matrix Schubert cells arise as the orbits of this action restricted to $B_m\times B_n$:

\begin{thm}[{See \cite[Ch. 15]{MillerSturmfels}}]
\label{x-thm}
 Let $w$ be a permutation of $\NN$.
Then $\openX_w$ is the $B_m \times B_n$-orbit of the $m\times n$ partial permutation matrix of $w$.
Moreover, $X_w$ is the Zariski closure of $\openX_w$ and is an irreducible variety.
Finally, each $B_m \times B_n$-orbit in $\Mat_{m\times n}$ is equal to $\openX_w$ for a unique $w \in S^{m,n}_\infty$.
\end{thm}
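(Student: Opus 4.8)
The plan is to prove Theorem~\ref{x-thm} by reducing all three assertions to standard facts about $B_m \times B_n$-orbits on $\Mat_{m \times n}$ via Gaussian elimination. First I would establish that $\openX_w$ is a $B_m \times B_n$-orbit. The key observation is that left multiplication by $B_m$ and right multiplication by $B_n^T$ (i.e.\ by upper triangular matrices on the right) perform exactly the row and column operations that add a multiple of a row to a \emph{later} row, and a multiple of a column to a \emph{later} column. Starting from an arbitrary $A \in \Mat_{m \times n}$, one can run a ``southeast'' version of Gauss--Jordan elimination: scan positions in a suitable order and use each surviving pivot to clear all entries below it in its column and to the right of it in its row, the point being that these operations never disturb the already-processed part of the matrix and never change any rank $\rank A_{[i][j]}$. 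The terminal form of this procedure is a partial permutation matrix, and the rank conditions identify it uniquely. This shows each orbit contains exactly one partial permutation matrix and that two matrices lie in the same orbit iff they have the same rank function $(i,j) \mapsto \rank A_{[i][j]}$; since the orbit of the partial permutation matrix $w_{[m][n]}$ is by definition $\{A : \rank A_{[i][j]} = \rank w_{[i][j]}\} = \openX_w$, the first claim follows.

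Next, for the closure statement, I would argue that $X_w$ is closed because it is cut out by the vanishing of minors: $\rank A_{[i][j]} \le r$ is the condition that all $(r{+}1)\times(r{+}1)$ minors of $A_{[i][j]}$ vanish, so $X_w$ is an intersection of zero loci of polynomials, hence Zariski closed, and it visibly contains $\openX_w$. For the reverse containment $X_w \subseteq \overline{\openX_w}$, and for irreducibility, the cleanest route is to exhibit $X_w$ as the image of an irreducible variety under a morphism whose image contains $\openX_w$ as a dense subset. Concretely, one can use the parametrization of $X_w$ by a product of the group $B_m \times B_n$ with an affine space of ``rank-$\le$'' matrices in echelon form compatible with $w$ (this is where one would cite the Miller--Sturmfels treatment, \cite[Ch. 15]{MillerSturmfels}); since $B_m$, $B_n$, and an affine space are all irreducible, so is their product and hence $X_w$, and density of $\openX_w$ follows because a generic point of the parameter space maps into the open cell $\openX_w$. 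Finally, the ``unique $w \in S^{m,n}_\infty$'' assertion combines the bijection from the previous paragraph (orbits $\leftrightarrow$ $m \times n$ partial permutation matrices) with the remark immediately preceding the theorem that $w \mapsto w_{[m][n]}$ restricts to a bijection $S^{m,n}_\infty \to \{m \times n \text{ partial permutation matrices}\}$.

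I expect the main obstacle to be making the Gaussian elimination argument genuinely rank-preserving: one has to choose the order in which pivots are processed so that clearing entries below and to the right of a pivot does not alter any of the principal-submatrix ranks $\rank A_{[i][j]}$, and verify that the process is well-defined regardless of the choices made (e.g.\ that it does not matter which nonzero entry in a given ``available'' region is selected as the next pivot). The conceptual content is that adding row $i$'s multiple to a row $i' > i$ leaves $A_{[k][\ell]}$ with the same rank for every $k, \ell$ — because for $k < i'$ the submatrix is literally unchanged, and for $k \ge i'$ the operation is an invertible row operation on $A_{[k][\ell]}$ — and symmetrically for columns; the bookkeeping is routine once this is isolated. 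The closure and irreducibility parts, while requiring a little care, are essentially citations to \cite{MillerSturmfels} and do not present a real difficulty, so I would keep that portion brief and lean on the reference.
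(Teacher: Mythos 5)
The paper states this theorem without proof, citing \cite[Ch.\ 15]{MillerSturmfels}; your sketch reconstructs the standard argument from that reference and is correct in outline. Two small clarifications are worth making. First, the rank-preservation property $\rank(gAh^T)_{[i][j]} = \rank A_{[i][j]}$ holds for \emph{every} $(g,h) \in B_m \times B_n$ — as you yourself isolate in the last paragraph — so the choice of pivot order is purely a termination concern (ensuring the process ends at a partial permutation matrix, i.e.\ that every row and column has at most one surviving nonzero entry), not a rank-preservation concern; your earlier phrasing runs these two issues together. Second, the step ``the rank conditions identify it uniquely'' is exactly the identity $w_{ij} = \r_w(i,j) - \r_w(i-1,j) - \r_w(i,j-1) + \r_w(i-1,j-1)$, which recovers the $(0,1)$-matrix from its rank table; you use this implicitly and it deserves to be stated, since it is also what underlies the bijection $S^{m,n}_\infty \to \{m\times n\text{ partial permutation matrices}\}$ invoked in the final claim.
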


The last part of the theorem shows that we lose no generality in our definition of $X_w$
by restricting the index $w$ to the set $S^{m,n}_\infty$, and this will be our usual practice.
    Matrix Schubert varieties have been extensively studied, for example, in  \cite{FultonEssentialSet,KnutsonMiller,MillerSturmfels}.


We are interested in the following related varieties:
    
\begin{defn}
    Given an involution $z \in \Ifpf_\infty$, 
    the associated \emph{$n\times n$ skew-symmetric matrix Schubert cell} $\openSSX_z$
    and
     \emph{$n\times n$ skew-symmetric matrix Schubert variety} $\SSX_{z}$ 
    are the intersections 
    \begin{align*}
    &\openSSX_z = \{ A \in \SSM_n : \text{$\rank A_{[i][j]} = \rank z_{[i][j]}$ for $i,j \in [n]$}\} = \openX_z \cap \SSM_n,
    \\
    &\SSX_z = \{ A \in \SSM_n : \text{$\rank A_{[i][j]} \leq \rank z_{[i][j]}$ for $i,j \in [n]$}\} = X_z \cap \SSM_n.
\end{align*}
\end{defn}

These definitions would still make sense if $z$ were an arbitrary permutation of $\NN$,
but then it could happen that $\openSSX_z =\emptyset$. We require $z \in \Ifpf_\infty$ to exclude this degenerate case.
As in the classical setting,
 skew-symmetric matrix  Schubert cells arise as the orbits of a certain group action.
 Specifically,
observe that the general linear group $\GL_n$ acts on $\SSM_n$ by $g: A\mapsto gAg^T$.

\begin{rem}
This formula obviously preserves $\SSM_n$ when $\ch(\KK)\neq 2$.
If $\ch(\KK)=2$, then it is still clear that $(gAg^T)^T = -gAg^T$ for all $g \in \GL_n$ and $A \in \SSM_n$,
and for $i \in [n]$ one has $(gAg^T)_{ii}= \sum_{j=1}^n \sum_{k=1}^n g_{ij}g_{ik} A_{jk}$. This is zero since
$A_{jj}=0$ and $A_{jk} = - A_{kj}$, so 
$gAg^T\in \SSM_n$.
\end{rem}

One can check that if $(g,h) \in \GL_n \times \GL_n$
then $gAh^T \in \SSM_n$ for all $A \in \SSM_n$
if and only if $g= \lambda h$ for some $0\neq \lambda \in \KK$.
If $g=\lambda h$ for $0\neq \lambda \in \KK$,
then 
there exists $\mu \in \KK$ with $\mu^2=\lambda$ since $\KK$ is algebraically closed, 
so
we can define
$\tilde g := \mu h$ 
 and then have  $g Ah^T = \tilde g A \tilde g^T$
for all $A \in \SSM_n$.
Thus, the $\GL_n$-orbits in $\SSM_n$ for the action $g: A\mapsto gAg^T$
are the same as the orbits of the subgroup of $\GL_n\times \GL_n$ stabilizing $\SSM_n$.
 
Given any permutation $w$ of $\NN$, let $\ss_{n}(w)$ be the $n \times n$ matrix
 whose entry in position $(i,j)$
 is $1$ if $w(j) = i<j =w(i)$, $-1$ if $w(j) = i>j = w(i)$, and $0$ otherwise; see Example~\ref{ex:SSM-orbits}.


 \begin{thm}[\cite{Cherniavsky}]
 \label{ssx-thm}
  Suppose $z\in \Ifpf_\infty$.
 Then $\openSSX_z$ is the $B_n$-orbit of the skew-symmetric matrix $\ss_{n}(z)$.
 Moreover, $\SSX_z$ is the Zariski closure of $\openSSX_z$ and an irreducible variety.
Finally, each $B_n$-orbit in $\SSM_{n}$ is equal to $\openSSX_z$ for a unique element $z \in \FPF_n(\I_n)$.
\end{thm}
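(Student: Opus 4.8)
The plan is to reduce the statement to the known structure of ordinary matrix Schubert cells (Theorem~\ref{x-thm}) together with the orbit-equivalence observation immediately preceding the theorem, namely that the $\GL_n$-action $g\colon A\mapsto gAg^T$ on $\SSM_n$ has the same orbits as the subgroup of $\GL_n\times\GL_n$ stabilizing $\SSM_n$. First I would record that $\ss_n(z)\in\SSM_n$ and that it is the $n\times n$ partial permutation matrix of $z$ up to signs on the entries below the diagonal, so that $\rank \ss_n(z)_{[i][j]}=\rank z_{[i][j]}$ for all $i,j$; hence $\ss_n(z)\in\openX_z\cap\SSM_n=\openSSX_z$, and the $B_n$-orbit of $\ss_n(z)$ under $g\mapsto g\ss_n(z)g^T$ is contained in $\openSSX_z$ because conjugation by a lower-triangular $g$ can only add lower rank conditions strictly below the existing pivots, preserving all the ranks $\rank A_{[i][j]}$ (this is the standard fact that $B_n\times B_n$ preserves matrix Schubert cells, restricted to the diagonal subgroup). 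For the reverse inclusion I would take $A\in\openSSX_z$, use that $\openX_z$ is a single $B_n\times B_n$-orbit to write $A=g\,\ss_n(z)\,h^T$ with $g,h\in B_n$ — here one must first check $\ss_n(z)$ and the actual partial permutation matrix $z_{[n][n]}$ lie in the same $B_n\times B_n$-orbit, which follows since they differ by a diagonal sign matrix that can be absorbed into $g$ (or, over characteristic $2$, the signs are trivial anyway) — and then invoke the equivalence of orbits: since $A$ and $\ss_n(z)$ both lie in $\SSM_n$ and are in the same $B_n\times B_n$-orbit, they are in the same orbit for the stabilizer subgroup, i.e.\ $A=\tilde g\,\ss_n(z)\,\tilde g^T$ for some $\tilde g$, and a triangularity argument (the stabilizer of $\SSM_n$ inside $B_n\times B_n$ is $\{(\lambda h,h)\}$ with $h\in B_n$) lets us take $\tilde g\in B_n$.

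The closure and irreducibility statements follow formally: $\openSSX_z$ is the image of the irreducible group $B_n$ under a morphism, hence irreducible, and $\SSX_z=X_z\cap\SSM_n$ is closed and contains the dense subset $\openSSX_z$ of the irreducible variety it generates; one then checks $\SSX_z=\overline{\openSSX_z}$ by noting $X_z=\overline{\openX_z}$ and that intersecting with the linear subspace $\SSM_n$ is compatible with closure here because every point of $X_z\cap\SSM_n$ is a limit within $\SSM_n$ of points of $\openX_z$ — most cleanly, one can cite or reprove that $\SSX_z$ is the union of the cells $\openSSX_{z'}$ over $z'$ with $z'$ below $z$ in the relevant order, so $\openSSX_z$ is dense in it. For the final sentence, that each $B_n$-orbit equals $\openSSX_z$ for a unique $z\in\FPF_n(\I_n)$: existence is just the orbit decomposition of $\SSM_n$ under $B_n$ combined with the classification above, and for the indexing set I would argue that distinct $z\in\Ifpf_\infty$ give distinct rank tables $(\rank z_{[i][j]})_{i,j\in[n]}$ precisely when restricted to representatives with $\SSD(z)\subseteq[n]\times[n]$, which by Proposition~\ref{ssd-prop} is exactly $\FPF_n(\I_n)$; the rank table determines the cell, so this gives both existence and uniqueness of the index.

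I expect the main obstacle to be the careful passage between the two group actions and the triangularity bookkeeping: one needs to be sure that when $A=g\,\ss_n(z)\,h^T$ with $g,h\in B_n$ can be rewritten as a conjugation by a single element, that element can still be chosen \emph{lower triangular}, not merely invertible. This requires identifying the stabilizer of $\SSM_n$ inside $B_n\times B_n$ (as opposed to inside all of $\GL_n\times\GL_n$, where the computation $g=\lambda h$ was already done in the text) and extracting a square root $\mu$ of $\lambda$ that keeps $\mu h\in B_n$, which is automatic since scalars are central and lie in $B_n$. A secondary subtlety is the characteristic-$2$ case, where the sign matrix relating $\ss_n(z)$ to $z_{[n][n]}$ is the identity but one must still confirm $\ss_n(z)\in\SSM_n$ — this is handled by the remark preceding the theorem. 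Everything else is routine: finiteness of the orbit set and the translation to $\FPF_n(\I_n)$ are combinatorial consequences of Propositions~\ref{prop:fpf-injection} and~\ref{ssd-prop}, which we may assume.
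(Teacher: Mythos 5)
The paper proves this theorem by citing Cherniavsky directly: the claim that the matrices $\ss_n(y)$ for $y\in\I_n$ are a complete set of $B_n$-orbit representatives in $\SSM_n$, the identification of $\openSSX_z$ with the orbit of $\ss_n(z)$, and the closure statement $\SSX_z=\overline{\openSSX_z}$ are all imported from \cite[\S2, Prop.~4.2, Prop.~4.3]{Cherniavsky}, with the paper's own contribution being the translation from $\I_n$ to $\FPF_n(\I_n)$ and the one-line irreducibility remark. You instead attempt a self-contained derivation from Theorem~\ref{x-thm} plus the stabilizer computation that precedes the statement, which would be a genuinely different and more elementary route — but as written it has a real gap at the heart of the argument.

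The gap is in the ``reverse inclusion'': you take $A\in\openSSX_z$, write $A=g\,\ss_n(z)\,h^T$ with $g,h\in B_n$, and then assert that \emph{since $A$ and $\ss_n(z)$ both lie in $\SSM_n$ and are in the same $B_n\times B_n$-orbit, they are in the same orbit for the stabilizer subgroup}. That implication is false in general: if a group $G$ acts on $X$, $H\le G$ is a subgroup, and $Y\subseteq X$ is $H$-stable, two points of $Y$ lying in a common $G$-orbit need not lie in a common $H$-orbit. Identifying the stabilizer of $\SSM_n$ inside $B_n\times B_n$ as $\{(\lambda h,h)\}$ tells you which group acts on $\SSM_n$, not that its orbits are the traces of the larger orbits. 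Concretely, from $A=g\,\ss_n(z)\,h^T$ with $A$ and $\ss_n(z)$ skew-symmetric one only extracts a relation $k\,\ss_n(z)=\ss_n(z)\,k^T$ for $k=g^{-1}h$; turning this into $A=\tilde g\,\ss_n(z)\,\tilde g^T$ with $\tilde g\in B_n$ requires analyzing the stabilizer of $\ss_n(z)$ in $B_n\times B_n$ and showing one can ``transfer'' the mismatch between $g$ and $h$ across it. This is precisely the nontrivial content that Cherniavsky supplies, and it cannot be waved away as a formal consequence of the earlier observations. A secondary, dependent issue: your argument that $\SSX_z=\overline{\openSSX_z}$ invokes the cell decomposition $\SSX_z=\bigsqcup_{z'\ge z}\openSSX_{z'}$, but establishing that each piece $\openSSX_{z'}$ is a single $B_n$-orbit is exactly the classification you are trying to prove, so as structured the closure argument is circular. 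To repair the proof you would need either to prove the orbit-equals-trace statement directly (a Pfaffian/Bruhat row-reduction argument in the style of Example~\ref{ex:SSM-orbits}, carried out in general), or simply cite Cherniavsky as the paper does.
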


We include a short proof sketch to explain how to extract our statement from
 \cite{Cherniavsky}.

\begin{proof}[Proof sketch]
The discussion in \cite[\S2]{Cherniavsky}
shows that the matrices $\ss_{n}(y)$ for $y \in \I_n$ represent
the distinct $B_n$-orbits in $\SSM_n$.
Cherniavsky works over the complex numbers but his arguments apply equally well over arbitrary fields.
Since $\ss_{n}(y) = \ss_{n}(\FPF_n(y))$ for $y \in \I_n$, it follows that the $B_n$-orbits in $\SSM_n$ are equivalently represented by 
$\ss_{n}(z)$ for $z \in \FPF_n(\I_n)$.
The set $ \openSSX_z $ is 
 the $B_n$-orbit of $\ss_{n}(z)$ by \cite[Prop. 4.2]{Cherniavsky},
and \cite[Prop 4.3]{Cherniavsky} asserts that $\SSX_z$ is the closure of $ \openSSX_z $.
%
%
%
The orbit $\openSSX_z$ is irreducible since it is the image of the irreducible variety $B_n$
under the algebraic map $b \mapsto b\cdot \ss_{n}(z) \cdot b^T$,
so its closure $\SSX_z$ is also irreducible.
\end{proof}

\begin{ex} \label{ex:SSM-orbits}
    Suppose $A = \left[ \begin{smallmatrix} 0 & a & b \\ -a & 0 & c \\ -b & -c & 0 \end{smallmatrix} \right] \in \SSM_3$. If $a \neq 0$, then adding an appropriate multiple of row $1$ to row $3$ and adding the same multiple of column $1$ to column $3$ transforms $A$ into $\left[ \begin{smallmatrix} 0 & a & b' \\ -a & 0 & 0 \\ -b' & 0 & 0 \end{smallmatrix} \right]$; further operations clear out the entries $\pm b'$ and rescale $a$ to $1$. Thus:
    \begin{itemize}
    \item If $a\neq 0$, then $A$ is in the $B_3$-orbit of $\left[ \begin{smallmatrix} 0 & 1 & 0 \\ -1 & 0 & 0 \\ 0 & 0 & 0 \end{smallmatrix} \right] = \ss_{3}((1,2))$.
\end{itemize}
    Similarly:
    \begin{itemize}
        \item If $a = 0\neq b$, then $A$ is in the $B_3$-orbit of $\left[ \begin{smallmatrix} 0 & 0 & 1 \\ 0 & 0 & 0 \\ -1 & 0 & 0 \end{smallmatrix} \right] = \ss_{3}((1,3))$.
        \item If $a = b = 0 \neq c$, then $A$ is in the $B_3$-orbit of $\left[ \begin{smallmatrix} 0 & 0 & 0 \\ 0 & 0 & 1 \\ 0 & -1 & 0 \end{smallmatrix} \right] = \ss_{3}((2,3))$.
        \item If $a = b = c = 0$, then $A$ is in the $B_n$-orbit of $\left[ \begin{smallmatrix} 0 & 0 & 0 \\ 0 & 0 & 0 \\ 0 & 0 & 0 \end{smallmatrix} \right] = \ss_{3}(1)$.
    \end{itemize}
    In each case $A$ belongs to the orbit of $\ss_{3}(y)$ for some $y \in \I_3$; this orbit is $\openSSX_z$ for $z = \FPF_3(y)$.
\end{ex}

\begin{remark}\label{doesnt-matter-remark}
A corollary of Theorems~\ref{x-thm} and \ref{ssx-thm} is that if $v$ is any permutation of $\NN$ and 
$y$ is any element of $\Ifpf_\infty$, 
then 
there are unique elements $w \in S^{m,n}_\infty$ and $z \in \FPF_n(\I_n)$
such that $\rank v_{[i][j]} = \rank w_{[i][j]}$ for all $(i,j) \in [m]\times [n]$
and $\rank y_{[i][j]} = \rank z_{[i][j]}$ for all $(i,j) \in [n]\times [n]$.
This is not hard to show directly.
We will encounter a few other objects described in terms of these rank conditions.
As with $X_w$ and $\SSX_z$,
such objects may be defined for arbitrary permutations or involutions,
but they will always be uniquely indexed by finite sets like $S^{m,n}_\infty$ or $\FPF_n(\I_n)$.
Beyond this subsection we will usually stick to the unique indexing sets.
\end{remark}

Many of the rank conditions defining $X_w$ and $\SSX_z$ are redundant, 
and it is sometimes useful to work with a smaller set of sufficient conditions.
The \emph{essential set} $\Ess(D)$ of $D \subseteq \NN \times \NN$ is the subset of pairs $(i,j) \in D$ such that $(i+1,j) \notin D$ and $ (i,j+1) \notin D$. 
\begin{prop}[{\cite{FultonEssentialSet,MP2019}}]  \label{thm:fpf-essential-set} If $w \in S^{m,n}_\infty$ and $z \in \FPF_n(\I_n)$ then
\[
\begin{aligned}
 X_w&= \{ A \in \Mat_{m\times n} : \text{$\rank A_{[i][j]} \leq \rank w_{[i][j]}$ for $(i,j) \in \Ess(D(w))$}\}, \\
\SSX_z&= \{ A \in \SSM_n : \text{$\rank A_{[i][j]} \leq \rank z_{[i][j]}$ for $(i,j) \in \Ess(\SSD(z))$}\}.
\end{aligned}
\]
\end{prop}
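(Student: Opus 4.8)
The first identity is Fulton's \cite[\S3]{FultonEssentialSet}; the simplest route is to cite it, but since its proof is the template for the harder skew-symmetric statement, I will sketch how I would argue. For $(i,j)\in[m]\times[n]$ write $C_{i,j}$ for the condition $\rank A_{[i][j]}\le\rank w_{[i][j]}$, and call it \emph{trivial} if $\rank w_{[i][j]}=\min(i,j)$, since then every matrix satisfies it. The plan is to show each non-trivial $C_{i,j}$ follows from $\{C_{i',j'}:(i',j')\in\Ess(D(w))\}$. If $(i,j)\in D(w)$, one inducts downward on $i+j$: either $(i,j)\in\Ess(D(w))$, or one of $(i+1,j),(i,j+1)$ lies in $D(w)$, and then $\rank w$ is unchanged there (membership in $D(w)$ places the pivot of the new row strictly right of column $j$, resp.\ the pivot of the new column strictly below row $i$), so $C_{i,j}$ follows from the neighbour because $A_{[i][j]}$ is a submatrix of the larger one. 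If $(i,j)\notin D(w)$ and $C_{i,j}$ is non-trivial, then $w^{-1}(j)\le i$ or $w(i)\le j$; in the first case $\rank w_{[i][j]}=\rank w_{[i][j-1]}+1$, so $C_{i,j}$ follows from $C_{i,j-1}$ (deleting a column drops rank by at most one), and the second case is symmetric. These recursions terminate --- the $D(w)$-steps stay in the finite set $D(w)$, and the others strictly decrease an index until a trivial cell or a cell of $D(w)$ is reached --- which proves the identity for $X_w$.

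For $\SSX_z$ the plan is to intersect with $\SSM_n$, replace the (infinite) diagram $D(z)$ by a finite symmetric one, and exploit skew-symmetry. By Remark~\ref{doesnt-matter-remark} there is a unique $w\in S^{n,n}_\infty$ with $\rank w_{[i][j]}=\rank z_{[i][j]}$ for all $(i,j)\in[n]\times[n]$, so the $n\times n$ matrix Schubert varieties of $z$ and $w$ coincide; moreover $D(w)=D(z)\cap([n]\times[n])$, which is symmetric since $D(z)$ is, and $\SSD(z)=D(w)\cap\{i>j\}$ because $\SSD(z)\subseteq[n]\times[n]$ by Proposition~\ref{ssd-prop}. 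Applying the first identity to $w$, $\SSX_z=X_w\cap\SSM_n=\{A\in\SSM_n:C_{i,j}\text{ for }(i,j)\in\Ess(D(w))\}$, with $C_{i,j}$ now abbreviating $\rank A_{[i][j]}\le\rank w_{[i][j]}$; the inclusion of the right-hand side in $\{A\in\SSM_n:C_{i,j}\text{ for }(i,j)\in\Ess(\SSD(z))\}$ is immediate, so the content is the reverse. Since $z=z^{-1}$ one has $\rank w_{[i][j]}=\rank z_{[i][j]}=\rank z_{[j][i]}=\rank w_{[j][i]}$ on $[n]\times[n]$, and for a skew-symmetric $A$ one has $A_{[j][i]}=-(A_{[i][j]})^{T}$, so the conditions $C_{i,j}$ and $C_{j,i}$ are equivalent on $\SSM_n$. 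A cell $(i,j)\in\Ess(D(w))$ with $i>j$ automatically lies in $\Ess(\SSD(z))$ --- its down- and right-neighbours already leave $D(w)$ --- so, up to this symmetry, the off-diagonal cells of $\Ess(D(w))$ contribute precisely the conditions indexed by $\Ess(\SSD(z))$.

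The remaining case, a diagonal cell $(i,i)\in\Ess(D(w))$, is where I expect the real work, and it is the step that genuinely uses skew-symmetry. As $(i,i)\in D(w)$ we have $w(i)>i$ and $w^{-1}(i)>i$, so $\rank w_{[i][i]}=\rank w_{[i][i-1]}=:r$; and $r$ is \emph{even}, since $r=\rank z_{[i][i]}$ is the cardinality of the $z$-stable set $\{k\le i:z(k)\le i\}$, on which the fixed-point-free involution $z$ acts without fixed points. The plan is first to deduce $C_{i,i-1}$ (the bound $\rank A_{[i][i-1]}\le r$) from the hypotheses, by running the template of the first paragraph while staying strictly below the diagonal: any cell $(i,j)$ with $j<i$ met in this reduction has $w(i)>i>j$, so if it lies outside $D(w)$ the applicable step is the ``$w^{-1}(j)\le i$'' one, which moves left within row $i$ and never upward toward the diagonal, while if it lies in $D(w)$ then $(i,j)\in\SSD(z)$ and the down/right pushes take place inside $\SSD(z)$, never touch a diagonal cell, and terminate at a cell of $\Ess(\SSD(z))$ with the same rank bound. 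In particular $(i,i)$ is never revisited, so there is no circularity. Finally $A_{[i][i]}$ is $A_{[i][i-1]}$ with one column appended, so $\rank A_{[i][i]}\le r+1$; but $A_{[i][i]}$ is skew-symmetric and therefore of even rank, and $r$ is even, forcing $\rank A_{[i][i]}\le r=\rank z_{[i][i]}$, i.e.\ $C_{i,i}$. The delicate point --- and the step I expect to be the main obstacle --- is exactly this bookkeeping that keeps the diagonal reduction away from the diagonal; alternatively, the whole skew-symmetric identity can be quoted from \cite{MP2019}.
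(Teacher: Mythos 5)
The paper's ``proof'' is a two-sentence citation to \cite{FultonEssentialSet} for the first identity and to \cite{MP2019} for the second, so there is no written argument here to match your sketch against. What you have done is reprove both statements from scratch, and your argument is essentially correct. Your sketch of Fulton's reduction for $X_w$ is the standard one and is sound. For $\SSX_z$, the structure is good: you pass to the unique $w\in S^{n,n}_\infty$ matching $z$'s rank table on $[n]\times[n]$ (so $D(w)=D(z)\cap[n]^2$ is symmetric and $\SSD(z)=D(w)\cap\{i>j\}$), apply the first identity to $w$, note that off-diagonal cells of $\Ess(D(w))$ and their transposes lie in $\Ess(\SSD(z))$ and give equivalent conditions on $\SSM_n$, and isolate the diagonal cells $(i,i)\in\Ess(D(w))$ as the only place where skew-symmetry must actually be invoked. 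The three ingredients you use there --- $\r_z(i,i)=\r_z(i,i-1)$ is even, skew-symmetric matrices have even rank, and the reduction of $C_{i,i-1}$ can be run entirely in the region $\{a>b\}$ --- are each correct. In particular, the ``stay below the diagonal'' bookkeeping checks out: while still outside $D(w)$ you remain in row $i$, where $w(i)>i>b$ forces the column-deletion step rather than the row-deletion step; once you enter $D(w)$ in a position $(i,b)$ with $i>b$ you are in $\SSD(z)$, and the down/right pushes never exit $\SSD(z)$ because $(a,a)\notin\SSD(z)$ and the pushes preserve the rank bound. Two small wording slips worth fixing: the sentence ``the off-diagonal cells of $\Ess(D(w))$ contribute precisely the conditions indexed by $\Ess(\SSD(z))$'' claims too much --- the containment $\Ess(D(w))\cap\{i>j\}\subseteq\Ess(\SSD(z))$ can be strict, e.g.\ when $(i,i)\in D(w)$ and $(i,i-1)\in\Ess(\SSD(z))\setminus\Ess(D(w))$ --- but this does not affect the argument since you only need one direction; and the sentence ``any cell $(i,j)$ with $j<i$ met in this reduction has $w(i)>i>j$'' is only literally true for cells visited while still in row $i$, though nothing is lost because the $\SSD(z)$ pushes never invoke a ``not in $D(w)$'' step.
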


The formula for $X_w$ here is \cite[Lem. 3.10]{FultonEssentialSet}; the formula for $\SSX_z$ is included in \cite[Prop. 2.16]{MP2019}.

\subsection{Bruhat orders}\label{bruhat-sect}

Recall that $\ell(w) = |D(w)|$ is the length of $w \in S_\infty$.
The \emph{Bruhat order} on $S_\infty$ is the transitive closure $<$ of the relation with $v <w$ if $w = v(i,j)$ for 
positive integers $ i < j$ and $\ell(w) = \ell(v)+1$.
This order is related to matrix Schubert varieties by the following well-known proposition.

\begin{prop}\label{prop:bruhat-order}
For all $v,w \in S^{m,n}_\infty$, the following properties are equivalent:
\begin{itemize}
\item[(a)] One has $v \geq w$ in the Bruhat order on $S_\infty$.
\item[(b)] It holds that $\rank v_{[i][j]} \leq \rank w_{[i][j]}$ for all $(i,j) \in [m]\times [n]$.
\item[(c)] We have the containment $X_v\subseteq X_w$ of matrix Schubert varieties.
\end{itemize}
\end{prop}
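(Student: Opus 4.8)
The plan is to prove the three facts (b)$\Leftrightarrow$(c), (a)$\Rightarrow$(b), and (b)$\Rightarrow$(a), which together give all the equivalences. Of these, (b)$\Leftrightarrow$(c) is essentially a restatement of the definition of $X_w$, (a)$\Rightarrow$(b) is a short telescoping argument over a Bruhat chain, and (b)$\Rightarrow$(a) is the classical rank (Ehresmann) criterion for Bruhat order and is where the real work lies.

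For (b)$\Leftrightarrow$(c): if (b) holds, then any $A$ with $\rank A_{[i][j]}\le\rank v_{[i][j]}$ also satisfies $\rank A_{[i][j]}\le\rank w_{[i][j]}$, so $X_v\subseteq X_w$ straight from the defining rank conditions. Conversely, the $m\times n$ partial permutation matrix $v_{[m][n]}$ lies in $X_v$, since it realizes equality in all the rank conditions defining $X_v$; if $X_v\subseteq X_w$ then $v_{[m][n]}\in X_w$, and unwinding the conditions for membership in $X_w$ gives exactly $\rank v_{[i][j]}\le\rank w_{[i][j]}$ for all $(i,j)\in[m]\times[n]$, which is (b). (One can also cite Theorem~\ref{x-thm} to identify $v_{[m][n]}$ as the base point of $\openX_v$, but this is not needed here.)

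For (a)$\Rightarrow$(b): a chain realizing $v\ge w$ has the form $w=u_0\lessdot u_1\lessdot\cdots\lessdot u_k=v$ with $u_t=u_{t-1}(a_t,b_t)$ for positive integers $a_t<b_t$ and $\ell(u_t)=\ell(u_{t-1})+1$; the length increase forces $u_{t-1}(a_t)<u_{t-1}(b_t)$. It therefore suffices to check that passing from a permutation $u$ to $u(a,b)$, where $a<b$ and $u(a)<u(b)$, never increases any $\rank u_{[i][j]}$. This is clear: $u$ and $u(a,b)$ differ only by interchanging the values in positions $a$ and $b$, so $u_{[i][j]}$ and $(u(a,b))_{[i][j]}$ have the same number of nonzero entries (hence the same rank) unless exactly one of $a,b$ lies in $[i]$; since $a<b$, that case is $a\in[i]\not\ni b$, and then the number of nonzero entries changes by $[u(b)\le j]-[u(a)\le j]\le 0$. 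Telescoping along the chain gives $\rank v_{[i][j]}\le\rank w_{[i][j]}$ for all $(i,j)$, in particular on $[m]\times[n]$.

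The remaining implication (b)$\Rightarrow$(a) is the main obstacle. I would first reduce to the statement for general elements of $S_\infty$: because $v,w\in S^{m,n}_\infty$ are increasing beyond row $m$ and (their inverses) beyond column $n$, the ranks $\rank v_{[i][j]}$ and $\rank w_{[i][j]}$ outside $[m]\times[n]$ are determined by those inside, so the inequalities in (b) upgrade to $\rank v_{[i][j]}\le\rank w_{[i][j]}$ for all $(i,j)\in\NN\times\NN$. Then I would induct on $\ell(v)$. If $\rank v_{[i][j]}=\rank w_{[i][j]}$ for all $(i,j)$ — in particular in the base case $\ell(v)=0$, where the inequalities force both sides to equal $\min(i,j)$ — then comparing Rothe diagrams via the identity $(i,j)\in D(u)\iff \rank u_{[i][j]}=\rank u_{[i-1][j]}=\rank u_{[i][j-1]}$ gives $D(v)=D(w)$, hence $v=w$ and $v\ge w$. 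Otherwise $v\neq w$, and one must exhibit a Bruhat cover $v'\lessdot v$ with $\rank v'_{[i][j]}\le\rank w_{[i][j]}$ still holding everywhere; the inductive hypothesis then gives $v'\ge w$, whence $v\ge w$. The delicate point — and where I expect the genuine difficulty — is producing such a cover: writing $v=v'(a,b)$ with $a<b$ and $v'(a)<v'(b)$, the positions where the rank strictly increases form a rectangle $[a,b)\times[v'(a),v'(b))$, and the cover must be chosen so that this rectangle lies entirely inside the region where $\rank v_{[i][j]}<\rank w_{[i][j]}$ strictly; an arbitrary descent of $v$ can overshoot. Pinning down a provably safe move is the crux of this direction, and if one prefers not to reprove it, the statement is precisely the Ehresmann criterion for the finite symmetric group containing $v$ and $w$ and may be quoted from standard references on Coxeter groups.
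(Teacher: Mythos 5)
Your argument is correct, and it lands in essentially the same place as the paper but is considerably more hands-on. The paper's proof is two sentences: the equivalence of (a) and (b) is cited as ``a straightforward consequence of the tableau criterion for Bruhat order'' from Bj\"orner--Brenti, and (b)\,$\Leftrightarrow$\,(c) is noted to follow since the cells $\openX_v,\openX_w$ are nonempty. You instead prove (a)\,$\Rightarrow$\,(b) directly by a telescoping argument along a cover chain (checking that a single length-increasing transposition can only decrease ranks) and (b)\,$\Leftrightarrow$\,(c) via the observation that $v_{[m][n]}\in X_v$, while for (b)\,$\Rightarrow$\,(a) you explain why you would defer to the Ehresmann (rank) criterion; that criterion and the tableau criterion are interchangeable, so the citation plays the same structural role as the paper's. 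Your explicit unwinding of (a)\,$\Rightarrow$\,(b) is a genuine bonus over the paper's terseness. One small point you state but do not justify: the upgrade of the inequalities from $[m]\times[n]$ to all of $\NN\times\NN$ before invoking Ehresmann. This is correct, but it rests on the computation that for $w\in S^{m,n}_\infty$ and $i\geq m$, $j\leq n$ one has $\r_w(i,j)=\min\{\r_w(m,j)+i-m,\ j\}$ (and symmetrically past column $n$), which is monotone in $\r_w(m,j)$; it is worth spelling out since the statement as written restricts (b) to $[m]\times[n]$. This is a minor polish, not a gap in the logic.
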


\begin{proof}[Proof sketch]
The equivalence of (a) and (b) follow as a straightforward consequence of the \emph{tableau criterion} for Bruhat order 
on symmetric groups \cite[Thm. 2.6.3.]{BjornerBrenti}. Properties (b) and (c) are equivalent since the matrix Schubert cells $\openX_v$ and $\openX_w$ are nonempty.
\end{proof}

There is a skew-symmetric analogue of this proposition.
This will involve a version of Bruhat order for the set $\Ifpf_\infty$, which we introduce 
after the following technical lemma.
Here, for any permutation $w$ of $\NN$ and integers $i,j \in \NN$, let  $\r_w(i,j) := \rank w_{[i][j]}$.

\begin{lem}\label{fpf-bruhat-pre-lem} Suppose $y,z \in \FPF_n(\I_n)$ 
are involutions with $\r_y(i,j) \leq \r_z(i,j)$
for all $i,j \in [n]$. Then $\r_y(i,j) \leq \r_z(i,j)$ for all $i,j \in [2n]$.
\end{lem}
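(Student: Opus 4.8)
The statement extends a rank inequality from the range $[n]\times[n]$ to $[2n]\times[2n]$ for two involutions $y,z$ lying in $\FPF_n(\I_n)$. The key structural fact is that if $z \in \FPF_n(\I_n)$, then $z$ agrees with the model involution $1_\fpf$ outside of $[n]$ up to the extra transpositions $(c_1,n+1)(c_2,n+2)\cdots$ introduced by the $\FPF_n$ construction, and in particular $\SSD(z) \subseteq [n]\times[n]$ by Proposition~\ref{ssd-prop}. So the behaviour of $z_{[i][j]}$ for indices in the range $n < i,j \le 2n$ is completely determined by data already visible in $[n]\times[n]$. The plan is to make this precise: express $\r_z(i,j)$ for $i$ or $j$ in $(n,2n]$ in terms of the values $\r_z(i',j')$ with $i',j' \le n$ together with a quantity depending only on $i,j$ and not on $z$, and then conclude by monotonicity.

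\textbf{Steps.} First I would record the explicit description of $z = \FPF_n(\hat z)$ for $\hat z \in \I_n$: if $\hat z$ has fixed points $i_1 < \cdots < i_k$ in $[n]$, then $z$ sends $i_\ell \mapsto n+\ell$ for $\ell \in [k]$ and sends $n+k+2m-1 \mapsto n+k+2m$ for $m \ge 1$, while $z$ agrees with $\hat z$ on the non-fixed points of $\hat z$ in $[n]$. Second, I would compute $\rank z_{[i][j]}$ for $i,j \in [2n]$ by decomposing the partial permutation matrix $z_{[i][j]}$ into the block in rows/columns $\le n$, the block in rows/columns $> n$, and the two off-diagonal blocks, and counting nonzero entries (recall $\rank z_{[i][j]}$ is just the number of $1$'s in $z_{[i][j]}$). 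The entries with both coordinates in $(n,2n]$ come only from the ``tail'' transpositions $(n+k+2m-1, n+k+2m)$, whose count as a function of $i,j$ is the same for $y$ and $z$ provided $y,z$ have the same number of fixed points in $[n]$; the entries with exactly one coordinate in $(n,2n]$ come from the transpositions $(i_\ell, n+\ell)$ moving a fixed point, and these are controlled by rank data in $[n]\times[n]$ (e.g.\ the number of fixed points of $\hat z$ in $[j]$ equals $j - \r_z(n,j) - (\text{something determined by }\r_z(\cdot,\cdot)\text{ on }[n]\times[n])$). Third, having written $\r_z(i,j) = (\text{a sum of }\r_z\text{-values on }[n]\times[n]) + c(i,j)$ where $c(i,j)$ depends only on $i,j$ and the common number of fixed points, I would substitute into the hypothesis $\r_y \le \r_z$ on $[n]\times[n]$ to get $\r_y \le \r_z$ on all of $[2n]\times[2n]$.

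\textbf{The main obstacle.} The subtle point is that $y$ and $z$ need not have the same number of fixed points in $[n]$, so the ``constant'' parts $c(i,j)$ of the rank formulas for $y$ and $z$ are a priori different, and one must check the inequality survives. I expect this is handled by first deducing from the hypothesis (applied at $(i,j)=(n,n)$, say, and nearby corners) that $\r_y$ and $\r_z$ already encode a comparison of the fixed-point counts of $\hat y, \hat z$ in $[n]$ — indeed $\r_z(n,n) = n - \#\{\text{fixed points of }\hat z\text{ in }[n]\}$ since the $2$-cycles of $\hat z$ each contribute a $1$ on the diagonal block while fixed points contribute nothing within $[n]\times[n]$. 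So $\r_y(n,n) \le \r_z(n,n)$ forces $\hat y$ to have at least as many fixed points as $\hat z$, and then a careful bookkeeping of how extra fixed points only \emph{decrease} rank in the range $[n]\times[n]$ but \emph{increase} the tail contribution in $(n,2n]$ must be shown to balance out correctly. Getting this monotonicity of $c(i,j)$ in the fixed-point count right, rather than the combinatorial decomposition itself, is where the real work lies; everything else is routine block-matrix rank counting using Proposition~\ref{ssd-prop} and the explicit form of $\FPF_n$.
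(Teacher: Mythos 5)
Your plan matches the paper's proof essentially exactly: both exploit the explicit structure of $\FPF_n$ (fixed points of $\hat z$ are sent to $n+1,\dots,n+k$, with a ``tail'' $(n+k+1,n+k+2)\cdots$), both read off the fixed-point count from the $[n]\times[n]$ rank table (the paper from the plateaus of $i \mapsto \r_z(i,n)$, you from $\r_z(n,n) = n-k$ -- equivalent), both deduce $k \le l$ from the hypothesis, and both then compare ranks block by block.

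One caution, though: the step you defer as ``routine block-matrix rank counting'' is in fact the bulk of the paper's argument. Once $k \neq l$ the index shifts in the two tails differ, so one cannot simply say ``the constant $c(i,j)$ depends only on $i,j$ and the common fixed-point count'' -- there is no common count, and the comparison has to be made across three separate regimes: $(i,j) \in [n]\times(n,2n]$ where the formula $\min\{i,\, \r_\cdot(i,n)+j-n\}$ applies to both with different $\r_\cdot(i,n)$; $i,j$ both in $(n,n+\min\{k,l\}]$; and the mismatched range $n+k < i \le n+l$ where $\r_z$ has already entered its $1_\fpf$-tail regime while $\r_y$ has not. The last case requires a short but genuine argument (the paper reduces it to the inequality $i + \min\{j-(l-k),0\} \le \r_{1_\fpf}(i,j)$ and checks the parity edge case). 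So the plan is sound and identical to the paper's, but you have underestimated the deferred work, which is where most of the actual proof lives; as written the proposal is a correct outline rather than a proof.
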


\begin{proof}
Consider the numbers $b_i := \r_z(i,n)$ for $i=0,1,2,\dots,n$.
Let $i_1 < i_2 < \dots < i_k$ be the indices $i \in [n]$ with $b_i = b_{i-1}$ where $b_0:=0$, so that $b_n = n -k $.
Then the permutation matrix of $z$ has an entry equal to $1$ in positions $(i_j,n+j)$ and $(n+j,i_j)$ for each $j \in [k]$
and in positions $(i,i+1)$ and $(i+1,i)$ for each (odd) index $i \in \{n + k+1, n+k+3,\dots,2n-1\}$,
and zeros in all other positions $(i,j) \in ([2n]\times [2n]) \setminus ([n]\times [n])$.
It follows that 
\begin{itemize}
\item $\r_z(i,n+j)= \min\{ i, b_i+j\}$ for $i,j \in [n]$,
\item $\r_z(n+i,n+j) = b_n + i + \min\{j, k\}$ for $i \in [k]$ and $j \in [n]$,
\item $\r_z(n+i+k,n+j+k) = n + k + \r_{1_\fpf}(i,j)$ if $i,j \in [n-k]$, with $1_\fpf \in \Ifpf_\infty$ as in \eqref{1fpf-eq}.
\end{itemize}
This tells us how to compute $\r_z(i,j)$ whenever $\max\{i,n+1\} \leq j \leq 2n$.

Now let $a_0=0$ and $a_i = \r_y(i,n)$ for $i \in [n]$, and suppose $a_i = a_{i-1}$ for exactly $l$ indices $i \in [n]$.
Since $a_n = n-l \leq n-k = b_n$, we have $k \leq l$.
The itemized formulas above immediately imply that $\r_y(i,n+j) \leq \r_z(i,n+j)$ for all $i,j \in [n]$ 
and that $\r_y(n+i,n+j) \leq \r_z(n+i,n+j)$ for all $i,j \in [k]$.
Checking that $\r_y(i,j) \leq \r_z(i,j)$ for the remaining values of $i,j \in [2n]$ is straightforward using the above formulas.

Here are the explicit details.
When $i \in [k]$ and $j \in [n]\setminus[k]$ we have
$ 
\r_y(n+i,n+j) =  a_n + i + \min\{j,l\}\leq n + i = \r_z(n+i,n+j)
$
since $a_n = n-l$ and $b_n=n-k$. Also, it is easy to see that 
$\r_y(n+i+l,n+j+l) = \r_z(n+i+l,n+j+l)= n + l + \r_{1_\fpf}(i,j)$ for $i,j \in [n-l]$.
It remains to check that 
$
\r_y(n+i+k,n+j+k) 
\leq \r_{1_\fpf}(i,j)= \r_z(n+i+k,n+j+k)
$
for $i \in [l-k]$ and $j \in [n-k]$ with $i\leq j$. Since in this regime we have
\[
\begin{aligned}
\r_y(n+i+k,n+j+k) &=n-l + i+k + \min\{j+k,l\}, \text{ and }\\
 \r_z(n+i+k,n+j+k) &= n + k + \r_{1_\fpf}(i,j),
\end{aligned}
\]
the desired inequality is equivalent to 
$i + \min\{j-(l-k),0\}   \leq \r_{1_\fpf}(i,j).
$
This holds when $i \in [l-k]$ and $j \in [n-k]$ and $i\leq j$,
since then $\r_{1_\fpf}(i,j)=i$ unless $i=j$ is odd, in which case $r_{1_\fpf}(i,j) = i-1$ and $j < l-k$.
Combining these observations confirms that $\r_y(i,j) \leq \r_z(i,j)$ for all $ 1 \leq i \leq j \leq 2n$,
so by symmetry the same inequality holds for all $i,j \in [2n]$.
\end{proof}

 Define 
$\ellfpf(z) = |\SSD(z)|$ for any involution $z$ of $\NN$. This is guaranteed to be a finite quantity if $z \in S_\infty$ or $z \in \Ifpf_\infty$.
One can show that $\ellfpf$ restricts to the unique map $ \Ifpf_\infty \to \{0,1,2,\dots\}$ with $\ellfpf(1_\fpf) = 0$
and $\ellfpf(szs) =\ellfpf(z) + 1$ whenever $s=(i,i+1)$ and $z(i) < z(i+1)$; see  \cite[\S2.3 and \S4]{HMP5}.

The \emph{Bruhat order} on $\Ifpf_\infty$ 
is the partial order $<$
given as the transitive closure of the relation with $z < tzt$ if $t=(i,j) \in S_\infty$ 
is a transposition and $\ellfpf(tzt) = \ellfpf(z) + 1$.
For our applications, all we need to know about this order is summarized by the following lemma.
\begin{lem}[{\cite[\S4]{HMP3}}]
\label{ss-bruhat-lem}
 Fix positive integers $i<j$. The following properties then hold:
\begin{itemize}
\item[(P1)] If $z \in \Ifpf_\infty$ then we have $z < (i,j) z(i,j)$ if and only if
 $z(i) < z(j)$.
\item[(P2)]  If $z \in \Ifpf_\infty$ then we have
$\ellfpf((i,j)z(i,j)) = \ellfpf(z)+1$ only if no integer $e$ exists with $i<e<j$ and $z(i)<z(e) < z(j)$.

\item[(P3)] If $y,z\in \Ifpf_n\subset S_\infty$ for some $n \in 2\NN$, then 
$\FPF_n(y) \leq \FPF_n(z)$ in the Bruhat order on $\Ifpf_\infty$
if and only if $y \leq z$ in the Bruhat order on $S_\infty$.
\end{itemize}
\end{lem}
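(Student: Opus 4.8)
All three assertions are fixed-point-free analogues of standard features of the Bruhat order on $S_\infty$, and my plan is to deduce them from a single structural result: the analogue of Proposition~\ref{prop:bruhat-order} characterizing the fpf-Bruhat order by rank functions. Explicitly, I would first prove that for $y,z\in\Ifpf_\infty$ one has $y\leq z$ in the fpf-Bruhat order if and only if $\r_z(a,b)\leq\r_y(a,b)$ for all $a,b\in\NN$ (equivalently, $\SSX_z\subseteq\SSX_y$). The forward implication is a finite check carried out along a saturated chain of covers: if $\ellfpf(tzt)=\ellfpf(z)+1$ for a transposition $t=(i,j)$, then comparing the skew-symmetric Rothe diagrams $\SSD(z)$ and $\SSD(tzt)$ gives $\r_{tzt}(a,b)\leq\r_z(a,b)$ for all $a,b$, and one iterates. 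The reverse implication is a descent argument patterned on the tableau-criterion proof of Proposition~\ref{prop:bruhat-order}: given $\r_z\leq\r_y$ pointwise but $z\neq y$, one exhibits a transposition $t$ with $\ellfpf(tzt)=\ellfpf(z)+1$ for which $\r_z\leq\r_{tzt}\leq\r_y$ still holds pointwise, and recurses on $\ellfpf(y)-\ellfpf(z)$. Two elementary inputs are used repeatedly: the length recursion recorded just before Lemma~\ref{fpf-bruhat-pre-lem} (namely $\ellfpf(szs)=\ellfpf(z)+1$ whenever $s=(i,i+1)$ and $z(i)<z(i+1)$), and the fact that $x<y$ in the fpf-Bruhat order forces $\ellfpf(x)<\ellfpf(y)$, which is immediate since the order is the transitive closure of $\ellfpf$-increasing covers.

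Granting the rank characterization, properties (P1) and (P2) are short. For the ``if'' half of (P1), suppose $i<j$ and $z(i)<z(j)$; then $\{i,j\}$ is not a $2$-cycle of $z$, so $z'=(i,j)z(i,j)\neq z$, and a direct count using only that $z$ has orbits $\{i,z(i)\}$ and $\{j,z(j)\}$ shows $\r_{z'}(a,b)\leq\r_z(a,b)$ for all $a,b$, whence $z<z'$. The ``only if'' half follows by applying this to $z'=(i,j)z(i,j)$: if instead $z(i)>z(j)$ and $\{i,j\}$ is not a $2$-cycle of $z$, then $z'(i)=z(j)<z(i)=z'(j)$, so $z'<(i,j)z'(i,j)=z$, contradicting $z<z'$; and the cases $z(i)=z(j)$ or $\{i,j\}$ a $2$-cycle are immediate or vacuous. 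For (P2), I would argue by contraposition. If $i<e<j$ with $z(i)<z(e)<z(j)$, then (after ruling out the degenerate coincidences among $i,e,j$ and $z(i),z(e),z(j)$, which are incompatible with these inequalities) conjugating $z$ successively by $(i,e)$, then $(e,j)$, then $(i,e)$ yields
\[
z \;<\; (i,e)z(i,e) \;<\; (e,j)(i,e)z(i,e)(e,j) \;<\; (i,e)(e,j)(i,e)z(i,e)(e,j)(i,e) \;=\; (i,j)z(i,j),
\]
each step being strict by (P1) because the relevant pair of values is increasing at that stage, and the final equality because $(i,e)(e,j)(i,e)=(i,j)$. Since $x<y$ implies $\ellfpf(x)<\ellfpf(y)$, this forces $\ellfpf((i,j)z(i,j))\geq\ellfpf(z)+3$, so in particular $\ellfpf((i,j)z(i,j))\neq\ellfpf(z)+1$.

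For (P3), this is where Lemma~\ref{fpf-bruhat-pre-lem} enters. By the rank characterization and Proposition~\ref{prop:bruhat-order} applied with $m=n$, we have $\FPF_n(y)\leq\FPF_n(z)$ in the fpf-Bruhat order if and only if $\r_{\FPF_n(z)}(a,b)\leq\r_{\FPF_n(y)}(a,b)$ for all $a,b\in\NN$, while $y\leq z$ in the Bruhat order on $S_\infty$ if and only if $\r_z(a,b)\leq\r_y(a,b)$ for all $a,b\in[n]$. Since $y,z\in\Ifpf_n$ have no fixed points in $[n]$, the involution $\FPF_n(y)$ is obtained from $y$ simply by appending the tail $(n+1,n+2)(n+3,n+4)\cdots$, so $\r_{\FPF_n(y)}$ and $\r_y$ agree on $[n]\times[n]$, and likewise for $z$; thus the $[n]\times[n]$ part of the fpf-rank condition is literally the Bruhat condition for $y\leq z$. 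It remains to see the $[n]\times[n]$ inequalities suffice: Lemma~\ref{fpf-bruhat-pre-lem} promotes them to all of $[2n]\times[2n]$, and beyond $[2n]$ the involutions $\FPF_n(y)$ and $\FPF_n(z)$ coincide (both act by the fixed pairing of $\{n+1,n+2,\dots\}$), so the remaining inequalities hold with equality. This gives (P3).

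The hardest part will be the reverse implication in the rank characterization: extracting, from a pointwise rank inequality, a single transposition-conjugation that raises $\ellfpf$ by exactly one while preserving the inequality. In type $A$ this is supplied by the tableau criterion, but for $\Ifpf_\infty$ the extra diagonal contribution that distinguishes $\SSD(z)$ from $D(z)$, together with the need to exclude transpositions that merely stabilize a $2$-cycle of $z$ setwise, makes the required case analysis more delicate; this is the portion of the argument that would follow \cite[\S4]{HMP3} most closely.
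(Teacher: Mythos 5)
The paper does not prove this lemma; the accompanying Remark simply cites \cite[Cor.~4.10]{HMP3}, \cite[Prop.~4.9]{HMP3}, and \cite[Thm.~4.6]{HMP3} (the last resting on the quasiparabolic-set machinery of \cite{RainsVazirani}). Your proposal instead derives all three properties from a single structural fact: that the fpf-Bruhat order on $\Ifpf_\infty$ is characterized by the pointwise rank comparison $\r_z \leq \r_y$. This is a genuinely different route, and you are right that it cannot simply quote Proposition~\ref{ss-bruhat-prop}, since the paper proves that proposition \emph{using} (P3) of this very lemma; you would have to establish your rank characterization independently, as you note. Granting it, your chain-decomposition argument for (P2) is sound (the hypothesis $z(i)<z(e)<z(j)$ does force $z(i),z(e),z(j)$ to be disjoint from $\{i,e,j\}$, so each of your three steps is a strict ascent and $\ellfpf\bigl((i,j)z(i,j)\bigr)\geq\ellfpf(z)+3$), and your (P3) argument is essentially the paper's proof of Proposition~\ref{ss-bruhat-prop} run in reverse---indeed, since $y,z\in\Ifpf_n$ have no fixed points, $\FPF_n(y)$ and $\FPF_n(z)$ already agree outside $[n]$, so the extension beyond $[n]\times[n]$ is immediate and the $[2n]$ bootstrap via Lemma~\ref{fpf-bruhat-pre-lem} is more than you need.

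The genuine gap is the reverse implication of your rank characterization: that $\r_z\leq\r_y$ pointwise implies $y\leq z$ in the fpf-Bruhat order. You acknowledge this is the hardest part and describe it only as a descent argument patterned on the tableau criterion that ``would follow \cite[\S4]{HMP3} most closely.'' That step is precisely the content being cited by the paper. It is not a routine transcription of the type-$A$ argument: one must produce a transposition $t$ with $\ellfpf(tzt)=\ellfpf(z)+1$ that both preserves the pointwise rank inequality and avoids the $2$-cycles of $z$ (which are fixed by conjugation), and the diagonal restriction built into $\SSD$ means the usual tableau criterion does not transport directly. Since your (P1) is deduced from the rank characterization, your (P2) from (P1), and your (P3) from the rank characterization again, the entire proposal rests on this unproven step, so as written it is incomplete.
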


\begin{rem}
In this lemma, property (P1) follows from \cite[Cor. 4.10]{HMP3} and property (P2) 
follows from \cite[Prop. 4.9]{HMP3}.
Property (P3) follows from  \cite[Thm. 4.6]{HMP3}, which summarizes some specific consequences 
of the general theory of quasiparabolic sets developed in \cite{RainsVazirani}. 
\end{rem}

Here is our skew-symmetric analogue of Proposition~\ref{prop:bruhat-order}:

\begin{prop}\label{ss-bruhat-prop}
For all $y,z \in \FPF_n(\I_n)$, the following properties are equivalent:
\begin{itemize}
\item[(a)] One has $y \geq z$ in the Bruhat order on $\Ifpf_\infty$.
\item[(b)] It holds that $\rank y_{[i][j]} \leq \rank z_{[i][j]}$ for all $(i,j) \in [n]\times [n]$.
\item[(c)] We have the containment $\SSX_y\subseteq \SSX_z$ of skew-symmetric matrix Schubert varieties.
\end{itemize}
\end{prop}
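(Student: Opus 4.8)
The plan is to prove the cyclic implications (a) $\Rightarrow$ (b) $\Rightarrow$ (c) $\Rightarrow$ (a), mirroring the structure of Proposition~\ref{prop:bruhat-order} but substituting the fpf-involution machinery developed in \S\ref{bruhat-sect} for the classical tableau criterion.

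For (a) $\Rightarrow$ (b): by the definition of Bruhat order on $\Ifpf_\infty$ it suffices, by transitivity, to treat a single covering step $z \lessdot tzt$ with $t = (i,j)$ and $\ellfpf(tzt) = \ellfpf(z)+1$. I would verify that passing from $z$ to $tzt$ can only weakly decrease each rank function $\r(a,b) := \rank(\cdot)_{[a][b]}$ on $[n]\times[n]$. This is a local computation: conjugating by $(i,j)$ swaps rows $i,j$ and columns $i,j$ of the permutation matrix, and (P1)--(P2) of Lemma~\ref{ss-bruhat-lem} control exactly which $1$'s move and in what configuration, so that for each $(a,b)$ the number of $1$'s in the top-left $a\times b$ block does not increase. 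Alternatively, and perhaps more cleanly, I would invoke Proposition~\ref{prop:bruhat-order}(a)$\Leftrightarrow$(b) applied to the underlying permutations together with Lemma~\ref{fpf-bruhat-pre-lem}: if $y \geq z$ in $\Ifpf_\infty$ then (P3) plus the fact that $\FPF_n$ is compatible with Bruhat order gives $y' \geq z'$ for the corresponding involutions viewed in a symmetric group $S_N$, and Proposition~\ref{prop:bruhat-order} yields the rank inequalities on $[N]\times[N]$, which restrict to $[n]\times[n]$.

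For (b) $\Rightarrow$ (c): the rank inequalities on $[n]\times[n]$ say precisely that every defining rank condition for $\SSX_z$ is implied by the defining conditions for $\SSX_y$, so $\SSX_y \subseteq \SSX_z$ directly from the definition of skew-symmetric matrix Schubert varieties (using Proposition~\ref{thm:fpf-essential-set} if one wishes to work with essential sets, though it is not needed). For (c) $\Rightarrow$ (a): by Theorem~\ref{ssx-thm}, $\openSSX_y$ is dense in $\SSX_y$, so if $\SSX_y \subseteq \SSX_z$ then $\ss_n(y) \in \SSX_z$, hence $\rank y_{[i][j]} = \rank \ss_n(y)_{[i][j]} \leq \rank z_{[i][j]}$ for all $i,j \in [n]$; this gives (b), and then I would upgrade (b) to (a) by the route above — extend the rank inequalities to $[2n]\times[2n]$ via Lemma~\ref{fpf-bruhat-pre-lem}, lift to involutions in $S_{2n}$, apply Proposition~\ref{prop:bruhat-order} to conclude $y \geq z$ in $S_{2n}$, and finally descend via (P3) to get $y \geq z$ in $\Ifpf_\infty$.

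The main obstacle is the passage between the Bruhat order on $\Ifpf_\infty$ and the Bruhat order on an ordinary symmetric group $S_N$: Lemma~\ref{ss-bruhat-lem}(P3) only addresses elements of the form $\FPF_n(y)$ with $y \in \Ifpf_n$, i.e.\ genuine fixed-point-free involutions of $[n]$, whereas a general element of $\FPF_n(\I_n)$ comes from an involution with fixed points. So the care point is choosing $N$ large enough (e.g.\ $N = 2n$, which is exactly why Lemma~\ref{fpf-bruhat-pre-lem} is phrased on $[2n]$) that both $y$ and $z$, as elements of $\FPF_n(\I_n)$, are of the form $\FPF_{2n}(\tilde y)$ and $\FPF_{2n}(\tilde z)$ with $\tilde y, \tilde z \in \Ifpf_{2n}$, and then checking that the rank inequalities genuinely transfer along this identification. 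Once that bookkeeping is in place the rest is routine, and Lemma~\ref{fpf-bruhat-pre-lem} is precisely the tool that closes the gap between "rank inequalities on $[n]\times[n]$" and "rank inequalities on $[2n]\times[2n]$" needed to make the lift legitimate.
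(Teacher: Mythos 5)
Your proposal is correct and follows essentially the same route as the paper: the equivalence (a)$\Leftrightarrow$(b) is obtained by writing $y=\FPF_{2n}(y')$, $z=\FPF_{2n}(z')$ with $y',z'\in\Ifpf_{2n}$, applying (P3) of Lemma~\ref{ss-bruhat-lem} to pass between the Bruhat orders on $\Ifpf_\infty$ and $S_{2n}$, applying Proposition~\ref{prop:bruhat-order} to translate to rank inequalities on $[2n]\times[2n]$, and then using Lemma~\ref{fpf-bruhat-pre-lem} to descend to $[n]\times[n]$; the equivalence (b)$\Leftrightarrow$(c) is immediate since $\openSSX_y$ is a nonempty dense subset of $\SSX_y$. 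The paper proves the two biconditionals directly rather than chaining (a)$\Rightarrow$(b)$\Rightarrow$(c)$\Rightarrow$(a), but the content (and the key care point you flag about lifting from $\FPF_n(\I_n)$ to $\Ifpf_{2n}$) is the same.
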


\begin{proof} 
We have $y = \FPF_{2n}(y')$ and $z = \FPF_{2n}(z')$ for some permutations $y',z' \in \Ifpf_{2n}$,
and
property (P3) in Lemma~\ref{ss-bruhat-lem}
asserts 
that  $y \geq z$ in the Bruhat order on $\Ifpf_\infty$ if and only if $y' \geq z'$ in the Bruhat order on $S_{2n}$.
The latter holds if and only if $\r_{y'}(i,j) \leq \r_{z'}(i,j)$ for all $i,j \in [2n]$ by Proposition~\ref{prop:bruhat-order}.
By Lemma~\ref{fpf-bruhat-pre-lem}, this is equivalent to having
$\r_{y}(i,j) \leq \r_{z}(i,j)$ for all $i,j \in [n]$, as $\r_y(i,j) = \r_{y'}(i,j)$ and $\r_z(i,j) = \r_{z'}(i,j)$ for all $i,j \in [2n]$.
This shows that  properties (a) and (b) are equivalent.
Properties (b) and (c) are obviously equivalent since the cells $\openSSX_y$ and $\openSSX_z$ are nonempty;
this equivalence is also noted as \cite[Cor. 4.4]{Cherniavsky}.
 \end{proof}

\subsection{Pfaffian generators for prime ideals}

As in the introduction, we identify the coordinate ring $\KK[\SSM_n]$ with $\KK[u_{ij} : i,j\in [n], i>j]$
where $u_{ij}$ represents the function $A \mapsto A_{ij}$.
If $A$ is a matrix and $I$ and $J$ are subsets of indices,
then we write $A_{IJ}:= [A_{ij}]_{(i,j) \in I \times J}$ for the corresponding $|I|\times |J|$ submatrix. 
We often apply this notation to the 
$n\times n$ skew-symmetric matrix of variables $\SSU$ with entries defined by
\begin{equation}
\label{SSU-eq}
  \SSU_{ij} = \begin{cases} -u_{ji} & \text{if $i < j$} \\ u_{ij} & \text{if $i > j$} \\ 0 & \text{if $i = j$.} \end{cases}
\end{equation}

If $A$ is a matrix then $\rank A \leq r$ if and only if all size $r+1$ minors of $A$ vanish.
Hence, by Proposition~\ref{thm:fpf-essential-set},  $\SSX_z$ is the zero locus of the ideal in 
$\KK[u_{ji} : i,j\in [n], i>j]$ 
 generated by all size $\rank z_{[i][j]} + 1$ minors of $\SSU_{[i][j]}$ for  
   $(i,j) \in \Ess(\SSD(z))$.
 This ideal is often \emph{not} prime, however.
 
\begin{ex}\label{not-prime-ex}
Take $n=6$ and  let $z$ be the image of $ (1,2)(3,5)(4,6) \in \Ifpf_6$ under $\FPF_6$. Then
\[
\Ess(\SSD(z)) = \SSD(z) = \{(4,3)\}  =
\left\{\begin{smallmatrix}
 \cdot& \cdot & \cdot &\cdot  &  \cdot&\cdot  \\
\cdot &   \cdot&\cdot  &  \cdot& \cdot &\cdot  \\
\cdot & \cdot &  \cdot&\cdot  & \cdot & \cdot \\
\cdot & \cdot & \square & \cdot & \cdot &\cdot  \\
\cdot & \cdot & \cdot & \cdot& \cdot &\cdot  \\
\cdot & \cdot & \cdot & \cdot & \cdot & \cdot \\
\end{smallmatrix}\right\}
=
\left\{\begin{smallmatrix}
 \square& 1 & \cdot &\cdot  &  \cdot&\cdot  \\
1 &   \cdot&\cdot  &  \cdot& \cdot &\cdot  \\
\cdot & \cdot &  \square&\square  & 1 & \cdot \\
\cdot & \cdot & \square & \square & \cdot &1  \\
\cdot & \cdot & 1 & \cdot & \cdot &\cdot  \\
\cdot & \cdot & \cdot & 1 & \cdot & \cdot \\
\end{smallmatrix}\right\}
\cap 
\left\{\begin{smallmatrix}
\cdot& \cdot & \cdot &\cdot  &  \cdot & \cdot  \\
\square& \cdot & \cdot &\cdot  &  \cdot & \cdot  \\
\square& \square & \cdot &\cdot  &  \cdot & \cdot  \\
\square& \square & \square &\cdot  &  \cdot & \cdot  \\
\square& \square & \square &\square  &  \cdot & \cdot  \\
\square& \square & \square &\square  &  \square & \cdot  
\end{smallmatrix}\right\}
\]
so 
$\SSX_z = \{ A \in \SSM_6 : \rank A_{[4][3]} \leq 2\}$ by Proposition~\ref{thm:fpf-essential-set}.
The ideal described above is therefore generated by the four $3 \times 3$ minors in $\SSU_{[4][3]}$, one of which is 
    \begin{equation*}
    \begin{aligned}
        \det ( \SSU_{\{1,2,4\},\{1,2,3\}}) =  \det \left[ \begin{smallmatrix}
            0 & -u_{21} & -u_{31}\\
            u_{21} & 0  & -u_{32}\\
            u_{41} & u_{42} & u_{43} \end{smallmatrix}\right] 
            &
            = u_{21}(u_{21}u_{43} - u_{31}u_{42} + u_{32}u_{41}).
            \end{aligned}
        \end{equation*}
        This ideal, being  generated by homogeneous polynomials of degree $3$, cannot contain either of the factors $u_{21}$ or $u_{21}u_{43} - u_{31}u_{42} + u_{32}u_{41}$, and therefore is not prime.

\end{ex}

In this section, we identify a different set of natural generators for an ideal $\SSI_z$ whose zero locus is $\SSX_z$.
Later,  in Section~\ref{tra-sect}, we will show that $\SSI_z$ is actually the prime ideal $I(\SSX_z)$ of the variety $\SSX_z$.
The key idea in our construction is to replace minors of matrices by Pfaffians.
Recall that the \emph{Pfaffian} of a skew-symmetric $n\times n$ matrix $A$ is
\be\label{pf-def-eq}
 \pf (A)
= \sum_{z \in \Ifpf_n} (-1)^{\ellfpf(z)} \prod_{z(i)<i \in [n] } A_{z(i),i}
\ee
where once again $\ellfpf(z) = |\SSD(z)|$. For example, we have
$
\pf( \SSU_{[2][2]}) = \pf \left[\begin{smallmatrix} 0 & - u_{21} \\ u_{21} & 0 \end{smallmatrix}\right] = -u_{21}
$.
If $n$ is odd then
the outer summation in \eqref{pf-def-eq} is empty so $\pf(A) = 0$. This is consistent with the well-known fact that
$ \pf (A)^2 = \det (A)$, which is zero if $A$ is skew-symmetric of odd size.

\begin{defn}\label{ssi-def} Given $z \in \FPF_n(\I_n)$, let $\SSI_z$ be the ideal in 
$\KK[\SSM_n] = \KK[u_{ij} : i,j \in [n], i>j]$ generated by the Pfaffians $\pf (\SSU_{RR})$ for all nonempty sets $R \subseteq [n]$ of even size 
for which there exist 
 indices $i,j \in [n]$ with $i\geq j$ such that $R \subseteq [i]$ and $|R \cap [j]| > \rank z_{[i][j]}$.  \end{defn}

We discuss an example where we can compute directly that $\SSI_z = I(\SSX_z)$.

\begin{ex} Suppose $n=6$ and $z$ is the image of $(1,4)(2,6)(3,5) \in \Ifpf_6$ under $\FPF_6$. Then
\begin{equation*}
\left[  
        \rank z_{[i][j]} 
   \right]_{1\leq i,j \leq 6}
    =
\left[    \begin{array}{cccccc}
        0 & 0 & 0 & 1 & 1 & 1 \\
        0 & 0 & 0 & 1 & 1 & 2 \\
        0 & \boxed{0} & 0 & 1 & 2 & 3 \\
        1 & 1 & 1 & 2 & 3 & 4 \\
        1 & \boxed{1} & 2 & 3 & 4 & 5 \\
        1 & 2 & 3 & 4 & 5 & 6
    \end{array}\right]
\end{equation*}
where the boxed cells correspond to the positions in $\Ess(\SSD(z))$. The rank condition at position $(i,j) = (3,2)$ says that $\rank A_{[3][2]} = 0$
for all $A \in \SSX_z$, so 
$\SSX_z$ is contained in the zero locus of the ideal $(u_{21}, u_{31},u_{32}) $ in $ \KK[u_{ij} : 6 \geq i>j \geq 1] $. Thus every matrix $A \in \SSX_z$ has the form
\begin{equation*}
    \begin{bmatrix}
        0 & 0 & 0 & -A_{41} & -A_{51} & -A_{61}\\
        0 & 0 & 0 & -A_{42} & -A_{52} & -A_{62}\\ 
        0 & 0 & 0 & -A_{43} & -A_{53} & -A_{63}\\
        A_{41} & A_{42} & A_{43} & 0 & -A_{54} & -A_{64}\\
        A_{51} & A_{52} & A_{53} & A_{54} & 0 & -A_{65}\\
        A_{61} & A_{62} & A_{63} & A_{64} & A_{65} & 0
    \end{bmatrix}.
\end{equation*}
The rank condition at position $(i,j)=(5,2)$ says that if $A \in \SSX_z$ then $\rank A_{[5][2]} \leq 1$, which holds if and only if $\det \left[\begin{smallmatrix} A_{41} & A_{42} \\ A_{51} & A_{52} \end{smallmatrix} \right] = 0$. We conclude that $\SSX_z$ is the zero locus of the ideal 
$(u_{21}, u_{31}, u_{32}, u_{41}u_{52} - u_{42}u_{51}) $ in $ \KK[u_{ij} : 6 \geq i>j \geq 1].$ This ideal turns out to be prime,
so it is in fact equal to the ideal $I(\SSX_z)$ of the (irreducible) variety $\SSX_z$.

We now describe the ideal $\SSI_z$. The generators of $\SSI_z$ corresponding to position $(i,j) = (3,2)$ 
have the form $\pf(\SSU_{RR})$ for all subsets 
 $R \subseteq \{1,2,3\}$ of size $2$ such that $|R \cap \{1,2\}| > 0$; the last condition is vacuous so $R$ can be any of $\{1,2\}$, $\{2,3\}$, or $\{1,3\}$, giving generators 
 \[\pf \left[ \begin{smallmatrix} 0 & -u_{21} \\ u_{21} & 0 \end{smallmatrix}\right] = -u_{21},\quad \pf \left[ \begin{smallmatrix} 0 & -u_{23} \\ u_{32} & 0 \end{smallmatrix}\right] = -u_{32},
 \quand \pf \left[ \begin{smallmatrix} 0 & -u_{31} \\ u_{31} & 0 \end{smallmatrix}\right] = -u_{31}.\]
The generators of $\SSI_z$ corresponding to position $(i,j) = (5,2)$ 
have the form $\pf(\SSU_{RR})$ for all subsets $R \subseteq \{1,2,3,4,5\}$ of size $2$ or $4$ such that $|R \cap \{1,2\}| > 1$: these are $\{1,2\}$, $\{1,2,3,4\}$, $\{1,2,3,5\}$, and $\{1,2,4,5\}$, giving generators $\pf(\SSU_{\{1,2\},\{1,2\}}) =\pf \left[ \begin{smallmatrix} 0 & -u_{21} \\ u_{21} & 0 \end{smallmatrix}\right] = -u_{21}$ along with
\[
\begin{aligned}
  \pf(\SSU_{\{1,2,3,4\},\{1,2,3,4\}}) &=  \pf \left[\begin{smallmatrix}
        0 & -u_{21} & -u_{31} & -u_{41} \\ 
        u_{21} & 0 & -u_{32} & -u_{42} \\ 
        u_{31} & u_{32} & 0 & -u_{43} \\ 
        u_{41} & u_{42} & u_{43} & 0 \\ 
    \end{smallmatrix}\right] = u_{21}u_{43} - u_{31}u_{42} + u_{41}u_{32},
    \\
  \pf(\SSU_{\{1,2,3,5\},\{1,2,3,5\}}) &=   \pf \left[\begin{smallmatrix}
        0 & -u_{21} & -u_{31} & -u_{51} \\ 
        u_{21} & 0 & -u_{32} & -u_{52} \\ 
        u_{31} & u_{32} & 0 & -u_{53} \\ 
        u_{51} & u_{52} & u_{53} & 0 \\ 
    \end{smallmatrix}\right] = u_{21}u_{53} - u_{31}u_{52} + u_{51}u_{32},
    \\
\pf(\SSU_{\{1,2,4,5\},\{1,2,4,5\}}) &=    \pf \left[\begin{smallmatrix}
        0 & -u_{21} & -u_{41} & -u_{51} \\ 
        u_{21} & 0 & -u_{42} & -u_{52} \\ 
        u_{41} & u_{42} & 0 & -u_{54} \\ 
        u_{51} & u_{52} & u_{54} & 0 \\ 
    \end{smallmatrix}\right] = u_{21}u_{54} - u_{41}u_{52} + u_{51}u_{42}.
\end{aligned}
\]
One can check that these Pfaffians are already a generating set for $\SSI_z$.
(In fact, by mimicking the proof of \cite[Lem. 3.10]{FultonEssentialSet}, one can show that $\SSI_z$ is always generated by the 
generators in Definition~\ref{ssi-def} corresponding to just the essential positions $(i,j) \in \Ess(\SSD(z))$, but we will not need this
in any arguments.)
For this example, one can compute directly that
\[ 
{\small
\begin{aligned}
   \SSI_z &= (-u_{21}, -u_{32}, -u_{31}, u_{21}u_{43} - u_{31}u_{42} + u_{41}u_{32}, u_{21}u_{53} - u_{31}u_{52} + u_{51}u_{32}, u_{21}u_{54} - u_{41}u_{52} + u_{51}u_{42})\\
    &=(u_{21}, u_{32}, u_{31}, -u_{41}u_{52} + u_{51}u_{42}) \\&= I(\SSX_z).
\end{aligned}
}
\]
\end{ex}

\begin{remark} \label{rem:pfaffian-ideals} One can show that the Pfaffian ideals considered in \cite{DeNegriSbarra, RaghavanUpadhyay} are 
precisely the ideals $\SSI_z$ 
indexed by all $z \in \FPF_n(\I_n)$ for which the essential set of the skew-symmetric Rothe diagram has the following property:
one can write $\Ess(\SSD(z)) = \{(a_1, b_1),\ldots, (a_k, b_k)\}$ 
such that the 
sequences of numbers $a_i$, $b_i$, and $\rank z_{[a_i][b_i]}$ for $i=1,2,\dots,k$
are respectively decreasing, increasing, and increasing (weakly in each case). Such  conditions appear in certain geometric contexts \cite{AndersonFulton, KnutsonMillerYong}, as well as combinatorially: the permutations $w \in S_\infty$ such that $\Ess(D(w))$ has the above property are exactly the ones that are \emph{vexillary} (i.e., 2143-avoiding). 
Fixed-point-free involutions $z$ satisfying our conditions also arise from geometry \cite{UniversalGraphSchubert}, but are no longer always vexillary permutations.
\end{remark}

To show that $\SSI_z$ is the prime ideal of $\SSX_z$,
we first need to prove that $\SSI_z \subseteq I(\SSX_z)$.
For this, it suffices to check that 
$A \in \SSX_z$ if and only if $f(A) = 0$ for all $f \in \SSI_z$.

It is well-known that if $B$ is the matrix obtained from a skew-symmetric matrix $A$ by multiplying row $i$ and column $i$ by a scalar $c$, then $\pf(B) = c\cdot \pf(A)$. Likewise, if $A^v$, $A^w$, and $A^{v+w}$ are skew-symmetric matrices that are equal outside row and column $i$, 
and it holds that column $i$ contains vectors $v$, $w$, and $v+w$ in the three respective matrices, then $\pf(A^{v+w}) = \pf(A^v) + \pf(A^w)$.
More generally, we have $\pf(XAX^T) = \det(X) \pf(A)$ as long as $X$ and $A$ are square matrices of the same size with $A$ skew-symmetric.

\begin{lem}\label{lem-pf2}
Fix integers $i,j \in [n]$ with $i\geq j$ and $r\geq 0$. If $A \in \SSM_n$
then $\rank A_{[i][j]} \leq r$ if and only if $\pf(A_{RR}) = 0$ for all subsets $\emptyset \neq R \subseteq [i]$ of even size with $|R \cap [j]| > r$. 
\end{lem}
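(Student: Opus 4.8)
The statement to prove is a rank condition $\rank A_{[i][j]} \le r$ is equivalent to the vanishing of all $\pf(A_{RR})$ for even-sized $R \subseteq [i]$ with $|R \cap [j]| > r$. The plan is to prove the contrapositive of each direction by relating the rank of the rectangular block $A_{[i][j]}$ to Pfaffians of symmetric submatrices.

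\textbf{Setup and the key reduction.} First I would fix $A \in \SSM_n$ and consider the skew-symmetric matrix $A_{SS}$ where $S = [i]$; note $A_{[i][j]}$ is the submatrix of $A_{SS}$ using all rows but only the columns in $[j]$. The crucial linear-algebra fact is this: for a skew-symmetric matrix $M$ of size $i$, and any column set $C \subseteq [i]$, one has $\rank M_{[i]C} \ge r+1$ if and only if there is an even-sized $R \subseteq [i]$ with $|R \cap C| \ge r+1$ and $\pf(M_{RR}) \ne 0$. Taking $C = [j]$ and $M = A_{[i][i]}$ gives exactly the lemma (rephrasing $|R \cap [j]| \ge r+1$ as $|R \cap [j]| > r$, and using that $R \subseteq [i]$). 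So the real content is this clean statement about skew-symmetric matrices and their rectangular sub-blocks.

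\textbf{Proving the key fact, direction ($\Leftarrow$).} If such an $R$ exists with $\pf(M_{RR}) \ne 0$, then $M_{RR}$ is nonsingular (since $\pf(M_{RR})^2 = \det(M_{RR})$), hence $\rank M_{R,R} = |R|$. Restricting to the columns $R \cap C$ and the rows $R$, the submatrix $M_{R, R\cap C}$ still has rank $\ge |R \cap C| \ge r+1$ because deleting $|R| - |R \cap C|$ columns from a full-rank matrix drops the rank by at most that many, and $|R| - |R\cap C| \le |R| - (r+1)$... more carefully: $\rank M_{R, R\cap C} \ge \rank M_{RR} - (|R| - |R\cap C|) = |R\cap C| \ge r+1$. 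Since $M_{R,R\cap C}$ is a submatrix of $M_{[i]C}$, we get $\rank M_{[i]C} \ge r+1$.

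\textbf{Proving the key fact, direction ($\Rightarrow$).} Suppose $\rank M_{[i]C} \ge r+1$. Then there is a set of $r+1$ columns $C' \subseteq C$ and a set of $r+1$ rows $P \subseteq [i]$ with $M_{PC'}$ nonsingular. I want to enlarge $P \cup C'$ to an even-sized $R$ with $\pf(M_{RR}) \ne 0$ and $|R \cap C| \ge r+1$. The natural candidate is a set $R$ built around $C'$: I would argue that among all even-sized $R \supseteq C'$ contained in $[i]$, one can be chosen with $M_{RR}$ nonsingular — equivalently $\pf(M_{RR}) \ne 0$ — and this will automatically satisfy $|R \cap C| \ge |C'| = r+1$. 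The existence of such $R$ follows from a standard fact: if $M$ is skew-symmetric and $M_{PC'}$ is nonsingular with $|P| = |C'| = r+1$, then there is an even-sized $R$ with $C' \subseteq R \subseteq P \cup C'$ and $M_{RR}$ nonsingular. One can see this by using $\pf(XMX^T) = \det(X)\pf(M)$ to change basis, or by the theory of the Pfaffian as the ``square root'' of the determinant: $M_{(P\cup C')(P\cup C')}$ has rank $\ge r+1$, and a maximal nonsingular principal submatrix of a skew-symmetric matrix has even size and can be taken to contain any prescribed set of $\le r+1$ independent columns. I would cite or prove this as a short lemma about skew-symmetric matrices.

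\textbf{Expected main obstacle.} The delicate point is the ($\Rightarrow$) direction: passing from a nonsingular \emph{rectangular} minor $M_{PC'}$ to a nonsingular \emph{principal} (hence even-sized) submatrix $M_{RR}$ containing the columns $C'$. The row set $P$ need not equal $C'$, so one cannot simply take $R = P \cup C'$ and hope it works; one must invoke the structure of skew-symmetric bilinear forms (e.g.\ that the restriction of a nondegenerate alternating form to a subspace on which it is nondegenerate has even-dimensional domain, or equivalently a greedy/symplectic-Gram-Schmidt argument). This is the step that genuinely uses skew-symmetry rather than being a formal rank manipulation, so I would isolate it as a self-contained sublemma and prove it by induction on $|P \cup C'|$: if $M_{(P\cup C')(P\cup C')}$ is already nonsingular and of even size, done; otherwise find a row or column index in $(P \cup C') \setminus C'$ that can be removed without losing a nonsingular $(r+1)$-column minor, or added, to adjust parity while keeping $C'$ and nonsingularity.
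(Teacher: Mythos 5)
Your approach is correct but takes a genuinely different route from the paper's. The paper first verifies the lemma directly when $A$ is a monomial skew-symmetric matrix (a permutation matrix times a diagonal matrix), then reduces the general case to this one by showing that the vector $[A] := (\pf(A_{RR}))_R$ transforms by an invertible linear map when $A$ is replaced by $XAX^T$ for any $X \in B_n$, and invoking Theorem~\ref{ssx-thm} to choose an $X$ making $XAX^T$ monomial. Your argument instead stays with a general $A$ and works through alternating bilinear forms, reducing to the sublemma that if the columns of a skew-symmetric $M$ indexed by $C'$ are linearly independent then some even-sized $R \supseteq C'$ has $M_{RR}$ nonsingular. That sublemma is true and the greedy induction you sketch does prove it: set $W = \mathrm{span}\{e_c : c \in C'\}$ and, as long as $\beta(u,v) := u^T M v$ is degenerate on $W$, pick $0 \neq u$ in the radical of $\beta|_W$; the independence of the $C'$-columns gives $W \cap \rad(\beta) = 0$, so $\beta(u,e_k) \neq 0$ for some index $k$, and enlarging $W$ by $e_k$ raises $\rank(\beta|_W)$ by two and $\dim W$ by one while preserving the invariant $W \cap \rad(\beta) = 0$; the process therefore halts at a nondegenerate (hence even-dimensional) span, and running it inside $M_{(P\cup C')(P\cup C')}$ keeps $R \subseteq P \cup C'$ as you claim. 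What each approach buys: your proof is self-contained and avoids the orbit classification of Theorem~\ref{ssx-thm}, whereas the paper's monomial-plus-$B_n$-action route is arguably more mechanical but needs that external input and requires tracking the $B_n$-equivariance of the Pfaffian vector $[A]$.
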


\begin{proof} 
  First assume that $A \in \SSM_n$ is a \emph{monomial matrix}, that is, a product of a permutation matrix and a diagonal matrix.
Suppose $\rank A_{[i][j]} > r$. Then there exist nonzero entries of $A_{[i][j]}$ in some positions $(i_0, j_0), (i_1,j_1),\ldots, (i_r, j_r)$ such that $R = \{i_0,i_1, \ldots, i_r\}$ and $S = \{j_0, j_1,\ldots, j_r\}$ both have size $r+1$. The skew-symmetry of $A$ means that every row and column of $A_{R \cup S, R \cup S}$ contains a nonzero entry. Since $A_{R \cup S, R \cup S}$ is again monomial, this means that $|R \cup S|$ is even and $\pf(A_{R \cup S, R \cup S}) \neq 0$,
but in this case we have $|(R \cup S) \cap [j]| \geq |S \cap [j]| = |S| = r+1 > r$. 
    Conversely, suppose that $\rank A_{[i][j]} \leq r$. Then, since $A$ is a monomial matrix, any selection of more than $r$ columns of $A_{[i][j]}$ must include a zero column. In particular, if $|R \cap [j]| > r$ and $R \subseteq [i]$ then $A_{RR}$ has a zero column, so $\pf(A_{RR}) = 0$.

This proves the lemma when $A \in \SSM_n$ is a monomial matrix. 
We now explain how to reduce the general form of the lemma to the monomial case. 

Say that a set $R$ is \emph{valid} if $\emptyset \neq R \subseteq [i]$ and $|R|$ is even and $|R \cap [j]| > r$.
Let $\mathcal{V}$ be the vector space with a basis given by the collection of all valid sets. For a matrix $A \in \SSM_n$, let
$[A]_R := \pf(A_{RR})$ and define
 $[A] \in \mathcal{V}$ to be formal linear combination $\sum_R [A]_R \cdot R$ over all all valid sets $R$.
 In this notation, what we wish to show is that $\rank A_{[i][j]} \leq r$ if and only if $[A]$ is the zero vector in $\mathcal{V}$.
Using the properties of Pfaffians noted above, we observe that for any indices $n\geq a > b \geq 1$:
\begin{itemize}
    \item[(i)] If $X \in B_n$ is the $n\times n$ diagonal matrix with $v \in \KK$ in position $(a,a)$ and all other diagonal entries  $1$, then $[XAX^T] =  v \sum_{a \in R}  [A]_R \cdot R + \sum_{a \notin R} [A]_R \cdot R$, both sums over valid sets $R$.
    
    \item[(ii)] If $R$ is a valid set with $a \in R$ and $b \notin R$, then the set $(R\setminus\{a\}) \cup \{b\}$ is also valid.
    
    \item[(iii)] If $X=1 + E_{ab} \in B_n$ is the $n\times n$ lower triangular matrix with 1's on the diagonal, with 1 in position $(a,b)$, and with 0's in all other positions, 
then 
\[[XAX^T] = \sum_{a\in R\text{ and } b \notin R} \left([A]_R \pm [A]_{(R \setminus \{a\}) \cup \{b\}}\right) \cdot R + \sum_{a\notin R\text{ or } b \in R} [A]_R \cdot R,\] 
with both sums over valid sets $R$. More precisely, the sign $\pm$  is $ (-1)^{|R \cap \{ a-1, a-2,\dots,b+1\}|}$.
\end{itemize}
Any matrix in $B_n$ is a product of matrices of the form described in item (i) or (iii).
Thus, we deduce that 
for each $X \in B_n$
there exists an invertible linear map $L_X : \mathcal{V} \to \mathcal{V}$ with $L_X([A]) = [XAX^T]$ and $(L_{X})^{-1} = L_{X^{-1}}$. 
Fix $A \in \SSM_n$.
Then for any given $X \in B_n$ we have $[A]=0$ if and only if $[XAX^T] = 0$.
In view of Theorem~\ref{ssx-thm},
there exists $X \in B_n$
such that  $XAX^T$ is the monomial matrix $\ss_{n}(z)$ for some $z \in \FPF_n(\I_n)$.
By the monomial case, this means that
 $[A] =0$ if and only if $\rank (XAX^T)_{[i][j]}   \leq r$.
However, $\rank (XAX^T)_{[i][j]} = \rank A_{[i][j]}$ for all $X \in B_n$.
\end{proof}

We can now briefly derive the following result, which implies that $\SSI_z \subseteq I(\SSX_z)$.

\begin{thm}
\label{ss-zero-locus-thm}
 If $z \in \FPF_n(\I_n)$ then the zero locus of $\SSI_z$ in $\SSM_n$ is $\SSX_z$.
  \end{thm}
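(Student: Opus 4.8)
The plan is to deduce this directly from Lemma~\ref{lem-pf2}, which already contains the substantive content. Fix $z \in \FPF_n(\I_n)$ and $A \in \SSM_n$. The first point to record is that $A$ lies in the zero locus of $\SSI_z$ precisely when $\pf(A_{RR}) = 0$ for every even-sized nonempty $R \subseteq [n]$ admitting indices $i \geq j$ in $[n]$ with $R \subseteq [i]$ and $|R \cap [j]| > \rank z_{[i][j]}$, since $\SSI_z$ is by Definition~\ref{ssi-def} generated by the corresponding Pfaffians $\pf(\SSU_{RR})$ and $\pf(A_{RR})$ is the value of $\pf(\SSU_{RR})$ at $A$.

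Next I would organize these conditions according to the pair $(i,j)$. For each fixed pair $i \geq j$ in $[n]$, Lemma~\ref{lem-pf2} (applied with $r = \rank z_{[i][j]}$) says exactly that the vanishing $\pf(A_{RR}) = 0$ for all even nonempty $R \subseteq [i]$ with $|R \cap [j]| > r$ is equivalent to $\rank A_{[i][j]} \leq \rank z_{[i][j]}$. Taking the conjunction over all such pairs, $A$ lies in the zero locus of $\SSI_z$ if and only if $\rank A_{[i][j]} \leq \rank z_{[i][j]}$ for all $i,j \in [n]$ with $i \geq j$.

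It then remains to compare this with the defining rank conditions of $\SSX_z$, which a priori range over all $(i,j) \in [n] \times [n]$. One inclusion is immediate: every $A \in \SSX_z$ satisfies the rank bounds for all $i,j$, in particular for $i \geq j$, so $\SSX_z$ is contained in the zero locus of $\SSI_z$. For the reverse inclusion I would invoke Proposition~\ref{thm:fpf-essential-set}, which cuts $\SSX_z$ out by the rank conditions at the essential positions $(i,j) \in \Ess(\SSD(z))$ alone; since $\SSD(z) \subseteq \{(i,j) : i > j\}$, all of these positions satisfy $i > j$ and hence $i \geq j$. Therefore any $A$ in the zero locus of $\SSI_z$ --- which meets all the $i \geq j$ rank conditions --- in particular meets the essential ones, so lies in $\SSX_z$. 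Combining the two inclusions gives the claim.

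As for difficulty, there is essentially no obstacle here beyond bookkeeping, since Lemma~\ref{lem-pf2} has already done the real work (the reduction to monomial matrices using the $B_n$-action on $\SSM_n$ from Theorem~\ref{ssx-thm}). The only point requiring slight care is that $\SSI_z$ encodes rank conditions only for $i \geq j$ whereas $\SSX_z$ is defined using all $(i,j)$; using the essential-set description of $\SSX_z$ handles this cleanly and avoids having to separately check the symmetry $\rank z_{[i][j]} = \rank z_{[j][i]}$ for involutions $z$.
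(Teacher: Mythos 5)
Your proof is correct and follows essentially the same route as the paper's: apply Lemma~\ref{lem-pf2} to each pair $(i,j)$ with $i \geq j$, then observe that the resulting rank conditions pin down $\SSX_z$. The paper's argument is a two-sentence gloss that implicitly leans on the symmetry $\rank A_{[i][j]} = \rank A_{[j][i]}$ for skew-symmetric $A$ (and likewise for $z$); you instead close the $i \geq j$ versus all $(i,j)$ gap by invoking Proposition~\ref{thm:fpf-essential-set} and the fact that $\Ess(\SSD(z))$ lies strictly below the diagonal, which is a slightly more careful and equally valid way to finish.
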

\begin{proof} 
The zero locus of $\SSI_z$ in $\SSM_n$ is the zero locus of the generating set in Definition~\ref{ssi-def}.
By Lemma~\ref{lem-pf2}, this is just the set of all $A \in \SSM_n$ satisfying the rank conditions defining $\SSX_z$.
\end{proof}

\subsection{Growth diagrams and rank tables}

We would also like to describe the initial ideal of $\SSI_z$ (with respect to the reverse lexicographic term order), 
which will turn out to be the initial ideal of $I(\SSX_z)$.
In preparation for this, we take a short digression to prove some technical facts about \emph{growth diagrams} and \emph{rank tables}.

Given integer partitions $\mu = (\mu_1 \geq \mu_2 \geq \dots \geq 0)$ and $\lambda = (\lambda_1 \geq \lambda_2 \geq \dots \geq 0)$, 
we write $\mu \subseteq \lambda$ if $\mu_i \leq \lambda_i$ for all $i \in \NN$. When this occurs we write $\lambda/\mu := \{ (i,j) \in \NN\times \NN : \mu_i < j \leq \lambda_i\}$ for 
the corresponding skew shape. Finally, let $\ell(\lambda) := \max(\{0\} \sqcup \{ i \in \NN : \lambda_i\neq 0\})$.

Suppose $X$ is an $m\times n$ matrix with nonnegative integer entries.
As first defined by Fomin \cite{fomin}, the \emph{growth diagram} of $X$ 
is the unique family of partitions $\lambda(i,j)$ 
for
$i \in \{0,1,2,\dots,m\}$ and $j \in \{0,1,2,\dots,n\}$ satisfying the following inductive rules.
First, if $i=0$ or $j=0$ then $\lambda(i,j) := \emptyset$. 
If $i>0$ and $j>0$, then we write
\begin{equation}
\label{lambda-eq} \left[\begin{array}{cc} \rho & \mu \\ \nu & \lambda \end{array} \right] := \left[\begin{array}{ll} \lambda(i-1,j-1) & \lambda(i-1,j) \\ \lambda(i,j-1) & \lambda(i,j) \end{array}\right]
\end{equation}
and define $\lambda = \lambda(i,j)$ by the following algorithm:
\begin{enumerate}
\item[(F0)] Set $\mathrm{CARRY} := X_{ij}$ and $k:=1$.
\item[(F1)] Set $\lambda_k := \max\{\mu_k,\nu_k\} + \mathrm{CARRY}$.
\item[(F2)] If $\lambda_k =0$, then return $\lambda = (\lambda_1,\lambda_2,\dots,\lambda_{k-1})$. Otherwise, set $\mathrm{CARRY} := \min\{\mu_k,\nu_k\} - \rho_k$ and then set $k:=k+1$ and go back to step  (F1).
\end{enumerate}
See \cite[\S4.1]{Krattenthaler} for further discussion and examples.
One can check  directly that in the notation of
\eqref{lambda-eq}, one always has $\rho \subseteq \mu$ and $\rho \subseteq \nu$ 
such that $\mu/\rho$ and $\nu/\rho$ are horizontal strips, so 
\begin{equation}
\label{growth-eq1}
\ell(\mu) -\ell(\rho) \in \{0,1\}\quad\text{and}\quad \ell(\nu) -\ell(\rho) \in \{0,1\}.
\end{equation}
Moreover, it holds that 
\begin{equation}
\label{growth-eq2}
 \ell(\mu) = \ell(\nu) = \ell(\rho) + 1\quad\Rightarrow\quad\ell(\lambda) = \ell(\rho) + 2.
 \end{equation}
An essential nontrivial property of the growth diagram  is 
this analogue of Greene's theorem:

\begin{lem}[{\cite[Thm. 8]{Krattenthaler}}]\label{greene-lem}
For each $(i, j)\in [m] \times [n]$,
the number of parts
$l = \ell(\lambda(i,j)) $ is the length of the longest sequence of nonzero positions $(i_1,j_1),(i_2,j_2),\dots,(i_l,j_l)$ in $X$ 
with $ i \geq i_1  >i_2> \dots > i_l \geq 1$ and $1 \leq j _1<j_2  < \dots < j _l \leq j$. 
\end{lem}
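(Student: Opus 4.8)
The statement is a known theorem of Krattenthaler (cited as \cite[Thm.~8]{Krattenthaler}), so the natural route is an induction on the size of the rectangle $[i]\times[j]$ that reduces to the local growth rule (F0)--(F2). I would set up the induction on $i+j$, with the base cases $i=0$ or $j=0$ being trivial since then $\lambda(i,j)=\emptyset$ and there is no nonzero position in an empty region. For the inductive step, fix $(i,j)\in[m]\times[n]$ and consider the local square in \eqref{lambda-eq} with $\rho=\lambda(i-1,j-1)$, $\mu=\lambda(i-1,j)$, $\nu=\lambda(i,j-1)$, and $\lambda=\lambda(i,j)$. By induction, $\ell(\rho)$, $\ell(\mu)$, $\ell(\nu)$ are the lengths of the longest strictly-decreasing-in-rows, strictly-increasing-in-columns chains of nonzero positions in the three smaller rectangles $[i-1]\times[j-1]$, $[i-1]\times[j]$, and $[i]\times[j-1]$ respectively. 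Call this common statistic $L(R)$ for a rectangular region $R$; so I want to show $L([i]\times[j]) = \ell(\lambda(i,j))$.

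\textbf{Key steps.} First I would record the elementary monotonicity and near-additivity facts: $L$ is weakly increasing under enlarging the rectangle, and enlarging by one row or one column increases $L$ by at most $1$ (deleting a row or column shortens any chain by at most one). This matches \eqref{growth-eq1}: $\ell(\mu)-\ell(\rho)$ and $\ell(\nu)-\ell(\rho)$ lie in $\{0,1\}$, consistently with $L([i-1]\times[j])-L([i-1]\times[j-1])\in\{0,1\}$, etc. Next, the combinatorial heart: I would prove the ``Greene-type'' recurrence
\[
L([i]\times[j]) = \max\{\,L([i-1]\times[j]),\ L([i]\times[j-1])\,\} + \varepsilon,
\]
where $\varepsilon=1$ precisely when $X_{ij}>0$ \emph{and} there is a common longest chain in both $[i-1]\times[j]$ and $[i]\times[j-1]$ that avoids row $i$ and column $j$ respectively and can be extended by the cell $(i,j)$ — equivalently when $L([i-1]\times[j])=L([i]\times[j-1])=L([i-1]\times[j-1])$ and $X_{ij}>0$, giving $L([i]\times[j])=L([i-1]\times[j-1])+2$. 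This is exactly the content of \eqref{growth-eq2} translated to chains; the ``$+2$'' case is the subtle one and is where the combinatorial argument about disjointly combining two chains through the corner cell lives. In parallel, I would verify that the algorithm (F0)--(F2) produces a partition $\lambda$ whose length obeys the same recurrence: when $\ell(\mu)=\ell(\nu)=\ell(\rho)+1$ and $X_{ij}=\mathrm{CARRY}_1>0$, one checks that the carry stays positive through index $\ell(\rho)+2$ and then terminates, so $\ell(\lambda)=\ell(\rho)+2$; otherwise $\ell(\lambda)=\max\{\ell(\mu),\ell(\nu)\}$ when $X_{ij}=0$, and $\ell(\lambda)=\max\{\ell(\mu),\ell(\nu)\}$ or $+1$ in the remaining cases, matching the chain recurrence. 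Comparing the two recurrences together with the (matching) base cases closes the induction.

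\textbf{Main obstacle.} The genuinely nontrivial point is the two chain-combinatorics lemmas feeding the recurrence: (a) if two maximum chains, one in $[i-1]\times[j]$ and one in $[i]\times[j-1]$, both have length equal to the maximum over $[i-1]\times[j-1]$, then one can splice them through $(i,j)$ to build a chain of length $L([i-1]\times[j-1])+2$ in $[i]\times[j]$ — this needs a careful disjointness/crossing argument reminiscent of the proof of Greene's theorem for RSK; and (b) conversely, a chain of length $L([i-1]\times[j-1])+2$ in $[i]\times[j]$ forces $X_{ij}>0$ and equality of the three smaller $L$-values. Since this is precisely \cite[Thm.~8]{Krattenthaler}, in the paper it is legitimate simply to cite it; if a self-contained proof were wanted, (a) and (b) are where essentially all the work goes, while everything else is bookkeeping against the explicit recursion (F0)--(F2) and the already-recorded facts \eqref{growth-eq1} and \eqref{growth-eq2}.
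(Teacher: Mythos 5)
The paper offers no proof of this lemma --- it is cited directly from Krattenthaler --- so you are right that citation is legitimate here, and there is no in-paper argument to compare against. But the inductive recurrence at the heart of your sketch is incorrect, and in a way that would block any honest execution. You claim that $\varepsilon=1$ happens ``precisely when $X_{ij}>0$ and $L([i-1]\times[j])=L([i]\times[j-1])=L([i-1]\times[j-1])$,'' and, in parallel, that the growth rule gives $\ell(\lambda)=\max\{\ell(\mu),\ell(\nu)\}$ whenever $X_{ij}=0$. Both claims are false. Take $X=\left[\begin{smallmatrix}0&1\\1&0\end{smallmatrix}\right]$ and $(i,j)=(2,2)$: here $\ell(\lambda(1,1))=0$, $\ell(\lambda(1,2))=\ell(\lambda(2,1))=1$, the algorithm (F0)--(F2) returns $\lambda(2,2)=(1,1)$, and the chain $(2,1),(1,2)$ gives $L([2]\times[2])=2$; thus $\varepsilon=1$ even though $X_{22}=0$ and the three smaller $L$-values are not all equal. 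In fact \eqref{growth-eq2} makes no reference to $X_{ij}$: the carry that keeps $\lambda_k$ positive at $k=\ell(\rho)+2$ is $\min\{\mu_{\ell(\rho)+1},\nu_{\ell(\rho)+1}\}>0$, not the entry $X_{ij}$. Your characterization is also internally inconsistent, since under the equalities you impose, $\varepsilon=1$ yields $L([i]\times[j])=L([i-1]\times[j-1])+1$, not the $+2$ you write.

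The deeper gap is that an induction tracking only $\ell(\lambda(i,j))$ cannot close. In the case $\ell(\mu)=\ell(\nu)=\ell(\rho)=:l\geq 1$, the rule (F0)--(F2) gives $\ell(\lambda)=l+1$ if and only if $\min\{\mu_l,\nu_l\}>\rho_l$, a condition on the actual parts rather than the lengths, so an inductive hypothesis recording only lengths cannot even evaluate the next step. Correspondingly, whether a chain of length $l+1$ exists in $[i]\times[j]$ is not determined by the three smaller $L$-values together with $X_{ij}$: compare $X=\left[\begin{smallmatrix}1&1\\1&0\end{smallmatrix}\right]$ with $X=\left[\begin{smallmatrix}1&0\\0&0\end{smallmatrix}\right]$, both of which have the three smaller $L$-values all equal to $1$ and $X_{22}=0$, but whose longest chains in $[2]\times[2]$ have lengths $2$ and $1$ respectively. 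This is precisely why Krattenthaler, like Greene, proves the result for all partial sums $\lambda_1+\cdots+\lambda_r$ (maximum total size of $r$ disjoint chains) simultaneously; the number of parts alone is not inductively self-contained. A workable self-contained plan would carry the full Greene-type tuple as the induction hypothesis, after which the comparison with (F0)--(F2) can be made honestly.
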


We will need one other lemma related to this general setup.

\begin{lem}\label{odd-columns-lem}
Suppose $m=n$ and $X=X^T$ is symmetric.
Then for each $i \in[n]$, the number of odd columns in the partition $\lambda(i,i)$ is $X_{11} +X_{22} + \dots +X_{ii}$.
\end{lem}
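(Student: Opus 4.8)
The plan is to argue by induction on $i$, relying on two preliminaries. The first is that the growth diagram of a symmetric matrix is itself symmetric: $\lambda(a,b) = \lambda(b,a)$ for all $a,b$. This follows by a routine induction on $a+b$, using that the rule (F0)--(F2) computing $\lambda(a,b)$ from $\lambda(a-1,b-1)$, $\lambda(a-1,b)$, $\lambda(a,b-1)$ and $X_{ab}$ depends on $\lambda(a-1,b)$ and $\lambda(a,b-1)$ only through their coordinatewise maxima and minima (and $X_{ab} = X_{ba}$); in particular $\lambda(i-1,i) = \lambda(i,i-1)$. The second is the elementary identity that the number of odd columns of any partition $\lambda$ equals $o(\lambda) := \lambda_1 - \lambda_2 + \lambda_3 - \cdots$: writing $\lambda_k$ for the number of columns of $\lambda$ of length at least $k$ (the usual $k$th part, since $(\lambda')' = \lambda$), the number of columns of length exactly $2t-1$ is $\lambda_{2t-1} - \lambda_{2t}$, and summing this over $t \geq 1$ telescopes to $\sum_{t\geq 1}(-1)^{t-1}\lambda_t$.

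For the inductive step I would set $\rho := \lambda(i-1,i-1)$, $\mu := \lambda(i-1,i) = \lambda(i,i-1)$, and $\lambda := \lambda(i,i)$, so that $\lambda$ is obtained by running (F0)--(F2) on $\rho$, $\mu$, $\nu = \mu$ with initial $\mathrm{CARRY} = X_{ii}$. Because $\mu = \nu$, the algorithm produces $\lambda_1 = \mu_1 + X_{ii}$ and $\lambda_k = \mu_k + \mu_{k-1} - \rho_{k-1}$ for $k \geq 2$ (the closed form extending correctly past the halting index, as discussed below). Substituting these expressions into $o(\lambda) = \sum_{k \geq 1}(-1)^{k-1}\lambda_k$ and reindexing, the $\mu_k$-terms cancel in consecutive pairs and the $\rho$-terms reassemble into $o(\rho)$, leaving $o(\lambda) = X_{ii} + o(\rho)$. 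Together with the base case $\lambda(0,0) = \emptyset$ and the inductive hypothesis $o(\rho) = X_{11} + \cdots + X_{i-1,i-1}$, this gives $o(\lambda(i,i)) = X_{11} + \cdots + X_{ii}$.

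The one point that requires genuine care is the interaction of the closed form $\lambda_k = \mu_k + \mu_{k-1} - \rho_{k-1}$ with the termination condition of the Fomin algorithm, that is, verifying that this formula genuinely lists every part of $\lambda$ and returns $0$ afterward. This is where \eqref{growth-eq1} is used, via the containment $\rho \subseteq \mu$: once $\lambda_k = 0$, nonnegativity of $\mu_{k-1} - \rho_{k-1}$ forces $\mu_k = 0$, and then monotonicity of $\mu$ together with $\rho \subseteq \mu$ makes every later term of the formula vanish. Beyond that, the argument is just formal bookkeeping with the alternating sum.
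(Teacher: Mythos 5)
Your proof is correct and takes essentially the same approach as the paper's: both exploit the symmetry $\lambda(a,b) = \lambda(b,a)$ to reduce the Fomin update to the case $\mu = \nu$, then track how the number of odd columns changes from $\rho = \lambda(i-1,i-1)$ to $\lambda = \lambda(i,i)$, and telescope over $i$. The only real difference is bookkeeping in the middle step: the paper interpolates through a chain of partitions $\lambda^0, \lambda^1, \ldots$ (replacing the parts of $\rho$ one at a time by those of $\lambda$) and sums the increments $\OC(\lambda^k)-\OC(\lambda^{k-1})$, whereas you substitute the closed form $\lambda_1 = \mu_1 + X_{ii}$, $\lambda_k = \mu_k + \mu_{k-1} - \rho_{k-1}$ ($k\geq 2$) directly into the alternating sum $o(\lambda) = \sum_{k\geq 1}(-1)^{k-1}\lambda_k$. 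Your handling of the termination issue — observing that $\rho\subseteq\mu$ makes the $\mathrm{CARRY}$ nonnegative, so once $\lambda_k=0$ all later values of the formula also vanish — is exactly what is needed to justify extending the sum past the halting index, and is a point the paper leaves more implicit. Both arguments are sound; yours is a slightly cleaner presentation of the same computation.
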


\begin{proof}
Fix $ i \in[n]$.
It is clear that $\lambda(j,k) = \lambda(k,j)$, so we can write 
\[
\left[\begin{array}{cc} \rho & \mu \\ \mu & \lambda \end{array} \right] := \left[\begin{array}{ll} \lambda(i-1,i-1) & \lambda(i-1,i) \\ \lambda(i,i-1) & \lambda(i,i) \end{array}\right]
\]
and in steps (F1) and (F2) to compute $\lambda$ one can replace $\max\{\mu_k,\nu_k\}$ and $\min\{\mu_k,\nu_k\}$ by just $\mu_k$.
Now consider the sequence of partitions $\lambda^0,\lambda^1,\lambda^2,\dots$ where $\lambda^k$ is formed by replacing the first $k$ parts of $\rho$ by $\lambda_1,\ldots, \lambda_k$, so that $\lambda^0 = \rho$.
Let $\OC(\lambda^k)$ be the number of odd columns in $\lambda^k$. If $\alpha$ and $\beta$ are two partitions all of whose parts are equal except that $\alpha_i < \beta_i$ for some $i$, then $\OC(\beta) - \OC(\alpha) = (-1)^{i+1}(\beta_i - \alpha_i)$. Thus 
$\OC(\lambda^1) -\OC(\lambda^0) = X_{ii} + (\mu_1-\rho_1)$ and 
for $k>1$  it follows by examining (F1) and (F2) that
$\OC(\lambda^{k}) - \OC(\lambda^{k-1}) = (-1)^{k+1}((\mu_{k-1} - \rho_{k-1}) + (\mu_{k} - \rho_{k})). $
Since $\lambda = \lambda^k$ for the first value of $k$ with $\rho_k - \mu_k =\rho_{k+1}= \mu_{k+1}=0$,
adding things up gives \[\OC(\lambda) -\OC(\rho)= \sum_{j=1}^k (\OC(\lambda^j) -\OC(\lambda^{j-1})) = X_{ii}\]
and in turn
$\OC(\lambda(i,i)) =  \sum_{j=1}^i (\OC(\lambda(j,j)) -\OC(\lambda(j-1,j-1)))=\sum_{j=1}^iX_{ii}$.
\end{proof}

We define a \emph{rank table} to be a map 
 ${\r} :\{0,1,2,\dots\} \times \{0,1,2,\dots\}  \to\{0,1,2,\dots\} $
satisfying $\r(0,0) = \r(i,0) = \r(0,j) = 0$ for all $i,j \in \NN$.
We have already seen that a permutation $w$ of $\NN$ defines a rank table by the formula 
${\r}_w(i,j) = \rank w_{[i][j]}$ for $i,j>0$. 
Recall that $\r_w(i,j)$ is just the number of nonzero entries in the $i\times j$ partial permutation matrix $w_{[i][j]}$.
 
\begin{lem} \label{lem:rw-characterization} Let ${\r}$ be a rank table.
There exists $w \in S^{m,n}_\infty$ with $\r(i,j) = \r_w(i,j)$ for all $(i,j) \in [m] \times [n]$
 if and only if 
  for each $(i,j) \in [m] \times [n]$ both of the following conditions hold:
    \begin{enumerate}[(a)]
        \item One has ${\r}(i,j) - {\r}(i,j - 1) \in \{0,1\}$ and ${\r}(i,j) - {\r}(i - 1,j) \in \{0,1\}$.
        \item If ${\r}(i-1,j) - {\r}(i-1,j-1) = 1$, then ${\r}(i,j) - {\r}(i,j-1) = 1$.
    \end{enumerate}
\end{lem}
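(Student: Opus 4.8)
The plan is to prove both directions by relating the rank table $\r$ to the growth diagram machinery just developed. For the forward direction, suppose $w\in S^{m,n}_\infty$ and set $\r=\r_w$. Since $\r_w(i,j)$ counts nonzero entries of the partial permutation matrix $w_{[i][j]}$, adding one more row or column can increase this count by at most one, which gives condition (a) immediately. For condition (b): if $\r(i-1,j)-\r(i-1,j-1)=1$, then row $\leq i-1$ contains an entry in column $j$, i.e.\ $w^{-1}(j)\leq i-1<i$; hence $w^{-1}(j)\leq i$ as well, and the same entry already appears in $w_{[i][j]}$, so $\r(i,j)-\r(i,j-1)=1$. This is the easy direction and should take only a few lines.

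For the converse, assume $\r$ satisfies (a) and (b) on $[m]\times[n]$. The idea is to build the partial permutation matrix directly: declare position $(i,j)$ to have an entry exactly when $\r(i,j)-\r(i-1,j)-\r(i,j-1)+\r(i-1,j-1)=1$ (the "inclusion--exclusion" local jump). Condition (a) guarantees this quantity lies in $\{-1,0,1\}$, and one checks that (b) — together with the transposed consequence of (a), which forces the symmetric statement "if $\r(i,j-1)-\r(i-1,j-1)=1$ then $\r(i,j)-\r(i-1,j)=1$" to follow as well — rules out the value $-1$ and ensures at most one marked entry in each row and column. Concretely, if $(i,j)$ and $(i,j')$ with $j<j'$ were both marked in the same row, propagating the row-increments forces a contradiction with (a); similarly for columns. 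A short induction on $i+j$ then shows $\r(i,j)$ equals the number of marked positions in $[i]\times[j]$. Finally, Theorem~\ref{x-thm} (or the bijection $w\mapsto w_{[m][n]}$ from $S^{m,n}_\infty$ onto $m\times n$ partial permutation matrices discussed before that theorem) produces the unique $w\in S^{m,n}_\infty$ whose partial permutation matrix is the one just constructed, and by construction $\r_w=\r$ on $[m]\times[n]$.

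The main obstacle is the bookkeeping in the converse: verifying that the "local jump" recipe really yields a partial permutation matrix requires carefully tracking how conditions (a) and (b), and their transposes, interact along a row or column, and confirming that no position gets jump value $-1$. One clean way to organize this is to fix a row index $i$ and consider the function $j\mapsto \r(i,j)-\r(i-1,j)$, which by (a) takes values in $\{0,1\}$; conditions (a) and (b) then say this function is \emph{weakly increasing} in $j$, so it changes from $0$ to $1$ at most once, pinning down a unique marked column in row $i$ (or none). An entirely analogous argument with columns, using the transposed form of (b), handles the column condition, and one should note that the transposed statement of (b) is itself a formal consequence of (a) and (b) applied around the $2\times 2$ square, so no additional hypothesis is needed. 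Everything else is routine induction.
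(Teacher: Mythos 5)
Your proposal is correct and matches the paper's approach: both directions are argued the same way, and your marking criterion (inclusion--exclusion jump equal to $1$) is exactly the paper's ``$\r(i,j)-\r(i,j-1)=1$ and $\r(i-1,j)-\r(i-1,j-1)=0$.'' Your observation that $j\mapsto\r(i,j)-\r(i-1,j)$ is $\{0,1\}$-valued and weakly increasing is a cleaner way of packaging the paper's ``two marks in one row forces $\r(i,k)-\r(i-1,k)\ge 2$'' contradiction, but the underlying argument is the same.
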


\begin{proof}
First assume $\r$ satisfies the given conditions.
    Say that $(i,j) \in [m] \times [n]$ is \emph{marked} if ${\r}(i,j) - {\r}(i,j-1) = 1$ but ${\r}(i-1,j) - {\r}(i-1,j-1) = 0$. Let  $X$ be the $m \times n$ matrix with $1$ in each marked position and $0$ everywhere else. Condition (b) implies that each column of $X$ contains at most nonzero entry. If some row $i$ contains two marked positions $(i,j), (i,k)$ with $j < k$, and there are no marked positions between these two, then we have        \begin{equation*}
        \r(i,j') - \r(i,j'-1) = \begin{cases}
            \r(i-1,j') - \r(i-1,j'-1) & \text{if $j < j' < k$}\\
            \r(i-1,j') - \r(i-1,j'-1) + 1 & \text{if $j' \in \{j,k\}$}
        \end{cases}
    \end{equation*}
    which implies that $\r(i,k) - \r(i-1,k) \geq 2$. This contradicts condition (a), so each row of $X$ must also contain at most one nonzero entry, and $X$ is an $m \times n$ partial permutation matrix. 
    
    Let $w \in S^{m,n}_\infty$ be the element  whose 
    $m\times n$ partial permutation matrix is $X$.
    Condition (b) implies that ${\r}(i,j) - {\r}(i,j-1) = 1$ if and only if (exactly) one of the positions $(i-1,j), (i-2,j), \ldots, (1,j)$ is marked,
    so  ${\r}(i,j) = \sum_{j'=1}^j ({\r}(i,j') - \r(i,j'-1))$ for $(i,j) \in [m]\times [n]$ is exactly the number of marked positions $(i',j')\in [i]\times [j]$. That is, 
    we have ${\r}(i,j) = {\r}_w(i,j)$ for $(i,j) \in [m]\times [n]$, as desired.
    If conversely there exists $w \in S^{m,n}_\infty$ with $\r(i,j) = \r_w(i,j)$ for all $(i,j) \in [m] \times [n]$,
    then it is straightforward to check that conditions (a) and (b) hold.
\end{proof}

Rank tables also arise from monomials in polynomial rings.
As above, let $\{u_{ij}\}$ be a family of  commuting indeterminates indexed by $(i,j) \in \NN\times \NN$.
If
$A = \{a_0 <a_1< \cdots < a_r\}$
and 
$ B = \{b_0 < b_1<\cdots < b_r\}$ are two sets of $r+1$
positive integers,
then we let \be\label{odot-eq}
A \odot B :=\{ (a_0,b_r),(a_1,b_{r-1}), \dots,(a_r,b_0)\}
\quand
u_{AB} := \prod_{(a,b) \in A\odot B} u_{ab}
.\ee
Given a set $S$ and an integer $r\geq 0$, let $\binom{S}{r}$ denote the set of $r$-element subsets of $S$.
\begin{defn}
For a monomial $M \in \KK[u_{ij} : i,j \in \NN]$,
let $\r_M$ be the rank table in which
${\r}_M(i,j)$ for $i,j>0$ is the largest value $r$ such that  $u_{AB} \mid M$ for some $(A,B) \in \binom{[i]}{r}\times \binom{[j]}{r}$.
\end{defn}
For example, 
    if $M =  u_{21}u_{31}$ then  
    \[
    \left[\begin{array}{llll}
    \r_M(1,1)  & \r_M(1,2)  & \r_M(1,3)   \\
    \r_M(2,1)  & \r_M(2,2)  & \r_M(2,3)   \\
    \r_M(3,1)  & \r_M(3,2)  & \r_M(3,3)   \\
    \end{array}
    \right]
    =
      \left[\begin{array}{llll}
    0  & 0 & 0   \\
    1  & 1  & 1   \\
    1  & 1  & 1   \\
    \end{array}
    \right]
    \]
and we have $\r_M(i,j) = \r_M(3,j)$ for $i\geq 3$ and $\r_M(i,j) = \r_M(i,3)$ for $j\geq 3$.
    
    \begin{prop} \label{prop:rM-equals-rw} If 
$ M \in \KK[u_{ij} : i \in [m], j \in [n]]
$
is a monomial then there exists $w \in S^{m,n}_{\infty}$ such that ${\r}_M(i,j) = {\r}_w(i,j)$ for all $i \in [m]$ and $j \in [n]$. 
 \end{prop}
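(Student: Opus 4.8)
The plan is to reduce everything to Lemma~\ref{lem:rw-characterization}: it suffices to show that the rank table $\r_M$ satisfies conditions (a) and (b) of that lemma at every $(i,j)\in[m]\times[n]$, since Lemma~\ref{lem:rw-characterization} then guarantees a $w\in S^{m,n}_\infty$ with $\r_w(i,j)=\r_M(i,j)$ on $[m]\times[n]$, as required. The first thing I would record is that $\r_M$ depends only on which variables divide $M$. For any $(A,B)\in\binom{[i]}{r}\times\binom{[j]}{r}$, the monomial $u_{AB}=\prod_{(a,b)\in A\odot B}u_{ab}$ is square-free, since the pairs in $A\odot B$ have pairwise distinct rows and pairwise distinct columns; hence $u_{AB}\mid M$ if and only if $u_{ab}\mid M$ for every $(a,b)\in A\odot B$. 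Unwinding the definition \eqref{odot-eq} of $A\odot B$, this says that $\r_M(i,j)$ equals the maximum length of a sequence of positions $(a_1,b_1),\dots,(a_r,b_r)$, each with $u_{a_kb_k}\mid M$, such that $a_1<\cdots<a_r\leq i$ and $b_1>\cdots>b_r$ with every $b_k\leq j$. Read in the reverse order, this is precisely the quantity appearing in Greene's theorem, Lemma~\ref{greene-lem}, applied to the $m\times n$ matrix $X$ with $X_{ab}=1$ when $u_{ab}\mid M$ and $X_{ab}=0$ otherwise. So $\r_M(i,j)=\ell(\lambda(i,j))$, where $\lambda(\cdot,\cdot)$ is the growth diagram of $X$; this identification is the step I expect to be the heart of the argument, because it makes the structural inputs \eqref{growth-eq1}--\eqref{growth-eq2} available.

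Next I would verify condition (a) directly from the chain description, without appealing to Greene's theorem. Enlarging the region $[i]\times[j]$ by one row or one column cannot shorten a longest antidiagonal chain, so $\r_M$ is weakly increasing in each argument. Conversely, a longest antidiagonal chain inside $[i]\times[j]$ contains at most one position in row $i$ (its last position, if any) and at most one position in column $j$ (its first position, if any); deleting that position leaves an antidiagonal chain inside $[i-1]\times[j]$, respectively $[i]\times[j-1]$. Hence $\r_M(i,j)-\r_M(i-1,j)\in\{0,1\}$ and $\r_M(i,j)-\r_M(i,j-1)\in\{0,1\}$, which is condition (a).

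Finally, for condition (b) I would set $a=\r_M(i-1,j-1)$, $b=\r_M(i-1,j)$, $c=\r_M(i,j-1)$, $d=\r_M(i,j)$, so that $(a,b,c,d)=(\ell(\rho),\ell(\mu),\ell(\nu),\ell(\lambda))$ for the growth-diagram window $\left[\begin{smallmatrix}\rho&\mu\\\nu&\lambda\end{smallmatrix}\right]=\left[\begin{smallmatrix}\lambda(i-1,j-1)&\lambda(i-1,j)\\\lambda(i,j-1)&\lambda(i,j)\end{smallmatrix}\right]$. Assuming $b=a+1$, I need $d=c+1$. From condition (a) and monotonicity, $d\geq b$, $d-c\in\{0,1\}$, and $c-a\in\{0,1\}$. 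If $c=a$, then $d\geq b=a+1>c$ together with $d-c\in\{0,1\}$ forces $d=c+1$. If instead $c=a+1$, then $\ell(\mu)=\ell(\nu)=\ell(\rho)+1$, so \eqref{growth-eq2} gives $\ell(\lambda)=\ell(\rho)+2$, that is, $d=a+2=c+1$. In either case $d=c+1$, which is condition (b). The only delicate point is getting the Greene-type identification $\r_M(i,j)=\ell(\lambda(i,j))$ exactly right---in particular passing to the $0/1$ support matrix $X$, so that \eqref{growth-eq1}--\eqref{growth-eq2} genuinely apply; conditions (a) and (b) themselves are short once this is in place.
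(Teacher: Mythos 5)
Your argument is correct and follows the paper's proof almost exactly: both identify $\r_M(i,j)$ with $\ell(\lambda(i,j))$ via the square-freeness of $u_{AB}$ and Lemma~\ref{greene-lem} applied to the $0/1$ support matrix of $M$, and then invoke Lemma~\ref{lem:rw-characterization} together with \eqref{growth-eq1}--\eqref{growth-eq2}. The only difference is that you spell out the case analysis for conditions (a) and (b), and verify (a) directly from the antidiagonal-chain description rather than from \eqref{growth-eq1}, where the paper simply asserts these are obvious.
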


\begin{proof}
Consider the growth diagram $\{\lambda(i,j)\}_{i \in \{0,1,\dots,m\}, j\in\{0,1,\dots,n\}}$ associated to the $m\times n$ matrix whose entry in position $(i,j)$ is $1$ if $u_{ij}$ divides $M$ and $0$ otherwise.
Lemma~\ref{greene-lem} implies that ${\r}_M(i,j) =\ell(\lambda(i,j)) $ for all $(i,j) \in [m]\times [n]$.
With this interpretation, the conditions in Lemma~\ref{lem:rw-characterization} obviously hold in view of
properties
\eqref{growth-eq1} and \eqref{growth-eq2}.
\end{proof}

We will need
a skew-symmetric version of $\r_M$, whose definition is slightly more involved.
If $A$ and $B$ are  finite subsets of $\NN$ with $|A| = |B|$, then we let
\[
\SSu_{AB} := \begin{cases} 0 &\text{if some $(a,b) \in A\odot B$ has $a=b$} \\ 
 \prod \{ u_{ij} : \text{$i > j$, $(i,j)$ in $A \odot B$ or $B\odot A$}\} &\text{otherwise}.
 \end{cases}
\]
For example, if $A = \{3,4\}$ and $B = \{1,2\}$ then $\SSu_{AB} = \SSu_{A\sqcup B, A\sqcup B}= u_{AB} = u_{41}u_{32}$.
In general, if $\phi : \KK[u_{ij} : i, j \in \NN] \to \KK[u_{ij} : i,j \in \NN, i > j]$  is the ring homomorphism
with 
\[\phi(u_{ij}) = \begin{cases} u_{ij} &\text{if } i > j \\ u_{ji}&\text{if }i<j \\ 0&\text{if }i=j,\end{cases}\] then   $\SSu_{AB} = \rad(\phi(u_{AB}))$ where $\rad(f)$ is the square-free radical of a polynomial $f$.

\begin{defn}
For a monomial $M \in \KK[u_{ij} : i,j \in \NN, i>j]$,
let $\SSr_M$  be the rank table in which
$\SSr_M(i,j)$ for $i,j>0$ is the largest value $r$ such that $\SSu_{AB} \mid M$ for some $(A,B) \in \binom{[i]}{r}\times \binom{[j]}{r}$.
\end{defn}

The rank table  $\SSr_M$ is always \emph{symmetric} in the sense that $\SSr_M(i,j) =\SSr_M(j,i)$ for all $i,j$.

\begin{ex}
    If $M = u_{21}u_{31}$ then 
    \[
    \left[\begin{array}{llll}
    \SSr_M(1,1)  & \SSr_M(1,2)  & \SSr_M(1,3)   \\
    \SSr_M(2,1)  & \SSr_M(2,2)  & \SSr_M(2,3)   \\
    \SSr_M(3,1)  & \SSr_M(3,2)  & \SSr_M(3,3)   \\
    \end{array}
    \right]
    =
      \left[\begin{array}{llll}
    0  & 1 & 2   \\
    1  & 2  & 2   \\
    2  & 2  & 2   \\
    \end{array}
    \right].
\]
 We have $\SSr_M(1,1)=0$ since $0$ does not divide $M$,
and we have $\SSr_M(1,2) = \SSr_M(2,1) = 1$ and $\SSr_M(2,2) = 2$
since $\SSu_{AB} = u_{21}$ for $A = \{1\}$ and $B=\{2\}$ (or vice versa)
as well as for $A=B=\{1,2\}$.
\end{ex}

Let $\dbl :  \KK[u_{ij} : i,j \in \NN] \to \KK[u_{ij} : i,j \in \NN]$ be the ring homomorphism with 
$\dbl(u_{ij}) = u_{ij} u_{ji}$.

\begin{lem}\label{dbl-lem}
Suppose $A,B \in \binom{ [n]}{r}$ for some $r \in[n]$ and $M \in  \KK[u_{ij} :  i,j \in \NN, i > j] $ is a monomial.
Then $ \SSu_{AB} \mid M$ if and only if $u_{AB} \mid \dbl(M)$.
\end{lem}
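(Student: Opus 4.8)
The plan is to unwind the definitions of $\SSu_{AB}$ and $u_{AB}$ and observe that both sides of the claimed equivalence are detecting exactly the same set of variables, each with multiplicity at most one. First I would dispose of the degenerate case: if some pair $(a,b) \in A \odot B$ has $a = b$, then $\SSu_{AB} = 0$, which never divides a monomial $M$; on the other side, $\dbl(u_{AB})$ then contains a factor $\dbl(u_{aa}) = u_{aa}^2$, which cannot divide $\dbl(M)$ since $M \in \KK[u_{ij} : i,j\in\NN, i>j]$ has no diagonal variables and hence $\dbl(M)$ is square-free in each variable $u_{aa}$ — in fact contains no $u_{aa}$ at all. So both divisibilities fail and the equivalence holds vacuously.

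In the main case, $A \odot B = \{(a_0,b_r),\dots,(a_r,b_0)\}$ has all its pairs off-diagonal. By definition $\SSu_{AB} = \prod\{u_{ij} : i>j,\ (i,j) \in A\odot B \text{ or } B \odot A\}$, which is the square-free product of the variables $u_{ij}$ (with $i>j$) obtained by symmetrizing each pair in $A\odot B$ to lie below the diagonal; equivalently $\SSu_{AB} = \rad(\phi(u_{AB}))$ as the text notes. On the other side, $\dbl(u_{AB}) = \prod_{(a,b)\in A\odot B} u_{ab}u_{ba}$. Since $M$ only involves variables $u_{ij}$ with $i>j$, a variable $u_{ab}$ divides $\dbl(M)$ for $a>b$ iff $u_{ab} \mid \dbl(M)$, and $\dbl(M) = \prod u_{ij}^{e_{ij}} u_{ji}^{e_{ij}}$ where $M = \prod_{i>j} u_{ij}^{e_{ij}}$; so for $a>b$, the exponent of $u_{ab}$ in $\dbl(M)$ equals $e_{ab}$, the exponent of $u_{ab}$ in $M$. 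Thus $u_{AB} \mid \dbl(M)$ iff for every pair $(a,b) \in A\odot B$, both $u_{ab}$ and $u_{ba}$ divide $\dbl(M)$, which (reading off the below-diagonal representative in each case) happens iff the corresponding below-diagonal variable divides $M$. Here I would note that the pairs in $A\odot B$ are distinct and the antidiagonal pairing means no two pairs share a row or column, so their below-diagonal symmetrizations are distinct variables — hence the condition "$u_{AB}\mid \dbl(M)$" is exactly "each variable occurring in the square-free monomial $\SSu_{AB}$ divides $M$", i.e. $\SSu_{AB}\mid M$.

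I do not expect a serious obstacle here; the only point requiring a little care is the bookkeeping that the below-diagonal variables appearing in $\SSu_{AB}$ are pairwise distinct, so that divisibility of the product reduces to divisibility by each factor — but this is immediate from the fact that $A\odot B$ is an antidiagonal and hence uses each element of $A$ and each element of $B$ exactly once. The degenerate ($a=b$) case is also worth stating explicitly rather than glossing, since it is where the definition of $\SSu_{AB}$ differs in spirit from a naive radical. Everything else is a direct comparison of exponent vectors under $\dbl$, using that $M$ has support in $\{u_{ij} : i>j\}$.
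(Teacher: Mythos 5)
Your proof is correct and its core idea matches the paper's: $\dbl$ equalizes the exponents of $u_{ij}$ and $u_{ji}$, so divisibility of $\dbl(M)$ by the below- and above-diagonal parts of $u_{AB}$ reduces to a single divisibility statement in $M$. (The paper packages this by writing $u_{AB} = u'_{AB}\,u''_{AB}$ and $\dbl(M) = MN$ with $N$ the mirror of $M$; you compare exponent vectors variable by variable, which is the same bookkeeping.)

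Two small slips worth fixing, neither of which breaks the argument. First, the claim that the below-diagonal symmetrizations of the pairs in $A \odot B$ are pairwise distinct is false in general: with $A = B = \{1,2\}$ one has $A \odot B = \{(1,2),(2,1)\}$ and both pairs symmetrize to $(2,1)$. This distinctness is not needed, however, since $\SSu_{AB}$ is square-free by construction (it is $\rad(\phi(u_{AB}))$), and the set of variables appearing in it is exactly $\{u_{\max(a,b),\min(a,b)} : (a,b) \in A \odot B\}$; so requiring each such variable to divide $M$ is literally the statement $\SSu_{AB} \mid M$. Second, in the degenerate case you argue via $\dbl(u_{AB})$, but the direct observation is cleaner and avoids a missing step: if some $(a,a) \in A \odot B$, then $u_{aa}$ divides $u_{AB}$ itself, and since no diagonal variable $u_{aa}$ occurs in $\dbl(M)$, we get $u_{AB} \nmid \dbl(M)$ immediately, with no need to compare $\dbl(u_{AB})$ against $\dbl(M)$.
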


\begin{proof}
If $\SSu_{AB} = 0$ then we have both $\SSu_{AB} \nmid M$ and $u_{AB} \nmid \dbl(M)$, since no variable of the form $u_{ii}$ divides $\dbl(M)$. Assume $\SSu_{AB} \neq 0$.
Let 
$u'_{AB} = \prod_{(i,j) \in A\odot B, i> j} u_{ij}$
and
$u''_{AB} = \prod_{(i,j) \in A \odot B, i<j} u_{ij}$ so that 
$u_{AB} = u'_{AB}\cdot  u''_{AB}$ and $ \SSu_{AB} = \rad( u'_{AB} \cdot \phi(u''_{AB})).$
Write $N$ for the image of $M$ under the ring homomorphism sending each $u_{ij} \mapsto u_{ji}$,
so that $\dbl(M)= M N$.
Clearly $\SSu_{AB} \mid M$  if and only if $u'_{AB} \mid M$ and $u''_{AB}\mid N$,
which holds if and only if 
 $u_{AB} \mid \dbl(M)$.
\end{proof}

Our main use of the previous lemma is to prove this analogue of Proposition~\ref{prop:rM-equals-rw}.

\begin{prop} \label{ss-prop:rM-equals-rw} 
If 
$
M \in \KK[u_{ij} : i,j \in \NN, i>j]
$ is a monomial then there exists $z \in \FPF_n(\I_n)$ such that $\SSr_M(i,j) = {\r}_z(i,j)$ for all $i,j \in [n]$.
\end{prop}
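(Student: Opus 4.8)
The plan is to transfer the statement to the ordinary (non-skew) setting through the ring homomorphism $\dbl$, invoke Proposition~\ref{prop:rM-equals-rw} there, and then upgrade the resulting permutation to a fixed-point-free involution.

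First I would note that $\SSr_M(i,j)$ for $i,j\in[n]$ depends only on the variables $u_{ab}$ with $a,b\in[n]$, so I may assume $M\in\KK[u_{ij}:n\geq i>j\geq 1]$; then $\dbl(M)\in\KK[u_{ij}:i,j\in[n]]$. By Lemma~\ref{dbl-lem}, $\SSu_{AB}\mid M$ if and only if $u_{AB}\mid\dbl(M)$ for all $A,B\in\binom{[n]}{r}$ with $r\geq 1$ (and the case $r=0$ is vacuous), so $\SSr_M(i,j)=\r_{\dbl(M)}(i,j)$ for all $i,j\in[n]$. Running the argument in the proof of Proposition~\ref{prop:rM-equals-rw} on the monomial $\dbl(M)$ yields the growth diagram $\{\lambda(i,j)\}$ of the $0/1$ matrix $Y$ with $Y_{ij}=1$ exactly when $u_{ij}\mid\dbl(M)$, together with an element $w\in S^{n,n}_\infty$ such that $\SSr_M(i,j)=\ell(\lambda(i,j))=\r_w(i,j)$ for all $i,j\in[n]$ (using Lemma~\ref{greene-lem} for the first equality). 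The crucial observation is that $\dbl(M)$ is invariant under the substitution $u_{ij}\mapsto u_{ji}$, so $Y$ is symmetric, and $Y$ has zero diagonal since no variable $u_{ii}$ divides $\dbl(M)$.

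Next I would show $w$ is an involution with no fixed point in $[n]$. Since $\SSr_M$, and hence $\r_w$, is symmetric, and $w^{-1}$ also lies in $S^{n,n}_\infty$ with $\r_{w^{-1}}(i,j)=\r_w(j,i)=\r_w(i,j)$ for $i,j\in[n]$, the fact that an element of $S^{n,n}_\infty$ is determined by its rank table on $[n]\times[n]$ (equivalently, by Theorem~\ref{x-thm}, by its $B_n\times B_n$-orbit) forces $w^{-1}=w$. Applying Lemma~\ref{odd-columns-lem} to the symmetric matrix $Y$ shows that $\lambda(i,i)$ has no columns of odd length, because $\sum_{k\leq i}Y_{kk}=0$; in particular $\r_w(i,i)=\ell(\lambda(i,i))$ is even for every $i$. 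If $w$ had a fixed point $a\in[n]$, then the only entry of $w_{[a][a]}$ not already present in $w_{[a-1][a-1]}$ would be the $1$ in position $(a,a)$, giving $\r_w(a,a)-\r_w(a-1,a-1)=1$; this contradicts the evenness just established, so $w$ fixes no point of $[n]$.

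Finally I would pass to $\Ifpf_\infty$: choose $N\geq n$ with $w\in\I_N$, so the fixed-point-free standardization $\FPF_N(w)$ lies in $\Ifpf_\infty$. Because $w$ fixes no point of $[n]$, the permutation $\FPF_N(w)$ agrees with $w$ on $[n]$, whence $\r_{\FPF_N(w)}(i,j)=\r_w(i,j)=\SSr_M(i,j)$ for all $i,j\in[n]$. By Remark~\ref{doesnt-matter-remark} there is a unique $z\in\FPF_n(\I_n)$ with $\r_z(i,j)=\r_{\FPF_N(w)}(i,j)$ for all $(i,j)\in[n]\times[n]$, and this $z$ meets the conclusion. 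I expect the delicate step to be the verification that $w$ is fixed-point-free on $[n]$: this is precisely where the skew-symmetry (the zero diagonal of $Y$) enters, via Lemma~\ref{odd-columns-lem}, and it is exactly what guarantees that the output can be taken in $\FPF_n(\I_n)$ rather than merely in $\Ifpf_\infty$; the involution property of $w$ is the other point needing a (short) argument, but it follows cleanly from the symmetry of the rank table.
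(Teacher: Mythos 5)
Your proposal is correct and follows essentially the same route as the paper's proof: pass to $\dbl(M)$, invoke Proposition~\ref{prop:rM-equals-rw} to produce $w\in S^{n,n}_\infty$, use the symmetry of the rank table to conclude $w=w^{-1}$, apply Lemma~\ref{odd-columns-lem} to the zero-diagonal symmetric $0/1$ matrix to rule out fixed points of $w$ in $[n]$, and finally standardize. The only cosmetic difference is at the last step, where the paper directly builds $y\in\I_n$ by truncating $w$ and sets $z=\FPF_n(y)$, whereas you invoke $\FPF_N(w)$ plus Remark~\ref{doesnt-matter-remark}; both are valid.
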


\begin{proof}
By Proposition~\ref{prop:rM-equals-rw},
there exists an element $w \in S^{n,n}_\infty$ with ${\r}_{\dbl(M)}(i,j) = {\r}_w(i,j)$ for all
$i,j \in [n]$,
and we have $\SSr_M(i,j) = \r_{\dbl(M)}(i,j)= {\r}_w(i,j)$ for all $i,j \in [n]$ by Lemma~\ref{dbl-lem}. Since the rank table ${\r}_{\dbl(M)}$ is symmetric,
we may assume that $ w=w^{-1}$.
We claim that $w(i) \neq i$ for $i \in [n]$.
Indeed, we can only have $w(i)=i$ for some $i \in [n]$
if 
\begin{equation}
\label{impossible-eq}
{\r}_{w}(i-1,i-1) = {\r}_{w}(i,i-1) = {\r}_{w}(i-1,i) < {\r}_{w}(i,i).
\end{equation} 
To exclude this possibility, consider the symmetric $n\times n$ matrix $Y$ whose entry in position $(i,j)$ is $1$ if $u_{ij}$ or $u_{ji}$ divides $M$, and $0$ otherwise. If $\{\lambda(i,j)\}_{i \in \{0,1,\dots,n\}, j\in\{0,1,\dots,n\}}$ is the growth diagram associated to $Y$,
then we have ${\r}_{w}(i,j) = \r_{\dbl(M)}(i,j)=\ell(\lambda(i,j))$ for all $i,j \in [n]\times [n]$
by the argument in the proof of Proposition~\ref{prop:rM-equals-rw}. Since $Y$ is symmetric with all zeros on the main diagonal,
Lemma~\ref{odd-columns-lem} implies that ${\r}_{w}(i,i)$ is even for all $i \in [n]$.
As adjacent entries in ${\r}_w$ differ by at most one, we conclude that \eqref{impossible-eq} cannot occur.
Now 
define $y \in \I_n$ to have $y(i) = w(i)$ if it holds that $\{i,w(i)\}\subseteq [n]$
and $y(i)=i$ otherwise. As $w$ has no fixed points in $[n]$,
the element $z = \FPF_n(y)$ satisfies
$\r_z(i,j) = \r_w(i,j) = \SSr_M(i,j)$ for all $i,j \in [n]$.
%
\end{proof}

\subsection{Monomial generators for initial ideals}\label{mon-sect}

We can now describe explicitly a monomial ideal in $\KK[u_{ij} :i,j \in [n], i >j]$
that will turn out to be  the initial ideal $\init(\SSI_z)$ of $\SSI_z$ under the reverse lexicographic term order.

\begin{defn}\label{ssj-def}
For a symmetric rank table $\r$, let $\SSJ_{\r}$ be the ideal 
in $ \KK[u_{ij} : i,j \in [n], i > j]$
generated by all monomials of the form $\SSu_{AB}$,
where  $(A, B) \in \binom{[i]}{q}\times \binom{[j]}{q}$
for some $(i,j) \in [n]\times [n]$ with $i\geq j$ and $q = \r(i,j)+1$.
If $z \in \FPF_n(\I_n)$, then we define $\SSJ_z := \SSJ_{{\r}_z}$.
\end{defn}

It can be shown that in the definition of $\SSJ_z$ above, one only needs those generators corresponding to $(i,j) \in \Ess(\SSD(z))$. We will not prove this fact since we do not need it for any arguments, but we do assume it in the next example.

\begin{ex} \label{ex:SSI-generators}
Take $n=6$ and  let $z$ be the image of $ (1,2)(3,6)(4,5) \in \Ifpf_6$ under $\FPF_6$. Then
\[
\SSD(z) = \{(4,3),(5,3)\}  =
\left\{\begin{smallmatrix}
 \cdot& \cdot & \cdot &\cdot  &  \cdot&\cdot  \\
\cdot &   \cdot&\cdot  &  \cdot& \cdot &\cdot  \\
\cdot & \cdot &  \cdot&\cdot  & \cdot & \cdot \\
\cdot & \cdot & \square & \cdot & \cdot &\cdot  \\
\cdot & \cdot & \square & \cdot& \cdot &\cdot  \\
\cdot & \cdot & \cdot & \cdot & \cdot & \cdot \\
\end{smallmatrix}\right\}
=
\left\{\begin{smallmatrix}
 \square& 1 & \cdot &\cdot  &  \cdot&\cdot  \\
1 &   \cdot&\cdot  &  \cdot& \cdot &\cdot  \\
\cdot & \cdot &  \square&\square  & \square & 1 \\
\cdot & \cdot & \square & \square & 1 &\cdot  \\
\cdot & \cdot & \square & 1 & \cdot &\cdot  \\
\cdot & \cdot & 1 & \cdot & \cdot & \cdot \\
\end{smallmatrix}\right\}
\cap 
\left\{\begin{smallmatrix}
\cdot& \cdot & \cdot &\cdot  &  \cdot & \cdot  \\
\square& \cdot & \cdot &\cdot  &  \cdot & \cdot  \\
\square& \square & \cdot &\cdot  &  \cdot & \cdot  \\
\square& \square & \square &\cdot  &  \cdot & \cdot  \\
\square& \square & \square &\square  &  \cdot & \cdot  \\
\square& \square & \square &\square  &  \square & \cdot  
\end{smallmatrix}\right\}
\]
so $\Ess(\SSD(z)) = \{(5,3)\}$ and 
$\SSX_z = \{ A \in \SSM_6 : \rank A_{[5][3]} \leq 2\}$ by Proposition~\ref{thm:fpf-essential-set}. The ideal $\SSJ_z$ is generated by
$\SSu_{AB}$ where $A$ ranges over the ten $3$-element subsets of $\{1,2,3,4,5\}$ and $B=\{1,2,3\}$, the unique $3$-element subset of $\{1,2,3\}$. The
relevant monomials 
 are listed below:
    \begin{equation*}
        \begin{array}{r|r|r|r}
            A & u_{AB} & \phi(u_{AB}) &  \SSu_{AB}\\
            \hline
            \{1,2,3\} & u_{13}u_{22}u_{31} & 0  & 0\\
            \{1,2,4\} & u_{13}u_{22}u_{41} & 0 & 0\\
            \{1,2,5\} & u_{13}u_{22}u_{51} & 0 & 0\\
            \{1,3,4\} & u_{13}u_{32}u_{41} & u_{31}u_{32}u_{41} & u_{31}u_{32}u_{41} \\
            \{1,3,5\} & u_{13}u_{32}u_{51} & u_{31}u_{32}u_{51} & u_{31}u_{32}u_{51} \\

            \{1,4,5\} & u_{13}u_{42}u_{51} & u_{31}u_{42}u_{51} & u_{31}u_{42}u_{51} \\
            \{2,3,4\} & u_{23}u_{32}u_{41} & u_{32}^2 u_{41} & u_{32}u_{41}\\
            \{2,3,5\} & u_{23}u_{32}u_{51} & u_{32}^2 u_{51} & u_{32}u_{51}\\
            \{2,4,5\} & u_{23}u_{42}u_{51} & u_{32}u_{42}u_{51} & u_{32}u_{42}u_{51}\\
            \{3,4,5\} & u_{33}u_{42}u_{51} & 0 & 0
        \end{array}
    \end{equation*}
 In particular,   $\SSJ_z=(u_{32}u_{41}, u_{32}u_{51}, u_{31}u_{42}u_{51}) \subseteq \KK[u_{ij}: i,j \in [6], i>j]$ is the ideal generated by the righthand column.
The ideal $\SSI_z$ in this case is generated by the Pfaffians of the submatrices of $\SSU_6$ with row and column sets $\{1,2,3,4\}$ and $\{1,2,3,5\}$, which are the polynomials
    \begin{equation*}
        f := u_{32}u_{41} - u_{31}u_{42} + u_{21}u_{43} \quad \text{and} \quad g := u_{32}u_{51} - u_{31}u_{52} + u_{21}u_{53}.
    \end{equation*}
    The set $\{f,g\}$ is \emph{not} a Gr\"obner basis under the reverse lexicographic term order, as the ideal $(\init(f), \init(g)) = (u_{32}u_{41}, u_{32}u_{51})$ does not contain
    $u_{31}u_{42}u_{51} =\init( u_{41}g  - u_{51}f)  \in \init(\SSI_z).$
It does hold that $u_{31}u_{42}u_{51} \in \SSJ_z$ and one can check directly for this example that $\SSJ_z = \init(\SSI_z)$.
\end{ex}

We state three basic properties of the ideals $\SSJ_\r$ 
before proving a more substantial identity. 

\begin{prop} \label{m-ss-prop}
If $M$ is a monomial in $ \KK[u_{ij} : i,j \in [n], i > j]$
and $\r = \SSr_M$, then $M \notin \SSJ_{\r}$. 
\end{prop}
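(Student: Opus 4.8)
The plan is to argue by contradiction: suppose $M \in \SSJ_{\r}$ with $\r = \SSr_M$. Since $\SSJ_{\r}$ is a monomial ideal, some generator $\SSu_{AB}$ of $\SSJ_{\r}$ must divide $M$, where $(A,B) \in \binom{[i]}{q} \times \binom{[j]}{q}$ for some $(i,j) \in [n] \times [n]$ with $i \geq j$ and $q = \r(i,j)+1$. In particular $\SSu_{AB} \neq 0$ and $\SSu_{AB} \mid M$. But by the very definition of $\SSr_M$, the quantity $\SSr_M(i,j)$ is the \emph{largest} value $r$ for which there exist sets $(A',B') \in \binom{[i]}{r} \times \binom{[j]}{r}$ with $\SSu_{A'B'} \mid M$. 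The existence of $(A,B)$ as above shows that $\SSr_M(i,j) \geq q = \r(i,j) + 1 = \SSr_M(i,j) + 1$, a contradiction.

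The only point requiring a small amount of care is making sure the indices line up correctly. A generator of $\SSJ_\r$ indexed by $(i,j)$ uses row set $A \subseteq [i]$ and column set $B \subseteq [j]$ with $|A| = |B| = \r(i,j)+1$; this is exactly the kind of pair $(A,B)$ that witnesses a lower bound for $\SSr_M(i,j)$ in the definition of that rank table. Since $\SSr_M(i,j)$ is defined as a maximum over all such witnessing pairs, having a witness of size $\r(i,j)+1$ immediately forces $\SSr_M(i,j) \geq \r(i,j)+1$, i.e.\ $\r(i,j) \geq \r(i,j)+1$ since $\r = \SSr_M$, which is absurd.

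There is essentially no obstacle here: the statement is almost immediate from unwinding Definitions~\ref{ssj-def} and the definition of $\SSr_M$. The one thing worth double-checking is that the symmetry hypothesis (that $\r = \SSr_M$ is a symmetric rank table, which indeed it always is) is compatible with the one-sided indexing $i \geq j$ used in Definition~\ref{ssj-def} — but this causes no trouble, since for any witnessing pair with $i < j$ one can simply transpose to a pair with $i \geq j$ using symmetry of $\SSr_M$, and the generators of $\SSJ_\r$ were only taken for $i \geq j$ precisely because the $i < j$ ones are redundant by symmetry.
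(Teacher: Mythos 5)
Your proof is correct and takes essentially the same approach as the paper's (which is a two-sentence unwinding of the definitions observing that any generator $\SSu_{AB}$ of $\SSJ_\r$ with $(A,B) \in \binom{[i]}{q}\times\binom{[j]}{q}$ and $q = \r(i,j)+1$ dividing $M$ would force $\SSr_M(i,j) \geq q > \r(i,j)$). Your write-up is just a more spelled-out version, including the routine remark that the restriction to $i \geq j$ in Definition~\ref{ssi-def} is harmless by symmetry of $\SSr_M$.
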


\begin{proof} 
The monomials in $\SSJ_{{\r}}$ are those divisible by some $\SSu_{AB}$ where $A \subseteq [i]$, $B \subseteq [j]$ and $|A| = |B| = {\r}(i,j) + 1$ for some $i,j$. All such monomials dividing $M$ have degree at most ${\r}(i,j)$. 
\end{proof}

\begin{prop}\label{ssj-bruhat-prop}
  If $y,z \in \FPF_n(\I_n)$ and $y \leq z$, then $\SSJ_y \subseteq \SSJ_z$. 
  \end{prop}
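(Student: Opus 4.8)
The plan is to reduce the statement to a divisibility check among the monomial generators. First I would apply Proposition~\ref{ss-bruhat-prop} with the roles of $y$ and $z$ exchanged: the hypothesis $y \le z$ is equivalent to $\r_z(i,j) \le \r_y(i,j)$ for all $(i,j) \in [n]\times[n]$. Since $\SSJ_y$ is generated by the monomials $\SSu_{AB}$ of Definition~\ref{ssj-def}, it then suffices to show that each such generator lies in $\SSJ_z$; so I would fix $(i,j)$ with $i \ge j$ and $(A,B) \in \binom{[i]}{q}\times\binom{[j]}{q}$ with $q = \r_y(i,j)+1$, and aim to exhibit $\SSu_{AB}$ as a multiple of a generator of $\SSJ_z$.

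The key idea is to shrink $A$ and $B$ to the size prescribed by $\r_z$. Write $q' := \r_z(i,j)+1$, so $1 \le q' \le q$ by the first step. If $\SSu_{AB}=0$ there is nothing to prove, so assume $\SSu_{AB}\ne 0$, i.e.\ no pair of $A\odot B$ is diagonal. Let $A'\subseteq A$ consist of the $q'$ smallest elements of $A$ and $B'\subseteq B$ of the $q'$ largest elements of $B$. Writing $A=\{a_0<\dots<a_{q-1}\}$ and $B=\{b_0<\dots<b_{q-1}\}$, a direct computation from \eqref{odot-eq} shows that pairing the $k$-th smallest of $A'$ with the $k$-th largest of $B'$ yields $(a_k,b_{q-1-k})$, which is precisely the pair produced by the $k$-th smallest of $A$ and the $k$-th largest of $B$; hence $A'\odot B'\subseteq A\odot B$ and, transposing, $B'\odot A'\subseteq B\odot A$. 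Consequently $\SSu_{A'B'}\ne 0$ (no pair of $A'\odot B'$ is diagonal) and, comparing the defining products, $\SSu_{A'B'}\mid \SSu_{AB}$. Since $A'\subseteq[i]$, $B'\subseteq[j]$, $i\ge j$, and $|A'|=|B'|=q'=\r_z(i,j)+1$, the monomial $\SSu_{A'B'}$ is a generator of $\SSJ_z$, so $\SSu_{AB}\in(\SSu_{A'B'})\subseteq\SSJ_z$, and $\SSJ_y\subseteq\SSJ_z$ follows.

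The only step that requires genuine care is the index bookkeeping establishing $A'\odot B'\subseteq A\odot B$, because $\odot$ reverses the second set; once this reindexing is verified the rest is routine. As an alternative, one could instead use the characterization ``$M\in\SSJ_{\r}$ if and only if $\SSr_M(i,j)>\r(i,j)$ for some $i\ge j$'' (immediate from Definition~\ref{ssj-def}) together with the same rank inequality $\r_z\le\r_y$ from Proposition~\ref{ss-bruhat-prop}, but I expect the generator-level argument above to be the cleanest and most self-contained route.
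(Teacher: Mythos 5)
Your argument is correct and follows the paper's own route: invoke Proposition~\ref{ss-bruhat-prop} to turn $y\le z$ into $\r_z\le\r_y$, then conclude $\SSJ_y\subseteq\SSJ_z$ at the level of monomial generators. The paper states this last step as ``clear from the definitions,'' and your shrinking of $(A,B)$ to $(A',B')$ with $A'\odot B'\subseteq A\odot B$ correctly supplies the divisibility check that the paper leaves implicit.
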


    \begin{proof}
If $y\leq z$ then $\r_y(i,j) \geq \r_z(i,j)$ for all $i,j \in [n]$ by Proposition~\ref{ss-bruhat-prop}, and 
it is clear from the definitions that  this implies that  $\SSJ_y \subseteq \SSJ_z$.
    \end{proof}
    
    Given rank tables $\r$ and $\s$, let $\min(\r,\s)$ be the rank table 
    mapping $(i,j) \mapsto \min\{\r(i,j),\s(i,j)\}$.
    
    \begin{prop} \label{prop:min-to-sum-ss} 
If ${\r}$ and ${\s}$ are symmetric rank tables then $\SSJ_{\min({\r}, {\s})} = \SSJ_{\r} + \SSJ_{{\s}}$. 
\end{prop}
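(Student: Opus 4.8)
The plan is to note that $\SSJ_\r$, $\SSJ_\s$, $\SSJ_{\min(\r,\s)}$ and $\SSJ_\r+\SSJ_\s$ are all monomial ideals in the polynomial ring $\KK[u_{ij} : i,j\in[n],\ i>j]$, so to prove the asserted equality it suffices to establish the two containments by comparing monomial generators up to divisibility. Writing $\cM := \min(\r,\s)$, I would prove $\SSJ_\r+\SSJ_\s\subseteq\SSJ_\cM$ and $\SSJ_\cM\subseteq\SSJ_\r+\SSJ_\s$ separately; both reduce to a single elementary observation about how the generators $\SSu_{AB}$ behave when the index sets shrink.

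That observation is as follows. Suppose $\SSu_{AB}\neq 0$, where $A=\{a_0<\dots<a_{q-1}\}$ and $B=\{b_0<\dots<b_{q-1}\}$, and let $1\leq q'\leq q$. Set $A':=\{a_0,\dots,a_{q'-1}\}$ (the $q'$ smallest elements of $A$) and $B':=\{b_{q-q'},\dots,b_{q-1}\}$ (the $q'$ largest elements of $B$). Reading off the definition of $\odot$, one finds $A'\odot B'=\{(a_k,b_{q-1-k}) : 0\leq k\leq q'-1\}$, which is contained in $A\odot B=\{(a_k,b_{q-1-k}) : 0\leq k\leq q-1\}$. Hence $u_{A'B'}\mid u_{AB}$, so applying the ring homomorphism $\phi$ gives $\phi(u_{A'B'})\mid\phi(u_{AB})$, and passing to square-free radicals shows that $\SSu_{A'B'}=\rad(\phi(u_{A'B'}))$ divides $\SSu_{AB}=\rad(\phi(u_{AB}))$; being a divisor of a nonzero monomial, $\SSu_{A'B'}$ is itself a nonzero monomial. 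Finally $A'\subseteq A$ and $B'\subseteq B$, so if $A\subseteq[i]$ and $B\subseteq[j]$ then also $A'\subseteq[i]$ and $B'\subseteq[j]$.

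Granting this, here is the first containment. Let $\SSu_{AB}$ be a nonzero generator of $\SSJ_\r$, so it arises from some $(i,j)\in[n]\times[n]$ with $i\geq j$ and $(A,B)\in\binom{[i]}{q}\times\binom{[j]}{q}$ where $q=\r(i,j)+1$. Since $\cM(i,j)\leq\r(i,j)$, we have $q':=\cM(i,j)+1\leq q$, so the observation furnishes $(A',B')\in\binom{[i]}{q'}\times\binom{[j]}{q'}$ with $\SSu_{A'B'}$ a nonzero monomial dividing $\SSu_{AB}$; as $i\geq j$ and $q'=\cM(i,j)+1$, this $\SSu_{A'B'}$ is a generator of $\SSJ_\cM$, whence $\SSu_{AB}\in\SSJ_\cM$. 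The identical argument with $\s$ replacing $\r$ shows every generator of $\SSJ_\s$ lies in $\SSJ_\cM$, giving $\SSJ_\r+\SSJ_\s\subseteq\SSJ_\cM$. For the reverse containment, let $\SSu_{AB}$ be a nonzero generator of $\SSJ_\cM$, arising from $(i,j)$ with $i\geq j$ and $q=\cM(i,j)+1=\min\{\r(i,j),\s(i,j)\}+1$. The minimum is attained by $\r(i,j)$ or by $\s(i,j)$; say $\cM(i,j)=\r(i,j)$, the other case being symmetric. Then $q=\r(i,j)+1$, so $\SSu_{AB}$ is already a generator of $\SSJ_\r$ and hence lies in $\SSJ_\r+\SSJ_\s$. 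Combining the two containments yields $\SSJ_{\min(\r,\s)}=\SSJ_\r+\SSJ_\s$.

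I do not expect any real obstacle. The one step that needs genuine care is the shrinking observation: the choice of $A'$ and $B'$ must be made so that the antidiagonal pairing $A'\odot B'$ lands inside $A\odot B$ — this is what lets divisibility and non-vanishing transfer from $\SSu_{AB}$ to $\SSu_{A'B'}$ — and one must keep the bookkeeping straight so that the side conditions $A'\subseteq[i]$, $B'\subseteq[j]$, $i\geq j$, and $q'=\cM(i,j)+1$ all remain valid after the reduction. Working through the identity $\SSu_{AB}=\rad(\phi(u_{AB}))$ makes the non-vanishing automatic, so this bookkeeping is essentially the entire content.
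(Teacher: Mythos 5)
Your proof is correct and follows essentially the same route as the paper's: the reverse containment is argued in exactly the same way (a generator of $\SSJ_{\min(\r,\s)}$ is already a generator of $\SSJ_\r$ or $\SSJ_\s$), while your "shrinking" observation simply spells out the divisibility reasoning that the paper dismisses as clear for the forward containment $\SSJ_\r + \SSJ_\s \subseteq \SSJ_{\min(\r,\s)}$.
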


    \begin{proof} It is clear that $\SSJ_{\r}$ and $ \SSJ_{{\s}}$ are both contained in $ \SSJ_{\min({\r}, {\s})}$,
    so $ \SSJ_{\r} + \SSJ_{{\s}} \subseteq \SSJ_{\min({\r}, {\s})} $.
     Conversely, suppose $M$ is a monomial in $\SSJ_{\min({\r}, {\s})}$. Then $M$ is divisible by some $\SSu_{AB}$ where $A \subseteq [i], B \subseteq [j]$ and $|A| = |B| = \min({\r}(i,j), {\s}(i,j))+1$ for some $i$ and $j$. But this means that $M$ is in either $\SSJ_{{\s}}$ or $\SSJ_{{\r}}$ depending on whether ${\r}(i,j)$ exceeds ${\s}(i,j)$ or not.
    \end{proof}

We will need the following theorem to prove the key lemma in Section~\ref{key-lemma-sect}.

\begin{thm}\label{thm:bruhat-minimal-intersection-ss}
 Let ${\r}$ be a symmetric rank table. Then $\SSJ_{\r} = \bigcap_z \SSJ_z$ where $z$ runs over the 
elements of $\FPF_n(\I_n)$ that are  Bruhat-minimal among those with ${\r}_z(i,j) \leq {\r}(i,j)$ for all $i,j \in [n]$.
\end{thm}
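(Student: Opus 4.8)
The plan is to prove the two inclusions separately, with the forward inclusion being routine and the reverse inclusion requiring the structural results about rank tables established earlier.

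\medskip

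\noindent\textbf{The easy inclusion.} First I would show $\SSJ_{\r} \subseteq \bigcap_z \SSJ_z$, where $z$ runs over the Bruhat-minimal elements of $\FPF_n(\I_n)$ with $\r_z(i,j) \le \r(i,j)$ for all $i,j$. This amounts to fixing such a $z$ and checking $\SSJ_{\r} \subseteq \SSJ_z$. Since $\r_z(i,j) \le \r(i,j)$ pointwise, every generator $\SSu_{AB}$ of $\SSJ_{\r}$ — where $(A,B) \in \binom{[i]}{q}\times\binom{[j]}{q}$ with $q = \r(i,j)+1 \ge \r_z(i,j)+1$ — is divisible by a corresponding generator of $\SSJ_z$ obtained by shrinking $A$ and $B$ to size $\r_z(i,j)+1$; this is exactly the observation used in the proof of Proposition~\ref{ssj-bruhat-prop}. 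Hence each such generator lies in $\SSJ_z$, and intersecting over all valid $z$ gives the inclusion. (One also needs that at least one such $z$ exists, which follows from Proposition~\ref{ss-prop:rM-equals-rw} applied to a suitable monomial, or more directly from Remark~\ref{doesnt-matter-remark}.)

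\medskip

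\noindent\textbf{The hard inclusion.} For $\bigcap_z \SSJ_z \subseteq \SSJ_{\r}$, I would argue by contrapositive on monomials: if $M$ is a monomial not in $\SSJ_{\r}$, I want to produce a single Bruhat-minimal $z$ with $\r_z \le \r$ pointwise and $M \notin \SSJ_z$. The natural candidate is governed by the rank table $\SSr_M$: by Proposition~\ref{m-ss-prop} we have $M \notin \SSJ_{\SSr_M}$, and by Proposition~\ref{ss-prop:rM-equals-rw} there is some $w \in \FPF_n(\I_n)$ with $\r_w = \SSr_M$ on $[n]\times[n]$, so $M \notin \SSJ_w$. Now consider $\SSr' := \min(\SSr_M, \r)$, which is again a symmetric rank table; since $M \notin \SSJ_{\SSr_M}$ and $\SSJ_{\SSr'} = \SSJ_{\SSr_M} + \SSJ_{\r} \supseteq$ — wait, more carefully, by Proposition~\ref{prop:min-to-sum-ss} we have $\SSJ_{\SSr'} = \SSJ_{\SSr_M} + \SSJ_{\r}$, and $M$ lies in neither summand (it avoids $\SSJ_{\SSr_M}$ by Proposition~\ref{m-ss-prop} applied after noting $\SSr_M = \SSr_{\min(\SSr_M,\r)}$ in the relevant range... this needs care) — so $M \notin \SSJ_{\SSr'}$. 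Apply Proposition~\ref{ss-prop:rM-equals-rw} to get $w' \in \FPF_n(\I_n)$ with $\r_{w'} = \SSr' \le \r$ pointwise, whence $M \notin \SSJ_{w'}$. Finally, choose $z \le w'$ Bruhat-minimal among elements of $\FPF_n(\I_n)$ with $\r_z \le \r$; such a $z$ exists since the poset is finite and $w'$ is in the set. By Proposition~\ref{ssj-bruhat-prop}, $\SSJ_z \subseteq \SSJ_{w'}$, so $M \notin \SSJ_z$. This exhibits one $z$ in the index set of the intersection with $M \notin \SSJ_z$, proving $M \notin \bigcap_z \SSJ_z$.

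\medskip

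\noindent\textbf{Where the difficulty lies.} The main obstacle is the step producing $w'$ with $\r_{w'} \le \r$ \emph{and} $M \notin \SSJ_{w'}$ simultaneously: I need the rank table $\SSr' = \min(\SSr_M, \r)$ to be realizable as $\r_{w'}$ for some fixed-point-free involution standardization, which is precisely the content of Proposition~\ref{ss-prop:rM-equals-rw} — but that proposition is stated for rank tables of the form $\SSr_M$, not for arbitrary symmetric rank tables or minima thereof. So I would need to check that $\min(\SSr_M, \r)$ is itself of the form $\SSr_N$ for some monomial $N$ (when $\r$ is a symmetric rank table arising as $\SSr$ of something, or more generally), or else restate and prove a version of Proposition~\ref{ss-prop:rM-equals-rw} for arbitrary symmetric rank tables satisfying the Lemma~\ref{lem:rw-characterization}-type growth conditions. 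Verifying that $\min$ of two valid symmetric rank tables still satisfies those conditions, and that $M \notin \SSJ_{\min(\SSr_M,\r)}$ (which uses that the witnessing $\SSu_{AB}\mid M$ already forces $|A|=|B| \le \SSr_M(i,j)$, hence $\le \min(\SSr_M(i,j),\r(i,j))$ is impossible for a generator), is the technical heart; the rest is bookkeeping with Propositions~\ref{ssj-bruhat-prop}, \ref{prop:min-to-sum-ss}, and \ref{m-ss-prop}.
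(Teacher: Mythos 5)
Your overall strategy is the same as the paper's, but there is a genuine gap at the step where you invoke Proposition~\ref{ss-prop:rM-equals-rw} for the rank table $\SSr' = \min(\SSr_M,\r)$: as you flag yourself, that proposition is proved only for tables of the form $\SSr_M$, and you have not established that $\SSr'$ has this form. The missing observation, which makes the $\min$ construction unnecessary and is exactly what the paper uses in its place, is that $M\notin\SSJ_{\r}$ \emph{already forces} $\SSr_M(i,j)\le\r(i,j)$ for all $i,j\in[n]$. Indeed, if $\SSr_M(i,j)>\r(i,j)$ for some $i\ge j$, set $q=\r(i,j)+1\le\SSr_M(i,j)$; by definition of $\SSr_M$ there is a pair $(A',B')\in\binom{[i]}{\SSr_M(i,j)}\times\binom{[j]}{\SSr_M(i,j)}$ with $\SSu_{A'B'}\mid M$, and picking $q$ of the positions in $A'\odot B'$ yields $(A,B)\in\binom{[i]}{q}\times\binom{[j]}{q}$ with $\SSu_{AB}\mid\SSu_{A'B'}\mid M$; but $\SSu_{AB}$ is a generator of $\SSJ_{\r}$, contradicting $M\notin\SSJ_{\r}$. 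Once this is in hand, $\min(\SSr_M,\r)=\SSr_M$, your $w'$ is just your $w$, Proposition~\ref{ss-prop:rM-equals-rw} applies directly, and the remainder of your argument goes through.

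A minor structural difference worth noting: the paper first proves $\SSJ_{\r}=\bigcap_z\SSJ_z$ with $z$ running over \emph{all} elements of $\FPF_n(\I_n)$ satisfying $\r_z\le\r$ pointwise, and only at the end passes to the Bruhat-minimal index set via Proposition~\ref{ssj-bruhat-prop} (every $z'$ in the full index set dominates some Bruhat-minimal $z$, so $\SSJ_z\subseteq\SSJ_{z'}$, and the intersection is unchanged). Your choice of a Bruhat-minimal $z\le w'$ inside the contrapositive accomplishes the same reduction and is also fine. The only real issue is the inequality $\SSr_M\le\r$ described above.
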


\begin{proof} 
It is clear that $\SSJ_{\r}$ is contained in $\SSJ_{\s}$ for any rank table $\s$ with $\s(i,j) \leq \r(i,j)$
 for all $i,j \in [n]$.
Conversely, suppose $M$ is a monomial in $\KK[u_{ij} : i,j \in [n],i>j]$ that is not contained in $\SSJ_{\r}$.  
We must have $\SSr_M(i,j)=\SSr_M(j,i) \leq {\r}(i,j)= {\r}(j,i)$ for each $(i,j) \in [n]\times [n]$ with $i\geq j$,
since otherwise $M$ would be divisible by a monomial $\SSu_{AB}$ with 
$(A,B) \in \binom{[i]}{r} \times \binom{[j]}{r}$ for $r = {\r}(i,j) + 1$, which would imply that $M \in \SSJ_{\r}$. 
By Propositions~\ref{ss-prop:rM-equals-rw} and \ref{m-ss-prop}, we have $\SSr_M = {\r}_z$ for some $z \in \FPF_n(\I_n)$
and  $M \notin \SSJ_z$. This means that $M$ is also not contained in the intersection given in the theorem statement.

We conclude that $\SSJ_{\r} = \bigcap_z \SSJ_z$ 
where $z$ runs over all (not necessarily 
 Bruhat-minimal) elements of $\FPF_n(\I_n)$ satisfying ${\r}_z(i,j) \leq {\r}(i,j)$ for all $i,j \in [n]$.
 By Proposition~\ref{ssj-bruhat-prop}, this intersection is unchanged if we require $z$ to be Bruhat-minimal
 with the desired property. 
 \end{proof}

\subsection{Initial terms of Pfaffians}

We continue to write $\init( \SSI_z)$ for the initial ideal of $\SSI_z$ relative to the reverse lexicographic term order on $\KK[u_{ij} : i,j \in [n], i>j]$.
Our ultimate goal is to prove  that 
$\SSJ_z=\init( \SSI_z)$. At this point, however, it is not even clear that $\SSJ_z \subseteq \init( \SSI_z)$,
since the leading terms of the defining generators of $\SSI_z$ do not generate $\SSJ_z$. 
We devote this subsection to proving this containment.

This section is the only place where our results depend on specific properties of 
the reverse lexicographic term order on $\KK[u_{ij} : i,j \in [n], i>j]$, as opposed to properties of term orders in general.
The reader may wish to review the definition of this term order 
from Example~\ref{lex-term2}.

Our strategy is to identify another set of  elements in $\SSI_z$ whose leading terms 
 recover the generators $ \SSu_{AB}\in \SSJ_z$ (and which we will eventually prove to be a Gr\"obner basis for $\SSI_z$).
Suppose $A$ and $B$ are finite sets of positive integers with $|A| \leq |B|$. 
Recall the definition of the matrix $\SSU$ from \eqref{SSU-eq} and 
define $g_{AB}$ to be the Pfaffian of the skew-symmetric block matrix
\be\label{gAB-eq}
g_{AB} := \pf \left[ \begin{array}{ll} \SSU_{BB} & \SSU_{BA} \\ \SSU_{AB} & 0
    \end{array}  \right] \in \KK[u_{ij} :i,j \in \NN, i> j].
\ee
For example, if $A = \{1,6,7\}$ and $B=\{1,2,3,4,5\}$ then
\[
g_{AB} = 
 \pf \left[\begin{array}{rrrrr|rrr}
            0 & -u_{21} & -u_{31} & -u_{41} & -u_{51} & 0 & -u_{61} &- u_{71}\\
            u_{21} & 0 & -u_{32} & -u_{42} & -u_{52} & u_{21} & -u_{62} & -u_{72}\\
            u_{31} & u_{32} & 0 & -u_{43} &- u_{53} & u_{31} &- u_{63} & -u_{73}\\
            u_{41} & u_{42} & u_{43} & 0 & -u_{54} & u_{41} & -u_{64} & -u_{74}\\
            u_{51} & u_{52} & u_{53} & u_{54} & 0 & u_{51} & -u_{65} & -u_{75}\\
            \hline 
            0 & -u_{21} & -u_{31} & -u_{41} & -u_{51} & 0 & 0 & 0\\
            u_{61} & u_{62} & u_{63} & u_{64} & u_{65} & 0 & 0 & 0\\
            u_{71} & u_{72} & u_{73} & u_{74} & u_{75} & 0 & 0 & 0
        \end{array}\right].
\]
To show that (some of) these polynomials belong to $\SSI_z$, we need an alternate formula
expressing each $g_{AB}$ as a sum of products of Pfaffians. 
This requires a little more notation.

Given a finite set of integers $T = \{t_1 < t_2 < \dots < t_m\}$, let $\sort(T)$ denote the word $t_1t_2 \cdots t_m$.
When $S \subseteq T$, define $\setsgn{T}{S}$ to be the sign of the permutation taking $\sort(T)$ to $\sort(T \setminus S)\sort(S)$.
By counting inversions, one checks that if $S = \{ t_i : i \in I\}$ for some set $I \subseteq [m]$, then 
\begin{equation}\label{stst-eq}
\setsgn{T}{S} = (-1)^{|S||T| +\binom{|S|}{2} + \sum I}.
\end{equation}
We will need this identity in the proof of the following:

\begin{prop}
\label{pfaffian-sum-prop}
 If $A, B$ are finite sets of positive integers with $|A| \leq |B|$, then
\[
    g_{AB} =  \sum_{\substack{S \subseteq A \setminus B \\ \text{$|B \sqcup S|$ even}}}\!\!\! (-1)^{|A \setminus S|/2}\setsgn{A}{S} \setsgn{B \sqcup S}{S}\pf (\SSU_{A \setminus S, A \setminus S})\pf(\SSU_{B \sqcup S, B \sqcup S}).
\]
\end{prop}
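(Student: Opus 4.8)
The plan is to prove the identity by expanding the Pfaffian on the left as a signed sum over fixed-point-free involutions (perfect matchings) of the index set $B \sqcup A'$, where $A'$ is a disjoint copy of $A$ corresponding to the bottom block of rows, and then grouping these matchings according to which elements of the copy $A'$ are matched to elements of the top block $B$. Concretely, write the block matrix in \eqref{gAB-eq} as a skew-symmetric matrix $M$ on the ordered index set $B b_1 < \dots < b_{|B|}$ followed by $a_1' < \dots < a_{|A|}'$ (the primed copy of $A$). Since the bottom-right block is zero, any matching contributing to $\pf(M)$ can have \emph{no} edge between two primed indices; every primed index $a'$ is therefore matched either to some element of $B$ (using the entry $\SSU_{ab}$ from the off-diagonal block, which equals the ``$A\times B$'' entry) or — crucially — this forces a certain combinatorial structure. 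The set $S \subseteq A$ will be exactly the set of (unprimed labels of) primed indices matched \emph{into $B$}; the remaining primed indices must be matched among themselves, which is impossible unless there are none — so actually one sees that the primed indices matched into $B$ partition into those forming the set $S$, and the rest of the primed block pairs up...

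Let me restate the grouping more carefully, since this is the heart of it. Fix a matching $m$ of $B \sqcup A'$. Let $S' \subseteq A'$ be the primed indices matched to $B$, and write $S \subseteq A$ for the corresponding unprimed labels. The primed indices \emph{not} in $S'$ must be matched among themselves (zero block), so $|A \setminus S|$ is even and this portion contributes $\pm\pf$ of the $\SSU_{A\setminus S, A\setminus S}$ block — but wait, the entries among primed indices are zero, so that can't happen unless $A\setminus S = \emptyset$. The resolution: the primed indices in $A'\setminus S'$ are matched using entries in the \emph{block between $A'$ and $B$}, i.e. again the off-diagonal block, but these are entries $\SSU_{ab}$ with $a\in A\setminus S$, $b\in B$; simultaneously those $b$'s are ``used up'' so the remaining elements of $B$ pair among themselves. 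I need to recount: the off-diagonal block $\SSU_{AB}$ lets a primed index talk to $B$; there is no $A'$-to-$A'$ block. So \emph{every} primed index is matched to $B$, $|A|\le|B|$ guarantees room, $|A|$ many elements of $B$ are consumed, they are consumed via entries that — reading off the matrix — are literally the entries of $\SSU_{B,A}$, i.e. $\SSU_{ba}$. But the first copy of the $A$-indexed rows appears \emph{inside} $B$'s block as columns indexed by $A\cap B$...

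The cleaner route, which I would actually follow, is to apply the general identity $\pf(XMX^T) = \det(X)\pf(M)$ together with a Laplace-type (Sylvester) expansion for Pfaffians of block matrices: for a skew-symmetric matrix in $2\times 2$ block form with a zero bottom-right block, there is a classical formula expressing its Pfaffian as a sum over subsets, $\pf\left[\begin{smallmatrix} P & Q \\ -Q^T & 0\end{smallmatrix}\right] = \sum_{S} \pm \pf(P_{\hat S \hat S})\det(Q_{S,\bullet})$ or similar, and one then recognizes the minors of $\SSU_{AB}$ as themselves expressible via Pfaffians of $\SSU_{A\sqcup B}$-type blocks because $\SSU$ is skew-symmetric (this is where $\pf(\SSU_{B\sqcup S, B\sqcup S})$ appears: a minor $\det(\SSU_{S,B'})$ with $|S|=|B'|$ equals, up to sign, $\pf(\SSU_{S\sqcup B', S\sqcup B'})$ only when $S$ and $B'$ are disjoint, which is why the sum is restricted to $S\subseteq A\setminus B$). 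I would carry this out in three steps: (1) state and prove (or cite, if it is standard enough to quote) the block-Pfaffian expansion with the zero block; (2) convert the resulting minors of the rectangular matrix $\SSU_{AB}$ into Pfaffians $\pf(\SSU_{B\sqcup S, B\sqcup S})$ using skew-symmetry, picking up the sign $\setsgn{B\sqcup S}{S}$ from reordering the index word $\sort(B)\,\sort(S)$ into $\sort(B\sqcup S)$; (3) track the remaining signs — the $(-1)^{|A\setminus S|/2}$ comes from the Pfaffian of the all-entries-from-$\SSU_{AB}$ matching on $A\setminus S$ against its transpose copy, and $\setsgn{A}{S}$ from splitting the $A'$-block index word — and check they combine via \eqref{stst-eq} into the stated coefficient.

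The main obstacle I expect is bookkeeping the signs: getting the block-Pfaffian expansion with the correct sign convention, then correctly matching it against the $\sgn\binom{\cdot}{\cdot}$ notation of the paper via the inversion count \eqref{stst-eq}. The parity condition ``$|B\sqcup S|$ even'' is automatic from the expansion (a Pfaffian of an odd-size matrix vanishes), and the constraint $S\subseteq A\setminus B$ drops out because any $s\in S\cap B$ would force a repeated index in $B\sqcup S$ and kill that term; I would remark on both rather than belabor them. A secondary subtlety worth a sentence is the interpretation of $\SSU_{AB}$ when $A\cap B\neq\emptyset$ — entries on the diagonal of the ambient skew-symmetric form are zero — but since $\SSU$ itself is a genuine skew-symmetric matrix this is consistent and requires no special handling. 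If the block expansion in step (1) is not readily citable, a self-contained proof by expanding $\pf(M)=\sum_{z}(-1)^{\ell_{\fpf}(z)}\prod \SSU_{z}$ over fixed-point-free involutions $z$ of $B\sqcup A'$ and grouping by $z(A')\cap B$ is routine but sign-heavy, and that is where most of the work will actually live.
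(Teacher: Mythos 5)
Your ``cleaner route'' has a genuine gap at step (2). The zero-block Laplace expansion you cite in step (1) yields a formula parameterized by $|A|$-element subsets $T \subseteq B$, each term being $\pm\det(\SSU_{TA})\cdot\pf(\SSU_{(B\setminus T)(B\setminus T)})$, which is structurally unlike the statement's sum over $S\subseteq A\setminus B$. You then propose to repair this by converting the minors into Pfaffians, asserting that $\det(\SSU_{S,B'})$ equals $\pm\pf(\SSU_{S\sqcup B',S\sqcup B'})$ when $S\cap B'=\emptyset$. That conversion is false: taking $S=\{1,2\}$ and $B'=\{3,4\}$ gives $\det(\SSU_{SB'})=\SSU_{13}\SSU_{24}-\SSU_{14}\SSU_{23}$, whereas $\pf(\SSU_{\{1,2,3,4\},\{1,2,3,4\}})=\SSU_{12}\SSU_{34}-\SSU_{13}\SSU_{24}+\SSU_{14}\SSU_{23}$ has an extra term coming from the matching $\{1,2\},\{3,4\}$ internal to each block. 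The Pfaffian of $\SSU_{B\sqcup S}$ sees all matchings of $B\sqcup S$, not merely the $S$-to-$B$ ones, so nothing collapses to a single minor.

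Your first (``tangled'') attempt of expanding the Pfaffian over matchings of the label set $B \sqcup A'$ and grouping runs into the same parameterization obstacle: since the $A'\times A'$ block really is zero, every primed index is forced into $B$, so the grouping is again by $|A|$-element subsets of $B$.

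The paper circumvents all of this with a different device. It does \emph{not} exploit the zero block directly. Instead it writes $g_{AB}=\pf(X+Y)$ where $X$ is the full matrix $\left[\begin{smallmatrix}\SSU_{BB}&\SSU_{BA}\\ \SSU_{AB}&\SSU_{AA}\end{smallmatrix}\right]$ (nonzero bottom-right block) and $Y$ is supported only on the bottom-right block with entry $-\SSU_{AA}$, and then applies the Ishikawa--Wakayama minor summation formula $\pf(X+Y)=\sum_{[m]=I\sqcup J,\,|I|\text{ even}}(-1)^{|I|/2+\sum I}\pf(X_{II})\pf(Y_{JJ})$. The requirement that $\pf(Y_{JJ})\neq 0$ forces $J$ into the bottom block, and the complementary $I$ then corresponds to $B\sqcup S$; the factor $(-1)^{|A\setminus S|/2}$ appears simply because $Y_{JJ}=-\SSU_{A\setminus S,A\setminus S}$, not from a transpose matching. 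Because both factors in the Ishikawa--Wakayama sum are already Pfaffians of submatrices of a single skew-symmetric ambient matrix, no determinant-to-Pfaffian conversion is ever needed; that extra step in your plan is precisely where your proposal breaks. Your observations on why $S\cap B=\emptyset$ is forced (repeated indices kill the term) and why the parity condition is automatic are both correct and do correspond to steps the paper actually takes.
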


\begin{proof}
Write $A = \{ a_1 <a _2  <\dots <a_{|A|}\}$ and set $m=|A|+|B|$.
If  $X$ and $Y$ are two $m\times m$ skew-symmetric matrices,
then \cite[Corollary 2.2]{IshikawaWakayama} states that
\begin{equation}\label{iw0-eq} \pf(X+Y) = \sum_{\substack{[m] =I\sqcup J \\ |I|\text{ even}}} (-1)^{|I|/2 + \sum I} \pf(X_{II}) \pf( Y_{JJ})\end{equation}
where the sum is over disjoint decompositions of $[m]$.
Fix a decomposition $[m] =I\sqcup J$ with $|I|$ even.
The result will follow by applying \eqref{iw0-eq} to  
\[ X = \left[ \begin{array}{cc} \SSU_{BB} & \SSU_{BA} \\ \SSU_{AB} & \SSU_{AA}
    \end{array}  \right]
    \quad\text{and}\quad
     Y = \left[ \begin{array}{cc}0& 0 \\ 0 & -\SSU_{AA}
    \end{array}  \right].
    \]
For these matrices, we have $\pf(Y_{JJ}) = 0$ unless $J\subseteq |B| + [|A|]$. Assume this holds and let
$S = \{ a_i : i+|B| \notin J\}$. Then
$Y_{JJ} = -\SSU_{A \setminus S,A \setminus S}$ so 
\begin{equation}\label{y-eq}
\pf(Y_{JJ}) = (-1)^{|A\setminus S|/2} \pf(\SSU_{A \setminus S,A \setminus S}).
\end{equation}
If $S \cap B$ is nonempty, then $X_{II}$ has two repeated rows 
so $\pf(X_{II}) =0$. Assume further that $S\cap B=\emptyset$. Then $S \subseteq A\setminus B$ and $|S| = |A| - |J| = |I|-|B| \equiv |B| \mod 2$, so
\begin{equation}
\label{i-eq}
(-1)^{|I| /2}= (-1)^{\frac{|B| + |S|}{2}} = (-1)^{\binom{|B|+1}{2} + \binom{|S|}{2}}.
\end{equation}
As
$X_{II}$ is obtained from $\SSU_{B\sqcup S, B\sqcup S}$ by applying the permutation 
taking the word $\sort(B\sqcup S)$ to $\sort(B) \sort(S)$
to all row and column indices, we have
\begin{equation}
\label{x-eq}
\pf(X_{II}) = \setsgn{B \sqcup S}{S} \pf(\SSU_{B\sqcup S,B\sqcup S}).
\end{equation}
Finally, since  $\sum_{i \in I} i =\binom{|B|+1}{2} + |B||S| + \sum_{a_i \in S} i$, it follows from
\eqref{stst-eq} that
\begin{equation}
\label{s-eq} (-1)^{\sum I}  = (-1)^{\binom{|B|+1}{2}  + |B||S| + |A||S| + \binom{|S|}{2}} \setsgn{A}{S} .
\end{equation}
Combining \eqref{y-eq}, \eqref{i-eq}, \eqref{x-eq}, and \eqref{s-eq}, we can rewrite the right hand side of \eqref{iw0-eq} as
\[
 \sum_{\substack{S\subseteq A \setminus B \\ |B\sqcup S| \text{ even}}} (-1)^{|A\setminus S|/2 +  |A||S| + |B||S|} \setsgn{A}{S}     \setsgn{B \sqcup S}{S} \pf(\SSU_{A\setminus S,A\setminus S})\pf(\SSU_{B\sqcup S,B\sqcup S}).
 \]
Finally, we can replace the sign by $(-1)^{|A\setminus S|/2}$
since all Pfaffians in the sum are zero unless $|A|\equiv |B| \mod 2$, in which case $(-1)^{|A||S| + |B||S|} = 1$.
 \end{proof}

If $A$ and $B$ are finite sets of positive integers with $|A| = |B|$,
then we write
\be A \ominus B := \{a \in A : \text{there is no $b$ with $(a,b)\in A \odot B$ and $ (b,a)\in A \odot B$}\}.
\label{ominus-eq}
\ee
For example, if $A = \{2 < 3<4<5\}$ and $B = \{6>4>3>1\}$ then $A\ominus B = \{2,5\}$. Next, let 
\be\label{fAB-eq}
f_{AB} := g_{A\ominus B, B}  \in \KK[u_{ij} :i,j \in \NN, i> j].
\ee

\begin{cor}\label{pfaffian-sum-cor}
Suppose $z \in \FPF_n(\I_n)$. Choose $k,l \in [n]$ with $k\geq l$, write $q = \rank z_{[k][l]}+1$,
and let $(A,B) \in \binom{[k]}{q} \times \binom{[l]}{q}$.
Then  $f_{AB} \in \SSI_z$.
\end{cor}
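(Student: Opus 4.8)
The plan is to reduce everything to Proposition~\ref{pfaffian-sum-prop}. Write $A' := A \ominus B$, so that $f_{AB} = g_{A'B}$ by definition, and note $|A'| \le |A| = |B| = q$, so the hypothesis of Proposition~\ref{pfaffian-sum-prop} is satisfied for the pair $(A', B)$. Applying that proposition expresses $f_{AB}$ as a $\{\pm 1\}$-linear combination of products $\pf(\SSU_{A' \setminus S, A' \setminus S})\,\pf(\SSU_{B \sqcup S, B \sqcup S})$, summed over $S \subseteq A' \setminus B$ with $|B \sqcup S|$ even. Since $\SSI_z$ is an ideal, it will be enough to show that the ``large'' Pfaffian $\pf(\SSU_{RR})$ with $R := B \sqcup S$ is one of the defining generators of $\SSI_z$ from Definition~\ref{ssi-def} for each such $S$; the product then lies in $\SSI_z$ regardless of the other factor (and vanishing terms are harmless).

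So the main step is to verify the conditions of Definition~\ref{ssi-def} for $R = B \sqcup S$. First, $R$ has even size by the summation constraint, and $R$ is nonempty since $q = \rank z_{[k][l]} + 1 \ge 1$ forces $B \neq \emptyset$. Next, $B \subseteq [l] \subseteq [k]$ because $k \ge l$, and $S \subseteq A' \subseteq A \subseteq [k]$, so $R \subseteq [k]$. Finally, take $i := k$ and $j := l$, which satisfy $i \ge j$: then $R \subseteq [i]$, and since $B \cap [l] = B$,
\[
|R \cap [j]| \;=\; |(B \sqcup S) \cap [l]| \;\ge\; |B| \;=\; q \;=\; \rank z_{[k][l]} + 1 \;>\; \rank z_{[i][j]}.
\]
Thus $R$ meets all the requirements in Definition~\ref{ssi-def}, so $\pf(\SSU_{RR}) \in \SSI_z$.

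Combining the two observations, every summand in the expansion of $f_{AB} = g_{A'B}$ provided by Proposition~\ref{pfaffian-sum-prop} is a scalar times the polynomial $\pf(\SSU_{A' \setminus S, A' \setminus S})$ times a generator $\pf(\SSU_{B \sqcup S, B \sqcup S})$ of $\SSI_z$, hence lies in $\SSI_z$; therefore $f_{AB} \in \SSI_z$. I do not anticipate a genuine obstacle here: all the substantive work sits in Proposition~\ref{pfaffian-sum-prop}, and the corollary is a short bookkeeping argument matching the index sets $B \sqcup S$ against the rank inequality $|B| = q > \rank z_{[k][l]}$. The only points needing a moment's care are the well-definedness condition $|A \ominus B| \le |B|$ (immediate from $|A| = |B|$) and the observation that passing from $(A,B)$ to $(A \ominus B, B)$ leaves the set $B$ untouched, which is precisely why it is $B$, not $A$, that carries the rank condition.
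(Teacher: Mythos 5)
Your argument is correct and is essentially the paper's own proof: both apply Proposition~\ref{pfaffian-sum-prop} to $g_{A\ominus B, B}$, observe that each factor $\pf(\SSU_{B\sqcup S, B\sqcup S})$ is a generator of $\SSI_z$ because $B\sqcup S \subseteq [k]$ and $|(B\sqcup S)\cap [l]| \geq |B| = q > \rank z_{[k][l]}$, and conclude by the ideal property. The extra remarks you add (nonemptiness of $B$, the condition $|A\ominus B| \leq |B|$, vanishing terms) are correct but routine checks the paper leaves implicit.
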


\begin{proof}
If $S \subseteq (A\ominus B)\setminus B$ and $|B\sqcup S|$ is even, then $B \sqcup S \subseteq [k]$ and $|(B \sqcup S) \cap [l]| \geq |B \cap [l]| =|B|= q$, so  $\pf(\SSU_{B \sqcup S,B \sqcup S}) \in \SSI_z$ by definition
and $f_{AB}=g_{A\ominus B, B}  \in \SSI_z$ by Proposition~\ref{pfaffian-sum-prop}.
\end{proof}

If our sets $(A,B) \in \binom{\NN}{q} \times \binom{\NN}{q}$ are $A = \{ a_1 < a_2 < \dots < a_q\}$ and $B = \{ b_1 > b_2 > \dots > b_q\}$,
then we say that $(A,B)$ is \emph{untwisted} if it never holds that $b_i > a_j > a_i > b_j$ for any $1\leq i < j \leq q$.
This property fails when $A = \{2<3<4<5\}$ and $B = \{6>4>3>1\}$
as in our example above, but it does hold for the interchanged sets $A=\{1<3<4<6\}$ and $B=\{5>4>3>2\}$.

In the following lemma and any similar statements, the notation $\init(f)$ means the initial term of $f$ 
relative to
the reverse lexicographic term order on $\KK[u_{ij} : i,j \in [n], i>j]$.

\begin{lem}\label{untwist-lem1}
 If the pair $(A,B) \in \binom{\NN}{q} \times \binom{\NN}{q}$ is  untwisted, then $\init(f_{AB}) =\pm \SSu_{AB}$. 
 \end{lem}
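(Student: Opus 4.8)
The plan is to read $\init(f_{AB})$ off of a Pfaffian expansion. Writing $C := A\ominus B$, we have $f_{AB} = g_{C,B} = \pf(M)$, where $M$ is the skew-symmetric matrix with rows and columns indexed by the disjoint union $B \sqcup C$ (the $B$-indices listed first), with $M_{pq} = \SSU_{\bar p\,\bar q}$ unless $p,q$ both lie in the $C$-block, in which case $M_{pq} = 0$ (here $\bar p$ denotes the value of an index $p$). Thus $f_{AB} = \sum_\pi \sgn(\pi)\prod_{\{p,q\}\in\pi} M_{pq}$ over perfect matchings $\pi$ of $B\sqcup C$, and a summand is nonzero only when $\pi$ matches all of $C$ into $B$ and every edge $\{p,q\}$ has $\bar p \ne \bar q$. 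I would first dispose of the degenerate case $\SSu_{AB} = 0$, which occurs exactly when $a_k = b_k$ for some $k$: since $A$ is increasing and $B$ decreasing there is at most one such $k$ (two would give $a_k=b_k>b_l=a_l$ with $a_k<a_l$), and its presence forces $|B|+|C|$ to be odd, so $f_{AB} = \pf(M) = 0 = \SSu_{AB}$. Assume henceforth $a_k \ne b_k$ for all $k$.

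Next I would exhibit $\SSu_{AB}$ as a term. Call an index $k$ \emph{symmetric} if $\{a_k,b_k\}=\{a_j,b_j\}$ for some $j\ne k$; then $C$ is precisely the set of $a_k$ with $k$ non-symmetric, and the symmetric indices pair off uniquely. Let $\pi_0$ be the matching of $B\sqcup C$ joining $a_k\in C$ to $b_k\in B$ for each non-symmetric $k$ and joining $b_k$ to $b_j$ for each symmetric pair $\{k,j\}$. A direct check shows $\pi_0$ is a perfect matching avoiding all zero entries of $M$, with $\prod_{\{p,q\}\in\pi_0} M_{pq} = \pm\,\SSu_{AB}$. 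It then remains to show (i) $\SSu_{AB}$ is the reverse-lexicographically largest monomial occurring in $f_{AB}$, and (ii) $\pi_0$ is the only matching producing it, so that $\SSu_{AB}$ survives in $f_{AB}$ with coefficient $\pm 1$. Both are proved by induction on $|B|+|C|$. Since reverse lex prefers monomials avoiding the variables $u_{ij}$ with large row index $i$ (then large column index $j$), I would let $v=\max(B\cup C)$, show that in any nonzero matching $v$ must be matched, that $\pi_0$ matches it to the smallest admissible partner, and ---this is the key point--- that the \emph{untwisted} hypothesis is exactly what forces this smallest admissible partner to be the one prescribed by $\pi_0$ and prevents this commitment from forcing a larger variable at a later stage. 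Deleting $v$ and its partner yields a matrix of the same block shape for a pair $(A',B')$ that is again untwisted (removing a pair from $A\odot B$ cannot create a forbidden quadruple $b_i>a_j>a_i>b_j$), closing the induction; the base case $|B|+|C|\le 2$ is immediate. A sub-ingredient, the case $C=\emptyset$, says that the reverse-lex leading term of $\pf(\SSU_{TT})$ for a generic submatrix is the nested-matching monomial $u_{t_{|T|}\,t_1}u_{t_{|T|-1}\,t_2}\cdots$, which falls out of the same reasoning. (Alternatively, one could expand $f_{AB}$ via Proposition~\ref{pfaffian-sum-prop} into a signed sum of products $\pf(\SSU_{C\setminus S})\,\pf(\SSU_{B\sqcup S})$, take leading terms summand by summand, identify the dominant $S$, and rule out cancellation; the untwisted hypothesis enters at the same spot.)

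The step I expect to be the main obstacle is (i)--(ii): pinning down precisely why the greedy choice that produces $\SSu_{AB}$ is forced, and why the untwisted condition is \emph{exactly} what stops a crossing of pairs from either producing a reverse-lex-larger monomial or making the leading monomial cancel. Controlling the signs, and checking that the block structure together with the untwisted property persist under the deletion step (including the bookkeeping when $v$ is realized in both $B$ and $C$), is where the genuine work lies; once those are in hand the rest is routine, and combining the three steps gives $\init(f_{AB}) = \pm\,\SSu_{AB}$.
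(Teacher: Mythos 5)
Your proposal follows essentially the same strategy as the paper's proof: dispose of the degenerate case $\SSu_{AB}=0$ by a parity argument, expand the Pfaffian of $f_{AB}$ over perfect matchings of $B\sqcup(A\ominus B)$ that avoid the zero block and the diagonal, identify the canonical matching $M_0$ realizing $\SSu_{AB}$, and induct on $q$ by peeling off an extremal edge while invoking untwistedness to show every competing matching gives a strictly rev-lex-smaller monomial. For the record, the paper's execution of your steps (i)--(ii) does not phrase things as ``match $v=\max(B\cup C)$ to the smallest admissible partner'' but rather fixes the lex-maximal variable $u_{rs}$ occurring in $M_0$, splits into three cases according to whether its edge has the form $\{b_1,b_q\}$, $\{a_q,b_q\}$, or $\{b_1,a_1\}$, shows in each case that any $M\neq M_0$ either also uses that edge or is rev-lex-smaller, and invokes untwistedness only in the third case---precisely to rule out $b_i<a_1$ for $1<i\le q$, which would otherwise let $b_1$ pair with $b_i$ and produce a rev-lex-larger monomial---which is exactly the pressure point you anticipated.
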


For example,
if  $A = \{2<3<4<5\}$ and $B = \{6>4>3>1\}$ then $\SSu_{AB} = \SSu_{BA} = u_{43} u_{62} u_{51}$ while
$\init(f_{AB}) = \init(g_{\{2,5\}, \{1,3,4,6\}})= -u_{53} u_{61} u_{42} \neq \pm\SSu_{AB}$.
On the other hand, the pair $(B,A)$ is untwisted and 
$\init(f_{BA}) = \init(g_{\{1,6\}, \{2,3,4,5\}}) = - \SSu_{BA}$ as the lemma predicts.

\begin{proof}
The set
$A\odot B$ contains a pair $(a,b)$ with $a=b$ if and only if $|A\ominus B| + |B| $ is odd.
The lemma holds in this case since then
 $f_{AB} = 0$, being the Pfaffian of an odd sized matrix, and $\SSu_{AB} = 0$ by definition.
Instead assume that $A\odot B$ contains no pairs $(a,b)$ with $a=b$.

Let $ \mathcal{V}^{\mathrm{ss}}_{BB}$ denote the $q\times q$ matrix formed 
from $\SSU_{BB}$ by replacing all entries in row $i$ or in column $i$  
by zeros whenever $b_i \in(A\ominus B) \cap B$. Then the basic properties of Pfaffians imply that
\begin{equation}
\label{Vv-eq}
f_{AB} := \pf \left[ \begin{array}{ll} \SSU_{BB} & \SSU_{B,A\ominus B} \\ \SSU_{A\ominus B,B} & 0
    \end{array}  \right] =
    \pf \left[ \begin{array}{ll}  \mathcal{V}^{\mathrm{ss}}_{BB} & \SSU_{B,A\ominus B} \\ \SSU_{A\ominus B,B} & 0
    \end{array}  \right].
    \end{equation}
  Now recall that a \emph{matching} is an undirected graph in which no vertex belongs to two edges.
Define $\cM_{AB}$ to be the set of matchings with $2q$ vertices labeled $a_1,a_2,\dots,a_q,b_1,b_2,\dots,b_q$
such that 
\begin{itemize}
\item each $a_i \in A \setminus (A\ominus B)$ is an isolated vertex;
\item each $a_i \in A\ominus B$ is connected to some $b_j \in B$;
\item each $b_i \in B \cap (A\ominus B)$ is connected to some $a_j \in (A\ominus B) \setminus \{b_i\}$; 
\item each $b_i \in B \setminus (A\ominus B)$ is connected to some $a_j \in A\ominus B$ or some $b_j \in B \setminus(A\ominus B)$.
\end{itemize}
For example, if $A = \{2<3<4<5\}$ and $B = \{5>4>3>1\}$, then $A\ominus B = \{2,5\}$ and $\cM_{AB}$ has 
only three elements, which we represent by the following pictures:
\be\label{run-eq}
\vcenter{\hbox{\small\begin{tikzpicture}[scale=0.5]
\node (1a) at (0,0) {$5$}; \node (2a) at (1,0) {$4$}; \node (3a) at (2,0) {$3$};\node (4a) at (3,0) {$1$};
\node (1b) at (0,2) {$2$}; \node (2b) at (1,2) {{$3$}}; \node (3b) at (2,2) {{$4$}};\node (4b) at (3,2) {$5$};
\draw (1a) to (1b); \draw (4a) to (4b); \draw (2a) to[out=90,in=90] (3a);
\end{tikzpicture}
\qquad\quad
\begin{tikzpicture}[scale=0.5]
\node (1a) at (0,0) {$5$}; \node (2a) at (1,0) {$4$}; \node (3a) at (2,0) {$3$};\node (4a) at (3,0) {$1$};
\node (1b) at (0,2) {$2$}; \node (2b) at (1,2) {{$3$}}; \node (3b) at (2,2) {{$4$}};\node (4b) at (3,2) {$5$};
\draw (1a) to (1b); \draw (3a) to (4b); \draw (2a) to[out=90,in=90] (4a);
\end{tikzpicture}
\qquad\quad
\begin{tikzpicture}[scale=0.5]
\node (1a) at (0,0) {$5$}; \node (2a) at (1,0) {$4$}; \node (3a) at (2,0) {$3$};\node (4a) at (3,0) {$1$};
\node (1b) at (0,2) {$2$}; \node (2b) at (1,2) {{$3$}}; \node (3b) at (2,2) {{$4$}};\node (4b) at (3,2) {$5$};
\draw (1a) to (1b); \draw (2a) to (4b); \draw (3a) to[out=90,in=90] (4a);
\end{tikzpicture}}}
\ee
To each edge $\{ i > j\}$ we associate the variable $u_{ij}$. We 
then define $u_{AB}^{M}$ to be the product of the variables corresponding to all edges in $M \in \cM_{AB}$.
Comparing the definition of a generic Pfaffian \eqref{pf-def-eq} with \eqref{Vv-eq} shows that
$f_{AB} =\sum_{M \in \cM_{AB}} \varepsilon(M) u_{AB}^M$
 for some map $\varepsilon : \cM_{AB} \to \{\pm 1\}$.
Indeed, in our running example,
$ f_{\{2<3<4<5\},\{6>4>3>1\}} = -u_{43} u_{52} u_{51} + u_{41} u_{53} u_{52} - u_{54} u_{52} u_{31}.$
In principle there might be further cancellations among the terms in this formula for $f_{AB}$, but this will not affect our argument.

Observe that if $a_i \in A\setminus (A\ominus B)$ then there is a unique index $j \neq i$ with $b_i = a_{j} \in A\setminus (A\ominus B)$ and $b_{j} = a_i$; 
let $\cP$ denote the set of such pairs $(i,j)$ and write $\I$ for the set of indices $i \in [q]$ with $a_i \in A\ominus B$.
In our example with $A = \{2<3<4<5\}$ and $B = \{5>4>3>1\}$,
we have $\cP = \{ (2,3),(3,2)\}$ and $\I = \{1,4\}$ since $a_2 = b_3 = 3$ and $a_3=a_2 = 4$.
Next let $M_0 \in \cM_{AB}$ be the matching 
whose edges consist of $\{ a_i ,b_i\}$ for each $i \in \I$ 
along with $\{b_i, b_{j}\}$ for each $(i,j) \in \cP$.
For our running example, $M_0$ is the left-most matching shown in \eqref{run-eq}.
We always have $u_{AB}^{M_0} = \SSu_{AB} \neq 0$ since we assume that $A\odot B$ does not intersect the main diagonal.

It suffices to show that if $M \in \cM \setminus\{ M_0\}$ then $u_{AB}^M < \SSu_{AB}$
in the reverse lexicographic term order.
Fix such a matching $M \in \cM \setminus\{M_0\}$ and 
consider the edge $\{ r > s\}$ in $M_0$ whose associated variable $u_{rs}$ is maximal in lexicographic order.
We first prove that either $u_{AB}^M < \SSu_{AB}$ in the  reverse lexicographic term order
or $\{r>s\}$ is also an edge in $M$.
The variable $u_{rs}$ is $\pm u_{ab}$ for either $(a,b) =(a_1,b_1)$ or $(a,b) =(a_q,b_q)$.
Exactly one of the following must therefore hold: 
\begin{itemize}
\item[(a)] We have $(1,q) \in \cP$ and $\{ r>s\} = \{ b_1 > b_{q}\}$.
\item[(b)] We have $q \in \I$ and $\{r >s \} = \{a_q > b_q\}$.
\item[(c)] We have $1 \in \I$ and $\{ r > s\} = \{ b_1 > a_1\}$.
\end{itemize}
We consider each case in turn.

If we are in case (a)
then $b_q = a_1 < \dots < a_q = b_1$, 
so
 if
$\{b_1> b_{q}\}$ were not an edge in $M$ then 
the variable associated to the edge incident to $b_1$
would exceed $u_{rs}=u_{b_1b_q}$ in lexicographic order, giving  $u_{AB}^M < \SSu_{AB}$
in the reverse lexicographic term order.

Suppose we are in case (b). 
Then we must have  $(1,q) \notin \cP$
and $a_q \geq b_1$ to avoid cases (a) and (c). Thus, as the edge of $M$
incident to $a_q$ must have the form $\{a_q> b_i\}$,
 if this edge is not $\{a_q >b_q\}$ then $u^M_{AB} < \SSu_{AB}$.

Finally suppose we are in case (c). This case is 
where our untwisted hypothesis plays a role.
To avoid cases (a) and (b), we must have $(1,q) \notin \cP$ and
 $b_1 \geq a_q$, and if $b_1=a_q$ then $b_q < a_1$.
As $\{b_1,a_q\}$ cannot be an edge in $M$ if $b_1=a_q$,
it follows that $u^M_{AB} < \SSu_{AB}$ whenever $M$ has an edge of the form $\{b_1> a_i\}$ with $i>1$.
It remains to show that the same holds whenever $M$ has an edge of the form $\{b_1>b_i\}$ with $1<i\leq q$.
If $b_1=a_q$ then $b_1 \in  (A\ominus B) \cap B$ since $(1,q) \notin \cP$;
in this case $M$ can have no edges of the form $\{b_1>b_i\}$ so what we want to show holds vacuously.
Instead suppose $b_1 > a_q$. Since $(A,B)$ is untwisted, we must have $a_1 \leq b_i$ for all $1<i\leq q$.
This can only hold if $a_1 < b_i$ for all $1<i<q$ and $a_1 \leq b_q$.
Thus if $\{b_1 > b_i\}$ is an edge in $M$ with $1<i<q$ or if $\{b_1>b_q\}$ is an edge with $a_1 <b_q$, 
then we have  $u^M_{AB} < \SSu_{AB}$ in the reverse lexicographic term order.
The only remaining possibility is that $a_1=b_q$, but then
  $b_q \in (A\ominus B) \cap B$ so $\{b_1>b_q\}$ cannot be an edge in $M$ by definition.
  Thus our claim also holds in case (c).

To finish the proof of the lemma, define 
\[ A' = \begin{cases} 
A \setminus\{a_1,a_q\} &\text{in case (a)}\\
A \setminus\{a_1\} &\text{in case (b)}\\
A \setminus\{a_q\} &\text{in case (c)}
\end{cases}
\quad\text{and}
\quad
B' = \begin{cases} 
B \setminus\{b_1,b_q\} &\text{in case (a)}\\
B \setminus\{b_1\} &\text{in case (b)}\\
B \setminus\{b_q\} &\text{in case (c)}.
\end{cases}
\]
The pair $(A',B')$ is untwisted and $\SSu_{AB} = u_{rs} \cdot \SSu_{A'B'}$. 
Moreover, it follows from our claim that either $u^M_{AB} < \SSu_{AB}$ or $u^M_{AB} = u_{rs}\cdot u^{M'}_{A'B'}$ for some 
matching $M' \in \cM_{A'B'}$.
But this alternative implies that $u^M_{AB} < \SSu_{AB}$ in the reverse lexicographic term order in all cases, since 
we may assume by induction that $u^{M'}_{A'B'} < \SSu_{A'B'}$ as we originally chose $M \neq M_0$.
\end{proof}

Let $ { a_1\ a_2\ \cdots\ a_q  \choose b_1\ b_2\ \cdots\ b_q}$ stand for the pair of sets 
\begin{equation}\label{aabb-eq}
(A,B) = (\{a_1<a_2<\dots<a_q\},\{b_1>b_2>\dots>b_q\}).
\end{equation}
Given a set of indices $S = \{s_1 <s_2 <  \cdots < s_t\} \subseteq [q]$, define 
\[ (A,B)^S = { a_1\ a_2\ \cdots\ a_q  \choose b_1\ b_2\ \cdots\ b_q}^S := { a_1'\ a_2'\ \cdots\ a_q'  \choose b_1'\ b_2'\ \cdots\ b_q'}\] where 
$a_1' a_2'\cdots a_q'$ is obtained from $a_1a_2 \cdots a_q$ by replacing the subword $a_{s_1} a_{s_2}\cdots a_{s_t}$ with $b_{s_t} \cdots b_{s_2}b_{s_1}$, and likewise $b_1'b_2'\cdots b_q'$ is obtained from $b_1 b_2\cdots b_q$ by replacing the subword $b_{s_1}b_{s_2} \cdots b_{s_t}$ with $a_{s_t} \cdots a_{s_2}a_{s_1}$. For example, 
${a_1\ a_2\ a_3\ a_4\ a_5 \choose b_1\ b_2\ b_3\ b_4\ b_5}^{\{1,3,4\}} = {b_4\ a_2\ b_3\ b_1\ a_5  \choose a_4\ b_2\ a_3\ a_1\ b_5}.$
Write $\equiv$ for the relation on $\binom{\ZZ}{q} \times \binom{\ZZ}{q}$ that is
the transitive closure of the relations $(A,B) \equiv (A,B)^S$ for all $S \subseteq [q]$ such that the 
top word of $(A,B)^S$ is strictly increasing and bottom word is strictly decreasing.

\begin{lem}\label{untwist-lem2}
Suppose $(A,B) \in \binom{[k]}{q} \times \binom{[l]}{q}$ for positive integers $k\geq l$.
Then there exists an untwisted $(A',B') \in  \binom{[k]}{q} \times \binom{[l]}{q}$ with $(A,B) \equiv (A',B')$.
\end{lem}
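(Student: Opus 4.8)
The plan is to argue by induction on the non-negative integer statistic $\tau(A,B):=\#\{(i,j): 1\le i<j\le q,\ b_j<a_i<a_j<b_i\}$, the number of twists of $(A,B)$. If $\tau(A,B)=0$ there is nothing to prove, so assume $(A,B)$ is twisted.

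The first step is a structural observation about the shape of $(A,B)$. Writing $(A,B)={a_1\ \cdots\ a_q\choose b_1\ \cdots\ b_q}$ as in \eqref{aabb-eq}, we have $a_1<\cdots<a_q$ and $b_1>\cdots>b_q$, so if $a_i>b_i$ then $a_{i+1}>a_i>b_i>b_{i+1}$; hence there is an index $p$ (possibly $0$ or $q$) with $a_i<b_i$ for all $i\le p$ ("ascending" pairs) and $a_i>b_i$ for all $i>p$ ("descending" pairs). In particular every twist $(i,j)$ has $i\le p<j$. Among all twists I would choose $(i_0,j_0)$ with $i_0$ as small as possible and then, given $i_0$, with $j_0$ as large as possible, and set $S:=\{i_0,i_0+1,\dots,j_0\}\subseteq[q]$.

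I would then verify three things about $(A,B)^S$. \emph{(i) Validity.} The operation replaces the pairs in positions $i_0,\dots,j_0$ by those same pairs in reverse order with $a$- and $b$-coordinates interchanged; a direct check shows the $a$-coordinates of the new pairs in these positions remain increasing and the $b$-coordinates remain decreasing, so the only inequalities left to verify are at the two boundaries, namely $a_{i_0-1}<b_{j_0}$ and $b_{i_0}<a_{j_0+1}$ (with the conventions $a_0:=0$, $a_{q+1}:=+\infty$; the remaining two boundary inequalities $b_{i_0-1}>a_{j_0}$ and $a_{i_0}>b_{j_0+1}$ are immediate from $b_{j_0}<a_{i_0}<a_{j_0}<b_{i_0}$). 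Minimality of $i_0$ says $(i_0-1,j_0)$ is not a twist; since the pair at $i_0-1$ is ascending and $a_{j_0}<b_{i_0}<b_{i_0-1}$, this forces $b_{j_0}\ge a_{i_0-1}$. Symmetrically maximality of $j_0$ together with $b_{j_0+1}<b_{j_0}<a_{i_0}$ forces $a_{j_0+1}\ge b_{i_0}$. (The cases of equality, which would mean a value shared by $A$ and $B$, can be handled by a short separate argument or by enlarging $S$ until the inequalities are strict.) \emph{(ii) Box preservation.} In $(A,B)^S$ the set $A$ only gains elements of $B\subseteq[l]\subseteq[k]$, and $B$ only gains the elements $a_{i_0},\dots,a_{j_0}$, each of which is $\le a_{j_0}<b_{i_0}\le l$ because $b_{i_0}\in B\subseteq[l]$; hence $(A,B)^S\in\binom{[k]}{q}\times\binom{[l]}{q}$. \emph{(iii) Strict decrease.} One must show $\tau\bigl((A,B)^S\bigr)<\tau(A,B)$: the twist $(i_0,j_0)$ is destroyed, and the extremality of $(i_0,j_0)$ prevents any new twist from appearing. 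Given (i)–(iii), each operation keeps us inside $\binom{[k]}{q}\times\binom{[l]}{q}$ and lowers $\tau$, so after finitely many steps we reach an untwisted $(A',B')\equiv(A,B)$ in $\binom{[k]}{q}\times\binom{[l]}{q}$, as required.

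I expect step (iii) to be the main obstacle. Because the move is a reverse-and-swap on the whole block $[i_0,j_0]$, controlling its effect on \emph{all} pairs of positions—not merely on $(i_0,j_0)$—requires a careful case analysis built on the ascending/descending dichotomy and on the minimality/maximality of $i_0,j_0$; it may be cleaner to run the induction on a more robust statistic, such as the total ascending "gap" $\sum_{i\le p}(b_i-a_i)$, for which the change under the operation is easier to bound, rather than on $\tau$ directly.
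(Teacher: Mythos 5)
Your structural dichotomy (a threshold $p$ separating ascending positions $a_i<b_i$ from descending ones $a_i>b_i$) and your box-preservation check (ii) are both correct, but step (iii) is false as stated: the extremality of $(i_0,j_0)$ does \emph{not} prevent new twists from appearing inside the reversed block, and $\tau$ need not decrease. Take $q=4$ and $(A,B)={2\ 3\ 6\ 7 \choose 8\ 5\ 4\ 1}\in\binom{[8]}{4}\times\binom{[8]}{4}$. The only twist is $(1,4)$, so $\tau(A,B)=1$, $i_0=1$, $j_0=4$, $S=[4]$, and the boundary inequalities hold vacuously. But $(A,B)^S={1\ 4\ 5\ 8\choose 7\ 6\ 3\ 2}$, valid and in the same box, has the new twist $(2,3)$ (since $3<4<5<6$), so $\tau\bigl((A,B)^S\bigr)=1=\tau(A,B)$. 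Your fallback statistic fares no better: the ascending gap is $8$ for both pairs, and in general when $S=[q]$ it simply turns into the old descending gap. The mechanism is that a pair of positions $s'<t'$ in $[i_0,j_0]$ satisfying $a_{s'}<b_{t'}<b_{s'}<a_{t'}$---a configuration compatible with $(s',t')$ \emph{not} being a twist---becomes a twist at $(i_0+j_0-t',\ i_0+j_0-s')$ after the reversal; that is exactly what happens in the example. Separately, your equality case in step (i) is not as benign as suggested: $A$ and $B$ need not be disjoint, and if $a_{i_0-1}=b_{j_0}$ (or $b_{i_0}=a_{j_0+1}$) then $(A,B)^S$ has a repeated letter and is not a legal pair, so $\equiv$ does not apply; enlarging $S$ on the left can break the right boundary inequality, so ``enlarge until strict'' needs a real argument.

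The repair is within reach but requires a different monovariant. Your extremality analysis does correctly show that after the reversal every surviving or newly created twist $(s,t)$ lies strictly inside the block, $i_0<s<t<j_0$ (in particular $s\ne i_0$ because $a_{s'}\ge a_{i_0}>b_{j_0}$ rules out $a_{s'}<b_{j_0}$, and symmetrically $t\ne j_0$), and that no twist straddles or avoids $[i_0,j_0]$. Hence the \emph{maximum width} $\max\{j-i : (i,j)\text{ a twist}\}$ strictly decreases under the operation, and inducting on that quantity gives a correct termination argument. This is still a different route from the paper, which does not chase twists at all: it defines $\untwist(A,B)$ by a single left-to-right pass, performing at most one block reversal $S=[i]$ (with $i$ maximal such that $b_1\ge a_i$) to fix position $1$ and positions $i+1,\dots,q$, and then recursing on the middle $i-1$ positions inside a strictly smaller ambient box $[k_1',k_2']\times[l_1',l_2']$. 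The paper's choice of $i$ makes the boundary inequalities automatic rather than requiring a minimality/maximality analysis, and termination comes for free from the shrinking box; your fixed-up version trades that for a cleaner-sounding ``destroy the widest twist'' description at the cost of the case analysis you correctly anticipated to be delicate.
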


\begin{proof}
Write $[k_1,k_2] := \{i \in \ZZ : k_1 \leq i \leq k_2\}$.
Fix positive integers  $k_1 \leq k_2$ and $l_1 \leq l_2$ with $k_1 \leq l_1\leq l_2 \leq k_2$.
We recursively define a map 
\[\untwist: \binom{[k_1,k_2]}{q} \times \binom{[l_1,l_2]}{q} \to \binom{[k_1,k_2]}{q} \times \binom{[l_1,l_2]}{q}\]
with the property that $(A,B) \equiv \untwist(A,B)$ for all pairs $(A,B)$.
Fix $(A,B) \in \binom{[k_1,k_2]}{q} \times \binom{[l_1,l_2]}{q}$ and write 
$ (A,B) = { a_1\ a_2\ \cdots\ a_q  \choose b_1\ b_2\ \cdots\ b_q} $ as in \eqref{aabb-eq}.
If $q < 2$ or $b_1 < a_2$ then we set 
$ \untwist(A,B) := (A,B).$
Otherwise, there exists a maximal index $1 < i \leq q$ such that $b_1 \geq a_i$. Note that $b_1 < a_{i+1}$ if $i<q$.
Set $(A',B') := (A,B)$ if $a_1 < b_i$ or else  define
\[
(A',B') :=  { b_i\ \cdots\ b_2\ b_1 \ a_{i+1}\ \cdots\ a_q  \choose a_i\ \cdots\ a_2\ a_1\ b_{i+1}\ \cdots b_q}.\]
This gives a well-defined element of $\binom{[k_1,k_2]}{q} \times \binom{[l_1,l_2]}{q}$ with $(A',B') \equiv (A,B)$.
Write $ { a_1'\ a_2'\ \cdots\ a_q'  \choose b_1'\ b_2'\ \cdots\ b_q'} := (A',B') $
and note that $ a_1' < b_i'$ and $\max\{b_1',a_i'\} < a_{i+1}'$, where we set $a'_{q+1} := k_2+1$ if $i=q$.
Let 
\[k'_1 := a'_1 +1 \leq k'_2 := a'_{i+1}-1\quad\text{and}\quad
l'_1 := b_i' \leq l'_2 := b'_1 -1 .\]
Then $k_1' \leq l_1' \leq l_2' \leq k_2'$ so we can recursively construct
\[ { a_2''\ a_3''\ \cdots\ a_i''  \choose b_2''\ b_3''\ \cdots\ b_i''} := \untwist{ a_2'\ a_3'\ \cdots\ a_i'  \choose b_2'\ b_3'\ \cdots\ b_i'}
\in \binom{[k_1',k_2']}{i-1} \times \binom{[l_1',l_2']}{i-1}
\]
and then define 
\[\untwist(A,B) := { a_1'\ a_2''\ a_3''\ \cdots\ a_i''\ a'_{i+1}\ \cdots \ a'_q  \choose b_1'\ b_2''\ b_3''\ \cdots\ b_i''\ b_{i+1}'\ \cdots\ b'_q}.\]
This is an element of 
$ \binom{[k_1,k_2]}{q} \times \binom{[l_1,l_2]}{q}$ that is equivalent to $(A,B)$ under $\equiv$.

To prove the lemma, we just need to check that $\untwist(A,B)$ is untwisted, as our notation suggests.
For the pair $(A',B')$, observe that one can only have $b'_s > a'_t > a'_s > b'_t$ for some $s<t$ if $1 < s < t \leq i$.
If follows that $\untwist(A,B)$ is untwisted if and only if ${ a_2''\ a_3''\ \cdots\ a_i''  \choose b_2''\ b_3''\ \cdots\ b_i''}$ is untwisted,
and we can assume that the latter property holds by induction.
\end{proof}

Putting everything together, we can finally prove the following lemma.

\begin{lem}\label{containment-ss-lem}
For all $z \in \FPF_n(\I_n)$, the ideal 
 $\SSJ_z$ is contained in the initial ideal  $\init (\SSI_z)$ defined relative to the reverse lexicographic term order on $\KK[u_{ij} : i,j \in [n], i>j]$.
\end{lem}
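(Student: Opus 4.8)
The plan is to exhibit, for each monomial generator of $\SSJ_z$, an element of $\SSI_z$ whose reverse lexicographic initial term equals that generator up to sign. By Definition~\ref{ssj-def}, a generator of $\SSJ_z$ has the form $\SSu_{AB}$ for some $(A,B)\in\binom{[i]}{q}\times\binom{[j]}{q}$ with $(i,j)\in[n]\times[n]$, $i\geq j$, and $q=\r_z(i,j)+1$; if $\SSu_{AB}=0$ there is nothing to prove, so we may assume it is a genuine monomial. Note that any polynomial $f_{CD}$ with $C,D\subseteq[n]$ involves only variables $u_{ab}$ with $a,b\in[n]$, hence lies in $\KK[u_{ij}:i,j\in[n],i>j]$, so its initial term may be computed in the same ring carrying the reverse lexicographic order.

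First I would apply Lemma~\ref{untwist-lem2} to produce an untwisted pair $(A',B')\in\binom{[i]}{q}\times\binom{[j]}{q}$ with $(A,B)\equiv(A',B')$. The one new observation required here is that $\SSu_{CD}$ is constant along $\equiv$-classes. It suffices to check this for a single elementary move $(C,D)\mapsto(C,D)^S$ between valid pairs: writing $C=\{c_1<\dots<c_q\}$ and $D=\{d_1>\dots>d_q\}$, the definition of $\odot$ gives $C\odot D=\{(c_k,d_k):k\in[q]\}$, and $(C,D)^S$ has the same shape, its $k$-th pair being $(c_k,d_k)$ for $k\notin S$ and the \emph{transpose} of $(c_s,d_s)$ for $k=s\in S$ (reindexed within $S$). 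Thus the unordered pairs occurring in $C\odot D$ and in the $\odot$-set of $(C,D)^S$ are the same, so the two square-free products $\SSu$ coincide (and vanish simultaneously). Passing to the transitive closure gives $\SSu_{A'B'}=\SSu_{AB}$.

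Then, since $q=\r_z(i,j)+1=\rank z_{[i][j]}+1$, I would invoke Corollary~\ref{pfaffian-sum-cor} with $k=i$ and $l=j$ to conclude $f_{A'B'}\in\SSI_z$. Because $(A',B')$ is untwisted, Lemma~\ref{untwist-lem1} then gives $\init(f_{A'B'})=\pm\SSu_{A'B'}=\pm\SSu_{AB}$, so $\SSu_{AB}\in\init(\SSI_z)$. As this holds for every generator of $\SSJ_z$, the containment $\SSJ_z\subseteq\init(\SSI_z)$ follows. The substance of the argument lies entirely in Lemmas~\ref{untwist-lem1} and~\ref{untwist-lem2} and Corollary~\ref{pfaffian-sum-cor}; the only points needing care are the bookkeeping that $\equiv$ preserves $\SSu$ (so that untwisting does not change the target monomial) and the check that one may legitimately take $k=i$, $l=j$ when invoking Corollary~\ref{pfaffian-sum-cor}.
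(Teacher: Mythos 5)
Your proof is correct and follows essentially the same approach as the paper's: reduce to the monomial generators $\SSu_{AB}$ of $\SSJ_z$, untwist via Lemma~\ref{untwist-lem2}, then apply Corollary~\ref{pfaffian-sum-cor} to place $f_{A'B'}$ in $\SSI_z$ and Lemma~\ref{untwist-lem1} to identify its leading term. The one place you go beyond the paper's terse proof is in explicitly verifying that $\SSu$ is invariant under the relation $\equiv$ (since the operation $(C,D)\mapsto(C,D)^S$ merely transposes and reindexes the pairs in $C\odot D$, so the multiset of unordered pairs is preserved) — the paper asserts $\SSu_{AB}=\SSu_{A'B'}$ without comment when citing Lemma~\ref{untwist-lem2}, so your explicit check is a small but genuine improvement.
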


\begin{proof}
Choose $k,l \in [n]$ with $k\geq l$ and let $(A,B) \in \binom{[k]}{q} \times \binom{[l]}{q}$
for $q := \rank z_{[k][l]}+1$. 
Since $\SSJ_z$ is generated by all such elements $\SSu_{AB}$ (see Definition~\ref{ssj-def}),
it suffices to show that $\SSu_{AB}  \in \init(\SSI_z)$.
By Lemma~\ref{untwist-lem2}, there is an untwisted pair $(A',B') \in  \binom{[k]}{q} \times \binom{[l]}{q}$
with $(A,B) \equiv (A',B')$ and 
$\SSu_{AB} = \SSu_{A'B'}$.
Finally, we have  $\SSu_{A'B'} =\pm \init( f_{A'B'})$ by   Lemma~\ref{untwist-lem1} and
 $ f_{A'B'} \in \SSI_z$ by Corollary~\ref{pfaffian-sum-cor}.
\end{proof}

\section{Transition equations for ideals}\label{tra-sect}

In this section we prove our main results, relating  $\SSI_z$, $\SSJ_z$, and $I(\SSX_z)$. Here is a quick
outline of the content that follows.
We show in Section~\ref{indu-sect} that $\SSJ_z = \init(\SSI_z)$ and $\SSI_z = I(\SSX_z)$,
relying on a technical lemma that is proved in Section~\ref{key-lemma-sect}.
Section~\ref{subsec:pipe-dreams} gives an explicit primary decomposition for $\SSJ_z$ involving the \emph{involution pipe dreams}
studied in \cite{HMPdreams}.
In Section~\ref{subsec:K} we discuss some applications related to $K$-theory representatives for 
$\SSX_z$.
Section~\ref{new-proofs-sect}, finally, sketches how to adapt our arguments to get new proofs of 
the results of Knutson and Miller  mentioned in the introduction.

\subsection{The inductive argument}\label{indu-sect}

 The \emph{dominant component} of an arbitrary permutation $z$ of $\NN$ is the set 
    \begin{equation}\label{dom-eq}
        \dom(z) = \left\{(i,j) \in \NN \times \NN : \rank z_{[i][j]} = 0\right\}.
    \end{equation}
This  is 
always the Young diagram of a partition.
For a strict partition $\lambda = (\lambda_1 > \cdots > \lambda_k > 0)$, let 
\[\SD_\lambda = \{ (i,i+j-1) \in \NN\times \NN : 1 \leq j \leq \lambda_i\}.\]
As explained in \cite[\S4.3]{HMPdreams}, 
if $z \in \Ifpf_\infty$ then there is a strict partition $\lambda$ 
with 
\[ \{ (i,j) \in \dom(z) : i > j\} = \{  (j+1,i) : (i,j) \in \SD_\lambda\}.\] 
An \emph{outer corner} of $\dom(z)$ is a pair $(i,j) \in (\NN\times \NN)\setminus \dom(z)$ with
\be\label{outer-corner-eq}
(i-1, j) \in \dom(z) \sqcup (\{0\} \times \NN)
\quand
(i, j-1) \in \dom(z) \sqcup ( \NN \times \{0\}).
\ee
It is clear that if $(i, j)$ is an outer corner of $\dom(z)$ then $z(i) = j$. 

\begin{ex}
Suppose $z \in \Ifpf_\infty$ is the image of $(1,5)(2,4)(3,6)$ under $\FPF_6$. Then 
\[
\dom(z) = \left\{\begin{smallmatrix}
 \square & \square & \square &\square  &  1&\cdot  \\
\square &   \square& \square  &  1& \cdot &\cdot \\
\square & \square &  \square&\cdot  & \cdot & 1 \\
\square & 1 & \cdot & \cdot & \cdot &\cdot  \\
1 & \cdot & \cdot & \cdot& \cdot &\cdot  \\
\cdot & \cdot & 1 & \cdot & \cdot & \cdot \\
\end{smallmatrix}\right\}
\]
and $\{ (i,j) \in \dom(z) : i > j\}  =  \{  (j+1,i) : (i,j) \in \SD_\lambda\}$ for the strict partition $\lambda =(3,1)$.
The outer corners of $\dom(z)$ are the pairs $(5,1)$, $(4,2)$, $(2,4)$, and $(1,5)$.
\end{ex}

Given $z \in \Ifpf_\infty$ and a positive integer $p$, define 
\begin{equation}
    \Psi(z,p) = \Bigl\{ y=(p,r)z(p,r) \in \Ifpf_\infty : p < r \text{ and } \ellfpf(y) = \ellfpf(z)+1\Bigr\}
\end{equation}
where $\ellfpf(z) = |\SSD(z)|$ as in Section~\ref{bruhat-sect}.
These sets will play a key role in the inductive proof of our main theorem, via the formulas in the following lemma.
Here, we write $(u_{pq})$ for the principal ideal of $\KK[u_{ij} : n\geq i > j \geq 1]$ generated by $u_{pq}$ for $n \geq p >q \geq 1$.
Let $\SSJ_z$ be as in Section~\ref{mon-sect}, and recall that $I(\SSX_z)$ is the ideal of all polynomials in $\KK[u_{ij} : n\geq i > j \geq 1]$ vanishing on the set $\SSX_z$,
where $u_{ij}$ is identified with the function on skew-symmetric matrices sending $A \mapsto A_{ij}$.

\begin{lem}\label{lem:transition-containment-ss}  
 Suppose $z \in \FPF_n(\I_n)$ and $(p,q) $  is an outer corner of $\dom(z)$ with $n\geq p>q$. Then 
 $ \Psi(z,p) \subseteq \FPF_n(\I_n)$, 
$I(\SSX_z) + (u_{pq}) \subseteq \bigcap_{v \in \Psi(z,p)} I(\SSX_v)$,
and
$\SSJ_z + (u_{pq}) = \bigcap_{v \in \Psi(z,p)} \SSJ_v
.$
\end{lem}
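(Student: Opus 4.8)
The plan is to handle each of the three assertions in turn, working with the rank tables attached to the involutions throughout. The first claim, $\Psi(z,p) \subseteq \FPF_n(\I_n)$, I would prove via Proposition~\ref{ssd-prop}: an element $y = (p,r)z(p,r)$ with $\ellfpf(y) = \ellfpf(z)+1$ satisfies $y \geq z$ in Bruhat order, so $\SSD(y) \subseteq [n]\times[n]$ would follow once I check that the new box created by the transposition lies in $[n]\times[n]$. Since $(p,q)$ is an outer corner of $\dom(z)$ with $p \leq n$, and $z(p)=q$, property (P1) of Lemma~\ref{ss-bruhat-lem} forces $z(p) < z(r)$, i.e. $q < z(r)$; combined with $r \leq n$ one checks using the visible-descent criterion of Proposition~\ref{prop:fpf-injection} (or directly with $\SSD$) that no visible descent of $y$ exceeds $n$. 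So $y \in \FPF_n(\I_n)$.

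For the containment $I(\SSX_z) + (u_{pq}) \subseteq \bigcap_{v\in\Psi(z,p)} I(\SSX_v)$, I would argue geometrically. Each $\SSX_v$ with $v \in \Psi(z,p)$ satisfies $v \geq z$, hence $\SSX_v \subseteq \SSX_z$ by Proposition~\ref{ss-bruhat-prop}, giving $I(\SSX_z) \subseteq I(\SSX_v)$. It remains to see $u_{pq}$ vanishes on $\SSX_v$, i.e. $\SSX_v \subseteq \{A : A_{pq}=0\}$. Since $(p,q)$ is an outer corner of $\dom(z)$, we have $\rank z_{[p][q]} = 0$ while $\rank z_{[p][q]} = 1$ would be the generic situation for an element lying on top of $z$; more precisely, for $v = (p,r)z(p,r)$ with the new inversion box appearing weakly northwest of $(p,q)$ in a way that forces $\rank v_{[p][q]} = 0$ as well. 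Actually the cleaner route: a box is added to $\SSD$ at a position $(a,b)$ with $a \le p$, $b \le q$ when we pass from $z$ to $v$ — I would verify from property (P2) and the outer-corner hypothesis that $\rank v_{[p][q]} = 0$ still, so the rank condition at $(p,q)$ forces $A_{pq}=0$ on $\SSX_v$, i.e. $u_{pq} \in I(\SSX_v)$.

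The third identity $\SSJ_z + (u_{pq}) = \bigcap_{v\in\Psi(z,p)} \SSJ_v$ is the combinatorial heart and I expect it to be the main obstacle. The left-to-right inclusion is the easy direction: $\SSJ_z \subseteq \SSJ_v$ for each $v \geq z$ by Proposition~\ref{ssj-bruhat-prop}, and $u_{pq} \in \SSJ_v$ since $\rank v_{[p][q]} = 0 < 1 = |R \cap [q]|$ for $R = \{p,q\}$ makes $\SSu_{\{p,q\}\{p,q\}} = u_{pq}$ a generator. For the reverse inclusion I would use Theorem~\ref{thm:bruhat-minimal-intersection-ss}: it suffices to produce a symmetric rank table $\r$ with $\SSJ_{\r+(u_{pq})} = \SSJ_z + (u_{pq})$ on one side, and to show that the Bruhat-minimal $v \in \FPF_n(\I_n)$ with $\r_v \le \r$ (in the table sense) are exactly the elements of $\Psi(z,p)$ together with the extra data encoding $u_{pq}$. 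Concretely, I would define $\r$ to be the rank table of $z$ modified so that the value at $(p,q)$ is lowered (forcing $u_{pq}$), then identify $\bigcap_v \SSJ_v$ over this minimal set with $\SSJ_{\r}$, and finally match the set of Bruhat-minimal covers with $\Psi(z,p)$ using the transition-recurrence description of $\Psi(z,p)$ and the outer-corner hypothesis. The delicate point will be checking that every Bruhat-minimal involution dominating this modified table actually arises as $(p,r)z(p,r)$ for an appropriate $r$ — this is precisely where the structure of outer corners and property (P2) (no intermediate $e$ with $z(i) < z(e) < z(j)$) must be invoked to rule out spurious minimal elements and to ensure the decomposition is exactly indexed by $\Psi(z,p)$.
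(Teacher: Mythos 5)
Your overall route matches the paper's: the first two assertions reduce to showing that every $v \in \Psi(z,p)$ satisfies $\rank v_{[p][q]} = 0$ (so that $u_{pq} \in \SSJ_v$ and $u_{pq} \in I(\SSX_v)$ and $\SSD(v) \subseteq [n]\times[n]$), and the third assertion is handled exactly as the paper does: write $\SSJ_z + (u_{pq})$ as $\SSJ_{\r'}$ for a suitable symmetric rank table $\r'$ via Proposition~\ref{prop:min-to-sum-ss}, then apply Theorem~\ref{thm:bruhat-minimal-intersection-ss}, and finally identify the Bruhat-minimal elements with $\r_v \leq \r'$ with $\Psi(z,p)$.

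There is, however, a genuine flaw in the reasoning you use for the first two claims, which rests twice on the premise that passing from $z$ to a Bruhat cover $v = (p,r)z(p,r)$ adds a single box to $\SSD(z)$: you speak of ``the new box created by the transposition'' and later of ``a box is added to $\SSD$ at a position $(a,b)$''. This is false. Covering relations in Bruhat order on $\Ifpf_\infty$ do \emph{not} correspond to nested diagrams; only the cardinalities differ by one. For instance, with $z = \FPF_4((1,3)) = (1,3)(2,5)(4,6)(7,8)\cdots$ one has $\SSD(z) = \{(2,1),(4,2)\}$ and the outer corner $(p,q)=(3,1)$, while $v = (3,4)z(3,4) = (1,4)(2,5)(3,6)(7,8)\cdots$ has $\SSD(v) = \{(2,1),(3,1),(3,2)\}$: $\ellfpf(v)=\ellfpf(z)+1$ but $\SSD(z) \not\subseteq \SSD(v)$, since the box $(4,2)$ disappears. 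So neither ``the new box lies in $[n]\times[n]$'' nor ``the new box forces $\rank v_{[p][q]}=0$'' can be extracted from a single-box argument. You need the direct approach: from $z(p) = q$, all of $([p]\times[q]) \setminus \{(p,q)\}$ lies in $\dom(z)$, and properties (P1) and (P2) of Lemma~\ref{ss-bruhat-lem} then show that for any cover $(i,j)z(i,j)$ of $z$, $[p]\times[q]\subseteq\dom((i,j)z(i,j))$ if and only if $i \in \{p,q\}$; this gives $\rank v_{[p][q]} = 0$ for all $v \in \Psi(z,p)$ directly, without any claim about how the diagram changes. Combining this with Proposition~\ref{ssd-prop} gives $\Psi(z,p) \subseteq \FPF_n(\I_n)$.

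The ``delicate point'' you defer at the end --- that the Bruhat-minimal $v \in \FPF_n(\I_n)$ satisfying $v \geq z$ and $\rank v_{[p][q]} = 0$ are precisely $\Psi(z,p)$ --- is indeed the technical core, and the paper isolates it as Lemma~\ref{lem:bruhat-minimal-fpf}. It is not a matter of merely invoking (P2) and the outer-corner hypothesis once: the paper's proof is an induction on $\ellfpf(w) - \ellfpf(z)$ relying on the auxiliary ``exchange'' Lemma~\ref{ijr-prime-lem-fpf} (itself reduced to a finite computation for involutions in $\I_n$ with $n \le 8$). Without acknowledging this separate nontrivial ingredient, your sketch understates the work required.
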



We postpone the proof of this lemma to Section~\ref{key-lemma-sect}. Here is our first main theorem.

\begin{thm} \label{ss-main-thm}  If $z \in \FPF_n(\I_n)$ then  $\SSJ_z = \init (\SSI_z) = \init (I(\SSX_z))$,
where both initial ideals are computed with respect to the reverse lexicographic term order from Example~\ref{lex-term2}.
 \end{thm}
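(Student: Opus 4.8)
The plan is to prove the chain of equalities $\SSJ_z = \init(\SSI_z) = \init(I(\SSX_z))$ by induction on $\ellfpf(z) = |\SSD(z)|$, using the transition formulas in Lemma~\ref{lem:transition-containment-ss} as the inductive engine. First I would dispose of the base case: if $\SSD(z) = \emptyset$ then $\dom(z)$ contains every position below the diagonal, so every generator of $\SSI_z$ (and of $I(\SSX_z)$ and of $\SSJ_z$) is a single variable $u_{ij}$, and all three ideals coincide with $(u_{ij} : (i,j) \in \SSD(z)' )$ — more precisely they are all the zero ideal if $z$ is the relevant minimal involution, or the ideal generated by the variables forced to vanish. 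In any case the statement is immediate when there are no essential rank conditions beyond the dominant ones.

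For the inductive step, assume $\ellfpf(z) > 0$, pick an outer corner $(p,q)$ of $\dom(z)$ with $n \geq p > q$ (one exists since $\dom(z)$ is a proper subset of the positions below the diagonal), and note that each $v \in \Psi(z,p)$ has $\ellfpf(v) = \ellfpf(z)+1 > \ellfpf(z)$; wait — this goes the wrong way for induction on $\ellfpf(z)$. The correct device is that each $v \in \Psi(z,p)$ satisfies $v > z$ in Bruhat order, hence (by Proposition~\ref{ss-bruhat-prop}) $\SSX_v \subsetneq \SSX_z$, and one inducts on something like $\binom{n}{2} - \ellfpf(z)$, equivalently downward on $\dim \SSX_z$ / codimension: the varieties $\SSX_v$ are strictly smaller, so by induction the theorem holds for each $v \in \Psi(z,p)$, giving $\SSJ_v = \init(\SSI_v) = \init(I(\SSX_v))$. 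Now apply Lemma~\ref{lem:transition-containment-ss}: we get
\[
\SSJ_z + (u_{pq}) = \bigcap_{v \in \Psi(z,p)} \SSJ_v = \bigcap_{v \in \Psi(z,p)} \init(I(\SSX_v)).
\]
Using Proposition~\ref{prop:init-facts}(c) and the inductive containment $I(\SSX_z) + (u_{pq}) \subseteq \bigcap_v I(\SSX_v)$, together with $\SSJ_z \subseteq \init(\SSI_z) \subseteq \init(I(\SSX_z))$ (the first containment is Lemma~\ref{containment-ss-lem}, the second from $\SSI_z \subseteq I(\SSX_z)$ which follows from Theorem~\ref{ss-zero-locus-thm} and primality, or more carefully from $\SSI_z \subseteq I(\SSX_z)$ proven earlier), I would chain inequalities of Hilbert series. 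The key is that $u_{pq}$ is a non-zero-divisor on the relevant rings modulo these ideals (the outer-corner position meets the variety transversally, reflecting that $u_{pq}$ vanishes on no component of $\SSX_z$), so Lemma~\ref{lem:hilbert} applies: we have $J \subseteq I$ with $J = \SSJ_z$-analogue inside $I = \init(\SSI_z)$ or $\init(I(\SSX_z))$, with $I + (u_{pq}) = J + (u_{pq})$ and $I \cap (u_{pq}) = u_{pq} I$, forcing $I = J$. Running this for both $I = \init(\SSI_z)$ and $I = \init(I(\SSX_z))$ collapses the chain $\SSJ_z \subseteq \init(\SSI_z) \subseteq \init(I(\SSX_z))$ to equalities.

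The main obstacle I anticipate is verifying the hypotheses of Lemma~\ref{lem:hilbert} — specifically that $u_{pq}$ acts as a non-zero-divisor, i.e. that $\init(I(\SSX_z)) \cap (u_{pq}) = u_{pq} \cdot \init(I(\SSX_z))$ and likewise for the monomial ideal $\SSJ_z$. For the monomial ideal this is a direct combinatorial check: no minimal generator $\SSu_{AB}$ of $\SSJ_z$ is divisible by $u_{pq}$ when $(p,q)$ is an outer corner of $\dom(z)$, because $(p,q) \in \dom(z)$-boundary forces $\rank z_{[p][q]} = 0$ so the only rank condition touching column $q$ up to row $p$ would produce a degree-one generator $u_{pq}$ itself — but $(p,q) \notin \dom(z)$, contradiction; hence $u_{pq}$ is a genuine non-zero-divisor on $\KK[\SSM_n]/\SSJ_z$. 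For the geometric side one argues that $\SSX_z \not\subseteq \{u_{pq} = 0\}$ (the permutation matrix $\ss_n(z)$ has a nonzero entry at $(p,q)$ precisely when $z(p) = q$, which holds at outer corners), so $u_{pq}$ is a non-zero-divisor on the domain $\KK[\SSM_n]/I(\SSX_z)$, and flatness/degeneration (Hilbert series equality $\Hilb(\init I) = \Hilb(I)$, already recorded before Lemma~\ref{lem:plus-vs-intersection}) transfers this to the initial ideal via $I(\SSX_z) + (u_{pq})$ versus its initial ideal. Assembling these transversality facts carefully — and making sure the induction is genuinely on a well-founded quantity (codimension of $\SSX_z$, or $\binom{n}{2} - \ellfpf(z)$, with $\Psi(z,p)$ consisting of strictly larger involutions) — is where the real work lies; the rest is bookkeeping with Propositions~\ref{prop:init-facts}, \ref{prop:min-to-sum-ss} and Lemmas~\ref{lem:hilbert}, \ref{lem:plus-vs-intersection}, \ref{containment-ss-lem}.
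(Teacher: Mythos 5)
Your proposal follows essentially the same route as the paper's own proof: an induction driven by the outer-corner transition identities of Lemma~\ref{lem:transition-containment-ss}, a sandwiching chain of containments built from Proposition~\ref{prop:init-facts} and Lemma~\ref{containment-ss-lem}, and then a Nakayama-style conclusion via Lemma~\ref{lem:plus-vs-intersection} and Lemma~\ref{lem:hilbert}. You correctly self-correct the direction of the induction, and you identify all the right auxiliary lemmas. There is, however, one concrete error that needs fixing: the base case is described backwards.

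You write that the base case is ``$\SSD(z) = \emptyset$'' and infer ``$\dom(z)$ contains every position below the diagonal,'' but that implication is false, and this is the wrong end of the induction. Since each $v \in \Psi(z,p)$ has $\ellfpf(v) = \ellfpf(z)+1$, the inductive hypothesis is applied to involutions with \emph{larger} $\ellfpf$; so the induction runs downward on $|\ltriang_n \setminus \SSD(z)|$, and the base case is $\ltriang_n \setminus \SSD(z) = \emptyset$, i.e. $\SSD(z) = \ltriang_n$, achieved uniquely by $z_\square = \FPF_n(1)$. For $z_\square$ one has $\dom(z_\square) = [n]\times[n]$ and all three ideals equal $(u_{ij} : n \geq i > j \geq 1)$ --- they are \emph{not} zero. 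The involution with $\SSD(z) = \emptyset$ is $1_\fpf$ (where all three ideals are indeed zero), but that is the terminal case of the descent, not the anchor of the induction; starting there gives you nothing to induct from. Your parenthetical hedge (``all the zero ideal if $z$ is the relevant minimal involution, or the ideal generated by the variables forced to vanish'') suggests you sensed the tension but did not resolve it. Once the base case is replaced by $z_\square$, the rest of your outline --- including the transfer of the non-zero-divisor property from $I(\SSX_z)$ (via primality) to its initial ideal (via the chain of equalities and Lemma~\ref{lem:plus-vs-intersection}) --- matches the paper's argument in substance, if not in every micro-step.
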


\begin{proof}
Let $z \in \FPF_n(\I_n)$ so that $\SSD(z)$ is a subset of 
$\ltriang_n := \{ (i,j) \in [n] \times [n] : i > j\}$ by Proposition~\ref{ssd-prop}. We prove the theorem by induction on the size of
the complement of the skew-symmetric Rothe diagram $\ltriang_n \setminus \SSD(z)$.

The unique element $z \in \FPF_n(\I_n)$ with $\SSD(z) = \ltriang_n$, which indexes the containment-minimal skew-symmetric 
 matrix Schubert variety defined by the all-zeros rank table, is
 the involution
\begin{equation}\label{z-square-eq}
z_{\square}:= \FPF_n(1) =  (1,n+1)(2,n+2)\cdots(n,2n)\cdot \prod_{i=n}^\infty (2i + 1, 2i + 2).
\end{equation}
If $n \geq i > j \geq 1$, then  
$u_{ij}  = \SSu_{AB} \in \SSJ_{z_\square}
$
and
$
u_{ij} =
-\pf (\SSU_{RR}) \in \SSI_{z_\square}
$
for $A = \{i\}$, $B = \{j\}$, and $R = \{i,j\}$,
so
 $\SSJ_{z_\square}=\SSI_{z_\square}= I(\SSX_{z_\square})=  (u_{ij} : (i,j) \in \ltriang_n)$.
Thus the theorem holds in this base case.

    For the inductive step, suppose $z \in \FPF_n(\I_n)\setminus\{ z_{\square}\}$. Then the set $\dom(z)$ is invariant under transpose and not all of $[n] \times [n]$, so it has an outer corner $(p,q) \in [n] \times [n]$ with $p\geq q$. We must have $p>q$ since $z$ has no fixed points and $z(p) = q$, as noted above.
Therefore
\begin{equation} \label{eq:I-vs-J-ss}
    \begin{aligned}
        \init(I(\SSX_z)) + (u_{pq})  &\subseteq \init(I(\SSX_z) + (u_{pq}))  & \quad\text{[by Prop. \ref{prop:init-facts}(b)]}\\
        &\subseteq \init \left(\bigcap_{v \in \Psi(z,p)} I(\SSX_z)\right) &\quad\text{[by Lem. \ref{lem:transition-containment-ss}]} \\
       & \subseteq  \bigcap_{v \in \Psi(z,p)} \init \left(I(\SSX_v) \right)&\quad \text{[by Prop. \ref{prop:init-facts}(c)]} \\
       & = \bigcap_{v \in \Psi(z,p)} \SSJ_v &\quad\text{[by induction]} \\
       &= \SSJ_z + (u_{pq}). &\qquad \text{[by Lem. \ref{lem:transition-containment-ss}]}
    \end{aligned}
    \end{equation}
The appeal to induction on the penultimate line is valid since every $v \in \Psi(z,p)$ belongs to $\FPF_n(\I_n)$ 
by Lemma \ref{lem:transition-containment-ss} and has 
$|\SSD(v)| = \ellfpf(v) = \ellfpf(z) + 1 = |\SSD(z)| + 1$.

On the other hand, we have
 $\SSJ_z \subseteq \init (\SSI_z)$ by Lemma~\ref{containment-ss-lem},
and  
 $\SSI_z \subseteq  I(\SSX_z)$ by Theorem~\ref{ss-zero-locus-thm}.
    Therefore $\SSJ_z \subseteq \init (\SSI_z) \subseteq  \init (I(\SSX_z))$, so all inclusions in \eqref{eq:I-vs-J-ss} must be equalities
    and
    $ \init(I(\SSX_z) + (u_{pq})) = \init(I(\SSX_z)) + (u_{pq}).$
     By Lemma~\ref{lem:plus-vs-intersection}, this implies that 
     \begin{equation}
     \label{hy1}
     \init(I(\SSX_z) \cap (u_{pq})) = \init(I(\SSX_z)) \cap (u_{pq}).
     \end{equation}
      Now suppose $f \in I(\SSX_z) \cap (u_{pq})$, so $f = u_{pq} g$ for some $g$. Since $u_{pq}$ is not identically zero on $\SSX_z$, as the permutation matrix of $z$ has a $1$ at the outer corner $(p,q)$ of $\dom(z)$, we have $u_{pq} \notin I(\SSX_z)$. But $I(\SSX_z)$ is a prime ideal as $\SSX_z$ is irreducible by Theorem~\ref{ssx-thm}, so $u_{pq} g \in I(\SSX_z)$ forces us to have $g \in I(\SSX_z)$. This proves 
     that
     $I(\SSX_z) \cap (u_{pq}) = u_{pq} I(\SSX_z)$, so
     \begin{equation}\label{porism-ss}
     \begin{aligned}
     \init(I(\SSX_z)) \cap (u_{pq}) &= \init(I(\SSX_z) \cap (u_{pq})) = \init(u_{pq} I(\SSX_z)) = u_{pq}  \init (I(\SSX_z)).
     \end{aligned}
     \end{equation}
     The chain of equalities \eqref{eq:I-vs-J-ss} shows that
     \begin{equation}\label{hy2}
     \init(I(\SSX_z)) + (u_{pq}) = \SSJ_z + (u_{pq}).
     \end{equation}
     Applying Lemma~\ref{lem:hilbert}, whose hypotheses hold by \eqref{porism-ss} and \eqref{hy2}, gives $\init(I(\SSX_z)) = \SSJ_z$.
    Since $\SSJ_z \subseteq \init (\SSI_z) \subseteq \init ( I(\SSX_z))$, both inclusions are equalities. 
    \end{proof}

\begin{cor} If $z \in \FPF_n(\I_n)$ and $(p,q)$ is an outer corner of $\dom(z)$ with $n \geq p >q$, then 
$\SSJ_z \cap (u_{pq})= u_{pq} \SSJ_z.$
 \end{cor}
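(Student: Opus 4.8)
The plan is to extract this corollary as a porism of the proof of Theorem~\ref{ss-main-thm} just given, rather than proving it from scratch. In the course of that proof, for any $z \in \FPF_n(\I_n) \setminus \{z_\square\}$ and any outer corner $(p,q)$ of $\dom(z)$ chosen there, we established the chain of equalities \eqref{eq:I-vs-J-ss}, \eqref{porism-ss}, and $\init(I(\SSX_z)) = \SSJ_z$. The first thing I would do is replace $\init(I(\SSX_z))$ by $\SSJ_z$ in \eqref{porism-ss}, which immediately yields $\SSJ_z \cap (u_{pq}) = u_{pq}\SSJ_z$ for \emph{that particular} outer corner $(p,q)$.

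The only gap is that the statement of the corollary allows an \emph{arbitrary} outer corner $(p,q)$ of $\dom(z)$ with $n \geq p > q$, whereas the proof of Theorem~\ref{ss-main-thm} only fixed one such corner. So the second step is to observe that the argument in the proof of Theorem~\ref{ss-main-thm} goes through verbatim with any outer corner $(p,q)$ of $\dom(z)$ satisfying $n \geq p > q$ in place of the one chosen there: Lemma~\ref{lem:transition-containment-ss} applies to any such corner, and nothing in the derivation of \eqref{eq:I-vs-J-ss}--\eqref{porism-ss} used a special choice. (The case $z = z_\square$ is excluded here anyway, since $\dom(z_\square) = [n]\times[n]$ has no outer corner in $[n]\times[n]$.) Alternatively, and perhaps more cleanly, one can give a short direct argument not mentioning corners at all: $I(\SSX_z)$ is prime and $u_{pq} \notin I(\SSX_z)$ (since the permutation matrix of $z$ has a $1$ in position $(p,q)$, because $z(p) = q$ at an outer corner), so $I(\SSX_z) \cap (u_{pq}) = u_{pq} I(\SSX_z)$; then applying $\init$ and using Lemma~\ref{lem:plus-vs-intersection} together with the equality $\init(I(\SSX_z)+(u_{pq})) = \init(I(\SSX_z)) + (u_{pq})$ (which holds because $\SSJ_z \subseteq \init(\SSI_z) \subseteq \init(I(\SSX_z))$ and the chain \eqref{eq:I-vs-J-ss} forces equality throughout, now known for every such $(p,q)$) gives $\SSJ_z \cap (u_{pq}) = \init(I(\SSX_z)) \cap (u_{pq}) = \init(I(\SSX_z) \cap (u_{pq})) = u_{pq}\init(I(\SSX_z)) = u_{pq}\SSJ_z$.

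I do not anticipate a genuine obstacle here: the real content is already inside the proof of Theorem~\ref{ss-main-thm}, and the corollary is essentially a bookkeeping observation that the intermediate identity \eqref{porism-ss}, once $\init(I(\SSX_z)) = \SSJ_z$ is known, is exactly the claimed statement, and that it holds for every admissible outer corner. The only point requiring a sentence of care is confirming that the equality $\init(I(\SSX_z) + (u_{pq})) = \init(I(\SSX_z)) + (u_{pq})$ — needed to invoke Lemma~\ref{lem:plus-vs-intersection} — is available for every outer corner $(p,q)$ with $n \geq p > q$, which follows because Theorem~\ref{ss-main-thm} supplies $\init(I(\SSX_z)) = \SSJ_z$ and then the chain \eqref{eq:I-vs-J-ss} (run with this $(p,q)$) collapses to equalities.

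\begin{proof}
If $z = z_\square$ then $\dom(z) = [n]\times[n]$ has no outer corner $(p,q)$ with $n \geq p > q$, so there is nothing to prove; assume $z \neq z_\square$ and fix an outer corner $(p,q)$ of $\dom(z)$ with $n \geq p > q$. The entire argument in the proof of Theorem~\ref{ss-main-thm} applies with this choice of $(p,q)$: Lemma~\ref{lem:transition-containment-ss} holds for it, so the chain of inclusions \eqref{eq:I-vs-J-ss} is valid, and since $\SSJ_z \subseteq \init(\SSI_z) \subseteq \init(I(\SSX_z))$ by Lemma~\ref{containment-ss-lem} and Theorem~\ref{ss-zero-locus-thm}, every inclusion in \eqref{eq:I-vs-J-ss} is an equality. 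In particular $\init(I(\SSX_z) + (u_{pq})) = \init(I(\SSX_z)) + (u_{pq})$, so Lemma~\ref{lem:plus-vs-intersection} gives $\init(I(\SSX_z) \cap (u_{pq})) = \init(I(\SSX_z)) \cap (u_{pq})$. Now $I(\SSX_z)$ is prime since $\SSX_z$ is irreducible by Theorem~\ref{ssx-thm}, and $u_{pq} \notin I(\SSX_z)$ because the permutation matrix of $z$ has a $1$ in position $(p,q)$, as $z(p) = q$ at an outer corner of $\dom(z)$; hence $I(\SSX_z) \cap (u_{pq}) = u_{pq} I(\SSX_z)$, and therefore
\[
\SSJ_z \cap (u_{pq}) = \init(I(\SSX_z)) \cap (u_{pq}) = \init(I(\SSX_z) \cap (u_{pq})) = \init(u_{pq} I(\SSX_z)) = u_{pq}\init(I(\SSX_z)) = u_{pq}\SSJ_z,
\]
where the first and last equalities use $\init(I(\SSX_z)) = \SSJ_z$ from Theorem~\ref{ss-main-thm}.
\end{proof}
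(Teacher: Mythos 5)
Your proof is correct and follows the paper's approach exactly: the paper also derives the corollary as a porism of the proof of Theorem~\ref{ss-main-thm}, simply substituting $\SSJ_z$ for $\init(I(\SSX_z))$ in \eqref{porism-ss}. You are a bit more explicit than the paper in noting that the chain \eqref{eq:I-vs-J-ss}--\eqref{porism-ss} can be re-run for any admissible outer corner once the theorem is known, and in dismissing the $z=z_\square$ case; the paper leaves these points implicit, but they are the same observations.
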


\begin{proof}
Since now we know that $\init(I(\SSX_z)) = \SSJ_z$, this is just \eqref{porism-ss}.
\end{proof}

We state two more significant consequences of Theorem~\ref{ss-main-thm} as the following theorems.

\begin{thm}
\label{ss-main-thm2}
 If $z \in \FPF_n(\I_n)$ then $\SSI_z = I(\SSX_z)$, which  is a prime ideal. \end{thm}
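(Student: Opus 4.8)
The plan is to deduce Theorem~\ref{ss-main-thm2} almost immediately from Theorem~\ref{ss-main-thm} together with the chain of inclusions already established. First I would recall that we always have $\SSI_z \subseteq I(\SSX_z)$ by Theorem~\ref{ss-zero-locus-thm}, since the zero locus of $\SSI_z$ is exactly $\SSX_z$ and $I(\SSX_z)$ is the (radical) ideal of all functions vanishing on that locus. Both ideals are homogeneous, since $\SSI_z$ is generated by Pfaffians, which are homogeneous, and $\SSX_z$ is a cone (it is $B_n$-invariant and stable under scaling, being cut out by rank conditions), so $I(\SSX_z)$ is homogeneous as well. Thus $\SSI_z \subseteq I(\SSX_z)$ is a containment of homogeneous ideals.

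Next I would invoke Theorem~\ref{ss-main-thm}, which asserts $\init(\SSI_z) = \init(I(\SSX_z))$ (both equal to $\SSJ_z$) with respect to the reverse lexicographic term order. Having a containment of homogeneous ideals with equal initial ideals is exactly the hypothesis of Proposition~\ref{prop:init-facts}(d), which then forces $\SSI_z = I(\SSX_z)$.

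Finally, for primality: $\SSX_z$ is an irreducible variety by Theorem~\ref{ssx-thm}, being the Zariski closure of the $B_n$-orbit $\openSSX_z$, which is the image of the irreducible variety $B_n$ under an algebraic map. The ideal of an irreducible variety over an algebraically closed field is prime, so $I(\SSX_z)$ is prime, and hence so is $\SSI_z$. I do not anticipate any real obstacle here; the entire content has already been front-loaded into Theorems~\ref{ss-zero-locus-thm}, \ref{ssx-thm}, and \ref{ss-main-thm}, and the only mild point to be careful about is confirming homogeneity of $I(\SSX_z)$ so that Proposition~\ref{prop:init-facts}(d) applies — but this is clear since $\SSX_z$ is defined by homogeneous rank equations and is therefore a cone.
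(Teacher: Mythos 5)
Your proof is correct and takes essentially the same route as the paper: containment from Theorem~\ref{ss-zero-locus-thm}, equality of initial ideals from Theorem~\ref{ss-main-thm}, Proposition~\ref{prop:init-facts}(d) to upgrade to equality, and irreducibility from Theorem~\ref{ssx-thm} for primality. The only (harmless) excess is your verification of homogeneity: Proposition~\ref{prop:init-facts}(d) as stated and proved in the paper requires no homogeneity hypothesis, so that check can be dropped.
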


    \begin{proof}
       We have $\SSI_z \subseteq I(\SSX_z)$  by Theorem~\ref{ss-zero-locus-thm} and $\init( \SSI_z) = \init (I(\SSX_z))$ by Theorem~\ref{ss-main-thm},
       so Proposition~\ref{prop:init-facts}(d) implies that $\SSI_z = I(\SSX_z)$, which is prime
    since  $\SSX_z$ is irreducible.
 \end{proof}


Given sets $A = \{a_1<\dots<a_q\}$ and $B = \{b_1 > \dots > b_q\}$,
recall from \eqref{fAB-eq} that  we define 
\[ f_{AB} :=  \pf \left[ \begin{array}{ll} \SSU_{BB} & \SSU_{B,A\ominus B} \\ \SSU_{A\ominus B, B} & 0
    \end{array}  \right] \in \KK[u_{ij} :i,j \in \NN, i> j] 
    \]
    where 
    $A \ominus B$ is the set of $a \in A$ for which no $b$ exists with $\{(a,b),(b,a)\} \subset \{(a_1,b_1),\dots,(a_q,b_q)\}$,
    and $\SSU$ is the skew-symmetric matrix whose $(i,j)$ entry 
    is $u_{ij}$ if $i>j$.
We also repeat that  the pair of sets $(A,B)$ is \emph{untwisted} if 
 it never holds that $b_i > a_j > a_i > b_j$ for any $1\leq i < j \leq q$.

\begin{thm}\label{ss-main-thm3}
If $z \in \FPF_n(\I_n)$ then 
the elements $f_{AB}$, as $(i,j)$ ranges over all pairs in $[n]\times [n]$ with $i \geq j$ and $(A,B)$ ranges over all untwisted pairs in
$ \tbinom{[i]}{q}\times \tbinom{[j]}{q} $ for $q = \rank z_{[i][j]}+1$,
form a 
Gr\"obner basis for $I(\SSX_z) $ with respect to the reverse lexicographic term order.
\end{thm}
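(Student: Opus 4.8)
The plan is to reduce the statement to results already established. By Theorem~\ref{ss-main-thm2} we have $I(\SSX_z) = \SSI_z$, and by Theorem~\ref{ss-main-thm} we have $\init(I(\SSX_z)) = \init(\SSI_z) = \SSJ_z$. So it suffices to show that the set $G$ consisting of the polynomials $f_{AB}$ named in the statement is a generating set of $\SSI_z$ whose initial terms generate $\SSJ_z$.

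First I would record the relevant properties of each $f_{AB} \in G$. Membership $f_{AB} \in \SSI_z$ is exactly Corollary~\ref{pfaffian-sum-cor}. Since every indexing pair $(A,B)$ is untwisted, Lemma~\ref{untwist-lem1} gives $\init(f_{AB}) = \pm\,\SSu_{AB}$. Next I would check that these monomials $\SSu_{AB}$, as $(A,B)$ runs over the untwisted pairs in the statement, already generate $\SSJ_z$. By Definition~\ref{ssj-def}, $\SSJ_z$ is generated by the monomials $\SSu_{A'B'}$ as $(A',B')$ ranges over \emph{all} pairs in $\binom{[i]}{q}\times\binom{[j]}{q}$ for each $(i,j)$ with $i \geq j$ and $q = \rank z_{[i][j]}+1$. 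But Lemma~\ref{untwist-lem2} produces, for every such $(A',B')$, an untwisted pair $(A,B)$ in the \emph{same} rectangle $\binom{[i]}{q}\times\binom{[j]}{q}$ with $(A',B') \equiv (A,B)$, and one has $\SSu_{A'B'} = \SSu_{AB}$ since $\SSu$ is invariant under the moves defining $\equiv$ (both the vanishing condition and the set $A \odot B \cup B \odot A$ are preserved). Hence the untwisted generators already suffice, and the initial terms of the elements of $G$ generate $\SSJ_z = \init(\SSI_z)$. This part of the argument is exactly the one carried out in the proof of Lemma~\ref{containment-ss-lem}.

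It remains to see that $G$ generates $\SSI_z$ as an ideal, but this is now automatic from the two facts just established, by the standard division argument of Proposition~\ref{prop:init-facts}(d): given $f \in \SSI_z$, its initial term lies in $\init(\SSI_z) = (\init(g) : g \in G)$, hence is a monomial multiple of $\init(g)$ for some $g \in G$, so subtracting the appropriate scalar times that monomial times $g$ yields an element of $\SSI_z$ with strictly smaller initial term; Lemma~\ref{lem:finite-chains} guarantees that iterating terminates and writes $f$ as a polynomial combination of elements of $G$. Therefore $G$ is a generating set of $\SSI_z = I(\SSX_z)$ whose initial terms generate $\init(I(\SSX_z))$, i.e.\ a Gr\"obner basis with respect to the reverse lexicographic term order.

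I do not expect a real obstacle: all the content lives in Theorems~\ref{ss-main-thm} and \ref{ss-main-thm2}, Corollary~\ref{pfaffian-sum-cor}, and Lemmas~\ref{untwist-lem1} and \ref{untwist-lem2}, and the proof is their assembly. The one point worth stating carefully is that the untwisting procedure of Lemma~\ref{untwist-lem2} never leaves the rectangle $\binom{[i]}{q}\times\binom{[j]}{q}$, which is precisely what lets the untwisted $\SSu_{AB}$ alone generate $\SSJ_z$.
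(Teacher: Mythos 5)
Your proof is correct and follows the same route as the paper's own proof, which simply cites Corollary~\ref{pfaffian-sum-cor} for membership in $\SSI_z = I(\SSX_z)$ and appeals to the argument in the proof of Lemma~\ref{containment-ss-lem} (Lemma~\ref{untwist-lem2} to untwist within the rectangle, Lemma~\ref{untwist-lem1} for the initial term, and the $\equiv$-invariance of $\SSu_{AB}$) to see that the initial terms generate $\SSJ_z = \init(I(\SSX_z))$. Your extra closing step via Proposition~\ref{prop:init-facts}(d) --- verifying that the $f_{AB}$ actually generate the ideal --- is the standard fact the paper leaves implicit, so this is just a slightly more spelled-out version of the same argument.
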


\begin{proof}
Each such $f_{AB}$ belongs to $\SSI_z = I(\SSX_z) $ by Corollary~\ref{pfaffian-sum-cor},
and by the argument in the proof of Lemma~\ref{containment-ss-lem} the initial terms of these polynomials
generate $\SSJ_z = \init(I(\SSX_z))$.
\end{proof}

\subsection{The key lemma}\label{key-lemma-sect}

In this subsection we prove Lemma~\ref{lem:transition-containment-ss}.
Write $<$ for the Bruhat order on $\Ifpf_\infty$ from Section~\ref{bruhat-sect}.

\begin{lem}\label{ijr-prime-lem-fpf}
Suppose $z \in \Ifpf_{\infty}$
and $(p,q)$ is an outer corner of $\dom(z)$ with $p>q$.
Assume $i,j,r\in \NN$ are integers such that $i<j$, $p<r$,
and $z < (i,j)z(i,j) < (p,r)(i,j)z(i,j)(p,r).$ Then there are integers $i',j',r' \in \{i,j,p,r,z(i), z(j), z(p), z(r)\}$ 
with $i'<j'$, $p<r'$, and
\[z < (p,r')z(p,r') < (i',j')(p,r')z(p,r')(i',j') = (p,r)(i,j)z(i,j)(p,r).\]
\end{lem}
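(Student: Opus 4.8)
The plan is to analyze the two-step Bruhat cover $z < (i,j)z(i,j) < (p,r)(i,j)z(i,j)(p,r)$ combinatorially, by tracking exactly how the values $z(i), z(j), z(p), z(r)$ and their images interact. Set $w = (i,j)z(i,j)$ and $y = (p,r)w(p,r)$. By property (P1) of Lemma~\ref{ss-bruhat-lem}, $z < w$ forces $z(i) < z(j)$, and $w < y$ forces $w(p) < w(r)$. Since $(p,q)$ is an outer corner of $\dom(z)$, we have $z(p) = q$; and because $p > q$ while $(p-1,q), (p,q-1)$ lie in $\dom(z)$, the value $z(p) = q$ is quite small — in fact $q = z(p) < p$, and the dominant-component structure pins down where the small values of $z$ sit. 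First I would record the case division according to which of $i, j$ equals $p$ or $r$, or coincides with $z(p) = q$, or is disjoint from all of these. When $\{i,j\}$ is disjoint from $\{p,r\}$, conjugation by $(p,r)$ and by $(i,j)$ commute, so $y = (i,j)\bigl((p,r)z(p,r)\bigr)(i,j)$; here I would set $(p,r')=(p,r)$ (so $z' := (p,r)z(p,r)$) and $(i',j')=(i,j)$, and the main point is to check that both steps $z < z'$ and $z' < (i,j)z'(i,j) = y$ are genuine length-one covers. That $z < z'$ holds follows from (P2): no integer $e$ with $p < e < r$ has $z(p) < z(e) < z(r)$, which I would deduce from the outer-corner hypothesis (values between $z(p)=q$ and $z(r)$ in the relevant window are blocked by the dominant shape) together with the assumed covers for $z < w$ and $w < y$. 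That $z' < y$ again uses (P1)/(P2) applied to the transposition $(i,j)$ acting on $z'$, using that $z'$ and $z$ agree except on $\{p,r\}$.

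The substantive cases are the overlapping ones: $j = p$, or $i = p$, or $j = r$, or $i = r$ (and the sub-case where one of $i,j$ equals $q = z(p)$, which can be folded in since $z(q) = p$). Here conjugation by $(i,j)$ and $(p,r)$ do not commute, and $y = (p,r)(i,j)z(i,j)(p,r)$ is a product of two transpositions $(i,j)(p,r)$ conjugating $z$; the composite permutation $(i,j)(p,r)$ is supported on at most three or four points, so $y$ differs from $z$ only on the small set $\{i,j,p,r\} \cup \{z(i),z(j),z(p),z(r)\}$. My approach is: first determine $y$ explicitly as a function on this small support by composing the two conjugations, then look for a different factorization $y = (i',j')(p,r')z(p,r')(i',j')$ in which the transposition $(p,r')$ is applied to $z$ \emph{first}, with $p < r'$, and $(i',j')$ applied second, with $i'<j'$. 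The candidates for $r'$ and for $\{i',j'\}$ will be forced: $r'$ must be the value that ends up in position $p$ after the first step produces the correct $y$-image there, and similarly $i',j'$ are read off from which two remaining positions get swapped. In each case I must verify (a) $z < (p,r')z(p,r')$ via (P1) — i.e. $z(p) = q < z(r')$, which holds because $q$ is the smallest value in its neighborhood — and that it is a length-one step via (P2); and (b) that applying $(i',j')$ then yields $y$ with $\ellfpf$ increasing by one, which follows because $\ellfpf(y) = \ellfpf(z) + 2$ is already known from the hypotheses and $(p,r')z(p,r')$ sits strictly between.

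The main obstacle I anticipate is the bookkeeping in the overlapping cases: correctly composing $(i,j)$-conjugation followed by $(p,r)$-conjugation on an involution (remembering that conjugating an involution $z$ by a transposition $t$ gives $tzt$, whose action sends $a \mapsto t(z(t(a)))$), and then reverse-engineering the alternate factorization. A clean way to manage this is to use the visible-descent / dominant-component description: the outer corner $(p,q)$ means $q = z(p)$ is minimal among $\{z(p-1+?), \ldots\}$ in a precise sense, so any value $z(e)$ with $e$ strictly between the relevant indices and lying strictly between $z(p)$ and another relevant value simply cannot occur — this is exactly what (P2) needs, and it is what rules out spurious intermediate covers. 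I would also use that all the indices $i',j',r'$ produced lie in $\{i,j,p,r,z(i),z(j),z(p),z(r)\}$ by construction, since conjugation only moves values among positions in the supports of the transpositions involved, so the stated containment is automatic once the factorization is exhibited. The parity/monotonicity checks needed to invoke (P1) at the first step are where the outer-corner hypothesis is essential and are the only place genuine case-analysis effort is required; everything else is formal manipulation of transpositions.
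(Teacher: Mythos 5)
Your proposal takes a genuinely different route from the paper. The paper's proof is much shorter: it observes that all positions and values in play lie in the set $S = \{i,j,p,r,z(i),z(j),z(p),z(r)\}$ of size at most $8$, that $z$ restricts to a fixed-point-free involution of $S$, and that the outer-corner and Bruhat-order hypotheses/conclusions transfer to the restriction (because, by (P1), every Bruhat comparison appearing in the lemma reduces to a comparison of $z$-values, which is preserved by the order-preserving bijection $S \to [n]$). The lemma is then verified by a finite calculation over $\Ifpf_n$ for $n \leq 8$. Your plan instead does a hand case analysis on the overlap pattern of $\{i,j\}$ with $\{p,r,q,z(r)\}$, exhibiting the factorization $(i',j')(p,r')$ explicitly in each case. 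I checked that your truly-disjoint case does work with $(r',i',j')=(r,i,j)$, but the substance of the argument — the overlapping cases — is only sketched, and these are exactly where the bookkeeping gets delicate; you have not ruled out a configuration where no valid $(r',i',j')$ pops out of your reverse-engineering step. The paper's reduction dodges this entirely.

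Two smaller points worth fixing. First, you repeatedly invoke (P2) to establish $z < z'$, but (P2) is only a \emph{necessary} condition for $\ellfpf$ to jump by exactly one; the Bruhat comparison $z < (p,r')z(p,r')$ comes from (P1) alone (i.e., $z(p) < z(r')$), and the fact that $\ellfpf$ increases by exactly one at each step is then forced because $\ellfpf(y) = \ellfpf(z)+2$ and the Bruhat order on $\Ifpf_\infty$ is graded by $\ellfpf$. In the disjoint case the needed inequality $z(p) < z(r)$ follows directly from $w(p) < w(r)$ together with the assumption that $\{i,j,z(i),z(j)\}$ misses $\{p,r\}$; no appeal to $\dom(z)$ is needed there. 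Second, your statement that ``values between $z(p)=q$ and $z(r)$ are blocked by the dominant shape'' is imprecise: the outer corner only forces $z(a)>q$ for $a<p$ and $z(b)>p$ for $b<q$; it says nothing about $z(a)$ for $a>p$, which is the regime relevant to $z(r)$. Tightening these points and actually carrying out the overlap cases would make your route a valid alternative proof, though considerably longer than the paper's.
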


\begin{proof}
Let $S = \{i,j,p,r,z(i), z(j), z(p), z(r)\}$ and 
write $\phi : [n]\to S$ and $\psi : S \to [n]$ 
for the unique order-preserving bijections where $n = |S|$. Note that $n \leq 8$.
We have $z(p) = q$,
so $(p',q') := (\psi(p), \psi(q)) \in [n]\times [n]$ is an outer corner of $z' := \psi \circ z \circ\phi \in \Ifpf_n$
with $p'>q'$.
If the lemma fails for $z$ and $(p,q)$ then it must also fail for $z'$ and $(p',q')$,
but it is a finite calculation to check that the lemma holds when $z \in \Ifpf_n$ and $n\leq 8$.
\end{proof}

\begin{lem} \label{lem:bruhat-minimal-fpf} 
Suppose $z \in \Ifpf_{\infty}$
and $(p,q)$ is an outer corner of $\dom(z)$ with $p>q$. Then 
$\Psi(z,p)$ is the subset of Bruhat-minimal elements in the set
\begin{equation*}
\begin{aligned} 
\cX(z,p) &:= \left\{v \in \Ifpf_\infty : v \geq z, \text{ $\rank v_{[i][j]} = 0$ for $(i,j) \in [p] \times [q]$}\right\} 
\\
&\phantom{:}=  \left\{v \in \Ifpf_\infty : v \geq z, \ \dom(v)\supseteq [p] \times [q]\right\}.
\end{aligned}
\end{equation*}
In addition, if $ z\in \FPF_n(\I_n)$ and $p \in [n]$ then $\Psi(z,p) \subseteq \FPF_n(\I_n)$.
 \end{lem}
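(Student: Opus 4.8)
The plan is to prove the two displayed descriptions of $\cX(z,p)$ are equal, then show $\Psi(z,p)$ consists exactly of the Bruhat-minimal elements of $\cX(z,p)$, and finally deduce the containment $\Psi(z,p)\subseteq\FPF_n(\I_n)$. The equality of the two sets is essentially a definitional unwinding: since $(p,q)$ is an outer corner of $\dom(z)$ with $p>q$, we have $q<p$, so the rectangle $[p]\times[q]$ lies in the region below the diagonal only partially, but the condition $\rank v_{[i][j]}=0$ for $(i,j)\in[p]\times[q]$ is, by monotonicity of ranks, equivalent to $\rank v_{[p][q]}=0$, which is equivalent to $[p]\times[q]\subseteq\dom(v)$ by \eqref{dom-eq}. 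This handles the second equality with no difficulty.

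For the main claim, I would argue both inclusions. First, every $v\in\Psi(z,p)$ lies in $\cX(z,p)$: by definition $v=(p,r)z(p,r)>z$ for some $r>p$ with $\ellfpf(v)=\ellfpf(z)+1$, and since $z(p)=q$ (as $(p,q)$ is an outer corner), conjugating by $(p,r)$ only moves entries in rows/columns $p$ and $r$, both of which are $\geq p>q$ except for the entry $z(p)=q$ itself; using property (P2) of Lemma~\ref{ss-bruhat-lem} together with the fact that $\dom(z)\supseteq[p]\times[q]$ minus the single cell where the $1$ of $z$ sits, one checks $\dom(v)$ still contains $[p]\times[q]$. Next, $v$ is Bruhat-minimal in $\cX(z,p)$: this is where I expect to lean on Lemma~\ref{ijr-prime-lem-fpf}, whose whole purpose is to let us ``commute'' a covering step of the form $(p,r)$ past an unrelated covering step $(i,j)$. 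Concretely, if $w\in\cX(z,p)$ with $z<w<v$, then $w=tzt$ for a single transposition $t$ (a cover of $z$), and we must have $t$ involving $p$ (else $\dom(w)$ would not contain $[p]\times[q]$, since only a transposition moving row/column $p$ can clear the obstruction at position $(p,q)$); but then $w\in\Psi(z,p)$ already, and since distinct elements of $\Psi(z,p)$ are incomparable (they all have the same length $\ellfpf(z)+1$), $w=v$. Conversely, if $v'\in\cX(z,p)$ is Bruhat-minimal, pick any saturated chain $z=z_0\lessdot z_1\lessdot\cdots\lessdot z_m=v'$; applying Lemma~\ref{ijr-prime-lem-fpf} repeatedly (to the bottom two steps, then propagating) one can rearrange the chain so that the first step is a conjugation by some $(p,r')$, i.e.\ there is $v''\in\Psi(z,p)$ with $v''\leq v'$; minimality of $v'$ forces $v'=v''\in\Psi(z,p)$.

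Finally, for $\Psi(z,p)\subseteq\FPF_n(\I_n)$ when $z\in\FPF_n(\I_n)$ and $p\in[n]$: by Proposition~\ref{ssd-prop} this amounts to showing $\SSD(v)\subseteq[n]\times[n]$ for each $v\in\Psi(z,p)$. Write $v=(p,r)z(p,r)$ with $p<r$ and $\ellfpf(v)=\ellfpf(z)+1$, so $|\SSD(v)|=|\SSD(z)|+1$. Since $v>z$ in Bruhat order, Proposition~\ref{ss-bruhat-prop} gives $\rank v_{[i][j]}\leq\rank z_{[i][j]}$ for all $i,j$; the only new cell of $\SSD(v)$ compared to $\SSD(z)$ comes from the transposition $(p,r)$ acting on $z$, and I need to check this cell has both coordinates $\leq n$. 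The key point is that $z(p)=q<p\leq n$ and the conjugation $(p,r)z(p,r)$ creates its new diagram cell in row $\max\{p,\text{something}\}$—but property (P2) of Lemma~\ref{ss-bruhat-lem} constrains $r$: no integer $e$ with $p<e<r$ has $z(p)<z(e)<z(r)$; combined with $z(p)=q$ being small and $z\in\FPF_n(\I_n)$ meaning $z$ agrees with $1_\fpf$ beyond row $n$ (so $z(e)\in\{e-1,e+1\}$ for $e>n$), this forces $r\leq n$ — otherwise one of the intermediate values $e\in(n,r)$ would violate the no-$e$ condition. Hence both $p,r\leq n$, so the new cell of $\SSD(v)$ lies in $[n]\times[n]$, giving $v\in\FPF_n(\I_n)$. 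I expect this last verification — pinning down exactly why (P2) forces $r\leq n$ — together with the chain-rearrangement argument via Lemma~\ref{ijr-prime-lem-fpf} to be the most delicate parts; everything else is bookkeeping with Rothe diagrams and Bruhat covers.
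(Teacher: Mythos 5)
Your overall strategy tracks the paper's: establish $\Psi(z,p)\subseteq\cX(z,p)$, show elements of $\Psi(z,p)$ are minimal, use Lemma~\ref{ijr-prime-lem-fpf} to handle higher-length elements, and use (P2) for the final containment. But several specific steps are wrong or have gaps.

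\textbf{The claim $r\leq n$ is false.} Take $n=4$, $y=(1,4)\in\I_4$, $z=\FPF_4(y)=(1,4)(2,5)(3,6)(7,8)\cdots$, so $\dom(z)=[3]\times[3]$ and $(p,q)=(4,1)$ is an outer corner. Then $v=(4,5)z(4,5)=(1,5)(2,4)(3,6)(7,8)\cdots$ satisfies $\SSD(v)=\{(2,1),(3,1),(3,2),(4,1)\}$, so $\ellfpf(v)=4=\ellfpf(z)+1$ and $v\in\Psi(z,4)$ with $r=5>n$. The conclusion $v\in\FPF_4(\I_4)$ still holds (the new cell $(4,1)$ is in $[4]\times[4]$), but your argument pinning $r\leq n$ does not. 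The premise that ``$z$ agrees with $1_\fpf$ beyond row $n$'' is also incorrect: $\FPF_n(y)$ agrees with $1_\fpf$ only beyond row $n+k$, where $k$ is the number of fixed points of $y$ in $[n]$; for $n<e\leq n+k$ one has $z(e)\leq n$, not $z(e)\in\{e-1,e+1\}$. The paper's cited ``straightforward'' deduction via Proposition~\ref{ssd-prop} and (P2) must proceed differently.

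\textbf{You omit the case of transpositions through $q$.} Your parenthetical remark that ``only a transposition moving row/column $p$ can clear the obstruction at $(p,q)$'' is not true: since $z$ is an involution with a $1$ at both $(p,q)$ and $(q,p)$, a cover $(q,r)z(q,r)$ can also land in $\cX(z,p)$. The covers of $z$ lying in $\cX(z,p)$ are $\Psi(z,p)\cup\Psi(z,q)$, and identifying these with $\Psi(z,p)$ requires the nontrivial containment $\Psi(z,q)\subseteq\Psi(z,p)$, which the paper cites from \cite[Lem.~4.15]{HMP3}. Without this, your argument that every minimal element is a $(p,r')$-cover is incomplete. The ``chain rearrangement'' argument is also underspecified: Lemma~\ref{ijr-prime-lem-fpf} only lets you swap a $(p,r)$-step in position two with a preceding step, and you give no mechanism for bringing a $(p,\cdot)$-step to position two when it appears higher in a long chain. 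The paper instead runs a careful induction on $\ellfpf(w)-\ellfpf(z)$ inside nested sets $\cX(v,p)$, which sidesteps the need for any such chain surgery.
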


 \begin{proof}
As noted above,
 we have $z(p) = q$ so
$([p]\times[q])\setminus \{(p,q)\} \subseteq \dom(z)$ and 
 $z \notin \cX(z, p)$. 
Properties (P1) and (P2) of the Bruhat order on $\Ifpf_\infty$ from Lemma~\ref{ss-bruhat-lem},
along with Proposition~\ref{ss-bruhat-prop},
imply that if $i<j$ are positive integers with $z< (i,j)z(i,j)$, then $[p]\times[q] \subseteq \dom((i,j)z(i,j))$ if and only if $i\in \{p,q\}$.
Since $\Psi(z,p) \supseteq \Psi(z,q)$ by \cite[Lem. 4.15]{HMP3},
this shows that 
\[
 \Psi(z,p) = \left\{ v \in \cX(z, p) : \ellfpf(v) = \ellfpf(z) + 1\right\}.
\]
This a subset of Bruhat-minimal 
elements in $\cX(z,p)$.
Using Proposition~\ref{ssd-prop} and property (P2) from Lemma~\ref{ss-bruhat-lem},
it is likewise straightforward to deduce that
if $z \in \FPF_n(\I_n)$ and $p \in [n]$, then $\Psi(z,p) \subset  \FPF_n(\I_n)$.

Now suppose $w \in \cX(z,p)$ and $\ellfpf(w)-\ellfpf(z) \geq 2$.
We argue by induction that $w$ is not Bruhat-minimal in $\cX(z,p)$.
There must exist some positive integers $i<j$ such that if $v:=(i,j)y(i,j)$
then $z < v < w$ and $\ellfpf(v) =\ellfpf(z)+1$. If we have $i \in \{p,q\}$ then $v \in \cX(z,p)$ 
in which case $w$ is not Bruhat-minimal in $\cX(z,p)$.
Assume $i\notin \{p,q\}$. Then $(p,q)$ is an outer corner of $v$ and $w \in \cX(v,p)$.
If $\ellfpf(w) - \ellfpf(z)  = 2$, then  $\ellfpf(w) - \ellfpf(v) = 1$, so $w\in \Psi(v,p)$
and $w = (p,r)v(p,r)$ for some $r>p$.
In this case, Lemma~\ref{ijr-prime-lem-fpf} asserts that 
\[w = (i',j')(p,r')z(p,r')(i',j') > (p,r')z(p,r') \in \Psi(z,p)\subseteq \cX(z,p)\] for some integers $p<r'$ and $i'<j'$, so $w$ is not Bruhat-minimal in $\cX(z,p)$.
Alternatively, if $\ellfpf(w) - \ellfpf(z) \geq 3$, then 
$\ellfpf(w) - \ellfpf(v) \geq 2$ so
by induction $w$ is neither Bruhat-minimal in $\cX(v,p)$ nor in $\cX(z,p)$. 
 \end{proof}

Given $(p,q) \in [n] \times [n]$ with $p>q$, consider the linear subspace 
\begin{equation*}
 \SSX_{p,q}:=   \{A \in \SSM_n: \text{$A_{ij} = 0$ for all $(i,j) \in ([p] \times [q]) \cup ([q]\times [p])$}\}.
\end{equation*}
Since $\SSX_{p,q}$ is closed, irreducible, and $B_n$-stable, it is a single skew-symmetric matrix Schubert variety.
Moreover, $I(\SSX_{p,q}) $ is the ideal $( u_{ij} : (i,j) \in [p]\times[q],\ i>j) \subset \KK[u_{ij} : i,j \in[n], i>j]$.

\begin{cor}\label{cor:SSXw-transitions} Let $z \in \FPF_n(\I_n)$ and suppose $(p,q)$ is an outer corner of $\dom(z)$ with $n\geq p > q$. 
Then $\SSX_z \cap \SSX_{p,q} = \bigcup_{v \in \Psi(z,p)} \SSX_v$. \end{cor}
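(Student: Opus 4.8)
The plan is to translate the set-theoretic identity $\SSX_z \cap \SSX_{p,q} = \bigcup_{v \in \Psi(z,p)} \SSX_v$ into a statement about rank conditions, and then apply Lemma~\ref{lem:bruhat-minimal-fpf} together with Proposition~\ref{ss-bruhat-prop}. First I would recall that $\SSX_z$ is cut out by the conditions $\rank A_{[i][j]} \le \rank z_{[i][j]}$ for all $i,j \in [n]$, and that $\SSX_{p,q}$ is the linear subspace where $A_{ij} = 0$ for $(i,j) \in ([p]\times[q]) \cup ([q]\times[p])$. A skew-symmetric matrix $A$ lies in $\SSX_{p,q}$ precisely when $\rank A_{[p][q]} = 0$, since the vanishing of the submatrix $A_{[p][q]}$ forces (by skew-symmetry) the vanishing of $A_{[q][p]}$ as well. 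So $\SSX_z \cap \SSX_{p,q}$ is the set of $A \in \SSM_n$ with $\rank A_{[i][j]} \le \rank z_{[i][j]}$ for all $i,j$ and $\rank A_{[p][q]} = 0$.

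Next I would unwind the right-hand side. By Proposition~\ref{ss-bruhat-prop}, for $v \in \FPF_n(\I_n)$ we have $\SSX_v \subseteq \SSX_z$ iff $v \ge z$ in Bruhat order, and $\SSX_v \subseteq \SSX_{p,q}$ iff $\rank v_{[p][q]} = 0$, i.e.\ $\dom(v) \supseteq [p]\times[q]$. Thus each $\SSX_v$ with $v \in \Psi(z,p)$ is contained in $\SSX_z \cap \SSX_{p,q}$, using Lemma~\ref{lem:bruhat-minimal-fpf} (which tells us $\Psi(z,p) \subseteq \cX(z,p)$, so every such $v$ satisfies $v \ge z$ and $\dom(v) \supseteq [p]\times[q]$, and also $\Psi(z,p) \subseteq \FPF_n(\I_n)$). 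This gives the inclusion $\supseteq$. For the reverse inclusion, take $A \in \SSX_z \cap \SSX_{p,q}$. By Theorem~\ref{ssx-thm}, $A$ lies in the $B_n$-orbit $\openSSX_v$ of $\ss_n(v)$ for a unique $v \in \FPF_n(\I_n)$; concretely $v$ is the element whose rank table records $\rank A_{[i][j]}$ (via Remark~\ref{doesnt-matter-remark}, or directly). Since $A \in \SSX_z$, Proposition~\ref{ss-bruhat-prop} gives $v \ge z$, and since $A \in \SSX_{p,q}$ we get $\rank v_{[p][q]} = \rank A_{[p][q]} = 0$, so $v \in \cX(z,p)$. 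Now $\SSX_v \supseteq \openSSX_v \ni A$, but $\SSX_v$ need not be one of the pieces indexed by $\Psi(z,p)$; however, by Lemma~\ref{lem:bruhat-minimal-fpf}, $\Psi(z,p)$ is exactly the set of Bruhat-minimal elements of $\cX(z,p)$, so there exists $u \in \Psi(z,p)$ with $u \le v$, hence $\SSX_v \subseteq \SSX_u$ by Proposition~\ref{ss-bruhat-prop}, giving $A \in \SSX_u \subseteq \bigcup_{v' \in \Psi(z,p)} \SSX_{v'}$.

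The only mild subtlety — and the step I expect to need the most care — is the passage from ``$A \in \SSX_z \cap \SSX_{p,q}$'' to ``the unique orbit-index $v$ of $A$ lies in $\cX(z,p)$'': I must be sure that the rank table $(i,j) \mapsto \rank A_{[i][j]}$ of an arbitrary $A \in \SSM_n$ is realized by a genuine element $v \in \FPF_n(\I_n)$ and that $\rank v_{[i][j]} = \rank A_{[i][j]}$ holds on all of $[n]\times[n]$ (not just on the essential set). This is precisely what Theorem~\ref{ssx-thm} provides: every $B_n$-orbit in $\SSM_n$ equals $\openSSX_v$ for a unique $v \in \FPF_n(\I_n)$, and on $\openSSX_v$ the rank conditions defining $\openX_v$ hold with equality. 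Once this identification is in hand, both inclusions are immediate from Proposition~\ref{ss-bruhat-prop} and Lemma~\ref{lem:bruhat-minimal-fpf}, and no computation is required beyond the observation that $\rank A_{[p][q]} = 0 \iff A \in \SSX_{p,q}$ for skew-symmetric $A$.
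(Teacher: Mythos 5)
Your proof is correct and follows essentially the same route as the paper, relying on the same three ingredients: Theorem~\ref{ssx-thm} to identify the $B_n$-orbit of a given $A$, Proposition~\ref{ss-bruhat-prop} to translate containments of skew-symmetric matrix Schubert varieties into Bruhat comparisons, and Lemma~\ref{lem:bruhat-minimal-fpf} to identify $\Psi(z,p)$ as the Bruhat-minimal elements of $\cX(z,p)$. The paper packages the forward inclusion more abstractly by observing that $\SSX_z \cap \SSX_{p,q}$ is closed and $B_n$-stable, hence a union of orbit closures; you unfold exactly that by passing through the orbit-index $v$ of a point $A$ and then descending to a Bruhat-minimal $u \le v$ in $\cX(z,p)$, which is slightly more explicit but mathematically the same.
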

Although the equality here is meant set-theoretically,  the proof of Theorem~\ref{ss-main-thm} implies $I(\SSX_z) + (u_{pq}) = I(\SSX_z) + I(\SSX_{p,q}) = \bigcap_{v \in \Psi(z,p)} I(\SSX_v)$, so this corollary also holds at the level of schemes.

\begin{proof}
The intersection $\SSX_z \cap \SSX_{p,q}$ is closed and $B$-stable,
so it is a union of  $\SSX_v$ for certain $v \in \FPF_n(\I_n)$.
It follows from Proposition~\ref{ss-bruhat-prop} and Lemma~\ref{lem:bruhat-minimal-fpf} that
if $v \in \FPF_n(\I_n)$ then $\SSX_v \subseteq \SSX_z \cap \SSX_{p,q}$ if and only if $v  \in \Psi(z,p)$.
\end{proof}

We may now prove our key lemma.

    \begin{proof}[Proof of Lemma~\ref{lem:transition-containment-ss}]
    Let $z \in \FPF_n(\I_n)$ and suppose $(p,q) $  is an outer corner of $\dom(z)$ with $n\geq p>q$.
    The fact that $\Psi(z,p) \subseteq \FPF_n(\I_n)$ was noted in Lemma~\ref{lem:bruhat-minimal-fpf}.
     
    One has
    $\bigcap_{v \in \Psi(z,p)} I(\SSX_v) = I(\bigcup_{v \in \Psi(z,p)} \SSX_v) $ so Corollary~\ref{cor:SSXw-transitions} implies that this intersection 
    contains 
    $I(\SSX_z) + I(\SSX_{p,q})$. Since $I(\SSX_{p,q})$ is generated by the variables $u_{ij}$ for $(i,j) \in [p] \times [q]$ with $i>j$, all of which are in $I(\SSX_z)$ except for $u_{pq}$, we therefore have 
    \[\bigcap_{v \in \Psi(z,p)} I(\SSX_v)  \supseteq I(\SSX_z) + I(\SSX_{p,q}) = I(\SSX_z) + (u_{pq}).\]
    This proves the desired formula for $I(\SSX_z) + (u_{pq})$.

Next, let ${\r}_{p,q}$ be the rank table with $ {\r}_{p,q}(i,j)  = 0$ whenever $(i,j) \in [p] \times [q]$ or $(i,j) \in [q] \times [p]$
 and with $\r_{p,q}(i,j) = n$ for all other positive integer pairs $(i,j)$.
Suppose $v \in \FPF_n(\I_n)$. As ${\r}_v(i,j) \leq \min({\r}_z, {\r}_{p,q})(i,j)$ for all $i,j \in [n]$ if and only if $v \geq z$ and $\rank v_{[i][j]} = 0$ for $(i,j) \in [p] \times [q]$, 
we have $\SSJ_{\min({\r}_z, {\r}_{p,q})} = \bigcap_{v \in \Psi(z,p)} \SSJ_v$ by Theorem~\ref{thm:bruhat-minimal-intersection-ss} and Lemma~\ref{lem:bruhat-minimal-fpf}.
Proposition~\ref{prop:min-to-sum-ss} shows that $\SSJ_{\min({\r}_z, {\r}_{p,q})} = \SSJ_z + \SSJ_{{\r}_{p,q}}$. The ideal $\SSJ_{{\r}_{p,q}}$
is generated by the variables $u_{ij}$ for $(i,j) \in [p] \times [q]$ with $i>j$, and all of these except
 $u_{pq}$
are contained in $\SSJ_z$. We conclude that  
\[\bigcap_{v \in \Psi(z,p)} \SSJ_v =\SSJ_{\min({\r}_z, {\r}_{p,q})} = \SSJ_z + \SSJ_{{\r}_{p,q}} = \SSJ_z + (u_{pq})\]
which is the desired formula for $ \SSJ_z + (u_{pq})$.
\end{proof}

\subsection{Pipe dreams}
\label{subsec:pipe-dreams}

In this subsection, we derive an alternate expression for $\SSJ_z$
as an intersection of certain natural ideals indexed by the \emph{involution pipe dreams} corresponding to $z$, as introduced in \cite{HMPdreams}.

Define the \emph{pipe dream reading word} of a finite set $D \subset \NN \times \NN$ to be the word whose letters list the numbers $i+j-1$ as $(i,j)$ ranges over all elements of $D$ in the order that makes $(i,-j)$ increase lexicographically, that is, reading the rows in order but going right to left.
For example, the pipe dream reading word of $D = \{(1,4),(1,3),(2,6),(5,5),(5,4),(5,3)\}$ is $437987$.
We write $\word(D)$ to denote the pipe dream reading word of $D$.

\begin{defn} \label{rp-def}
A \emph{(reduced) pipe dream} for a permutation $w \in S_{\infty}$ is a finite set $D \subset \NN \times \NN$ 
whose pipe dream reading word is a reduced word for $w$, that is, a sequence of positive integers $i_1i_2\cdots i_l$ of shortest possible length such that $w=s_{i_1}s_{i_2}\cdots s_{i_l}$.
\end{defn}

This definition first appeared in work of Bergeron and Billey \cite{bergeron-billey}, who referred to pipe dreams as \emph{rc-graphs}.
The set $D = \{(1,4),(1,3),(2,6),(5,5),(5,4),(5,3)\}$ is a pipe dream for the permutation $w= (3,5,4)(7,10,9)$.
Let $\RP(w)$ denote the set of all pipe dreams for $w \in S_\infty$.

Recall that $\Ifpf_m$ is the set of involutions  $y=y^{-1} \in S_m$ with no fixed points in $[m]$, and that 
$ \FPF_m(y) = y \cdot (m+1,m+2)(m+3,m+4)(m+5,m+6)\cdots$
for $y \in \Ifpf_m$ if $m$ is even.

\begin{defn}\label{fp-def}
Suppose $z = \FPF_m(y) \in \Ifpf_\infty$ for some even $m \in 2\NN$ and $y \in \Ifpf_m$.
The set of \emph{(fpf-involution) pipe dreams} for $z $ is then
   $
        \FP(z) := \{D \cap \ltriang : D^T=D \in \RP(y)\}
   $
    where $\ltriang := \{(i,j) \in \NN \times \NN : i > j\}$ and $D^T$ is the transpose of $D$.
\end{defn}
The set $\FP(z)$ where $z = \FPF_6((1,2)(3,6)(4,5))$ is given in Example~\ref{ex:subword-ss} below.

Every element $z \in \Ifpf_\infty$ arises as $z = \FPF_m(y)$ for some choice of  $m\in 2\NN$ and $y \in \Ifpf_m$,
and our definition $\FP(z)$ might seem to depend on this choice. 
However, if $y_1 \in \Ifpf_m$ and $y_2 \in \Ifpf_{m+2}$ are such that $\FPF_m(y_1) = \FPF_{m+2}(y_2)$,
then we must have $y_2 = y_1 s_{m+1}$ and therefore
\[
\RP(y_2) = \left\{ D \sqcup \{(i,j)\} : D \in \RP(y_1)\text{ and } i+j-1 = m+1\right\}.
\]
It follows that 
$
\{D \cap \ltriang : D^T=D \in \RP(y_1)\}
=
\{D \cap \ltriang : D^T=D \in \RP(y_2)\}
$
so
$\FP(z)$ 
is independent of the choice of $y \in \Ifpf_m$ with $z = \FPF_m(y)$.

\begin{rem}
In \cite[Def. 1.4]{HMPdreams}, a set of pipe dreams $\FP(y)$ is defined for each $y \in \Ifpf_m$.
This set is what we call $\FP(z)$ for $z = \FPF_m(y)$.
It is straightforward to translate the results in \cite{HMPdreams} about pipe dreams for elements of $\Ifpf_m$
to statements for elements of $\Ifpf_\infty$ via the preceding discussion, and 
we will not comment further on these unimportant differences in notation.
\end{rem}

 \begin{prop}\label{fp-00-prop}
 If $z \in \FPF_n(\I_n)$ then every $D \in \FP(z)$ is a subset of  $\ltriang_n := \ltriang \cap ([n] \times [n])$
 with size $|D| = |\SSD(z)| = \ellfpf(z)$.
 \end{prop}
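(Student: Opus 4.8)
The statement has two parts: (i) every $D \in \FP(z)$ is contained in $\ltriang_n = \{(i,j) : n \geq i > j \geq 1\}$, and (ii) $|D| = |\SSD(z)| = \ellfpf(z)$. I would handle the size claim first, since it is the more robust of the two and follows from general facts about reduced words. Write $z = \FPF_m(y)$ for some even $m \geq n$ and $y \in \Ifpf_m$. By Definition~\ref{fp-def}, an element $D \in \FP(z)$ has the form $E \cap \ltriang$ where $E = E^T \in \RP(y)$, so $\word(E)$ is a reduced word for $y$ and hence $|E| = \ell(y)$. Since $E$ is symmetric and disjoint from the diagonal (no pipe dream contains a cell $(i,i)$, as $\word$ would then repeat a letter $s_{2i-1}$ forcing non-reducedness — or more simply because $y$ is an involution with a known length formula), we have $|E| = 2|D|$, so $|D| = \ell(y)/2$. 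Now I invoke the identity $\ellfpf(\FPF_m(y)) = |\SSD(\FPF_m(y))|$ together with the fact (from \cite{HMP5}, or derivable from $\SSD(z) = D(z) \cap \ltriang$ and $D(z) = D(z)^T$ for an involution) that $\ell(y) = 2\,\ellfpf(\FPF_m(y)) = 2|\SSD(z)|$. This gives $|D| = |\SSD(z)| = \ellfpf(z)$, establishing (ii).

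For part (i), the containment $D \subseteq \ltriang_n$: by hypothesis $z \in \FPF_n(\I_n)$, so Proposition~\ref{ssd-prop} gives $\SSD(z) \subseteq [n] \times [n]$, equivalently $z$ has no visible descents greater than $n$ by Proposition~\ref{prop:fpf-injection}. I want to conclude that every cell $(i,j)$ of every $D \in \FP(z)$ satisfies $i \leq n$. Equivalently, writing $D = E \cap \ltriang$ with $E = E^T \in \RP(y)$, I must show that $E$ never uses a cell in row $> n$ or column $> n$ — since $E$ is symmetric, controlling rows controls columns. The standard tool here is that a reduced pipe dream $E$ for a permutation $v$ satisfies: the cell $(i,j) \in E$ forces $v$ to have a descent-type condition, and more usefully, $E \subseteq D(\text{something})$-type bounds. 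Concretely, I expect to use that every pipe dream for $v$ is contained in the "staircase region" cut out by the Rothe diagram in the sense that row $i$ of any $D \in \RP(v)$ is empty whenever row $i$ of $D(v)$ is empty — this is a known property (e.g. via the subword/Demazure product characterization, or Bergeron–Billey \cite{bergeron-billey}). Applying this to $v = y$: the Rothe diagram $D(y)$ has its last nonempty row at index equal to the largest $i$ with $y(i) \neq i$ or $y$ has an inversion ending at $i$; since $y \in \Ifpf_m$ and $z = \FPF_m(y)$ has no visible descents $> n$, one translates this via \cite[Lem. 5.2]{HMP5} (already cited in the proof of Proposition~\ref{ssd-prop}) into: $\SSD(y)$ — hence by symmetry $D(y) \cap \ltriang$, hence $D(y)$ — has last nonempty row $\leq n$. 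Then $E \in \RP(y)$ has all cells in rows $\leq n$, and by symmetry in columns $\leq n$, so $D = E \cap \ltriang \subseteq \ltriang_n$.

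\textbf{Main obstacle.} The delicate point is the passage "reduced pipe dream for $v$ lives in rows bounded by the last nonempty row of $D(v)$." This is true but I should pin down the cleanest justification: one clean route is that $\RP(v) \ne \emptyset$ forces each letter $s_k$ appearing in $\word(D)$ to satisfy $k \leq$ (largest descent position of $v$) $+ \text{(number of cells)} $ — too crude. A better route: use that the \emph{top-justified} (dominant) pipe dream $D_{\mathrm{top}}(v)$ and the recursive "ladder move" generation of $\RP(v)$ (Bergeron–Billey) never moves a cell out of the columns/rows already touched, combined with the base case being $D(v)$ itself when $v$ is dominant — but $y$ need not be dominant. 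The most reliable argument is probably: a cell $(i,j) \in E$ for $E \in \RP(v)$ implies $i + j - 1 \in \DesR(v)$-reachable, and more precisely that $v(i') > i$-type inequalities hold; I would instead cite the known fact that $\bigcup_{E \in \RP(v)} E$ is contained in the complement of the "accessible" region, which for an involution $y$ with $\SSD(y) \subseteq [n]\times[n]$ stays within $[n] \times [n]$. If a direct citation is awkward, the fallback is to argue at the level of $y$ directly using Proposition~\ref{prop:fpf-injection}'s visible-descent characterization and the explicit structure of $\FPF_m(y)$, invoking \cite[\S5]{HMP5} where analogous containment statements for involution pipe dreams are proved. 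I expect roughly one paragraph of careful bookkeeping to make this airtight, but no genuinely new idea is needed.
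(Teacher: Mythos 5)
Your approach has genuine gaps in both parts, traceable to a single misconception: you assume symmetric reduced pipe dreams $E \in \RP(y)$ avoid the diagonal, but in fact they must contain diagonal cells, and those cells can live outside $[n]\times[n]$.

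\textbf{Part (ii).} The claim that ``no pipe dream contains a cell $(i,i)$'' is false: $\{(1,1)\} \in \RP(s_1)$, for instance. A cell $(i,i)$ contributes the letter $2i-1$ \emph{once} to the reading word; there is no forced repetition. In fact, for $y \in \Ifpf_m$ every symmetric $E \in \RP(y)$ contains exactly $m/2$ diagonal cells, so $|E| = 2|D| + m/2$, not $2|D|$. Your companion claim $\ell(y) = 2\ellfpf(\FPF_m(y))$ is also false; the correct identity is $\ell(y) = 2\ellfpf(\FPF_m(y)) + m/2$ (e.g.\ $y=(1,2)$: $\ell(y)=1$, $\ellfpf(1_\fpf)=0$, $m/2=1$). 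These two errors cancel, so the conclusion $|D|=\ellfpf(z)$ is right, but the reasoning is not, and filling the gap (that $|E\cap\text{diag}|=m/2$) is not trivial.

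\textbf{Part (i).} This is where the plan actually breaks. You propose to bound the rows of $D(y)$, and hence of every $E \in \RP(y)$, by $n$. But $\SSD(z) \subseteq [n]\times[n]$ does \emph{not} bound $D(y)$: take $n=2$, $z=1_\fpf$, $y=(1,2)(3,4)\in\Ifpf_4$; then $\SSD(z)=\emptyset$ but $D(y)=\{(1,1),(3,3)\}$ has a cell in row $3>n$. Worse, symmetric pipe dreams of $y$ really do occupy rows $>n$: for $y=(1,2)(3,4)(5,6)\in\Ifpf_6$ and $z=1_\fpf\in\FPF_2(\I_2)$, the unique symmetric element of $\RP(y)$ is $E=\{(1,1),(2,2),(3,3)\}$, with a cell in row $3>2$. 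What is true is only that $E \cap \ltriang$ stays inside $\ltriang_n$; the extra cells of $E$ all sit on the diagonal. So the reduction to a row-bound on $E$ itself cannot succeed, and your cited fallback (the last-row characterization via Proposition~\ref{prop:fpf-injection}) does not help because that controls $\SSD(z)$, not $D(y)$ or its pipe dreams.

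The paper's proof sidesteps all of this by working with the fpf-involution pipe dreams directly: it invokes [HMPdreams, Thm.~5.15], which exhibits a ladder-move partial order on subsets of $\ltriang$ making $\FP(z)$ the up-set of an explicit bottom element $\hat D_{\text{bot}}^{\fpf}$; the bottom element has size $|\SSD(z)|$ and is contained in $\ltriang_n$, and the moves preserve both properties. If you want to argue at the level of $\RP(y)$ as you propose, you would need to track the diagonal cells of $E$ carefully rather than ignoring them, and you would still need a containment statement for $E\cap\ltriang$ that is not implied by any naive bound on the rows of $E$.
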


 \begin{proof}
Let $z \in \FPF_n(\I_n)$, so that $\SSD(z) \subseteq \ltriang_n$.
By \cite[Thm. 5.15]{HMPdreams},
 there exists an element  $\hat D_{\mathsf{bot}}^{\fpf} \in \FP(z)$ and a partial order $\leq_{\FP}$ on finite subsets of $\ltriang$ such that 
 $\FP(z) = \{ E \subset \ltriang : \hat D_{\mathsf{bot}}^{\fpf}  \leq _{\FP} E\}$.
 The partial order has the property that if $D \subseteq \ltriang_n$ and $D \leq_{\FP} E$ then $|D| =|E|$ and $E \subseteq \ltriang_n$.
But it is clear from \cite[Eq. (5.5)]{HMPdreams}  that $|\hat D_{\mathsf{bot}}^{\fpf}| = |\SSD(z)|$ and $\hat D_{\mathsf{bot}}^{\fpf}  \subseteq \ltriang_n$ if $\SSD(z) \subseteq \ltriang_n$.
 \end{proof}

    Given a set $D \subseteq \ltriang_n $,
    we write $ (u_{ij} : (i,j) \in D)$ to denote the ideal in
    the coordinate ring $\KK[\SSM_n] = \KK[u_{ij} : i,j \in [n], i>j]$ generated by $u_{ij}$ for all $(i,j) \in D$.

\begin{lem} \label{lem:Jw-in-JD-ss} If $z \in \FPF_n(\I_n)$ and $D \in \FP(z)$, then 
   $
    \SSJ_z \subseteq (u_{ij} : (i,j) \in D) \subseteq \KK[\SSM_n].
  $
\end{lem}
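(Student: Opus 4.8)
The goal is to show that every generator $\SSu_{AB}$ of $\SSJ_z$ lies in the prime monomial ideal $(u_{ij} : (i,j) \in D)$; equivalently, that $\SSu_{AB}$ is divisible by $u_{ij}$ for some $(i,j) \in D$. Recall that $\SSu_{AB}$ corresponds to a pair $(A,B) \in \binom{[i]}{q}\times\binom{[j]}{q}$ with $i \geq j$ and $q = \rank z_{[i][j]}+1$, and that its support consists of the positions in $A \odot B$ together with their transposes (discarding everything if $A \odot B$ meets the diagonal, in which case $\SSu_{AB} = 0 \in$ any ideal). So I must produce a cell of $D$ lying among the $q$ positions $(a_0,b_r),\dots,(a_r,b_0)$ of $A \odot B$ or their reflections across the diagonal.

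The natural tool is the ``Greene-type'' characterization of $D$ via its reading word together with the fact that $D \in \FP(z)$ lifts to a symmetric reduced pipe dream $\tilde D = \tilde D^T \in \RP(y)$ where $z = \FPF_{2n}(y)$ (or any suitable even $m$). The key step is the rank-count property for ordinary pipe dreams: if $E \in \RP(w)$ then $E$ contains no ``antidiagonal chain'' of length exceeding $\rank w_{[i][j]}$ inside $[i]\times[j]$ — more precisely, for any $(i,j)$, the largest $k$ for which there exist $(i_1,j_1),\dots,(i_k,j_k) \in E$ with $i \geq i_1 > \cdots > i_k \geq 1$ and $1 \leq j_1 < \cdots < j_k \leq j$ is at most $\rank w_{[i][j]}$ (this is standard for reduced pipe dreams, e.g.\ via the subword/Demazure-product interpretation and compatible-sequence combinatorics, and is exactly the shape of Lemma~\ref{greene-lem}). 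First I would invoke this with $w = y$ and with the symmetric lift $\tilde D$: if $\tilde D$ contained an antidiagonal chain of length $q = \rank z_{[i][j]}+1 > \rank y_{[i][j]}$ inside $[i]\times[j]$ we would have a contradiction (using $\rank y_{[i][j]} = \rank z_{[i][j]}$ for $i,j \in [n]$, which holds since $z = \FPF(y)$ agrees with $y$ on the relevant range). The positions of $A \odot B$ form precisely such a length-$q$ antidiagonal chain in $[i]\times[j]$; hence $A \odot B \not\subseteq \tilde D$ — but that alone isn't enough, since I need a cell of $D$, not merely that $\tilde D$ misses some cell.

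The cleaner argument goes the other way: I would argue that $D$ (hence $\tilde D$) \emph{must} contain one of the cells of $A \odot B$ (up to transposition). Assume for contradiction that $\tilde D$ avoids every position of $A \odot B$ and its reflection. Using symmetry $\tilde D = \tilde D^T$ and the rank property applied to the \emph{complement} — i.e.\ the fact that a reduced pipe dream $E$ for $w$, restricted to $[i]\times[j]$, must \emph{attain} $\rank w_{[i][j]}$ as the length of a maximal antidiagonal chain when $(i,j)$ is chosen appropriately, combined with the monotonicity of how pipe dreams fill out Rothe-type regions — I would derive that avoiding all $q$ cells of the chain $A \odot B$ forces $\tilde D \cap ([i]\times[j])$ to have an antidiagonal chain of length $\geq q$ elsewhere, again contradicting $\rank y_{[i][j]} = q-1$. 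Concretely, I expect to mimic the standard proof that $\RP(w)$-elements are ``supported below the rank conditions'': for each $(i,j)$ the number of cells of a pipe dream in the hook-complement regions is controlled, and a pipe dream cannot simultaneously be reduced for $y$ and avoid a full antidiagonal $q$-chain that the matrix rank forces it to meet.

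\textbf{Main obstacle.} The crux — and the step I expect to be hardest to pin down carefully — is establishing the precise ``meeting'' statement: that a reduced pipe dream for $y$, viewed inside $[i]\times[j]$, must contain a cell of any given antidiagonal $q$-chain with $q = \rank y_{[i][j]}+1$. This is a statement about reduced pipe dreams that presumably appears in \cite{KnutsonMiller} or \cite{bergeron-billey} (it is essentially the pipe-dream shadow of Theorem~\ref{thm:KM-primary-decomp}, namely that $\init(I(X_y)) \subseteq (u_{ij} : (i,j)\in E)$ for each $E \in \RP(y)$, which unwinds to exactly this divisibility), so I would prefer to cite it directly rather than reprove it. The remaining work is bookkeeping: (i) carefully passing between $z \in \FPF_n(\I_n)$ and a finite involution $y$ with $\rank$ agreement on $[n]\times[n]$ (Lemma~\ref{fpf-bruhat-pre-lem} and Proposition~\ref{ssd-prop} handle the compatibility); (ii) tracking the transpose symmetry so that a cell of $\tilde D$ inside $A \odot B$ \emph{or} its reflection yields a cell of $D = \tilde D \cap \ltriang$ inside $A \odot B \cup B \odot A$, which is exactly the support of $\SSu_{AB}$; and (iii) handling the degenerate case $\SSu_{AB} = 0$ separately. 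None of (i)–(iii) is deep, so the entire proof reduces to the ordinary-pipe-dream divisibility fact plus symmetrization.
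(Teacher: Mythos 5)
Your final plan is exactly the paper's proof: lift $D$ to a symmetric $\tilde D = \tilde D^T \in \RP(y)$ with $z = \FPF_m(y)$, invoke Knutson--Miller Theorem~B to conclude $\tilde D \cap (A \odot B) \neq \emptyset$, and then use $\tilde D^T = \tilde D$ together with the assumption that $A\odot B$ misses the diagonal to extract a cell of $D = \tilde D \cap \ltriang$ dividing $\SSu_{AB}$, handling the $\SSu_{AB}=0$ case separately. Your bookkeeping items (i)--(iii) are precisely the steps the paper carries out.

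One side remark: the ``Greene-type'' bound you posit in the exploratory first paragraph --- that $E \in \RP(w)$ cannot contain an antidiagonal chain in $[i]\times[j]$ of length exceeding $\rank w_{[i][j]}$ --- is actually false. For $w = 321$ and $E = \{(1,1),(1,2),(2,1)\} \in \RP(w)$, the cells $(2,1), (1,2)$ form a length-$2$ antidiagonal chain inside $[2]\times[2]$, yet $\rank w_{[2][2]} = 1$. You abandoned that route anyway because it points the wrong direction (it would at best show $A\odot B \not\subseteq \tilde D$), so this does not affect your conclusion; but the correct fact, used by both you and the paper, is the ``meeting'' statement that $([i]\times[j])\setminus E$ contains no antidiagonal chain of length $\rank w_{[i][j]}+1$, which is the content of Knutson--Miller Theorem~B.
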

    \begin{proof}
        Fix $i,j\in [n]$ with $i>j$. Let $q =   \rank z_{[i][j]}+1$ and choose sets  $(A,B) \in \binom{[i]}{q} \times \binom{[j]}{q}$. Let
        \begin{equation*}
            S = \{(a,b) : \text{$a > b$ and $A \odot B$ contains $(a,b)$ or $(b,a)$}\},
        \end{equation*}
        so that $\SSu_{AB} = \prod_{(a,b) \in S} u_{ab}$ is a generator of $\SSJ_z$. 
        It suffices to show that $\SSu_{AB} \in (u_{ij} : (i,j) \in D)$. If $A \odot B$ contains a diagonal position then this is obvious since $\SSu_{AB}  =0$,
        so assume that $A \odot B$ does not intersect the main diagonal.

         By definition, there exists some $y \in \Ifpf_m$ and $E =E^T\in \RP(y)$  such that $z = \FPF_m(y)$ and $D = E \cap \ltriang$.
The fact that $\prod_{(a,b) \in A \odot B} u_{ab}$ belongs to the ideal of $ \KK[u_{ij} : i,j \in [n]]$ 
generated by all $u_{ij}$ with $(i,j) \in E$ is a consequence of \cite[Theorem B]{KnutsonMiller}, which means that $E \cap (A \odot B)$ is nonempty. 
Since $A \odot B$ contains no element $(i,i)$ by hypothesis,  this  implies that $D \cap S$ is also nonempty,
so $u_{ij} \mid \SSu_{AB}$ for some $(i,j) \in D$, proving that $\SSu_{AB} \in (u_{ij} : (i,j) \in D)$.
    \end{proof}

For us, a \emph{monomial ideal} is an ideal in a commutative polynomial ring generated by monomials.

\begin{lem} \label{lem:intersection-of-sums}  If $I, J, K$ are monomial ideals, then $I + (J \cap K) = (I + J) \cap (I + K)$.  \end{lem}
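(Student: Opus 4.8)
The statement is the distributive law for monomial ideals over intersection, so the natural approach is to work with monomials directly: for monomial ideals, containment and equality can be checked by comparing the sets of monomials contained in each ideal, and a monomial $m$ lies in a monomial ideal exactly when it is divisible by one of the generating monomials. The plan is therefore to fix an arbitrary monomial $m$ and show $m \in I + (J\cap K)$ if and only if $m \in (I+J)\cap(I+K)$.

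\textbf{Key steps.} First I would recall (or quickly note) the two elementary facts about monomial ideals that make this work: (i) a monomial ideal is the $\KK$-span of the monomials it contains, so two monomial ideals are equal as soon as they contain the same monomials; and (ii) a monomial $m$ lies in a monomial ideal $L$ iff some generator of $L$ divides $m$, and consequently $J\cap K$ is again a monomial ideal whose monomials are exactly those divisible by a generator of $J$ and by a generator of $K$. The inclusion $I + (J\cap K) \subseteq (I+J)\cap(I+K)$ is formal and holds for arbitrary ideals, so I would dispatch it in one line: $J\cap K \subseteq J$ gives $I+(J\cap K)\subseteq I+J$, similarly $\subseteq I+K$, hence $\subseteq (I+J)\cap(I+K)$. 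For the reverse inclusion, take a monomial $m \in (I+J)\cap(I+K)$. Since $I+J$ is a monomial ideal, $m$ is divisible either by a generator of $I$ or by a generator of $J$; similarly for $I+K$. If $m$ is divisible by a generator of $I$ in either case, then $m \in I \subseteq I + (J\cap K)$ and we are done. Otherwise $m$ is divisible by a generator of $J$ and by a generator of $K$, so $m \in J\cap K \subseteq I+(J\cap K)$. In all cases $m \in I+(J\cap K)$, and since both sides are monomial ideals containing the same monomials, they are equal.

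\textbf{Main obstacle.} There is essentially no serious obstacle here; the only point requiring a modicum of care is the justification that $I+J$, $J\cap K$, and the other combinations are themselves \emph{monomial} ideals (so that membership of a monomial can be tested generator-by-generator and so that equality can be checked on monomials), but this is standard: sums of monomial ideals are monomial, and the intersection of two monomial ideals is the monomial ideal generated by all pairwise least common multiples of their generators. I would state these facts briefly and then give the four-line case analysis above. The proof should occupy only a few lines.

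\begin{proof}
Recall that a monomial ideal is the $\KK$-span of the monomials it contains, that sums and intersections of monomial ideals are again monomial ideals, and that a monomial lies in a monomial ideal if and only if it is divisible by one of its monomial generators. In particular, to prove the asserted equality it suffices to check that a monomial $m$ lies in the left-hand side if and only if it lies in the right-hand side.

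The inclusion $I + (J \cap K) \subseteq (I+J) \cap (I+K)$ is immediate: from $J \cap K \subseteq J$ we get $I + (J\cap K) \subseteq I + J$, and similarly $I + (J\cap K) \subseteq I + K$.

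For the reverse inclusion, let $m$ be a monomial with $m \in (I+J) \cap (I+K)$. Since $I + J$ is a monomial ideal, $m$ is divisible by a monomial generator of $I$ or of $J$; likewise, since $I+K$ is a monomial ideal, $m$ is divisible by a monomial generator of $I$ or of $K$. If in either case $m$ is divisible by a generator of $I$, then $m \in I \subseteq I + (J\cap K)$. Otherwise $m$ is divisible by a generator of $J$ and by a generator of $K$, so $m \in J \cap K \subseteq I + (J \cap K)$. In all cases $m \in I + (J\cap K)$, so $(I+J)\cap(I+K) \subseteq I + (J\cap K)$, and the two ideals are equal.
\end{proof}
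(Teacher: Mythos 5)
Your proof is correct and takes essentially the same approach as the paper: reduce to monomials using that sums and intersections of monomial ideals are monomial, then case on whether the monomial lies in $I$ or in $J\cap K$. The paper's version is a touch more terse but the argument is the same.
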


This lemma follows as a basic exercise; we include a proof for completeness.

    \begin{proof}
        The inclusion $I + (J \cap K) \subseteq (I + J) \cap (I + K)$ always holds. The collection of monomial ideals is closed under sum and intersection, so it suffices to assume that $\mon \in (I + J) \cap (I + K)$ is a monomial and show that $\mon \in I + (J \cap K)$. Since $\mon$ is a monomial and $I, J, K$ are monomial ideals, $\mon \in I + J$ implies that $\mon \in I$ or $\mon \in J$; likewise $\mon \in I+K$ implies that $\mon \in I$ or $\mon \in K$. That is, we either have $\mon \in I$ or $\mon \in J \cap K$, so $\mon \in I + (J \cap K)$. 
    \end{proof}


We arrive at the main theorem of this subsection.

\begin{thm} \label{thm:primary-decomposition-ss}
Let $z \in \FPF_n(\I_n)$.
Then $\SSJ_z = \bigcap_{D \in \FP(z)} (u_{ij} : (i,j) \in D) \subseteq \KK[\SSM_n]$. \end{thm}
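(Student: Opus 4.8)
The strategy is to run essentially the same transition induction used in the proof of Theorem~\ref{ss-main-thm}, replacing the analytic identities (which relied on primality and Hilbert series) by the purely combinatorial identity of Lemma~\ref{lem:intersection-of-sums}. Write $\SSK_z := \bigcap_{D \in \FP(z)} (u_{ij} : (i,j) \in D)$ for the ideal on the right-hand side. By Lemma~\ref{lem:Jw-in-JD-ss} we already have $\SSJ_z \subseteq \SSK_z$, so the content is the reverse inclusion. We induct on the size of $\ltriang_n \setminus \SSD(z)$, exactly as in Theorem~\ref{ss-main-thm}.

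For the base case $z = z_\square$ of \eqref{z-square-eq}, we have $\SSD(z_\square) = \ltriang_n$, so $\SSJ_{z_\square} = (u_{ij} : (i,j) \in \ltriang_n)$, and by Proposition~\ref{fp-00-prop} every $D \in \FP(z_\square)$ has $|D| = \ellfpf(z_\square) = |\ltriang_n|$ while $D \subseteq \ltriang_n$, so the only pipe dream is $\ltriang_n$ itself and $\SSK_{z_\square} = (u_{ij} : (i,j) \in \ltriang_n) = \SSJ_{z_\square}$. For the inductive step, take $z \ne z_\square$ and choose an outer corner $(p,q)$ of $\dom(z)$ with $n \ge p > q$, as in Theorem~\ref{ss-main-thm}. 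The plan is to show that $\FP(z)$ decomposes along this corner in the way that matches the transition formula $\SSJ_z + (u_{pq}) = \bigcap_{v \in \Psi(z,p)} \SSJ_v$ from Lemma~\ref{lem:transition-containment-ss}. Concretely, I expect that $\FP(z)$ is the disjoint union of the pipe dreams $D \in \FP(z)$ containing $(p,q)$ and those not containing $(p,q)$; the former should be in bijection with $\bigsqcup_{v \in \Psi(z,p)} \{E \in \FP(v) : (p,q) \notin E\}$ (removing the box $(p,q)$), and more importantly the pipe dreams $D \in \FP(z)$ with $(p,q) \notin D$ should be exactly $\bigsqcup_{v \in \Psi(z,p)} \{E \in \FP(v) : (p,q) \notin E\}$ as well — this is the pipe-dream avatar of the Lascoux–Sch\"utzenberger transition recurrence, and should be extractable from \cite{HMPdreams} (it is the fpf-involution analogue of the fact that $\RP(w)$ splits into the pipe dreams using a fixed corner box and those avoiding it, which are governed by the transition tree). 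Granting this, one computes
\[
\SSK_z = \bigcap_{D \in \FP(z)} (u_{ij} : (i,j)\in D) = \Bigl(\bigcap_{\substack{D \in \FP(z)\\ (p,q)\in D}} (u_{ij})\Bigr) \cap \Bigl(\bigcap_{\substack{D \in \FP(z)\\ (p,q)\notin D}} (u_{ij})\Bigr),
\]
where the first factor contains $u_{pq}$ and the second equals $\bigcap_{v\in\Psi(z,p)}\bigcap_{E\in\FP(v),\,(p,q)\notin E}(u_{ij}:(i,j)\in E)$. Using Lemma~\ref{lem:intersection-of-sums} repeatedly to pull the variable $u_{pq}$ out, together with the inductive hypothesis $\SSJ_v = \SSK_v$ for each $v \in \Psi(z,p)$ (valid since $|\SSD(v)| = |\SSD(z)| + 1$), this chain collapses to $\SSK_z = \bigl(\bigcap_{v\in\Psi(z,p)} \SSJ_v\bigr) \cap (\text{something containing } u_{pq})$, and then matching against $\SSJ_z + (u_{pq}) = \bigcap_{v}\SSJ_v$ from Lemma~\ref{lem:transition-containment-ss} gives $\SSK_z \subseteq \SSJ_z + (u_{pq})$. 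To finish, one argues as follows: any monomial $M \in \SSK_z$ either is divisible by $u_{pq}$, in which case $M/u_{pq}$ lies in $\bigcap_{v} \SSK_v = \bigcap_v \SSJ_v = \SSJ_z + (u_{pq})$ and one iterates (the exponent of $u_{pq}$ strictly drops, so this terminates), or $M$ is not divisible by $u_{pq}$, in which case $M \in \SSJ_z + (u_{pq})$ forces $M \in \SSJ_z$ directly since $\SSJ_z$ is a monomial ideal. Hence $\SSK_z \subseteq \SSJ_z$, completing the induction.

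\textbf{Main obstacle.} The genuinely substantive step is establishing the pipe-dream transition decomposition of $\FP(z)$ along the outer corner $(p,q)$ — i.e.\ that the pipe dreams of $z$ not using the box $(p,q)$ are precisely the union, over $v \in \Psi(z,p)$, of the pipe dreams of $v$ not using $(p,q)$. This is where all the combinatorics of \cite{HMPdreams} (the transition tree for fpf-involutions, the structure of $\FP$ via the order $\le_{\FP}$ and the bottom pipe dream $\hat D_{\mathsf{bot}}^{\fpf}$) must be invoked; the rest is bookkeeping with monomial ideals and Lemma~\ref{lem:intersection-of-sums}. An alternative, possibly cleaner route — and the one I would try first — is to avoid re-deriving the pipe-dream transition and instead directly verify the two inclusions: $\SSJ_z \subseteq \SSK_z$ is Lemma~\ref{lem:Jw-in-JD-ss}, and for $\SSK_z \subseteq \SSJ_z$ run the induction above but feed it only the set-theoretic transition $\FP(z) = \FP(z)_{(p,q)\in} \sqcup \FP(z)_{(p,q)\notin}$ together with the already-known ideal identity $\bigcap_{v\in\Psi(z,p)}\SSJ_v = \SSJ_z + (u_{pq})$, deducing the needed bijection $\FP(z)_{(p,q)\notin} = \bigsqcup_v \FP(v)_{(p,q)\notin}$ a posteriori from dimension/degree counts (all ideals are squarefree monomial, so they are Stanley–Reisner and determined by their minimal primes). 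Either way, the crux is the compatibility of the pipe-dream combinatorics with the transition move $z \rightsquigarrow \Psi(z,p)$.
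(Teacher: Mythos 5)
Your high-level strategy matches the paper's exactly: induct on $|\ltriang_n \setminus \SSD(z)|$, handle the base case $z_\square$, pick an outer corner $(p,q)$ of $\dom(z)$, invoke the ideal transition $\SSJ_z + (u_{pq}) = \bigcap_{v\in\Psi(z,p)}\SSJ_v$ from Lemma~\ref{lem:transition-containment-ss}, use Lemma~\ref{lem:intersection-of-sums} to pull $(u_{pq})$ through, and close with Lemma~\ref{lem:Jw-in-JD-ss} plus a factor-out-$u_{pq}$ argument. You also correctly flag the pipe-dream transition as the crux. The problem is that the pipe-dream decomposition you conjecture is false, and not in a way that can be repaired by bookkeeping. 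You propose that $\FP(z)$ splits into dreams containing $(p,q)$ and dreams avoiding $(p,q)$, each piece biject with $\bigsqcup_{v}\{E\in\FP(v) : (p,q)\notin E\}$. In fact the result the paper needs (\cite[Thm.~4.33]{HMPdreams}) is simpler and different: \emph{no} pipe dream of $z$ contains $(p,q)$, and $D\mapsto D\sqcup\{(p,q)\}$ is a bijection from $\FP(z)$ onto $\bigcup_{v\in\Psi(z,p)}\FP(v)$ — so in particular \emph{every} $E\in\FP(v)$ for $v\in\Psi(z,p)$ contains $(p,q)$, i.e.\ $\FP(z)_{(p,q)\in}$ and $\FP(v)_{(p,q)\notin}$ are both empty. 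If your conjectured bijections held, the intersection $\SSK_z$ would collapse to $\bigcap_v\bigcap_{E\in\FP(v)_{(p,q)\notin}}\J{E}$, which does not match $\bigcap_v\SSJ_v$ unless $\FP(v)_{(p,q)\notin}=\FP(v)$ — precisely the opposite of what is true. So the gap is a genuine one, not a statement that happens to be equivalent to the correct one.

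Two smaller notes. First, your finishing descent on the $u_{pq}$-exponent is a valid alternative to the paper's use of Lemma~\ref{lem:hilbert}, and since both $\SSJ_z$ and $\SSK_z$ are squarefree monomial ideals it is in some ways cleaner; but as written it is garbled: you assert $M/u_{pq}\in\bigcap_v\SSK_v$ when the containment you actually need is $M/u_{pq}\in\SSK_z$, which holds because $(p,q)\notin D$ for all $D\in\FP(z)$. The correct step is: write $M=u_{pq}^k M'$ with $u_{pq}\nmid M'$; then $M'\in\SSK_z\subseteq\SSJ_z+(u_{pq})$, and since $M'$ is a monomial not divisible by $u_{pq}$ and $\SSJ_z$, $(u_{pq})$ are monomial ideals, $M'\in\SSJ_z$, hence $M\in\SSJ_z$. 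Second, the ``alternative route'' at the end of your proposal, which aims to deduce the pipe-dream bijection a posteriori from degree counts, still assumes your incorrect version of the decomposition and is not obviously non-circular.
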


\begin{proof}
    Given  $D \subseteq \ltriang_n$, let $\J{D}$ stand for the ideal $(u_{ij} : (i,j) \in D)$ in $ \KK[\SSM_n]$. As in the proof of Theorem~\ref{ss-main-thm}, we use downward induction on the size of $\ltriang_n \setminus \SSD(z)$, 
    starting at the element $z_{\square} \in \FPF_n(\I_n)$ defined by \eqref{z-square-eq}.
    Since $\SSD(z_\square) = \ltriang_n$, it follows from Proposition~\ref{fp-00-prop} that $\FP(z_\square) = \{ \ltriang_n\}$.
    Moreover, as noted in the proof of Theorem~\ref{ss-main-thm}, we have 
    $\SSJ_{z_{\square}} = \J{\ltriang_n}$. Thus, the desired formula holds for $z = z_{\square}$.
    
    
    Assume $z \in \FPF_n(\I_n)\setminus\{ z_{\square}\}$. Then,
    as in the proof of Theorem~\ref{ss-main-thm}, there exists an outer corner $(p,q) \in \ltriang_n$ of $\dom(z)$. 
    By the second formula in Lemma~\ref{lem:transition-containment-ss} and induction, we have
    \[
     \SSJ_z + (u_{pq}) = \bigcap_{y \in \Psi(z,p)} \SSJ_y =  \bigcap_{y \in \Psi(z,p)} \bigcap_{D \in \FP(y)} \J{D}
     \]
     The technical result 
     \cite[Thm. 4.33]{HMPdreams}
     asserts that $(p,q)$ is not in any pipe dream of $z$, and that $D \mapsto D \sqcup \{(p,q)\}$ is a bijection 
$\FP(z) \to \bigcup_{y \in \Psi(z,p)} \FP(y).$ Hence, we can rewrite the right side of the preceding displayed equation as 
    \[
    \bigcap_{y \in \Psi(z,p)} \bigcap_{D \in \FP(y)} \J{D}
    =
    \bigcap_{D \in \FP(z)} \J{D \sqcup \{(p,q)\}} = \bigcap_{D \in \FP(z)} (\J{D} + (u_{pq})).\]
By Lemma~\ref{lem:intersection-of-sums}, the last expression is just $ \bigcap_{D \in \FP(z)} \J{D} + (u_{pq})$.
%
%
  Thus, we have shown that  
    \[\SSJ_z + (u_{pq}) =  \bigcap_{D \in \FP(z)} \J{D} +  (u_{pq}).\] 
    Our last step is to apply Lemma~\ref{lem:hilbert} 
    with $J = \SSJ_z $, $I =  \bigcap_{D \in \FP(z)} \J{D}$, and $f=u_{pq}$ 
    to conclude that $\SSJ_z = \bigcap_{D \in \FP(z)} \J{D}$. 
  To invoke this lemma, the necessary hypotheses are that $J\subseteq I$ and $I \cap (u_{pq}) = u_{pq} I$,
   and checking these conditions is straightforward: the containment $J \subseteq I$ is  Lemma~\ref{lem:Jw-in-JD-ss},
and $(u_{pq}) \cap I = u_{pq} I$ holds
as $(p,q) \notin D$ for all $D \in \FP(z)$.
%
%
\end{proof}

\subsection{Stanley-Reisner ideals and $K$-polynomials}
\label{subsec:K}
In this section we observe that $\SSJ_z$ is the Stanley-Reisner ideal of a shellable simplicial complex, and then use this fact
to give a new proof of a combinatorial formula for the torus-equivariant $K$-theory class of $\SSX_z$ from \cite{MacdonaldNote}. 
Our approach is modeled after Knutson and Miller's study of subword complexes and the resulting combinatorial formulas for Grothendieck polynomials \cite{SubwordComplexes}.

Suppose $R = \KK[u_1, u_2,\ldots, u_N]$ is a polynomial ring that is graded by a free abelian group $G$, so that there is a decomposition $R = \bigoplus_{g \in G} R_g$ where $R_g R_h \subseteq R_{gh}$. We assume each variable $u_i$ is homogeneous and thus contained in $R_g$ for some $g \in G$, in which case we set $\deg (u_i) = g$. We further assume that the grading is \emph{positive}, meaning that there is a homomorphism $\phi : G \to \ZZ$ such that $\phi(\deg(u_i)) > 0$ for all $i$.
This condition ensures that all degrees $\deg(u_1),\deg(u_2), \ldots, \deg(u_N)$ lie strictly on one side of a hyperplane in $G$. 

Now suppose $M = \bigoplus_{g \in G} M_g$ is a finitely generated $G$-graded $R$-module, so that $R_g M_h \subseteq M_{gh}$ for all $g,h \in G$. One can show that $ \dim_\KK(M_g)$ is always finite \cite[Thm. 8.6]{MillerSturmfels}, and we define the \emph{$G$-graded Hilbert series} of $M$ to be $\Hilb(M) = \sum_{g \in G} \dim_\KK(M_g)g$. 

We view $\Hilb(M)$ as a formal generating function in the completion 
of the group ring $\ZZ[G]$. The obvious formula extending the ring structure on $\ZZ[G]$ defines a $\ZZ[G]$-module structure on this completion.
This means it is well-defined to multiply $\Hilb(M)$ by finite linear combinations of elements of $G$.

%

An ideal $I \subseteq R$
 is $G$-homogeneous if it is generated by a set of homogeneous elements. 
 Such an ideal is a $G$-graded abelian group, so the quotient $R/I$ is a $G$-graded $R$-module with its own well-defined $G$-graded Hilbert series.

\begin{defn} The \emph{K-polynomial} of a $G$-homogeneous ideal $I \subseteq R$ is the formal generating function $\K(I) = \Hilb(R/I)\prod_{i=1}^N (1 - \deg(u_i))$. \end{defn}
One can show that $\K(I)$ is a Laurent polynomial in $\deg(u_1), \ldots, \deg(u_N)$ \cite[Thm. 8.20]{MillerSturmfels}.
Fix an arbitrary term order on $R$ and define $\init(I)$ for an ideal $I\subseteq R$ relative to this order. 

\begin{lem} \label{lem:init-K} If $I \subseteq R$ is a $G$-homogeneous ideal, then $\K(I) = \K(\init(I))$. \end{lem}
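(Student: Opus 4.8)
The plan is to show that the $G$-graded Hilbert series of $R/I$ and $R/\init(I)$ coincide, after which multiplying both by the same product $\prod_{i=1}^N(1-\deg(u_i))$ gives the claimed equality of $K$-polynomials. So the whole statement reduces to the assertion $\Hilb(R/I) = \Hilb(R/\init(I))$ as elements of the completion of $\ZZ[G]$.

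\begin{proof}
By definition $\K(I) = \Hilb(R/I)\prod_{i=1}^N(1-\deg(u_i))$ and $\K(\init(I)) = \Hilb(R/\init(I))\prod_{i=1}^N(1-\deg(u_i))$, so it suffices to prove that $\Hilb(R/I) = \Hilb(R/\init(I))$ as formal generating functions in the completion of $\ZZ[G]$. For this it is enough to check that $\dim_\KK (R/I)_g = \dim_\KK (R/\init(I))_g$ for every $g \in G$.

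Fix $g \in G$ and let $R_g$ denote the (finite-dimensional) $\KK$-span of all monomials $\mon$ with $\deg(\mon) = g$. The images in $R/\init(I)$ of those monomials $\mon \in R_g$ with $\mon \notin \init(I)$ form a $\KK$-basis of $(R/\init(I))_g$, since $\init(I)$ is a monomial ideal; call this set of monomials $B_g$. I claim the images in $R/I$ of the elements of $B_g$ also form a basis of $(R/I)_g$.

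For linear independence, suppose $\sum_{\mon \in B_g} c_\mon \mon \in I$ with not all $c_\mon$ zero; then this element is a nonzero homogeneous element of $I$ of degree $g$, so its initial term $\init\!\left(\sum c_\mon \mon\right)$ lies in $\init(I)$. But the initial term is one of the monomials $\mon$ with $c_\mon \neq 0$, contradicting $\mon \notin \init(I)$. For spanning, take any homogeneous $f \in R_g$; I will reduce $f$ modulo $I$ to a $\KK$-linear combination of elements of $B_g$. If $\init(f) \in B_g$, subtract its leading term and recurse on $f - (\text{leading term})$, which has strictly smaller initial term and the same degree $g$. If instead $\init(f) \in \init(I)$, choose $h \in I$ homogeneous of degree $g$ with $\init(h) = \init(f)$ (possible since $\init(I)$ is spanned by initial terms of elements of $I$, and we may take $h$ homogeneous because $I$ is $G$-homogeneous and $\init$ respects the grading under a positive grading), scale so the leading coefficients agree, and replace $f$ by $f - h \in R_g$, which has strictly smaller initial term. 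Each step strictly decreases the initial term in the term order, so by Lemma~\ref{lem:finite-chains} the process terminates, expressing $f$ modulo $I$ as a combination of elements of $B_g$. Hence $(R/I)_g$ and $(R/\init(I))_g$ have the same dimension $|B_g|$ for every $g$, so $\Hilb(R/I) = \Hilb(R/\init(I))$ and therefore $\K(I) = \K(\init(I))$.
\end{proof}

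The only point requiring a little care — and the step I expect to be the main technical obstacle — is confirming that $\init(I)$ is genuinely a $G$-graded vector space with a homogeneous monomial basis and that the reduction argument stays within a single graded piece; this uses the positivity of the grading (each monomial has a well-defined degree, and the term order together with the grading interact well enough that one may choose the reducing element $h$ homogeneous of the correct degree). Everything else is the standard "standard monomials are a basis" argument, transplanted to the $G$-graded setting, combined with Lemma~\ref{lem:finite-chains} to guarantee termination.
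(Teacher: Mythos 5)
Your proof is correct, and it takes a genuinely more self-contained route than the paper. The paper's proof consists of a single sentence citing \cite[Thm.\ 15.3]{Eisenbud} for the fact that $\Hilb(I) = \Hilb(\init(I))$ (which gives $\Hilb(R/I) = \Hilb(R/\init(I))$ via the short exact sequence $0 \to I \to R \to R/I \to 0$). Eisenbud's statement is phrased for the standard $\ZZ$-grading, so the paper is implicitly relying on the reader to accept the routine extension to a positive $G$-grading. You instead spell out the standard-monomial argument directly in the $G$-graded setting: standard monomials of degree $g$ are a basis for $(R/\init(I))_g$ because $\init(I)$ is a monomial ideal, linear independence mod $I$ falls out by looking at leading terms, and spanning follows from a division-style reduction that terminates by Lemma~\ref{lem:finite-chains}. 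Your aside about why the reducing element $h$ may be taken homogeneous is exactly the point that makes the $G$-graded extension work: the initial monomial of any $h \in I$ lies in a single homogeneous component $h_g$, that component is again in $I$ by $G$-homogeneity, and $\init(h_g) = \init(h)$ because the monomials of $h_g$ are a subset of those of $h$ containing the maximum. So your proof both reproves the cited theorem and makes explicit why it survives the passage to a general positive multigrading, which is a small but real gain in rigor over the paper's appeal to a reference stated in a slightly different setting.
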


\begin{proof}
This holds since $\Hilb(I) = \Hilb(\init(I))$ \cite[Thm. 15.3]{Eisenbud}.
\end{proof}

This result is useful since in many cases it is easier to compute $\K(\init(I))$ than $\K(I)$ directly.

\begin{ex}
    Suppose $R = k[x_1, x_2]$, $G =  \ZZ t_1 \oplus \ZZ t_2$, $\deg x_i = t_i$, and $I = (x_1^2)$. 
    A homogeneous basis for $R/I$ is  $\{x_1^i x_2^j + I : 0 \leq i \leq 1, 0 \leq j\}$, so 
$
        \Hilb(R/I) = \sum_{j=0}^{\infty} (1 + t_1) t_2^j = \frac{1+t_1}{1-t_2}
$
    and $\K(I) = \tfrac{1+t_1}{1-t_2}(1-t_1)(1-t_2) = 1-t_1^2$.     If $J = (x_1^2 + x_2)$ and $\init(x_1^2 + x_2) = x_1^2$, then 
    a homogeneous basis for $R/J$ is $\{x_1^i x_2^j + J : 0 \leq i \leq 1, 0 \leq j\}$, so we have $I = \init(J)$ and $\K(J) = \K(I) = \K(\init (J))$.
\end{ex}

Let $T_n \subseteq B_n \subseteq \GL_n$ denote the torus of diagonal matrices.
Write $a_i$ for the character $T_n \to \KK^\times$ sending $t \in T_n$ to $t_{ii}$. 
The subgroup $T_n$ acts on $\SSM_n$ by the formula $t \cdot A := tAt$ for $t \in T_n$.
The corresponding weight space decomposition of $\SSM_{n}$ is $\bigoplus_{n \geq i >j \geq 1} \KK (E_{ij} - E_{ji})$ where $E_{ij}$ is the $n \times n$ matrix with a $1$ in position $(i,j)$ and $0$ in all other positions. Since 
$ t \cdot (E_{ij}-E_{ji})  =
  t_{ii} t_{jj} (E_{ij} - E_{ji}),$
every $A \in \KK (E_{ij} - E_{ji})$ has weight $a_i a_j$. 

Let $G$ be the multiplicative group generated by $a_1,a_2,\dots,a_n$. The distinct elements of $G$ are the products $a_1^{e_1} a_2^{e_2}\cdots a_n^{e_n}$ with $e_1,e_2,\dots,e_n \in \ZZ$,
so $G$ is free abelian and the completion of its group ring may be identified with the formal power series ring 
$\ZZ[[a_1,a_2,\dots]]$.
The coordinate ring $\KK[\SSM_n] = \KK[u_{ij} : n \geq i > j \geq 1]$ has a unique positive $G$-grading in which 
 $\deg (u_{ij}) = a_i a_j$. Since a skew-symmetric matrix Schubert variety $\SSX_z$ is $B_n$-stable, it is also $T_n$-stable, and so its ideal $I(\SSX_z)$ is homogeneous under this grading.

\begin{defn} The \emph{symplectic Grothendieck polynomial} associated to $z \in \FPF_n(\I_n)$ is the $K$-polynomial 
$\Gfpf_z := \K(I(\SSX_z))$. \end{defn}

Although \emph{a priori} one only knows that  $\Gfpf_z \in \ZZ[[a_1,a_2,\dots]]$, it turns out that this formal generating function always has only finitely many nonzero terms.
Moreover, although each $z \in \Ifpf_\infty$ belongs to $\FPF_n(\I_n)$ for infinitely many values of $n$
and the definition of $\SSX_z$ depends on this value, the 
polynomial $\Gfpf_z$ is independent of the choice of $n$ such that $z \in \FPF_n(\I_n)$  \cite[Thm. 4]{WyserYongOSp}.

The functions $\Gfpf_z$ 
 are the same as the polynomials $\Upsilon^K_{\pi,(\GL_{2n},\Sp_{2n})}$ given in \cite{WyserYongOSp} 
as representatives for the ordinary $K$-theory classes of the $\Sp_n(\CC)$-orbit closures on $\GL_n(\CC)/B_n$.
Symplectic Grothendieck polynomials also appear in \cite{MP2019, MP2019b} (after making the change of variables $a_i \mapsto 1- x_i$)
as representatives for the $T$-equivariant $K$-theory classes of the varieties $\SSX_z$.
The latter polynomials (in $\ZZ[x_1,x_2,\dots]$) have well-defined ``stable limits''
converging to symmetric functions that expand positively in terms of Ikeda and Naruse's $K$-theoretic Schur $P$-functions (with parameter $\beta=-1$); see \cite[Thm. 1.9]{Mar} and \cite[Thm. 1.6]{MP2019}.

\begin{ex} \label{ex:groth1-ss} 
Let $D$ be a subset of $\ltriang_n = \{ (i,j) \in [n] \times [n] : i > j\}$.
As in the proof of Theorem~\ref{thm:primary-decomposition-ss}, 
write $\J{D} = (u_{ij} : (i,j) \in D ) \subset \KK[u_{ij} : n \geq i>j\geq 1] =: R.$
The set of monomials 
$ \prod_{(i,j) \in D^c} u_{ij}^{m_{ij}}$ where $D^c:= \ltriang_n \setminus D$ descends to a basis for $R/\J{D}$. Accordingly,
    \begin{equation*}
        \Hilb(R/\J{D}) = \prod_{(i,j)  \in D^c} \sum_{m_{ij} \geq 0} (a_i a_j)^{m_{ij}} 
        = \prod_{(i,j)   \in D^c} \tfrac{1}{1-a_i a_j}
    \end{equation*}
    and
    \begin{equation*}
        \K(\J{D}) = \prod_{(i,j)  \in D^c} \tfrac{1}{1-a_i a_j} \prod_{n \geq i > j \geq 1 } (1-a_i a_j) = \prod_{(i,j) \in D} (1-a_i a_j).
    \end{equation*}
If $z \in \FPF_n(\I_n)$ is \emph{fpf-dominant} in the sense that $\SSD(z) = \dom(z) \cap \ltriang_n$, then $\SSD(z)$ is the unique element of $\RFP(z)$ \cite[Lem. 4.29]{HMPdreams} and  $I(\SSX_z) = \J{\SSD(z)}$ by Theorem~\ref{thm:primary-decomposition-ss}.
We conclude that
   $
        \Gfpf_z = \prod_{(i,j) \in \SSD(z)} (1-a_i a_j),
   $
 which recovers the skew-symmetric half of \cite[Thm. 3.8]{MP2019}.  
    \end{ex}


$K$-polynomials satisfy the inclusion-exclusion formula $\K(I \cap J) = \K(I) + \K(J) - \K(I+J)$. From this and Lemma~\ref{lem:intersection-of-sums} one deduces that $\K(\bigcap_{i=1}^m I_i) = \sum_{S \subseteq [m]} (-1)^{|S|-1} \K(\sum_{i \in S} I_i)$ for any monomial ideals $I_1,I_2, \ldots, I_m$. In particular, Theorem~\ref{thm:primary-decomposition-ss} implies that
\begin{equation} \label{eq:bad-K-formula}
    \Gfpf_z = {\textstyle \K(\bigcap_{D \in \RFP(z)} \J{D})} = \sum_{S \subseteq \RFP(z)} (-1)^{|S|-1} \prod_{(i,j) \in \bigcup S} (1-a_i a_j).
\end{equation}
This formula involves too much cancellation to be efficient: for $z \in \FPF_n(\I_n)$ the size of $\RFP(z)$ can be exponential in $n$, in which case the number of terms in \eqref{eq:bad-K-formula} is doubly exponential.

To get something more practical, we will simplify the formula \eqref{eq:bad-K-formula} using inclusion-exclusion on the poset $\{\bigcup S : S \subseteq \RFP(z)\}$ ordered by inclusion---or more precisely, by taking complements and studying the structure of the simplicial complex generated by facets $\{\ltriang_n \setminus D : D \in \RFP(z)\}$.
For us, a \emph{simplicial complex with vertex set $V$} means a collection of subsets of $V$ (which we refer to as \emph{faces}) closed under taking subsets. We do not require  $\{v\}$ to be a face for every element $v\in V$.
The containment-maximal faces of a simplicial complex are its \emph{facets}.

\begin{defn}
    The \emph{fpf-subword complex} associated to $z \in \FPF_n(\I_n)$ and a subset $Q \subseteq \ltriang_n$ is the simplicial complex with vertices $Q$ and faces
\begin{equation*}
    \subword(z,Q) = \{S \subseteq Q : \text{$Q \setminus S$ contains an fpf-involution pipe dream for $z$} \}.
\end{equation*}
This closely mirrors Knutson and Miller's definition of a \emph{subword complex} in \cite[\S2]{SubwordComplexes}.
\end{defn}

Given a set $V$, let $\KK[x_v : v \in V]$ be the polynomial ring generated by a set of commuting indeterminates indexed by  $V$.
The \emph{Stanley-Reisner ideal} of a simplicial complex $\Delta$ with vertex set $V$ is the ideal in $\KK[x_v : v \in V]$
generated by the elements $\prod_{v \in N} x_v $ for all  $N \subseteq V$ with $N \notin \Delta$. 
\begin{thm} \label{thm:SR-ideal}
If $z \in \FPF_n(\I_n)$ then
    $\SSJ_z$ is the Stanley-Reisner ideal of $\subword(z,\ltriang_n)$.
\end{thm}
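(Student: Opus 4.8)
The plan is to establish the theorem directly from the definitions of the Stanley-Reisner ideal and of the fpf-subword complex $\subword(z,\ltriang_n)$, combined with the primary decomposition for $\SSJ_z$ from Theorem~\ref{thm:primary-decomposition-ss}. First I would recall that, with $V = \ltriang_n$ as vertex set, the Stanley-Reisner ideal of $\subword(z,\ltriang_n)$ is by definition the ideal in $\KK[u_{ij} : (i,j) \in \ltriang_n]=\KK[\SSM_n]$ generated by the squarefree monomials $\prod_{(i,j) \in N} u_{ij}$ over all nonfaces $N \subseteq \ltriang_n$, i.e. over all $N$ for which $\ltriang_n \setminus N$ contains \emph{no} element of $\FP(z)$. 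Dually, a subset $F \subseteq \ltriang_n$ is a face precisely when its complement $\ltriang_n \setminus F$ contains some pipe dream $D \in \FP(z)$. So the facets of $\subword(z,\ltriang_n)$ are exactly the complements $\ltriang_n \setminus D$ for $D \in \FP(z)$ (using Proposition~\ref{fp-00-prop} to know every $D \in \FP(z)$ lies in $\ltriang_n$, and the fact that none of these complements contains another, which again follows since all $D \in \FP(z)$ have the same cardinality $\ellfpf(z)$).

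Next I would invoke the standard correspondence between a simplicial complex and its Stanley-Reisner ideal: for any complex $\Delta$ on vertex set $V$ with facets $F_1, \dots, F_k$, the Stanley-Reisner ideal equals $\bigcap_{i=1}^k (x_v : v \in V \setminus F_i)$. Applying this with $\Delta = \subword(z,\ltriang_n)$, whose facets are $\{\ltriang_n \setminus D : D \in \FP(z)\}$, gives that the Stanley-Reisner ideal of $\subword(z,\ltriang_n)$ equals $\bigcap_{D \in \FP(z)} (u_{ij} : (i,j) \in D)$. But this is exactly the expression for $\SSJ_z$ furnished by Theorem~\ref{thm:primary-decomposition-ss}, so the two ideals coincide.

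I would therefore organize the proof as two short lemmas feeding into the conclusion: (1) a combinatorial identification of the facets of $\subword(z,\ltriang_n)$ with $\{\ltriang_n\setminus D : D \in \FP(z)\}$, and (2) the general fact that the Stanley-Reisner ideal of a complex is the intersection of the prime ideals generated by the complements of its facets. Step (2) is entirely standard (and can be cited or proved in two lines: a squarefree monomial $\prod_{(i,j)\in N} u_{ij}$ lies in $\bigcap_D (u_{ij} : (i,j)\in D)$ iff for every $D$ some variable $u_{ij}$ with $(i,j)\in D$ divides it, i.e. $N \cap D \neq \emptyset$ for all $D$, i.e. $\ltriang_n \setminus N$ contains no $D\in\FP(z)$, i.e. $N$ is a nonface). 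Step (1) requires only the observation that the sets $\ltriang_n\setminus D$ are pairwise incomparable under inclusion because all $D\in\FP(z)$ have equal size.

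The main obstacle is really just the bookkeeping in Step (1): one must be careful that $\subword(z,\ltriang_n)$, as defined, is genuinely a simplicial complex (closure under subsets is immediate: if $\ltriang_n\setminus S$ contains a pipe dream and $S'\subseteq S$, then $\ltriang_n\setminus S' \supseteq \ltriang_n\setminus S$ still contains it) and that its maximal faces are exactly the claimed complements rather than possibly larger sets. Since every $D \in \FP(z)$ is a \emph{subset} of $\ltriang_n$ of the common size $\ellfpf(z)$, the complement $\ltriang_n\setminus D$ is a face, and any face $F \supsetneq \ltriang_n\setminus D$ would force $\ltriang_n\setminus F \subsetneq D$ to still contain some $D' \in \FP(z)$ with $|D'| = |D|$, which is impossible. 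Hence the facets are precisely $\{\ltriang_n\setminus D : D\in\FP(z)\}$, and the rest is the general Stanley-Reisner dictionary together with Theorem~\ref{thm:primary-decomposition-ss}. No deep input beyond what is already proved is needed.
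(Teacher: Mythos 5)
Your proposal is correct and follows essentially the same route as the paper: the paper's proof consists of invoking Theorem~\ref{thm:primary-decomposition-ss} to write $\SSJ_z = \bigcap_{D \in \FP(z)} (u_{ij} : (i,j) \in D)$ and then citing the standard facet-intersection description of the Stanley-Reisner ideal (Thm.\ 1.7 of Miller--Sturmfels), which is exactly the dictionary you spell out. The only difference is that you unpack the citation into an explicit two-line verification of the facet identification and the monomial-ideal intersection, which is harmless and arguably clarifying.
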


\begin{proof} Theorem~\ref{thm:primary-decomposition-ss} says that $\SSJ_z = \bigcap_{D \in \RFP(z)} (u_{ij} : (i,j) \in D)$; now apply \cite[Thm. 1.7]{MillerSturmfels}.
\end{proof}


\begin{ex} \label{ex:subword-ss}
    Let $n=6$ and define $z$ to be the image of $(1,2)(3,6)(4,5)$ under $\FPF_6$, so that
    \[
    z= (1,2)(3,6)(4,5)(7,8)(9,10)(11,12)\cdots \in \FPF_6(\I_6)\subset \Ifpf_\infty.
    \] Suppose $Q = \{(3,1),(3,2),(4,1),(4,2),(5,1)\}$, so that, abbreviating the pair $(i,j)$ as $ij$, we have \[Q = \left\{\begin{smallmatrix}
        \cdot&  & \phantom{13} &  \phantom{14} & \phantom{15} & \phantom{16}  \\
       \cdot &   \cdot&  &  &  &  \\
       31 & 32 &  \,\cdot\, &   &  &  \\
       41 & 42 & \cdot & \cdot &  &  \\
       51 & \cdot & \cdot & \cdot& \cdot &  \\
       \cdot & \cdot & \cdot & \cdot & \cdot & \cdot \\
       \end{smallmatrix}\right\}.\]
       The set of pipe dreams $\FP(z)$ is 
       \begin{equation*}
       \FP(z) = \left\{\left\{\begin{smallmatrix}
            \cdot& \phantom{+}  &  \phantom{+}    &  \phantom{+}    & \phantom{+}     & \phantom{+}    \\
           \cdot &   \cdot&  &  &  &  \\
           \cdot & \cdot & \cdot &  &  &  \\
           + & \cdot & \cdot & \cdot &  &  \\
           + & \cdot & \cdot & \cdot& \cdot &  \\
           \cdot & \cdot & \cdot & \cdot & \cdot & \cdot \\
           \end{smallmatrix}\right\},
           \left\{\begin{smallmatrix}
             \cdot& \phantom{+}  &  \phantom{+}    &  \phantom{+}    & \phantom{+}     & \phantom{+}    \\
           \cdot &   \cdot&  &  &  &  \\
           \cdot & + & \cdot &  &  &  \\
           \cdot & \cdot & \cdot & \cdot &  &  \\
           + & \cdot & \cdot & \cdot& \cdot &  \\
           \cdot & \cdot & \cdot & \cdot & \cdot & \cdot \\
           \end{smallmatrix}\right\},
           \left\{\begin{smallmatrix}
            \cdot& \phantom{+}  &  \phantom{+}    &  \phantom{+}    & \phantom{+}     & \phantom{+}    \\
           \cdot &   \cdot&  &  &  &  \\
           \cdot & + & \cdot &  &  &  \\
           \cdot & + & \cdot & \cdot &  &  \\
           \cdot & \cdot & \cdot & \cdot& \cdot &  \\
           \cdot & \cdot & \cdot & \cdot & \cdot & \cdot \\
           \end{smallmatrix}\right\},
           \left\{\begin{smallmatrix}
             \cdot& \phantom{+}  &  \phantom{+}    &  \phantom{+}    & \phantom{+}     & \phantom{+}    \\
           \cdot &   \cdot&  &  &  &  \\
           + & + & \cdot &  &  &  \\
           \cdot & \cdot & \cdot & \cdot &  &  \\
           \cdot & \cdot & \cdot & \cdot& \cdot &  \\
           \cdot & \cdot & \cdot & \cdot & \cdot & \cdot \\
           \end{smallmatrix}\right\}
           \right\},
        \end{equation*}
        and  the facets of $\Sigma(z,Q)$ are $\{31,32,42\}$, $\{31,41,42\}$, $\{31,41,51\}$, and  $\{41,42,51\}$. The collection of all subsets of these four facets gives 
        $\Sigma(z,Q)$, which we can draw as  
        \begin{center}
            \begin{tikzpicture}[scale=2]
                \filldraw[fill=black!10!white] (0,-0.5) -- (0,0.5) -- (0.866,0) -- (0,-0.5);
                \filldraw[fill=black!10!white] (0,-0.5) -- (-0.289,0) -- (0,0.5);
                \draw (0,0.5) -- (0.289,0) -- (0,-0.5);
                \draw (0.289,0) -- (0.866,0);
                \draw (0,0.5) -- (0,-0.5);

                \filldraw[fill=black] (0,0.5) circle[radius=0.2mm];
                \filldraw[fill=black] (0,-0.5) circle[radius=0.2mm];
                \filldraw[fill=black] (0.866,0) circle[radius=0.2mm];
                \filldraw[fill=black] (0.289,0) circle[radius=0.2mm];
                \filldraw[fill=black] (-0.289,0) circle[radius=0.2mm];

                \draw (-0.289,0) node[left] {$\scriptstyle 32$};
                \draw (0,0.5) node[above] {$\scriptstyle 31$};
                \draw (0,-0.5) node[below] {$\scriptstyle 42$};
                \draw (0.27,-0.02) node[above right] {$\scriptstyle 41$};
                \draw (0,0.5) node[above] {$\scriptstyle 31$};
                \draw (0.866,0) node[right] {$\scriptstyle 51$};
            \end{tikzpicture}
        \end{center}
        The minimal non-faces of $\subword(z,Q)$ are $\{\{32,41\}, \{32,51\}, \{31,42,51\}\}$.  If $v \in \ltriang_n \setminus Q$ then $v$ appears in no fpf-involution pipe dream of $z$, and hence appears in every maximal face of $\subword(z,\ltriang_n)$.
        This implies that $\subword(z,\ltriang_n)$ and $\subword(z,Q)$ have the same minimal non-faces. We can therefore deduce from Theorem~\ref{thm:SR-ideal} that $\SSJ_z = (u_{32}u_{41}, u_{32}u_{51}, u_{31}u_{42}u_{51})$ in accordance with Example~\ref{ex:SSI-generators}. It is clear from the picture above that $\subword(z,Q)$ is homeomorphic to a closed ball as predicted by Lemma~\ref{lem:ball-or-sphere}.
\end{ex}

If $\Delta$ is a simplicial complex with vertex set $V$ and $F \subseteq V$, then the \emph{link} of $F$ in $\Delta$ is 
the simplicial complex $\link_F \Delta := \{E \in \Delta : E \cap F = \emptyset, E \cup F \in \Delta\}$ with vertex set $V \setminus F$.

\begin{lem} \label{lem:subword-link} For any $Q, D \subseteq \ltriang_n$ and $z \in \FPF_n(\I_n)$, we have $\link_D \subword(z, Q) = \subword(z, Q \setminus D)$. \end{lem}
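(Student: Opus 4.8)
The plan is to prove this by a direct comparison at the level of faces. Both simplicial complexes have vertex set $Q\setminus D$ (for $\link_D\subword(z,Q)$ this is built into the definition of link, and we may harmlessly assume $D\subseteq Q$ since vertices outside $Q$ play no role), so it suffices to fix an arbitrary subset $E\subseteq Q\setminus D$ and show that $E$ is a face of $\link_D\subword(z,Q)$ if and only if $E$ is a face of $\subword(z,Q\setminus D)$. Everything will come down to unwinding the two definitions together with one trivial set-theoretic identity.

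First I would unwind the link. By definition $E\in\link_D\subword(z,Q)$ means $E\cap D=\emptyset$, $E\in\subword(z,Q)$, and $E\cup D\in\subword(z,Q)$. Since $\subword(z,Q)$ is a simplicial complex it is closed under passage to subsets, and $E\subseteq E\cup D$, so the condition $E\in\subword(z,Q)$ is implied by $E\cup D\in\subword(z,Q)$ and may be dropped. As we have taken $E\subseteq Q\setminus D$, the conditions $E\cap D=\emptyset$ and $E\cup D\subseteq Q$ hold automatically, so $E\in\link_D\subword(z,Q)$ is equivalent to the single assertion that $Q\setminus(E\cup D)$ contains an fpf-involution pipe dream for $z$.

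Next comes the key (and only) observation: for $E\subseteq Q\setminus D$ one has the identity
\[
Q\setminus(E\cup D)=(Q\setminus D)\setminus E .
\]
Hence the condition "$Q\setminus(E\cup D)$ contains an fpf-involution pipe dream for $z$'' is literally the condition "$(Q\setminus D)\setminus E$ contains an fpf-involution pipe dream for $z$,'' which by Definition of $\subword$ is exactly the statement $E\in\subword(z,Q\setminus D)$. Combining this with the previous paragraph gives the claimed equality of face sets, hence of simplicial complexes.

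I do not expect any genuine obstacle here: the statement is essentially a bookkeeping exercise, and the substance is confined to the elementary identity $Q\setminus(E\cup D)=(Q\setminus D)\setminus E$. The only points that require a moment's attention are that the clause $E\in\subword(z,Q)$ in the definition of the link is redundant (because $\subword(z,Q)$ is downward closed), and that one should keep the ambient set $Q$ and the removed set $D$ carefully distinguished when matching up the two conditions; neither of these is difficult.
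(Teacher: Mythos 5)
Your proof is correct and takes essentially the same route as the paper: both arguments simply unwind the two definitions, observe that the clause $E \in \subword(z,Q)$ in the definition of the link is redundant, and reduce the claim to the set-theoretic identity $Q \setminus (E \cup D) = (Q \setminus D) \setminus E$. The paper's one-sentence proof is a more compressed version of exactly what you wrote.
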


\begin{proof}
    The conditions $F \cap D = \emptyset$ and $F \cup D \in \subword(z, Q)$ hold precisely when $F \subseteq Q \setminus D$ and $(Q \setminus D) \setminus F$ contains a pipe dream in $\RFP(z)$, which says exactly that $F \in \subword(z, Q \setminus D)$.
\end{proof}

 If $\Delta$ is a simplicial complex with vertex set $V$, 
 then the \emph{deletion} of $F \subseteq V$ is the simplicial complex $\del_F \Delta := \{E \in \Delta : E \cap F = \emptyset\}$ with vertex set $V \setminus F$. A complex $\Delta$ with non-empty vertex set $V$ is \emph{vertex-decomposable} if it is \emph{pure}
 (in the sense that all containment-maximal faces have the same dimension) and there is $v \in V$ such that $\del_{\{v\}} \Delta$ and $\link_{\{v\}} \Delta$ are vertex-decomposable; as a base case, the complexes $\{\emptyset\}$ and $\emptyset$ are both vertex-decomposable. 
 Results in \cite{BilleraProvan} show that any vertex-decomposable complex is 
 \emph{shellable} in the sense of \cite[Def. 2.4]{SubwordComplexes}.
 

The next two proofs will use one other concept:
an \emph{fpf-involution word} for $z \in \FPF_n(\I_n)$ is a minimal-length sequence of positive integers $a_1a_2\cdots a_l$
such that $z = s_{a_l}\cdots s_{a_2}s_{a_1} 1_\fpf s_{a_1} s_{a_2} \cdots s_{a_l}$.
Every such word has length $\ellfpf(z)$, and a subset $D \subseteq \ltriang_n$ 
belongs to $\FP(z)$ if and only if its pipe dream reading word is an fpf-involution word
for $z$ \cite[Thm. 3.12]{HMPdreams}. 

\begin{thm} \label{thm:shellable} 
Suppose $z \in \FPF_n(\I_n)$ and $Q \subseteq \ltriang_n$.
Then $\Delta(z, Q)$ is vertex-decomposable and therefore shellable. \end{thm}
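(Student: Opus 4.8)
The plan is to prove that $\Sigma(z,Q)$ is vertex-decomposable by induction on $|Q|$, closely following the strategy Knutson and Miller use for ordinary subword complexes in \cite[Thm. 2.5]{SubwordComplexes}. The base case $Q = \emptyset$ gives either the complex $\{\emptyset\}$ (if $\emptyset \in \FP(z)$, i.e.\ $z = 1_\fpf$ and $\ellfpf(z)=0$, though since $z \in \FPF_n(\I_n)$ this happens only for suitable small $z$) or the empty complex $\emptyset$ otherwise; both are vertex-decomposable by definition. So assume $Q \neq \emptyset$ and pick a vertex $v \in Q$; I will show that $\del_{\{v\}} \Sigma(z,Q)$ and $\link_{\{v\}} \Sigma(z,Q)$ are both vertex-decomposable and that $\Sigma(z,Q)$ is pure of the expected dimension.

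First I would identify the two sub-complexes explicitly. By Lemma~\ref{lem:subword-link}, $\link_{\{v\}} \Sigma(z,Q) = \Sigma(z, Q \setminus \{v\})$, which is vertex-decomposable by the inductive hypothesis. For the deletion, observe $\del_{\{v\}} \Sigma(z,Q) = \{ S \subseteq Q \setminus \{v\} : (Q \setminus S) \text{ contains an fpf-involution pipe dream for } z\}$; the difference from $\Sigma(z, Q\setminus\{v\})$ is that here the complement is taken inside $Q$ rather than $Q \setminus \{v\}$, so $Q \setminus S$ is allowed to use $v$. Write $v = (i,j)$ and let $a = i+j-1$ be the associated simple-reflection label. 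The key combinatorial input is the characterization \cite[Thm. 3.12]{HMPdreams}: $D \in \FP(z)$ iff $\word(D)$ is an fpf-involution word for $z$. Using this, one distinguishes whether there is an fpf-involution pipe dream for $z$ contained in $Q \setminus \{v\}$ whose reading word, with the letter $a$ inserted in the position corresponding to $v$, still reduces to $z$ — equivalently whether $s_a$ acts trivially in the appropriate demazure-product sense. In the case where deleting $v$ does not change which element is represented (the ``$s_a$ is absorbed'' case), $\del_{\{v\}}\Sigma(z,Q) = \Sigma(z, Q\setminus\{v\})$ and we are done by induction; in the complementary case, $\del_{\{v\}}\Sigma(z,Q) = \Sigma(z', Q\setminus\{v\})$ for the element $z' = s_a z s_a$ (or $z'=z$ with an adjusted pipe-dream count), which is again vertex-decomposable by induction on a smaller complex. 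Purity follows since by Proposition~\ref{fp-00-prop} every facet of $\Sigma(z,Q)$ has the form $Q \setminus D$ with $|D| = \ellfpf(z)$ fixed, hence all facets have dimension $|Q| - \ellfpf(z) - 1$.

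The main obstacle will be the bookkeeping in the deletion step: making precise the dichotomy for the vertex $v=(i,j)$ in terms of fpf-involution words and verifying that in each branch the deletion is genuinely a subword complex $\Sigma(z'', Q \setminus \{v\})$ for an appropriate $z''$ (and checking $z'' \in \FPF_n(\I_n)$ so the inductive hypothesis applies). This is the skew-symmetric analogue of the ``Demazure product'' argument in \cite{SubwordComplexes}, and the subtlety is that insertion/deletion of a letter in an fpf-involution word corresponds to conjugation $z \mapsto s_a z s_a$ rather than ordinary multiplication, so one must track the $\ellfpf$-length change via property (P1) of Lemma~\ref{ss-bruhat-lem} and confirm the pipe-dream reading-word identity. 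Once the two sub-complexes are recognized as smaller subword complexes (or one of them is empty, the remaining base case), vertex-decomposability — and hence shellability via \cite{BilleraProvan} — follows immediately by induction.
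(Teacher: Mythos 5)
Your overall strategy — induction on $|Q|$ via deletion and link, using Lemma~\ref{lem:subword-link} for the link and reducing the deletion to a smaller fpf-subword complex — is the same as the paper's. However, there is a concrete gap in the deletion step: you write ``pick a vertex $v \in Q$'' and then argue as if $\del_{\{v\}} \subword(z,Q) = \subword(z'', Q\setminus\{v\})$ for a suitable $z''$. This identification is \emph{not} true for an arbitrary vertex. The reason it works in the Knutson--Miller argument and in the paper is that the vertex is chosen to be the \emph{terminal} element of $Q$ in the pipe-dream reading order (the paper orders $Q = \{(i_1,j_1)\prec\cdots\prec(i_l,j_l)\}$ so that $(i,-j)$ increases lexicographically, and sets $\square=(i_l,j_l)$). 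Only then does removing $v$ correspond to peeling the last letter off the reading word of $Q$, so that the Demazure product relationship $\deltafpf(w\cdot a) = \deltafpf(w)\ast s_a$ applies and the deletion becomes $\subword(z,Q')$ or $\subword(s_q z s_q, Q')$ depending on whether $\ellfpf(s_q z s_q)$ exceeds $\ellfpf(z)$. If $v$ sits in the middle of the reading word, removing it does not correspond to a single conjugation $z\mapsto s_a z s_a$, and the deletion complex need not be a subword complex of any $z''$ at all. Your phrase ``with the letter $a$ inserted in the position corresponding to $v$'' hints at awareness of this issue, but the proposal never commits to the specific choice that makes the argument go through.

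Once you fix $v$ to be the terminal vertex, the rest of your outline is correct: the two cases you sketch match the paper's cases $\ellfpf(s_q z s_q) > \ellfpf(z)$ (no pipe dream uses $v$, so deletion is $\subword(z,Q')$) and $\ellfpf(s_q z s_q) \le \ellfpf(z)$ (deletion is $\subword(s_q z s_q,Q')$, using \cite[Lem.\ 4.31]{HMPdreams} to handle the subcase where $\square\notin D$), and purity follows from Proposition~\ref{fp-00-prop} exactly as you say. I would also flag that your parenthetical ``or $z'=z$ with an adjusted pipe-dream count'' is vague — the case $s_qzs_q=z$ is simply the equality subcase and needs no adjustment, since $\RFP(z)=\RFP(s_qzs_q)$ there.
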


\begin{proof}
Our argument is similar to the proof of \cite[Thm. 2.5]{SubwordComplexes}.
Lemma~\ref{lem:subword-link} shows that $\link_{\{\square\}} \subword(z,Q) = \subword(z,Q \setminus \{\square\})$ for any $\square \in Q$, and we will check that also $\del_{\{\square\}} \subword(z,Q) = \subword(z', Q\setminus\{\square\})$ for some $z'\leq z$. By induction on the size of $Q$, this will prove that $\subword(z,Q)$ is vertex-decomposable.

To this end, we  number the elements of $Q = \{(i_1,j_1) \prec (i_2,j_2) \prec\cdots \prec (i_l, j_l)\}$ in the order $\dots  \prec (1,3) \prec (1,2) \prec (1,1) \prec \cdots \prec (2,3) \prec (2,2) \prec (2,1) \prec \cdots$ that makes $(i,-j)$ increase lexicographically. Set $\square = (i_l, j_l)$, $q = i_l+j_l-1$, and $Q' = Q \setminus \{\square\}$. There are two cases:
    \begin{itemize}
        \item Assume $\ellfpf(s_q zs_q) > \ellfpf(z)$. 
        If there exists a pipe dream $D \in \RFP(z)$ with $D \subseteq Q$ and $\square \in D$, then the pipe dream reading word $\word(D)$ ends in $q$, which means $z$ has an fpf-involution word ending with $q$. This contradicts our hypothesis for this case, so $\square$ is not contained in any $D \in \RFP(z)$ with $D \subseteq Q$. We deduce  that 
        $\del_{\{\square\}} \subword(z,Q) =  \subword(z, Q')$.

         \item Assume instead that $\ell(s_q z s_q) \leq \ell(z)$. 
         We wish to show that $\del_{\{\square\}} \subword(z,Q) = \subword(s_q z s_q, Q')$. 
        
         Suppose $E \subseteq Q'$ is a set contained in $\del_{\{\square\}} \subword(z,Q)$. Then $D \subseteq Q \setminus E$ for some $D \in \RFP(z)$, and we have  $Q' \setminus E\supseteq D' := D \setminus \{\square\}$. 
          If $\square \in D$, then $z = s_q y s_q$ where $\ellfpf(y) = \ellfpf(z)-1$. 
          In this case we cannot have $s_q z s_q = z$ so it holds that $D' \in \RFP(s_q z s_q) = \RFP(y)$.
          On the other hand, if $\square \notin D$, then $D = D'$
          contains an element of $\RFP(s_q z s_q)$ by \cite[Lem. 4.31]{HMPdreams}
          since $s_q z s_q \leq z$ in Bruhat order.
          %
         Either way, $Q' \setminus E$ contains an element of $\RFP(s_q z s_q)$, so $E \in \subword(s_q z s_q, Q')$ and we conclude that $\del_{\{\square\}} \subword(z,Q) \subseteq \subword(s_q z s_q, Q')$.
         
         Conversely, suppose $E \in \subword(s_q z s_q, Q')$, so that $D \subseteq Q' \setminus E$ for some $D \in \RFP(s_q z s_q)$. If $\ellfpf(s_q z s_q) < \ellfpf(z)$, then $D \sqcup \{\square\} \in \RFP(z)$ and $D \sqcup \{\square\} \subseteq Q \setminus E$, while if $\ellfpf(s_qzs_q) = \ellfpf(z)$ then $s_q z s_q = z$ and $D \in \RFP(z)$. Thus $Q \setminus E$ contains an element of $\RFP(z)$ and $E \in \del_{\{\square\}} \subword(z, Q)$, so we have $\del_{\{\square\}} \subword(z,Q) = \subword(s_q z s_q, Q')$.
    \end{itemize}
  Thus $\del_{\{\square\}} \subword(z,Q) = \subword(z', Q')$ for $Q ' = Q\setminus\{\square\}$ and some $z'\leq z$.
    As noted above,
    this lets us conclude by induction that $\subword(z,Q)$ is vertex-decomposable and therefore shellable via  \cite{BilleraProvan}.
\end{proof}

\begin{rem} Our definition of ``vertex-decomposable'' differs slightly from that in \cite{BilleraProvan} and \cite{SubwordComplexes}: first, we include not only the complex $\{\emptyset\}$ but also $\emptyset$; second, we allow for the possibility that the vertex $v$ for which $\del_{\{v\}} \Delta$ and $\link_{\{v\}} \Delta$ are vertex-decomposable is not a face of $\Delta$. Except for the inclusion of the complex $\emptyset$, the definitions are equivalent, because if $\{v\} \notin \Delta$, then $\del_{\{v\}} \Delta = \Delta$ and $\link_{\{v\}} \Delta = \emptyset$. 
A close reading of the 
 proof of \cite[Thm. 2.5]{SubwordComplexes} 
 suggests that our convention may in fact be what was originally intended in \cite{SubwordComplexes}, in any case.
 %
  \end{rem}

If $z \in \{1\} \sqcup \Ifpf_\infty$ and $s = s_i = (i,i+1) \in S_\infty$, then we define 
\[ z \ast s = \begin{cases} 1 & \text{if $(i,i+1)$ is a cycle of $z$} \\
z & \text{if }z=1\text{ or }z(i) > z(i+1) \\
szs&\text{otherwise}.
\end{cases}
\]
Note that if $z \in \Ifpf_\infty$ then either $z \ast s \in \Ifpf_\infty$ or $z\ast s = 1$, but we always have $1 \ast s = 1$.
The operation $\ast$ extends to a right action of the \emph{$0$-Hecke monoid} of $S_\infty$ but not to a group action. If $i_1  i_2\cdots i_l$ is a word, define $\deltafpf(i_1 i_2 \cdots i_l) = (\cdots (( 1_{\fpf} \ast s_{i_1}) \ast s_{i_2}) \cdots) \ast s_{i_l}$.

\begin{defn}
An \emph{extended (fpf-involution) pipe dream} for $z \in \Ifpf_\infty$ is a subset $D \subseteq  \ltriang$
whose pipe dream reading word $i_1i_2\cdots i_l$ satisfies $z = \deltafpf(i_1 i_2 \cdots i_l)$.
Let $\NFP(z)$ be the set of all extended pipe dreams for $z$.
\end{defn}

 The set  $\NFP(z)$ is called $\mathsf{InvDreams}(z)$ in \cite{MacdonaldNote}.
Each $D \in \NFP(z)$ has $|D|\geq \ellfpf(z)$, with equality if and only if  
$D \in \FP(z)$  \cite[Thm. 3.12]{HMPdreams}.
The containment $\FP(z) \subset \NFP(z)$ is strict unless $z =1_\fpf$.

In the next lemma, we consider the complex $\{\emptyset\}$ to be a $(-1)$-dimensional sphere and $\emptyset$ to be a ball. These conventions may appear somewhat recherch\'e. Ultimately, however, all we need out of the lemma are the reduced Betti numbers $\dim \tilde{H}_i(\subword(z,Q); \KK)$ as computed via simplicial homology, and one checks that these are $\delta_{i,-1}$ and $0$ when $\subword(z,Q)$ is $\{\emptyset\}$ or $\emptyset$, respectively.

\begin{lem} \label{lem:ball-or-sphere} 
Suppose $z \in \FPF_n(\I_n)$ and $Q \subseteq \ltriang_n$.
Then $\subword(z, Q)$ is homeomorphic to a sphere of dimension $|Q|-\ellfpf(z)-1$
when $Q \in \NFP(z)$ and to a closed ball when $Q \notin \NFP(z)$. \end{lem}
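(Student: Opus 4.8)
The plan is to prove this by induction on $|Q|$, mirroring the vertex-decomposability argument in the proof of Theorem~\ref{thm:shellable} and following the template of \cite[Thm.~3.7]{SubwordComplexes}. The base case is $Q = \emptyset$: then $\subword(z,Q)$ is $\{\emptyset\}$ if $\ellfpf(z)=0$, i.e. $z = 1_\fpf$, in which case $\emptyset \in \FP(z) \subseteq \NFP(z)$ and $\{\emptyset\}$ is the $(-1)$-sphere of dimension $|Q|-\ellfpf(z)-1 = -1$; otherwise $\FP(z)=\emptyset$, so $\subword(z,\emptyset) = \emptyset$, and indeed $\emptyset = Q \notin \NFP(z)$ since every element of $\NFP(z)$ has size $\geq \ellfpf(z) > 0$. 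This matches the conventions in the lemma statement.

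For the inductive step, pick $\square = (i_l,j_l)$ to be the $\prec$-largest element of $Q$ (in the ordering used in the proof of Theorem~\ref{thm:shellable}), set $Q' = Q\setminus\{\square\}$ and $q = i_l+j_l-1$, and recall from Lemma~\ref{lem:subword-link} that $\link_{\{\square\}}\subword(z,Q) = \subword(z,Q')$. The proof of Theorem~\ref{thm:shellable} identifies $\del_{\{\square\}}\subword(z,Q)$: it equals $\subword(z,Q')$ when $\ellfpf(s_qzs_q) > \ellfpf(z)$, and $\subword(s_qzs_q,Q')$ when $\ellfpf(s_qzs_q)\leq \ellfpf(z)$. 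One then uses the standard fact that if $\Delta = \del_{\{\square\}}\Delta \cup (\square \ast \link_{\{\square\}}\Delta)$ is a union of a ball/sphere with the cone on another complex glued along the link, one can determine the homeomorphism type of $\Delta$ from those of the pieces: in the cases where $\link$ is a ball, $\Delta$ is a ball iff $\del$ is a ball, a sphere iff $\del$ is a sphere of the appropriate dimension with the link as a face on the boundary; when $\link$ is a sphere (so $\square$ is in every facet), $\Delta$ is the cone on it, hence a ball, etc. The bookkeeping is exactly as in \cite[proof of Thm.~3.7]{SubwordComplexes}, but one must carefully track which of the three situations ($\ellfpf(s_qzs_q) > \ellfpf(z)$, $=$, or $<$) one is in, and whether $Q \in \NFP(z)$, using that $Q \in \NFP(z)$ iff $\deltafpf(\word(Q)) = z$ and that appending $q = i_l+j_l-1$ to $\word(Q')$ gives $\word(Q)$, so $\deltafpf(\word(Q)) = \deltafpf(\word(Q')) \ast s_q$. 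This last identity lets one translate "$Q\in\NFP(z)$" into statements about $Q'$ relative to $z$ or $s_qzs_q$, matching the inductive hypothesis applied to $Q'$.

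Concretely: if $\ellfpf(s_qzs_q) > \ellfpf(z)$, then $z \ast s_q = z$ (since $z(q) > z(q+1)$ fails would give a length drop, so actually we are in the case where $q$ is not a descent; careful: $z\ast s_q = s_qzs_q$ here, so $z \ast s_q \neq z$ — need to recheck, but the point is $Q\in\NFP(z)$ forces the last letter $q$ to act trivially or not, and one checks $Q \in \NFP(z)$ iff $Q' \in \NFP(z)$ and $\square$ is redundant, which cannot happen when $q$ genuinely changes $z$; so in this subcase $Q \notin \NFP(z)$ unless $Q' \notin \NFP(z)$ too, and $\subword(z,Q) = \del_{\{\square\}}\subword(z,Q) = \subword(z,Q')$ is, by induction, a ball since $Q'\notin\NFP(z)$ — wait, $Q'$ could be in $\NFP(z)$). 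The honest statement is: in this subcase $\subword(z,Q) = \subword(z,Q')$ directly (the link contributes nothing new because $\square$ is in no pipe dream), so I just invoke the inductive hypothesis for $\subword(z,Q')$ after checking $Q\in\NFP(z) \iff Q'\in\NFP(z)$, which follows since $\deltafpf(\word(Q)) = \deltafpf(\word(Q'))\ast s_q$ and here $\ast s_q$ is a genuine length-increasing move that cannot return $z$ from $\deltafpf(\word(Q'))$ unless $\deltafpf(\word(Q')) = z$ already and $z\ast s_q = z$, contradicting $\ellfpf(s_qzs_q)>\ellfpf(z)$; so actually $Q\notin\NFP(z)$ here, consistent with $\subword(z,Q)=\subword(z,Q')=\emptyset$ or a ball. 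In the subcase $\ellfpf(s_qzs_q) \leq \ellfpf(z)$, write $z' = s_qzs_q \leq z$; then $\del_{\{\square\}}\subword(z,Q) = \subword(z',Q')$, $\link_{\{\square\}}\subword(z,Q) = \subword(z,Q')$, and one glues. The main obstacle will be this final gluing/homeomorphism-type bookkeeping: verifying in each of the (roughly) four combined cases that the union of the deletion and the cone-over-link is a ball or sphere of the predicted type, and that the predicted type agrees with the $Q \in \NFP(z)$ dichotomy — this requires carefully relating $|Q'| - \ellfpf(z') - 1$ and $|Q'|-\ellfpf(z)-1$ via $\ellfpf(z') = \ellfpf(z) \pm 1$ or $\ellfpf(z')=\ellfpf(z)$, and checking the link sits as a full-dimensional face or boundary sphere as needed. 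I expect this dimension/containment juggling, rather than any conceptual difficulty, to be where the proof needs the most care; everything else is a routine adaptation of \cite{SubwordComplexes}.
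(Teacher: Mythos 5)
Your approach is genuinely different from the paper's. The paper does not run a fresh induction at all: it invokes the already-established shellability of $\subword(z,Q)$ (Theorem~\ref{thm:shellable}), then cites the BLSWZ criterion for a shellable complex to be a ball or sphere in terms of how many facets each codimension-one face lies in. The entire content of the paper's proof is then a direct combinatorial check of three conditions: each codim-one face is in $\leq 2$ facets (a reduced-word count), exactly two when $Q\in\NFP(z)$ (citing \cite[Lem.~4.21]{HMP3}), and at least one codim-one face in a unique facet when $Q\notin\NFP(z)$ (the bulk of the argument, via a careful choice of $D'$). You instead propose the inductive link/deletion gluing route of the original subword-complex paper, which re-derives the topology from scratch rather than piggy-backing on shellability.

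That route can be made to work, but as written your sketch has a concrete error in the case $\ellfpf(s_q z s_q) > \ellfpf(z)$: you assert $\subword(z,Q) = \del_{\{\square\}}\subword(z,Q) = \subword(z,Q')$ and conclude by applying induction to $Q'$. This is wrong. The deletion equals $\subword(z,Q')$, but the complex $\subword(z,Q)$ is not its deletion unless $\square$ lies in no face, i.e.\ unless $Q'$ contains no pipe dream for $z$. In general $\square$ is a cone vertex here (its link is also $\subword(z,Q')$), so $\subword(z,Q) = \square * \subword(z,Q')$, which is a ball whenever $\subword(z,Q')$ is nonempty -- in particular a ball even when $Q'\in\NFP(z)$ and $\subword(z,Q')$ is a sphere, which you appear to worry about but do not resolve. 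A correct treatment must distinguish these subcases. Beyond that, the gluing step that you defer as ``bookkeeping'' is the heart of the proof: one must determine in each case whether the link sits in the boundary of the deletion as a proper subcomplex (yielding a ball) or as the entire boundary (yielding a sphere). That verification is essentially the same combinatorial work the paper does with codimension-one faces, just reorganized, so your route is not shorter -- it trades one BLSWZ citation plus a direct check for a topological gluing lemma plus the same underlying case analysis. If you carry it through, you should also confirm that the dimension tracking ($|Q'|-\ellfpf(z')-1$ versus $|Q'|-\ellfpf(z)-1$) forces the link to land in the boundary of the deletion where claimed, and handle the degenerate subcase $s_q z s_q = z$ explicitly.
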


\begin{proof} 
The facets of $\subword(z,Q)$ are the sets $Q \setminus D$ for all $D \in \RFP(z)$ such that $D \subseteq Q$.
Let  $F$ be a codimension one face of $\subword(z,Q)$, that is, a face contained in
a facet of size $|F|+1$. Then 
$Q \setminus F$ is a set of size $\ellfpf(z)+1$ from which one (not necessarily unique) cell can be removed to form a pipe dream in $\FP(z)$.

As explained in \cite[\S 3]{KnutsonMiller}, 
since $\subword(z,Q)$  is shellable by Theorem~\ref{thm:shellable},
it suffices by \cite[Prop. 4.7.22]{BLSWZ99} to check
(a) that $F$ is contained in at most two facets of $\subword(z,Q)$,
 (b) that
$F$ is contained in exactly two facets of $\subword(z,Q)$ if $Q \in \NFP(z)$,
and (c) if $ Q \notin \NFP(z)$ then there is some choice of $F$  that is
contained in a unique facet.

To prove (a), we observe that if $F$ were contained in three distinct facets, then $Q \setminus F$ would contain three distinct pipe dreams in $ \RFP(z)$.
This would mean that
there are three distinct letters that can be deleted from the pipe dream reading word of $F$, which has length $\ellfpf(z)+1$, to form an fpf-involution word for $z$. It is a basic exercise to show that this is impossible; the argument is the same as what is needed to prove that if $w \in S_\infty$, then a word of length $\ell(w)+1$ may contain at most two distinct subwords that are reduced words for $w$.

Claim (b) follows immediately from \cite[Lem. 4.21]{HMP3}, 
which asserts that if 
$Q \in \NFP(z)$ then there are exactly two cells $c \in Q \setminus F$ with $Q \setminus F \setminus \{c\} \in \RFP(z)$. 
Finally, to prove (c), suppose $Q \notin \NFP(z)$. It remains to show that some  $F$ is contained in exactly one facet.

 We have defined the complex $\emptyset$ to be a ball, so we may assume that $\subword(z,Q)$ is non-empty and that $Q$ contains some element of $\FP(z)$. Take $D $ to be maximal under inclusion such that $D\subseteq Q$ and $D \in \NFP(z)$. 
 Since $D \neq Q$ as $Q \notin \NFP(z)$, there exists $D \subseteq D' \subseteq Q$ with $|D' \setminus D| = 1$. By maximality, we have $ z\neq \deltafpf(\word(D')) \in \{1\} \sqcup \Ifpf_\infty$, so there are two possibilities.
         \begin{enumerate}[(i)]
            \item Suppose $ \deltafpf(\word(D'))\neq 1$. Since $\word(D')$ contains an fpf-involution word for $z$ by hypothesis, the subword criterion for the Bruhat order on $S_\infty$ and Lemma~\ref{ss-bruhat-lem} imply that $z< \deltafpf(\word(D')) \in \Ifpf_\infty$. We therefore can choose $z' \in \Ifpf_\infty$ with $\deltafpf(\word(D')) \geq z' > z$ and $\ellfpf(z') = \ell(z) + 1$. Then $D'$ contains some $E \in \RFP(z')$ by \cite[Lem. 4.31]{HMPdreams}, and we 
            take $F = Q\setminus E$. This is a codimension one face
            and we claim that it is contained in a unique facet.
            Indeed, if $F$ were contained in two facets of $\subword(z,Q)$
            then $\word(E)$, which is an fpf-involution word for $z'$,
            would contain two distinct subwords that were fpf-involution words for $z$.
            But this is impossible; the argument is  the same as the proof that if $w,w' \in S_\infty$ and $\ell(w') = \ell(w)+1$, then a reduced word for $w'$ may contain at most one subword that is a reduced word for $w$.
            

            \item Suppose $\deltafpf(\word(D')) = 1$. This only happens if we can write $\word(D') = aqb$ where $a$ and $b$ are subwords with $\word(D) = ab$, $q$ is a positive integer,  and the subset $A \subseteq D'$ contributing the subword $a$ belongs to $\NFP(y)$ for some $y \in \Ifpf_\infty$ with $y(q) = q+1$. Let $\square$ be the element of $D'$ corresponding to the letter $q$ in $\word(D') = aqb$. Since $D \in \NFP(z)$, there is $E \subseteq D$ with $E \in \RFP(z)$, and we can further assume that $E \cap A \in \RFP(y)$. 
            
            The set $F = Q \setminus (E \sqcup \{\square\})$ is a codimension one face contained in  $Q \setminus E$, and we claim it is contained in no other facet. Suppose for the sake of contradiction that $E \sqcup \{\square\}$ contains some $E' \in \RFP(z)$ distinct from $E$. Writing $\word(E \sqcup \{\square\}) = a_1 \cdots a_k q b_1 \cdots b_l$ and $\word(E) = a_1 \cdots a_k b_1 \cdots b_l$, where $\word(E \cap A) = a_1 \cdots a_k$, we have either $\word(E') = a_1 \cdots a_k q b_1 \cdots \widehat{b}_i \cdots b_l$ or $\word(E') = a_1 \cdots \widehat{a}_i \cdots a_k q b_1 \cdots b_l$ for some $i$.  The first case
            implies 
            \[   
            \begin{aligned}
            z &=
            s_{b_l} \cdots  s_{b_1} s_{a_k} \cdots  s_{a_1} 1_{\fpf}  s_{a_1} \cdots s_{a_k}  s_{b_1}  \cdots s_{b_l}
            \\ &
           =  s_{b_l} \cdots \widehat{s}_{b_i} \cdots s_{b_1} s_{q} s_{a_k} \cdots  s_{a_1} 1_{\fpf}  s_{a_1} \cdots s_{a_k} s_q s_{b_1} \cdots \widehat{s}_{b_i} \cdots s_{b_l}
            \end{aligned}
            \]
          but if this holds then it is easy to deduce from  $y=s_{a_k} \cdots  s_{a_1} 1_{\fpf}  s_{a_1} \cdots s_{a_k}$ that
         \[    z = s_{b_l} \cdots \widehat{s}_{b_i} \cdots s_{b_1} s_{a_k} \cdots  s_{a_1} 1_{\fpf}  s_{a_1} \cdots s_{a_k} s_{b_1} \cdots \widehat{s}_{b_i} \cdots s_{b_l}
         \]
         which contradicts the fact that $a_1 \cdots a_k  b_1 \cdots b_l$ is an fpf-involution word for $z$.
     
    Similarly, in the second case we have
            \begin{align*}
                z &= s_{b_l} \cdots s_{b_1} s_{a_k} \cdots s_{a_1} 1_{\fpf} s_{a_1} \cdots s_{a_k} s_{b_1} \cdots s_{b_l}\\
                &= s_{b_l} \cdots s_{b_1} s_q s_{a_k} \cdots \widehat{s}_{a_i} \cdots s_{a_1} 1_{\fpf} s_{a_1} \cdots \widehat{s}_{a_i} \cdots s_{a_k} s_q s_{b_1} \cdots s_{b_l}.
            \end{align*} 
            Canceling the factors $s_{b_j}$ gives 
            \begin{equation*}
            y=
             s_{a_k}\cdots s_{a_1} 1_{\fpf} s_{a_1}\cdots s_{a_k} =
                s_q s_{a_k} \cdots \widehat{s}_{a_i} \cdots s_{a_1} 1_{\fpf} s_{a_1} \cdots \widehat{s}_{a_i} \cdots s_{a_k} s_q .
            \end{equation*}
            As $a_1\cdots a_k$ is an fpf-involution word for $y$, this implies that 
            $a_1 \cdots \widehat{a_i} \cdots a_k q$ is another fpf-involution for $y$.
            But this is also impossible since $s_q y s_q = y$.
        \end{enumerate}
In each case, we have identified a codimension one face 
that is contained in a unique facet of $\subword(z,Q)$.
The verifies claim (c) and completes our proof of the lemma.
\end{proof}


We conclude with this application:

\begin{thm}[{\cite[Thm. 4.5]{MacdonaldNote}}]\label{4.5-thm} Let $z \in \FPF_n(\I_n)$. Then
    \begin{equation*}
        \Gfpf_z = \sum_{D \in \NFP(z)} (-1)^{|D|-\ellfpf(z)} \prod_{(i,j) \in D} (1 - a_i a_j).
    \end{equation*}
\end{thm}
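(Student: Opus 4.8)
The plan is to derive the Grothendieck polynomial formula by combining the primary decomposition $\SSJ_z = \bigcap_{D \in \RFP(z)} \J{D}$ from Theorem~\ref{thm:primary-decomposition-ss}, the invariance $\K(I(\SSX_z)) = \K(\init(I(\SSX_z))) = \K(\SSJ_z)$ from Lemma~\ref{lem:init-K} and Theorem~\ref{ss-main-thm}, and the combinatorial-topological input of Lemma~\ref{lem:ball-or-sphere} and Theorem~\ref{thm:shellable}. The starting point is the naive inclusion-exclusion formula \eqref{eq:bad-K-formula}. Rather than attacking that sum directly, I would reorganize it using the structure of the fpf-subword complex $\Delta := \subword(z,\ltriang_n)$, whose Stanley-Reisner ideal is $\SSJ_z$ by Theorem~\ref{thm:SR-ideal}. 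The key is Hochster's formula for the $\ZZ$-graded (here $G$-graded) Hilbert series of a Stanley-Reisner ring in terms of reduced homology of induced subcomplexes, together with the fact that each relevant subcomplex $\link_D \Delta = \subword(z, \ltriang_n \setminus D)$ (Lemma~\ref{lem:subword-link}) is either a ball or a sphere.

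Concretely, the steps would be: (1) Write $R = \KK[\SSM_n]$ with its positive $G$-grading $\deg(u_{ij}) = a_i a_j$, and recall $\Gfpf_z = \K(\SSJ_z) = \Hilb(R/\SSJ_z) \prod_{(i,j)\in\ltriang_n}(1 - a_i a_j)$. (2) Apply Hochster's formula: for a simplicial complex $\Delta$ on vertex set $V = \ltriang_n$,
\[
\Hilb(R/\SSJ_z) = \sum_{W \subseteq V} \dim_\KK \widetilde H_{|W|-|\text{something}|}(\Delta|_W;\KK)\cdot(\text{monomial factor}),
\]
or more usefully in the multigraded square-free form, $\K(\SSJ_z) = \sum_{W \subseteq V}\left(\sum_i (-1)^i \dim \widetilde H_{i-1}(\Delta|_W;\KK)\right)\prod_{(i,j)\in W}(-a_ia_j)\prod_{(i,j)\in V\setminus W}(1 - \text{(nothing)})$; I would state the precise version from \cite{MillerSturmfels} (Corollary 1.40 / Theorem 13.37 there) and not reprove it. (3) Observe that for a subword complex, $\Delta|_W$ is not literally $\subword(z, W)$ but its deletion complement; however the complexes $\subword(z, W')$ for $W' \subseteq V$ are exactly the links of faces, and the reduced Euler characteristic of the induced subcomplex on $V \setminus D$ relates to whether $D$ extends to an element of $\NFP(z)$. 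Using Lemma~\ref{lem:ball-or-sphere}: $\widetilde\chi(\subword(z,Q)) = (-1)^{|Q|-\ellfpf(z)-1}$ if $Q \in \NFP(z)$ and $0$ otherwise (a ball has vanishing reduced homology, a sphere of dimension $d$ contributes $(-1)^{d}$). (4) Substitute this Euler characteristic computation into the Hochster sum: the only surviving terms $W$ are those for which $V \setminus W$ (suitably interpreted) lies in $\NFP(z)$, and the sign $(-1)^{|D|-\ellfpf(z)}$ and the product $\prod_{(i,j)\in D}(1-a_ia_j)$ emerge after collecting the monomial factors and absorbing the $\prod(1-a_ia_j)$ from the definition of the $K$-polynomial. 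This is exactly the Knutson-Miller argument in \cite[\S4]{SubwordComplexes} for ordinary Grothendieck polynomials, transplanted to the skew-symmetric setting, and I would present it as such.

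The main obstacle is bookkeeping the dictionary between induced subcomplexes $\Delta|_W$ and the subword complexes $\subword(z, Q)$, and tracking the (co)homological degree shifts and signs correctly — in particular the $\pm 1$ conventions for $\{\emptyset\}$ as a $(-1)$-sphere and $\emptyset$ as a ball, which the excerpt has deliberately set up in the remark preceding Lemma~\ref{lem:ball-or-sphere}. One must be careful that the induced subcomplex appearing in Hochster's formula on vertex set $\ltriang_n \setminus D$ is indeed $\subword(z, \ltriang_n\setminus D)$ (this follows from Lemma~\ref{lem:subword-link} applied with the deletion, since deleting $D$ from $\subword(z,\ltriang_n)$ gives $\subword(z', \ltriang_n \setminus D)$ only for a modified $z'$ — so I would instead use that $\subword(z,Q)$ for arbitrary $Q \subseteq \ltriang_n$ already has the form needed, via the observation in Example~\ref{ex:subword-ss} that restricting the vertex set does not change minimal non-faces). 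A secondary subtlety is confirming that $\K(\SSJ_z)$ computed combinatorially agrees with $\Gfpf_z = \K(I(\SSX_z))$, but this is immediate from Lemma~\ref{lem:init-K} and Theorem~\ref{ss-main-thm}. Once the Hochster substitution is set up correctly, the identification of surviving terms with $\NFP(z)$ and the sign $(-1)^{|D|-\ellfpf(z)}$ is a direct consequence of Lemma~\ref{lem:ball-or-sphere}, and the proof concludes.
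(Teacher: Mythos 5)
Your proposal has the right skeleton and the same ultimate ingredients as the paper --- the chain $\Gfpf_z = \K(I(\SSX_z)) = \K(\init I(\SSX_z)) = \K(\SSJ_z)$ (Theorem~\ref{ss-main-thm} plus Lemma~\ref{lem:init-K}), the Stanley-Reisner identification (Theorem~\ref{thm:SR-ideal}), and the ball/sphere dichotomy (Lemma~\ref{lem:ball-or-sphere}) to extract the reduced Euler characteristics. But there is a genuine gap in the middle, and it is exactly where you flag uncertainty.

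You propose to invoke Hochster's formula, which expresses $\K(\SSJ_z)$ via the reduced homology of \emph{induced subcomplexes} $\Delta|_W$ of $\Delta := \subword(z,\ltriang_n)$. You then correctly observe that $\Delta|_{\ltriang_n\setminus D}$ is a deletion $\del_D\Delta$, not a link, and that it equals $\subword(z', \ltriang_n\setminus D)$ for a \emph{different} involution $z' \leq z$ depending on $D$ (this is what the proof of Theorem~\ref{thm:shellable} shows). The ball/sphere result as stated in Lemma~\ref{lem:ball-or-sphere} applies to $\subword(z,Q)$, indexed by $\NFP(z)$, and does not directly control $\subword(z',Q)$ for a shifting $z'$. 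Your proposed patch --- citing the observation in Example~\ref{ex:subword-ss} that restricting the vertex set does not change minimal non-faces --- is correct only when every deleted vertex lies outside all fpf-involution pipe dreams of $z$, which is a very special circumstance and not what occurs for a generic $W$ in the Hochster sum. So this step would fail as written.

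The paper's proof avoids this obstruction by never touching induced subcomplexes. It cites the discussion in \cite[\S4]{KnutsonMiller} for the \emph{link-based} identity
\[
\K(\SSJ_z) \;=\; \sum_{D \subseteq \ltriang_n} -\tilde\chi\bigl(\link_{D^c}\subword(z,\ltriang_n)\bigr)\prod_{(i,j)\in D}(1-a_ia_j),
\]
which follows from the elementary face-sum formula for the Stanley-Reisner Hilbert series (no Hochster machinery needed). The decisive simplification is then Lemma~\ref{lem:subword-link}: $\link_{D^c}\subword(z,\ltriang_n) = \subword(z,D)$ \emph{with the same} $z$. This is precisely what Lemma~\ref{lem:ball-or-sphere} applies to, so the Euler characteristic computation goes through verbatim: $-\tilde\chi(\subword(z,D))$ equals $(-1)^{|D|-\ellfpf(z)}$ if $D \in \NFP(z)$ and $0$ otherwise. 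If you replace your Hochster/restriction step with this link-based identity, your argument closes up and becomes the paper's proof.
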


\begin{rem}
 Our first proof of this result  in \cite{MacdonaldNote}
 was by a direct algebraic method,  proceeding from an explicit generating function for $\Gfpf_z$ similar to the  Billey-Jockusch-Stanley formula \cite[Thm. 1.1]{BJS}.  
The identity given here is the specialization of \cite[Thm. 4.5]{MacdonaldNote}
with $x_i = 1-a_i$ and $\beta=-1$.
We recover the original theorem by setting $a_i = 1+\beta x_i$ and then dividing both sides 
by $(-\beta)^{\ellfpf(z)}$.

\end{rem}

\begin{proof}
    We have $\Gfpf_z = \K(I(\SSX_z)) = \K(\init (I(\SSX_z))) = \K(J_z)$ by Theorem~\ref{ss-main-thm} and Lemma~\ref{lem:init-K}. 
    The fact that $J_z$ is the Stanley-Reisner ideal of $\subword(z, \ltriang_n)$ by Theorem~\ref{thm:SR-ideal} implies that
\[
        \Gfpf_z = \sum_{D \subseteq \ltriang_n} -\tilde{\chi}(\link_{D^c} \subword(z, \ltriang_n)) \prod_{(i,j) \in D} (1-a_i a_j),
\]
    where $D^c := \ltriang_n \setminus D$ and $\tilde{\chi}$ is the reduced Euler characteristic over $\KK$, by the discussion in \cite[\S 4]{KnutsonMiller}. We have $\link_{D^c} \subword(z, \ltriang_n) = \subword(z, D)$ by Lemma~\ref{lem:subword-link}. 
The theorem follows by substituting the identity
    \[ -\tilde{\chi}(\link_{D^c} \subword(z, \ltriang_n)) = \begin{cases} 0 &\text{if }D \notin \NFP(z) \\
     (-1)^{|D|-\ellfpf(z)} &\text{if }D \in \NFP(z),\end{cases}
     \]
    which holds since in the first case $\subword(z, D)$  is homeomorphic to a ball and 
     in the second case $\subword(z, D)$ is homeomorphic to a sphere of dimension $|D|-\ellfpf(z)-1$ by Lemma~\ref{lem:ball-or-sphere}.
     %
\end{proof}

\section{New proofs of classical results}\label{new-proofs-sect}

The inductive strategy used to derive our main theorems 
can be adapted to give new proofs of the results of Knutson and Miller  \cite{KnutsonMiller} that we presented in an abridged form as Theorems~\ref{thm:KM-ideal} and \ref{thm:KM-primary-decomp} in the introduction.
We briefly indicate how this is done, omitting the details that are similar to the skew-symmetric case.
This approach is likely already known to experts, but we believe it has not been written down explicitly in any prior literature.

Fix $m,n\in \NN$ and 
identify the coordinate ring $\KK[\Mat_{m\times n}]$ with $\KK[u_{ij} : (i,j) \in [m]\times [n]]$ 
by viewing $u_{ij}$  as the map $A \mapsto A_{ij}$.
Write $\Umat $ for the matrix of variables $[u_{ij}]_{(i,j) \in [m]\times [n]}$.
\begin{defn} Given $w \in S^{m,n}_\infty$, let
$I_w$
be the ideal in $\KK[u_{ij} : (i,j) \in [m]\times [n]]$ generated by the minors $\det(\Umat_{AB})$,
where $(A,B) \in \binom{[i]}{q}\times \binom{[j]}{q}$ for some $(i,j) \in [m]\times [n]$ with $q =  \rank(w_{[i][j]})+1$. 
\end{defn}

 Since 
$\rank A \leq r$ if and only if all size $r+1$ minors of $A$ vanish, $X_w$ is the zero locus of $I_w$. 
In contrast to the skew-symmetric case, where the analogue of $I_w$ is not always prime (see Example~\ref{not-prime-ex}),
the ideal $I_w$ will turn out to be the (prime) ideal of the variety $X_w$ \cite{KnutsonMiller}.


\begin{defn}
For a rank table ${\r}$, let $J_{{\r}}$ be the ideal 
in $\KK[ u_{ij} : (i,j) \in [m]\times [n]]$
generated by 
all monomials  $u_{AB}$ as defined in \eqref{odot-eq}, where 
$(A, B) \in \binom{[i]}{q}\times \binom{[j]}{q}$
for some $(i,j) \in [m]\times [n]$ with $q = \r(i,j)+1$.
If $w \in S^{m,n}_\infty$,
then we define $J_w := J_{\r_w}$.
 \end{defn}

 %
From this point on, we fix an arbitrary antidiagonal term order on $\KK[u_{ij} : (i,j) \in [m]\times [n]]$.
This could be the reverse lexicographic term order from Example~\ref{lex-term2} once again, 
but other choices are possible.
 For any such term order, it holds that $u_{AB} = \init(\det(\Umat_{AB}))$,
so the ideal $J_w$ is automatically a subset of $\init(I_w)$.
We will see shortly that this inclusion is equality  \cite{KnutsonMiller}.

\begin{ex}
Suppose $m=n=3$ and $w=2143 \in S_4 \cap S^{3,3}_\infty$.
It follows from Proposition~\ref{thm:fpf-essential-set} that 
 $X_{2143}$ is the set of matrices $A \in M_{3,3}$ with $ \rank A_{[1][1]} \leq 0$ and $\rank A_{[3][3]} \leq 2$.
One can show that similarly $
    I_w = (\det \Umat_{[1][1]}, \det \Umat_{[3][3]})$ (see \cite[Lem. 3.10]{FultonEssentialSet}) and $J_w = (u_{11}, u_{31}u_{22}u_{13})$.
Our definitions of these ideals involve many other generators that turn out to be redundant.
\end{ex}

The following results have the same proofs 
as their skew-symmetric versions (namely, Propositions~\ref{m-ss-prop}, \ref{ssj-bruhat-prop}, and \ref{prop:min-to-sum-ss}),
\emph{mutatis mutandis}. We omit the details.

\begin{prop} \label{m-prop}
If $M$ is a monomial in $ \KK[u_{ij} : (i,j) \in [m]\times [n]]$
and $\r = \r_M$ is the rank table defined before Proposition~\ref{prop:rM-equals-rw}, then $M \notin J_{\r}$. 
\end{prop}

\begin{prop}\label{j-bruhat-prop}
  If $v,w \in S^{m,n}_\infty$ and $v \leq w$, then $J_v \subseteq J_w$. 
  \end{prop}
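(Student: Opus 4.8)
The statement to be proved is the non-skew-symmetric analogue of Proposition~\ref{ssj-bruhat-prop}: if $v,w \in S^{m,n}_\infty$ with $v \leq w$ in Bruhat order, then $J_v \subseteq J_w$. The plan is to follow the proof of Proposition~\ref{ssj-bruhat-prop} verbatim, making only the notational substitutions $\SSJ \rightsquigarrow J$, $\r_z \rightsquigarrow \r_w$, $\SSu_{AB} \rightsquigarrow u_{AB}$, and replacing the appeal to Proposition~\ref{ss-bruhat-prop} by an appeal to Proposition~\ref{prop:bruhat-order}.

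\begin{proof}
If $v \leq w$ in the Bruhat order on $S_\infty$, then $\r_v(i,j) = \rank v_{[i][j]} \geq \rank w_{[i][j]} = \r_w(i,j)$ for all $(i,j) \in [m]\times[n]$ by Proposition~\ref{prop:bruhat-order}. Now let $M$ be a monomial in $J_v$. By definition of $J_v = J_{\r_v}$, there exist $(i,j) \in [m]\times[n]$ and a pair $(A,B) \in \binom{[i]}{q}\times\binom{[j]}{q}$ with $q = \r_v(i,j) + 1$ such that $u_{AB} \mid M$. Since $\r_w(i,j) \leq \r_v(i,j)$, writing $q' = \r_w(i,j) + 1 \leq q$, we may choose subsets $A' \subseteq A$ and $B' \subseteq B$ with $|A'| = |B'| = q'$; then $u_{A'B'} \mid u_{AB} \mid M$ because $A' \odot B'$ is a sub-antidiagonal of $A \odot B$, so $u_{A'B'}$ divides $u_{AB}$. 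As $A' \subseteq [i]$, $B' \subseteq [j]$, and $|A'| = |B'| = \r_w(i,j)+1$, the monomial $u_{A'B'}$ is one of the generators of $J_w = J_{\r_w}$. Hence $M \in J_w$. Since $J_v$ is a monomial ideal, this shows $J_v \subseteq J_w$.
\end{proof}

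There is essentially no obstacle here: the only point requiring a word of care is the divisibility $u_{A'B'} \mid u_{AB}$ when one passes to a smaller pair $(A',B')$. For generic (non-symmetric) minors this is immediate, since $u_{AB} = \prod_{(a,b) \in A \odot B} u_{ab}$ and any way of choosing $q'$ elements of $A$ together with the corresponding $q'$ smallest elements of $B$ yields an antidiagonal product dividing $u_{AB}$; concretely, if $A' = \{a_{i_1} < \cdots < a_{i_{q'}}\}$ indexed inside $A = \{a_1 < \cdots < a_q\}$ and we take $B' = \{b_{i_1} < \cdots < b_{i_{q'}}\}$ inside $B = \{b_1 < \cdots < b_q\}$, then $A' \odot B' \subseteq A \odot B$. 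This is exactly the same elementary fact used implicitly throughout Section~\ref{mon-sect}, so the argument is routine and the proof is as short as its skew-symmetric counterpart.
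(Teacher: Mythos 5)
Your overall approach mirrors the paper's (the paper proves Proposition~\ref{ssj-bruhat-prop} in one line: $v \leq w$ gives $\r_v \geq \r_w$ pointwise by Proposition~\ref{prop:bruhat-order}, and then ``it is clear from the definitions'' that $J_v \subseteq J_w$). Your argument fills in the details, and the logical skeleton is sound: a monomial in $J_v$ is divisible by some generator $u_{AB}$ with $(A,B) \in \binom{[i]}{q} \times \binom{[j]}{q}$, $q = \r_v(i,j)+1$, and you want a generator $u_{A'B'}$ of $J_w$ with $|A'|=|B'|=q' := \r_w(i,j)+1 \leq q$ and $u_{A'B'} \mid u_{AB}$.

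However, the concrete construction at the end is incorrect. You propose $A' = \{a_{i_1} < \cdots < a_{i_{q'}}\}$ and $B' = \{b_{i_1} < \cdots < b_{i_{q'}}\}$ using the \emph{same} index set $\{i_1,\dots,i_{q'}\}$ inside both $A$ and $B$. That does not give $A' \odot B' \subseteq A \odot B$. For example, with $A = B = \{1,2\}$ (so $A \odot B = \{(1,2),(2,1)\}$ and $u_{AB} = u_{12}u_{21}$) and $q'=1$, taking $i_1 = 1$ yields $A' = B' = \{1\}$ and $u_{A'B'} = u_{11}$, which does not divide $u_{AB}$. The point is that $A \odot B$ pairs $a_k$ with $b_{r-k}$ (using the paper's zero-indexed $A = \{a_0 < \cdots < a_r\}$, $B = \{b_0 < \cdots < b_r\}$), so a sub-antidiagonal requires \emph{complementary} index sets: choose $0 \leq k_1 < \cdots < k_{q'} \leq r$, set $A' = \{a_{k_1},\dots,a_{k_{q'}}\}$, and set $B' = \{b_{r-k_{q'}} < \cdots < b_{r-k_1}\}$. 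Equivalently, just select any $q'$ of the $r+1$ pairs in $A \odot B$; the resulting sets $A'$ and $B'$ automatically satisfy $A' \odot B' \subseteq A \odot B$ because the selected pairs are simultaneously increasing in the first coordinate and decreasing in the second. With that fix, the proof is complete and matches the paper's intended argument.
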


    \begin{prop} \label{prop:min-to-sum} 
If ${\r}$ and ${\s}$ are rank tables then $J_{\min({\r}, {\s})} = J_{\r} + J_{{\s}}$. 
\end{prop}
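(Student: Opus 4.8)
The plan is to mirror exactly the proof of Proposition~\ref{prop:min-to-sum-ss}, since the monomial ideals $J_\r$ in the generic (non-skew-symmetric) setting behave identically to the $\SSJ_\r$. Recall that $J_\r$ is generated by the monomials $u_{AB}$ with $(A,B) \in \binom{[i]}{q} \times \binom{[j]}{q}$ and $q = \r(i,j)+1$ for some $(i,j) \in [m]\times[n]$.

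First I would prove the easy inclusion $J_\r + J_\s \subseteq J_{\min(\r,\s)}$. For any $(i,j) \in [m]\times[n]$, since $\min(\r,\s)(i,j) \leq \r(i,j)$, every generator $u_{AB}$ of $J_\r$ attached to $(i,j)$ with $|A| = |B| = \r(i,j)+1$ is divisible by a generator $u_{A'B'}$ of $J_{\min(\r,\s)}$ attached to the same $(i,j)$ with $|A'| = |B'| = \min(\r,\s)(i,j)+1 \leq \r(i,j)+1$ (take $A' \subseteq A$, $B' \subseteq B$ of the appropriate sizes, noting that the antidiagonal pairing $A' \odot B'$ on the smaller sets consists of pairs $(a,b)$ with $u_{ab} \mid u_{AB}$ — this is immediate from the definition \eqref{odot-eq}). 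Hence $J_\r \subseteq J_{\min(\r,\s)}$, and symmetrically $J_\s \subseteq J_{\min(\r,\s)}$, so $J_\r + J_\s \subseteq J_{\min(\r,\s)}$.

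For the reverse inclusion, it suffices (since all ideals involved are monomial) to take a monomial $M \in J_{\min(\r,\s)}$ and show $M \in J_\r + J_\s$. By definition $M$ is divisible by some generator $u_{AB}$ with $(A,B) \in \binom{[i]}{q}\times\binom{[j]}{q}$ and $q = \min(\r(i,j),\s(i,j))+1$ for some $(i,j)$. If $\min(\r(i,j),\s(i,j)) = \r(i,j)$, then $u_{AB}$ is literally a generator of $J_\r$, so $M \in J_\r$; otherwise $\min(\r(i,j),\s(i,j)) = \s(i,j)$ and $u_{AB}$ is a generator of $J_\s$, so $M \in J_\s$. Either way $M \in J_\r + J_\s$.

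I do not expect any real obstacle here: this is a verbatim transcription of Proposition~\ref{prop:min-to-sum-ss}'s proof with $\SSu_{AB}$ replaced by $u_{AB}$ and the skew-symmetry/symmetry hypotheses dropped (they play no role in the argument). The only minor point to state carefully is that passing from a larger antidiagonal-indexing pair $(A,B)$ to a smaller one $(A',B')$ of size $q' \leq q$ yields a monomial $u_{A'B'}$ dividing $u_{AB}$, which follows directly from the fact that if $A' = \{a_{i_0} < \cdots < a_{i_{q'-1}}\} \subseteq A$ and $B' = \{b_{j_0} < \cdots < b_{j_{q'-1}}\} \subseteq B$ then $A' \odot B' \subseteq A \odot B$ need not hold as sets of pairs, but each variable appearing in $u_{A'B'}$ still divides $u_{AB}$ when $A'$ and $B'$ are chosen compatibly (e.g.\ take $A'$ to be the top $q'$ elements of $A$ and $B'$ the bottom $q'$ elements of $B$, or any matching choice). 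Since we only need \emph{some} such divisor to exist, and the easy inclusion above already only uses $\min(\r,\s)(i,j)+1 \leq \r(i,j)+1$ at a fixed $(i,j)$ with the generator $u_{AB}$ of $J_\r$ itself divisible by a smaller-degree generator of $J_{\min(\r,\s)}$, the argument goes through cleanly.
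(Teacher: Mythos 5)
Your proof is correct and mirrors the paper's proof of the analogous Proposition~\ref{prop:min-to-sum-ss} (the paper's proof of Proposition~\ref{prop:min-to-sum} is exactly that skew-symmetric argument, \emph{mutatis mutandis}, as stated before the proposition). Your fleshing-out of the ``easy'' inclusion is also sound, and you correctly flag and then fix the subtlety that $A' \odot B' \subseteq A \odot B$ holds only for a compatible choice of $A' \subseteq A$ and $B' \subseteq B$ (e.g.\ top of one, bottom of the other).
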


Using these statements in place of Propositions~\ref{m-ss-prop}, \ref{ssj-bruhat-prop}, and \ref{prop:min-to-sum-ss},
while 
swapping out relevant notation, one can 
repeat the proof of 
Theorem~\ref{thm:bruhat-minimal-intersection-ss} verbatim
to derive the following identity.

\begin{thm}\label{thm:bruhat-minimal-intersection}
 Let ${\r}$ be a rank table. Then $J_{\r} = \bigcap_w J_w$ where $w$ runs over the 
 elements of $S^{m,n}_\infty$ that are  Bruhat-minimal among those with ${\r}_w(i,j) \leq {\r}(i,j)$ for all $(i,j) \in [m]\times [n]$.
\end{thm}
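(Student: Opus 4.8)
The plan is to mirror the proof of Theorem~\ref{thm:bruhat-minimal-intersection-ss} line by line, replacing the skew-symmetric objects by their ordinary counterparts. First I would observe that $J_{\r} \subseteq J_{\s}$ whenever $\s(i,j) \leq \r(i,j)$ for all $(i,j) \in [m]\times[n]$, directly from the definition of $J_{\s}$ (a larger rank table imposes weaker size constraints on the generating minors, hence fewer generators). This gives the easy containment $J_{\r} \subseteq \bigcap_w J_w$, where the intersection runs over the relevant $w$.

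For the reverse containment, suppose $M$ is a monomial in $\KK[u_{ij} : (i,j)\in[m]\times[n]]$ with $M \notin J_{\r}$. I would argue that $\r_M(i,j) \leq \r(i,j)$ for all $(i,j)\in[m]\times[n]$: otherwise, for some $(i,j)$ we would have $\r_M(i,j) \geq \r(i,j)+1 =: q$, so by definition of $\r_M$ there would be $(A,B)\in \binom{[i]}{q}\times\binom{[j]}{q}$ with $u_{AB}\mid M$, forcing $M \in J_{\r}$, a contradiction. Then Proposition~\ref{prop:rM-equals-rw} supplies some $w \in S^{m,n}_\infty$ with $\r_M = \r_w$, and Proposition~\ref{m-prop} gives $M \notin J_w = J_{\r_w}$. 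Hence $M$ is not in the intersection over all $w \in S^{m,n}_\infty$ with $\r_w(i,j)\leq \r(i,j)$ for all $(i,j)$. This shows $J_{\r} = \bigcap_w J_w$ over all such $w$ (not necessarily Bruhat-minimal).

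Finally, Proposition~\ref{j-bruhat-prop} says that if $v \leq w$ then $J_v \subseteq J_w$; since by Proposition~\ref{prop:bruhat-order} we have $v \geq w$ in Bruhat order exactly when $\rank v_{[i][j]} \leq \rank w_{[i][j]}$ for all $(i,j)$, the family of $w \in S^{m,n}_\infty$ with $\r_w(i,j)\leq \r(i,j)$ for all $(i,j)$ is an upward-closed set in Bruhat order, and every element of it lies above some Bruhat-minimal element of it. Therefore the intersection is unchanged if we restrict $w$ to those that are Bruhat-minimal with $\r_w(i,j)\leq\r(i,j)$ for all $(i,j)\in[m]\times[n]$, which is the claimed identity. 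There is no real obstacle here — the argument is a routine transcription of the skew-symmetric case — but the one point that deserves a moment of care is confirming that the cited ingredients (Proposition~\ref{prop:rM-equals-rw}, Proposition~\ref{m-prop}, Proposition~\ref{j-bruhat-prop}, and the Bruhat-order characterization in Proposition~\ref{prop:bruhat-order}) are exactly the ordinary analogues of what was used skew-symmetrically, so that no genuinely new input is needed.
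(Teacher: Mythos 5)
Your proposal is correct and follows essentially the same argument as the paper, which simply repeats the proof of Theorem~\ref{thm:bruhat-minimal-intersection-ss} with Propositions~\ref{prop:rM-equals-rw}, \ref{m-prop}, and \ref{j-bruhat-prop} substituted for their skew-symmetric counterparts. The three steps you give (the easy containment from monotonicity of $\r\mapsto J_\r$, the contrapositive argument via $\r_M$, and the reduction to Bruhat-minimal $w$ via upward-closure) are exactly the steps of the paper's proof.
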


Given $w \in S_\infty$ and a positive integer $p$, define 
\begin{equation*}
    \Phi(w,p) = \{w(p,r) \in S_\infty : p < r \text{ and } \ell(w(p,r)) = \ell(w) + 1\},
\end{equation*}
Recall that an outer corner of $\dom(w)$ is a pair 
$(i,j) \in (\NN\times \NN)\setminus \dom(w)$ satisfying \eqref{outer-corner-eq}.

\def\cW{\mathcal{W}}

\begin{lem} \label{lem:bruhat-minimal} Suppose $w \in S_{\infty}$
and $(p,q)$ is an outer corner of $\dom(w)$. Then 
$w(p) = q$ and $\Phi(w,p)$ is the set of Bruhat-minimal elements of
\begin{equation*}
\begin{aligned} 
\cW(w, p) &= \left\{v \in S_\infty : v \geq w, \text{ $\rank v_{[i][j]} = 0$ for $(i,j) \in [p] \times [q]$}\right\} 
\\
&=  \left\{v \in S_\infty : v \geq w, \ \dom(v)\supseteq [p] \times [q]\right\}.
\end{aligned}
\end{equation*}
In addition, if $ w\in S^{m,n}_\infty$ and $(p,q) \in [m]\times [n]$ then $\Phi(w,p) \subset S^{m,n}_\infty$.
 \end{lem}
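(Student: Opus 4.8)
The plan is to mirror the skew-symmetric argument in Lemma~\ref{lem:bruhat-minimal-fpf}, but using the ordinary Bruhat order on $S_\infty$ in place of the Bruhat order on $\Ifpf_\infty$, with permutations instead of fixed-point-free involutions. First I would observe that since $(p,q)$ is an outer corner of $\dom(w)$, we have $\rank w_{[p][q]}=0$ while $([p]\times[q])\setminus\{(p,q)\}\subseteq\dom(w)$, which forces $w(p)=q$ (this is already noted in the text just after \eqref{outer-corner-eq}). The equality of the two descriptions of $\cW(w,p)$ is immediate from the definition of $\dom(v)$: the condition $\rank v_{[i][j]}=0$ for all $(i,j)\in[p]\times[q]$ is exactly $[p]\times[q]\subseteq\dom(v)$, since rank tables are monotone.

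Next I would identify $\Phi(w,p)$ with the set of Bruhat-minimal elements of $\cW(w,p)$. For the ``$\subseteq$'' direction: if $v=w(p,r)$ with $p<r$ and $\ell(v)=\ell(w)+1$, I need to check $v\in\cW(w,p)$, i.e. $\dom(v)\supseteq[p]\times[q]$, and that $v$ is minimal. The key combinatorial fact is that multiplying $w$ on the right by $(p,r)$ with a length increase of exactly one means $w(p)<w(r)$ and there is no $p<e<r$ with $w(p)<w(e)<w(r)$; since $w(p)=q$ is as small as possible in the relevant range (it equals the value at an outer corner of the dominant component), this transposition only moves the entry in row $p$ rightward without disturbing the pattern of zeros in rows $1,\dots,p$ and columns $1,\dots,q$ outside position $(p,q)$ — so $\dom(v)\supseteq[p]\times[q]$. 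Minimality follows because any $v'$ with $w<v'<v$ would have $\ell(v')=\ell(w)$, contradiction. For the ``$\supseteq$'' direction, i.e. every Bruhat-minimal element of $\cW(w,p)$ lies in $\Phi(w,p)$, I would argue exactly as in the proof of Lemma~\ref{lem:bruhat-minimal-fpf}: if $v\in\cW(w,p)$ with $\ell(v)-\ell(w)\geq 2$, pick a covering transposition $w<w(i,j)<v$; the standard characterization of Bruhat covers (analogue of properties (P1)--(P2) in Lemma~\ref{ss-bruhat-lem}, but for $S_\infty$, which follows from the subword/tableau criterion in \cite{BjornerBrenti}) shows that $[p]\times[q]\subseteq\dom(w(i,j))$ forces $i\in\{p,q\}$, so either $w(i,j)\in\cW(w,p)$ already (making $v$ non-minimal) or one reduces to a smaller case and iterates, eventually invoking a finite $S_n$ calculation analogous to Lemma~\ref{ijr-prime-lem-fpf}. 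Here the argument is actually cleaner than the FPF version because right multiplication by $(p,r)$ with $p$ fixed never interacts with itself the way double-conjugation does.

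Finally, for the statement $\Phi(w,p)\subseteq S^{m,n}_\infty$ when $w\in S^{m,n}_\infty$ and $(p,q)\in[m]\times[n]$: by Proposition~\ref{rothe-prop} it suffices to show $D(v)\subseteq[m]\times[n]$ for each $v=w(p,r)\in\Phi(w,p)$. Since $\ell(v)=\ell(w)+1$, the Rothe diagram $D(v)$ is obtained from $D(w)$ by adding a single box; I would check that this new box lies in row $p\leq m$ and in a column $\leq n$, using that $w(p)=q\leq n$ and that the length-one increase pins down the added box's column to be $w(p)=q$ (in the appropriate sense). The main obstacle, as in the skew-symmetric case, is the careful bookkeeping in the ``$\supseteq$'' direction — verifying that Bruhat-minimality in $\cW(w,p)$ really does force a single covering transposition of the form $(p,r)$ — but this is routine given the tableau criterion and the finite-rank reduction, and is strictly easier than Lemmas~\ref{ijr-prime-lem-fpf}--\ref{lem:bruhat-minimal-fpf}, so I do not anticipate serious difficulty. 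I would then state that the details are omitted, being parallel to (and simpler than) the fixed-point-free case, in keeping with the expository tone of Section~\ref{new-proofs-sect}.
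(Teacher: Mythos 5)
Your proposal follows the same route as the paper: show $\Phi(w,p)=\{v\in\cW(w,p):\ell(v)=\ell(w)+1\}$, then use induction together with the finite $S_n$ reduction (Lemma~\ref{ijr-prime-lem}, the $S_\infty$ analogue of Lemma~\ref{ijr-prime-lem-fpf}) to rule out minimality when $\ell(u)-\ell(w)\geq 2$. The strategy is correct, and you are right that the conjugation bookkeeping of the fpf case disappears here.

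There are, however, two factual slips worth correcting. First, you write that $(p,q)$ being an outer corner of $\dom(w)$ gives $\rank w_{[p][q]}=0$; it gives the opposite, $\rank w_{[p][q]}\neq 0$ (indeed $=1$), since $(p,q)\notin\dom(w)$ by definition of outer corner. Were the rank zero, $w$ would have no $1$ in $[p]\times[q]$ and $w(p)=q$ would be false. The correct argument is: $\rank w_{[p-1][q]}=\rank w_{[p][q-1]}=0$, so the unique $1$ of $w_{[p][q]}$ must sit at $(p,q)$, forcing $w(p)=q$. Second, the claim that ``$D(v)$ is obtained from $D(w)$ by adding a single box'' is false, even in this setting. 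Take $w=1324$ (one-line notation), which has $\dom(w)=\emptyset$ and outer corner $(1,1)$; then $v=w\,(1,2)=3124\in\Phi(w,1)$, yet $D(w)=\{(2,2)\}$ while $D(v)=\{(1,1),(1,2)\}$, so boxes are both created and destroyed. Your third paragraph therefore needs a different justification for $\Phi(w,p)\subset S_\infty^{m,n}$: one should instead check $\DesR(v)\subseteq[m]$ and $\DesL(v)\subseteq[n]$ directly using the cover condition (no $p<e<r$ with $w(p)<w(e)<w(r)$) and the monotonicity of $w$ beyond row $m$ and column $n$, which is the analogue of the paper's appeal to Proposition~\ref{ssd-prop} and property (P2) in the fpf case.
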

 
\begin{proof}[Proof sketch]
As in the proof of Lemma~\ref{lem:bruhat-minimal-fpf},
one first checks that $\Phi(w,p) = \{ v \in \cW(w,p): \ell(v) = \ell(w) +1\}$  
 using basic properties of the Bruhat order and Proposition~\ref{prop:bruhat-order}.
 Next, one argues that if $u \in \cW(w,p)$ and $\ell(u) - \ell(w) \geq 2$ then $u$ is not a Bruhat-minimal element.
 One can deduce this claim by repeating the second half of the proof of   Lemma~\ref{lem:bruhat-minimal-fpf},
changing the obvious things needing to be changed, and replacing Lemma~\ref{ijr-prime-lem-fpf}
by Lemma~\ref{ijr-prime-lem} below.\end{proof}
 
 \begin{lem}\label{ijr-prime-lem}
Suppose $w \in S_{\infty}$
and $(p,q)$ is an outer corner of $\dom(w)$.
Assume $i,j,r$ are positive integers with $i<j$, $p<r$,
and 
$w < w(i,j) < w(i,j)(p,r).$
Then there are integers $i',j',r' \in \{i,j,p,r\}$ 
with $i'<j'$, $p<r'$, and
$w < w(p,r') < w(p,r')(i',j') = w(i,j)(p,r).$
\end{lem}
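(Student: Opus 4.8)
\textbf{Proof proposal for Lemma~\ref{ijr-prime-lem}.}

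The plan is to mimic the reduction-to-finite-cases argument used in the proof of Lemma~\ref{ijr-prime-lem-fpf}, but for ordinary permutations rather than fixed-point-free involutions. Let $S = \{i,j,p,r\} \subseteq \NN$ and let $n = |S| \leq 4$. Write $\phi : [n] \to S$ and $\psi : S \to [n]$ for the mutually inverse order-preserving bijections. Since $(p,q)$ is an outer corner of $\dom(w)$ we have $w(p) = q$ (this is the observation recorded just before the lemma, and follows directly from \eqref{outer-corner-eq} and \eqref{dom-eq}); in particular $q < p$, so $q \notin S$ but the position $(p,q)$ is still recoverable from the value $w(p)$. The key point is that the three relevant inequalities $w < w(i,j) < w(i,j)(p,r)$ are each statements about whether applying a transposition increases length by exactly one, and by Proposition~\ref{prop:bruhat-order} these can be rephrased purely in terms of the rank table entries $\rank w_{[a][b]}$ for $(a,b)$ ranging over $S \times w(S)$. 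The conclusion we wish to produce is of the same form. So it suffices to transfer the whole situation to the ``compressed'' permutation on $[n]$ and argue there.

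First I would make this transfer precise: given $w$ as in the hypothesis, one cannot literally restrict $w$ to $S$ (the image $w(S)$ need not equal $S$), so instead I would pass to the $m \times n$ partial-permutation-matrix picture, or more simply observe that both the hypotheses and the desired conclusion of the lemma are invariant under the following operation: replace $\NN$ by the subset $T := S \cup w(S)$ (still finite, of size at most $8$), and replace $w$ by the permutation $w'$ of $[|T|]$ obtained by conjugating $w|_T$ by the order-preserving bijection $T \to [|T|]$. Under this operation, $(p,q)$ corresponds to an outer corner $(p',q')$ of $\dom(w')$ with $w'(p') = q'$, and the three chain inequalities for $(w,i,j,p,r)$ hold if and only if the analogous inequalities hold for $(w', \psi(i), \psi(j), \psi(p), \psi(r))$, because length-increase-by-one is detected by a single covering relation in Bruhat order and Bruhat order only sees the relative order of the entries involved. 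Likewise, a valid choice of $(i',j',r')$ for $w'$ drawn from $\{\psi(i),\psi(j),\psi(p),\psi(r)\}$ pulls back to a valid choice for $w$ drawn from $\{i,j,p,r\}$. Hence if the lemma failed for some $(w,i,j,p,r)$ it would fail for some permutation $w'$ of $[N]$ with $N \leq 8$.

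The remaining step is then a finite verification: for every $N \leq 8$, every $w' \in S_N$, every outer corner $(p',q')$ of $\dom(w')$, and every triple $i' < j'$, $p' < r'$ in $[N]$ satisfying the chain $w' < w'(i',j') < w'(i',j')(p',r')$, one checks that some permitted $(i'',j'',r'')$ exists. This is a finite (if tedious) computation, and I would simply assert it, exactly as the excerpt does for Lemma~\ref{ijr-prime-lem-fpf} (``it is a finite calculation to check''). The main obstacle is not conceptual but bookkeeping: one must be careful that the reduction genuinely preserves the outer-corner condition and the exact length-increase conditions, since these are the places where an off-by-one or an overlooked coincidence among $i,j,p,r,w(i),w(j),w(p),w(r)$ could break the argument; concretely one should note that $w(i),w(j),w(r)$ may or may not lie in $S$, which is why $T$ rather than $S$ is the right set to restrict to. Once the reduction is set up correctly, the finite check is routine and can be done by hand or by computer, in close parallel to the fixed-point-free case.
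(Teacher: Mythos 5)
There is a genuine gap in the reduction step. You write that one should ``replace $\NN$ by the subset $T := S \cup w(S)$ \ldots and replace $w$ by the permutation $w'$ of $[|T|]$ obtained by conjugating $w|_T$ by the order-preserving bijection $T \to [|T|]$.'' But $w$ does not stabilize $T$ in general: $w(T) = w(S) \cup w(w(S))$, and the second set can leak outside $T$ whenever $w$ is not an involution (which is exactly the case here, in contrast to Lemma~\ref{ijr-prime-lem-fpf}, where $z$ being an involution makes $z(S) = S$ for $S = \{i,j,p,r,z(i),z(j),z(p),z(r)\}$). So $w|_T$ is not a permutation of $T$, and the proposed conjugation is undefined. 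Enlarging $T$ to be closed under $w$ does not help either, since the $w$-orbits through $S$ can be arbitrarily long, and the argument would no longer reduce to a bounded finite check.

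The correct move --- which your earlier mention of the ``partial-permutation-matrix picture'' is actually gesturing toward --- is to use two \emph{different} order-preserving bijections, one for rows and one for columns, rather than one conjugating bijection. Concretely, the paper takes $\phi : [n] \to \{p,r,i,j\}$ and $\psi : \{w(p),w(r),w(i),w(j)\} \to [n]$ with $n := |\{p,r,i,j\}| \leq 4$, and sets $w' := \psi \circ w \circ \phi$. Since $w$ maps $\{p,r,i,j\}$ bijectively onto $\{w(p),w(r),w(i),w(j)\}$ and these sets have the same cardinality, $w'$ is a genuine element of $S_n$. Equivalently: crop the partial permutation matrix of $w$ to the rows indexed by $\{p,r,i,j\}$ and the columns indexed by $\{w(p),w(r),w(i),w(j)\}$; the result is a $4\times 4$ (or smaller) permutation matrix. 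This preserves the outer-corner condition at $(p,q)$ (with $q = w(p)$ in the retained column set) and all the relative-order data needed to detect Bruhat covers among the transposed permutations in the statement. Note that this is \emph{not} what happens if you instead throw $S$ and $w(S)$ into a single set $T$ and try to act on both sides with one bijection. Once the two-bijection reduction is set up, the finite check is over $S_n$ for $n \leq 4$, which is far smaller than the $n \leq 8$ your proposal would require even if it worked.
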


\begin{proof}
Write $\phi : [n]\to \{ p,r,i,j\} $ and $\psi : \{ w(p), w(r), w(i), w(j)\} \to [n]$ 
for the unique order-preserving bijections where $n := |\{ p,r,i,j\}|\leq 4$.
Then $(p',q') := (\phi^{-1}(p), \psi(q)) \in [n]\times [n]$ is an outer corner of $w' := \psi \circ w \circ\phi \in S_n$,
and if the lemma fails for $w$ and $(p,q)$ then it must fail for $w'$ and $(p',q')$.
But a finite calculation shows that the lemma holds whenever $w \in S_n$ and $n\leq 4$.
\end{proof}

\begin{cor} \label{cor:Xw-transitions} Let $w \in S^{m,n}_\infty$ and suppose $(p,q) \in [m]\times [n]$ is an outer corner of $\dom(w)$. Then $X_w \cap X_{p,q} = \bigcup_{v \in \Phi(w,p)} X_v$
for
$X_{p,q}:= \{A \in \Mat_{m\times n} : \text{$A_{ij} = 0$ for all $(i,j) \in [p] \times [q]$}\}$.
 \end{cor}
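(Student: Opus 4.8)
The statement is the exact non-skew-symmetric analogue of Corollary~\ref{cor:SSXw-transitions}, and the proof should mirror that one step for step. First I would observe that $X_w \cap X_{p,q}$ is closed and $B_m \times B_n$-stable (it is an intersection of two such subvarieties of $\Mat_{m\times n}$), hence by Theorem~\ref{x-thm} it is a union of matrix Schubert varieties $X_v$ for various $v \in S^{m,n}_\infty$. So it suffices to determine exactly which $X_v$ are contained in $X_w \cap X_{p,q}$.

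\textbf{Translating containment into a condition on $v$.} By Proposition~\ref{prop:bruhat-order}, for $v \in S^{m,n}_\infty$ we have $X_v \subseteq X_w$ if and only if $v \geq w$ in Bruhat order, equivalently $\rank v_{[i][j]} \leq \rank w_{[i][j]}$ for all $(i,j) \in [m]\times[n]$. Likewise $X_v \subseteq X_{p,q}$ if and only if every $A \in \openX_v$ has $A_{ij} = 0$ for $(i,j) \in [p]\times[q]$, which (since $\openX_v$ is the $B_m\times B_n$-orbit of the partial permutation matrix of $v$, and rank is the relevant invariant) is equivalent to $\rank v_{[i][j]} = 0$ for all $(i,j) \in [p]\times[q]$, i.e.\ $\dom(v) \supseteq [p]\times[q]$. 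Thus $X_v \subseteq X_w \cap X_{p,q}$ if and only if $v \in \cW(w,p)$ in the notation of Lemma~\ref{lem:bruhat-minimal}. Since $X_v \subseteq X_{v'}$ whenever $v \geq v'$, the union $\bigcup_{v \in \cW(w,p)} X_v$ equals the union over just the Bruhat-minimal elements of $\cW(w,p)$, which by Lemma~\ref{lem:bruhat-minimal} is precisely $\Phi(w,p)$. Combining, $X_w \cap X_{p,q} = \bigcup_{v \in \Phi(w,p)} X_v$, as claimed. (One should also note that $(p,q) \in [m]\times[n]$ and $w \in S^{m,n}_\infty$ guarantee $\Phi(w,p) \subseteq S^{m,n}_\infty$ by the last sentence of Lemma~\ref{lem:bruhat-minimal}, so the indexing set on the right is legitimate.)

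\textbf{Main obstacle.} Essentially all of the real content has been pushed into Lemma~\ref{lem:bruhat-minimal}, whose proof in turn rests on Lemma~\ref{ijr-prime-lem} (the finite $S_n$, $n\le 4$ verification) and on the standard fact that $\Phi(w,p) = \{v \in \cW(w,p) : \ell(v) = \ell(w)+1\}$. So once those are in hand, the corollary itself is a short formal deduction; the only thing requiring a moment's care is the equivalence ``$X_v \subseteq X_{p,q}$ iff $\dom(v) \supseteq [p]\times[q]$'', i.e.\ identifying the subvariety $X_{p,q}$ with a single matrix Schubert variety and checking that the obvious rank description of its ideal matches. As in the remark following Corollary~\ref{cor:SSXw-transitions}, I would also point out that the scheme-theoretic refinement $I(X_w) + (u_{pq}) = I(X_w) + I(X_{p,q}) = \bigcap_{v \in \Phi(w,p)} I(X_v)$ will fall out of the inductive proof of the Knutson--Miller analogue of Theorem~\ref{ss-main-thm}, so the set-theoretic statement here suffices for the present purposes.
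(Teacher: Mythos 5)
Your proof is correct and follows essentially the same route as the paper's: both observe that $X_w \cap X_{p,q}$ is closed and $B_m\times B_n$-stable and hence a union of matrix Schubert varieties, then use Proposition~\ref{prop:bruhat-order} and Lemma~\ref{lem:bruhat-minimal} to identify which $X_v$ occur. If anything you state the logic slightly more precisely than the paper's compressed one-line sketch: you correctly note that $X_v \subseteq X_w \cap X_{p,q}$ characterizes membership in $\cW(w,p)$, and then pass to the Bruhat-minimal elements $\Phi(w,p)$ because $X_v \subseteq X_{v'}$ for $v \geq v'$.
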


This follows by the proof of Corollary~\ref{cor:SSXw-transitions}, \emph{mutatis mutandis}, via Proposition~\ref{prop:bruhat-order} and Lemma~\ref{lem:bruhat-minimal}.
The same is true of the following result compared to Lemma~\ref{lem:transition-containment-ss}:
the proof is the same after replacing $\FPF_n(\I_n)$ by $S^{m,n}_\infty$, $\Psi(z,p)$ by $\Phi(w,p)$, and the symbols
$\SSX$, $\SSI$, $\SSJ$ by $X$, $I$, $J$,
and then citing the relevant results in this subsection in place of their skew-symmetric 
predecessors.

\begin{lem}\label{lem:transition-containment}  
 Suppose $w \in S^{m,n}_\infty$ and $(p,q) \in [m]\times [n]$  is an outer corner of $\dom(w)$. Then 
 it holds that
$I(X_w) + (u_{pq}) \subseteq \bigcap_{v \in \Phi(w,p)} I(X_v)
$
and
$
J_w + (u_{pq}) = \bigcap_{v \in \Phi(w,p)} J_v
$,
where $(u_{pq})$ denotes the principal ideal of $\KK[u_{ij} : (i,j) \in [m]\times [n]]$ generated by $u_{pq}$.
\end{lem}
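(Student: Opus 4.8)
The plan is to transcribe the proof of Lemma~\ref{lem:transition-containment-ss} into the ordinary (non-skew) setting, replacing $\FPF_n(\I_n)$ by $S^{m,n}_\infty$, $\Psi(z,p)$ by $\Phi(w,p)$, the varieties $\SSX$ by $X$, and the ideals $\SSI,\SSJ$ by $I,J$, and then citing the analogues of the needed auxiliary results: Corollary~\ref{cor:Xw-transitions} in place of Corollary~\ref{cor:SSXw-transitions}, Lemma~\ref{lem:bruhat-minimal} in place of Lemma~\ref{lem:bruhat-minimal-fpf}, Theorem~\ref{thm:bruhat-minimal-intersection} in place of Theorem~\ref{thm:bruhat-minimal-intersection-ss}, Proposition~\ref{prop:min-to-sum} in place of Proposition~\ref{prop:min-to-sum-ss}, and Proposition~\ref{prop:bruhat-order} in place of Proposition~\ref{ss-bruhat-prop}. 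The one combinatorial fact to record up front is that, since $(p,q)$ is an outer corner of $\dom(w)$ and $\dom(w)$ is the Young diagram of a partition (hence downward closed), the cells $(p-1,q)$ and $(p,q-1)$ — whenever their coordinates are positive — lie in $\dom(w)$, and therefore so does every cell of $[p]\times[q]$ except $(p,q)$ itself. Consequently $\rank w_{[i][j]} = 0$ for each $(i,j) \in ([p]\times[q])\setminus\{(p,q)\}$, which forces $u_{ij} \in I(X_w)$ and $u_{ij} \in J_w$ for all such $(i,j)$.

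For the first containment, I would use that $\bigcap_{v\in\Phi(w,p)} I(X_v) = I\bigl(\bigcup_{v\in\Phi(w,p)} X_v\bigr) = I(X_w \cap X_{p,q})$ by Corollary~\ref{cor:Xw-transitions}, together with the elementary inclusion $I(X_w \cap X_{p,q}) \supseteq I(X_w) + I(X_{p,q})$. Since $X_{p,q}$ is a coordinate subspace, $I(X_{p,q}) = (u_{ij} : (i,j) \in [p]\times[q])$, and by the observation above every generator of $I(X_{p,q})$ other than $u_{pq}$ already belongs to $I(X_w)$, so $I(X_w) + I(X_{p,q}) = I(X_w) + (u_{pq})$. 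This gives $I(X_w) + (u_{pq}) \subseteq \bigcap_{v\in\Phi(w,p)} I(X_v)$.

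For the equality of monomial ideals, let $\r_{p,q}$ be the rank table with $\r_{p,q}(i,j) = 0$ for $(i,j) \in [p]\times[q]$ and $\r_{p,q}(i,j) = \max\{m,n\}$ for all other positive pairs $(i,j)$, so that $J_{\r_{p,q}} = (u_{ij} : (i,j) \in [p]\times[q])$. For $v \in S^{m,n}_\infty$ one has $\r_v(i,j) \le \min(\r_w,\r_{p,q})(i,j)$ for all $(i,j) \in [m]\times[n]$ exactly when $v \ge w$ in Bruhat order (Proposition~\ref{prop:bruhat-order}) and $\rank v_{[i][j]} = 0$ for $(i,j) \in [p]\times[q]$, i.e.\ exactly when $v \in \cW(w,p)$; by Lemma~\ref{lem:bruhat-minimal} the Bruhat-minimal such $v$ are precisely the elements of $\Phi(w,p)$. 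Theorem~\ref{thm:bruhat-minimal-intersection} then yields $J_{\min(\r_w,\r_{p,q})} = \bigcap_{v\in\Phi(w,p)} J_v$, while Proposition~\ref{prop:min-to-sum} yields $J_{\min(\r_w,\r_{p,q})} = J_w + J_{\r_{p,q}}$. Since all generators of $J_{\r_{p,q}}$ except $u_{pq}$ lie in $J_w$, we conclude $\bigcap_{v\in\Phi(w,p)} J_v = J_w + J_{\r_{p,q}} = J_w + (u_{pq})$.

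I do not expect a genuine obstacle: this is a direct transcription of the skew-symmetric argument, and the only step requiring a moment of care is the verification that the outer-corner hypothesis forces $([p]\times[q])\setminus\{(p,q)\} \subseteq \dom(w)$, which is what makes the "extra" generators of $I(X_{p,q})$ and $J_{\r_{p,q}}$ get absorbed into $I(X_w)$ and $J_w$ respectively.
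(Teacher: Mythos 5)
Your proposal is correct and is precisely the transcription the paper itself indicates: the paper's proof of this lemma is a one-line remark stating that the argument of Lemma~\ref{lem:transition-containment-ss} carries over verbatim after swapping out the notation and citing the classical analogues (Corollary~\ref{cor:Xw-transitions}, Lemma~\ref{lem:bruhat-minimal}, Theorem~\ref{thm:bruhat-minimal-intersection}, Proposition~\ref{prop:min-to-sum}, Proposition~\ref{prop:bruhat-order}), which is exactly what you have carried out. Your explicit verification that the outer-corner hypothesis forces $([p]\times[q])\setminus\{(p,q)\}\subseteq\dom(w)$, so that all generators of $I(X_{p,q})$ (respectively $J_{\r_{p,q}}$) other than $u_{pq}$ are absorbed into $I(X_w)$ (respectively $J_w$), is the one step the paper leaves tacit, and you have justified it correctly via downward closure of the Young diagram $\dom(w)$.
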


Putting all of this together, we recover the following results from \cite{KnutsonMiller}.

\begin{thm}[{\cite[Thms. A and B]{KnutsonMiller}}] \label{thm:initial-ideal} 
Let $w \in S^{m,n}_\infty$.
Then $J_w = \init (I_w) = \init (I(X_w))$ and $I_w = I(X_w)$.
Thus $I_w$ is a prime ideal, with a Gr\"obner basis given by the set of
minors $\det (\Umat_{AB})$ with $(A,B)$ ranging over all  pairs in
$ \tbinom{[i]}{q}\times \tbinom{[j]}{q} $ for $(i,j) \in [m]\times [n]$ with $q = \rank w_{[i][j]}+1$.
 \end{thm}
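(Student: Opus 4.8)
The plan is to mimic the inductive argument already carried out in the skew-symmetric setting (the proof of Theorem~\ref{ss-main-thm}), using the ordinary matrix-Schubert analogues assembled in this section. First I would set up the induction on the size of the complement $[m]\times[n]\setminus D(w)$ of the Rothe diagram, with base case the unique $w\in S^{m,n}_\infty$ whose Rothe diagram is all of $[m]\times[n]$: for that $w$ one checks directly that $I_w=I(X_w)=J_w=(u_{ij}:(i,j)\in[m]\times[n])$, exactly as $z_\square$ was handled before. For the inductive step, given $w\ne$ the base element, the set $\dom(w)$ is a genuine Young diagram that is not all of $[m]\times[n]$, hence has an outer corner $(p,q)\in[m]\times[n]$ with $w(p)=q$ by Lemma~\ref{lem:bruhat-minimal}.

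Next I would run the same chain of inclusions as in \eqref{eq:I-vs-J-ss}: using Proposition~\ref{prop:init-facts}(b), then the first containment in Lemma~\ref{lem:transition-containment}, then Proposition~\ref{prop:init-facts}(c), then the inductive hypothesis applied to each $v\in\Phi(w,p)$ (valid since each such $v$ lies in $S^{m,n}_\infty$ and has $\ell(v)=\ell(w)+1$, so its Rothe-diagram complement is strictly smaller), and finally the second identity in Lemma~\ref{lem:transition-containment}, to get
\[
\init(I(X_w))+(u_{pq})\ \subseteq\ \bigcap_{v\in\Phi(w,p)}\init(I(X_v))\ =\ \bigcap_{v\in\Phi(w,p)}J_v\ =\ J_w+(u_{pq}).
\]
On the other side, $J_w\subseteq\init(I_w)$ because $u_{AB}=\init(\det(\Umat_{AB}))$ under any antidiagonal term order, and $I_w\subseteq I(X_w)$ because $X_w$ is the zero locus of $I_w$; hence $J_w\subseteq\init(I_w)\subseteq\init(I(X_w))$, forcing all inclusions above to be equalities and in particular $\init(I(X_w)+(u_{pq}))=\init(I(X_w))+(u_{pq})$. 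By Lemma~\ref{lem:plus-vs-intersection} this yields $\init(I(X_w)\cap(u_{pq}))=\init(I(X_w))\cap(u_{pq})$. Since $X_w$ is irreducible (Theorem~\ref{x-thm}), $I(X_w)$ is prime, and $u_{pq}\notin I(X_w)$ because the permutation matrix of $w$ has a $1$ at $(p,q)$; therefore $I(X_w)\cap(u_{pq})=u_{pq}I(X_w)$, which combined with $\init(I(X_w))=\init(I_w)$ (to be confirmed) gives $\init(I(X_w))\cap(u_{pq})=u_{pq}\init(I(X_w))$, i.e.\ the first hypothesis of Lemma~\ref{lem:hilbert}; the second hypothesis is the equality $\init(I(X_w))+(u_{pq})=J_w+(u_{pq})$ just established. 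Applying Lemma~\ref{lem:hilbert} with $J=J_w$, $I=\init(I(X_w))$ (both homogeneous under the standard grading), $f=u_{pq}$, gives $\init(I(X_w))=J_w$, and then $J_w\subseteq\init(I_w)\subseteq\init(I(X_w))=J_w$ makes both inclusions equalities. Finally $\init(I_w)=\init(I(X_w))$ together with $I_w\subseteq I(X_w)$ and Proposition~\ref{prop:init-facts}(d) gives $I_w=I(X_w)$, which is prime; the Gr\"obner basis statement is then immediate since the minors $\det(\Umat_{AB})$ generate $I_w=I(X_w)$ and their initial terms $u_{AB}$ generate $J_w=\init(I(X_w))$.

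The only genuinely new ingredients needed beyond transcribing the skew-symmetric proof are the ordinary-case lemmas already listed in this section, whose proofs the text defers as being ``\emph{mutatis mutandis}''; the one place where a little care is required is the combinatorial heart, namely Lemma~\ref{lem:transition-containment} (the transition identity $J_w+(u_{pq})=\bigcap_{v\in\Phi(w,p)}J_v$ and the containment for $I(X_w)$), which in turn rests on Corollary~\ref{cor:Xw-transitions}, Theorem~\ref{thm:bruhat-minimal-intersection}, Proposition~\ref{prop:min-to-sum}, and the Lascoux--Sch\"utzenberger-style transition combinatorics packaged in Lemma~\ref{lem:bruhat-minimal} and Lemma~\ref{ijr-prime-lem}. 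I expect the main obstacle to be bookkeeping rather than conceptual: one must verify that the essential-set / outer-corner reasoning of Lemma~\ref{lem:bruhat-minimal-fpf} goes through with $\FPF_n(\I_n)$ replaced by $S^{m,n}_\infty$ and with the simpler one-sided transposition $w\mapsto w(p,r)$ in place of the conjugation $z\mapsto(p,r)z(p,r)$, and that the finite base-case computations underpinning Lemma~\ref{ijr-prime-lem} (now on $S_n$ with $n\le 4$) indeed hold. Since none of these steps use any special feature of skew-symmetry and the relevant statements are already stated in the excerpt, the argument goes through as a faithful translation.
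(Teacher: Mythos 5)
Your proof follows the paper's own argument for Theorem~\ref{thm:initial-ideal} essentially line for line, and it is correct: the same induction on $|[m]\times[n]\setminus D(w)|$, the same base case $D(w)=[m]\times[n]$, the same chain of inclusions mirroring \eqref{eq:I-vs-J-ss} using Proposition~\ref{prop:init-facts}, Lemma~\ref{lem:transition-containment}, and the inductive hypothesis, and the same closing appeals to Lemma~\ref{lem:plus-vs-intersection}, primality of $I(X_w)$, Lemma~\ref{lem:hilbert}, and Proposition~\ref{prop:init-facts}(d). You also correctly flag the one genuine simplification over the skew-symmetric case, namely that $J_w\subseteq\init(I_w)$ is immediate from $u_{AB}=\init(\det\Umat_{AB})$ under an antidiagonal term order, so no analogue of Lemma~\ref{containment-ss-lem} is needed.

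One small slip in the write-up: to get $\init(I(X_w))\cap(u_{pq})=u_{pq}\init(I(X_w))$ you parenthetically invoke ``$\init(I(X_w))=\init(I_w)$ (to be confirmed),'' which would be circular. It is also unnecessary: you have already derived $\init(I(X_w)\cap(u_{pq}))=\init(I(X_w))\cap(u_{pq})$ from Lemma~\ref{lem:plus-vs-intersection}, and combined with $I(X_w)\cap(u_{pq})=u_{pq}I(X_w)$ and the identity $\init(u_{pq}g)=u_{pq}\init(g)$, one gets
\[
\init(I(X_w))\cap(u_{pq})=\init(I(X_w)\cap(u_{pq}))=\init\bigl(u_{pq}I(X_w)\bigr)=u_{pq}\init(I(X_w)),
\]
exactly as in \eqref{porism-ss}. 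Dropping the spurious ``to be confirmed'' step leaves a clean argument identical in substance to the paper's.
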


 \begin{proof}[Proof sketch]
 The identity $J_w = \init (I_w) = \init (I(X_w))$ is an analogue of Theorem~\ref{ss-main-thm}, and its proof is essentially the same.
 We argue by induction on the size of the complement of $D(w)$ in $[m]\times [n]$.
The base case corresponds to $w_{\square} =(n+1)(n+2)\cdots (n+m)12 \cdots n$,
which has  $D(w_\square) = [m]\times [n]$.
If $w=w_\square$ then it is easy to see that $I_w = J_w = I(X_w)$ is the ideal of $\KK[\Mat_{m\times n}]$
generated by all the variables $u_{ij}$ for $(i,j) \in [m]\times[n]$, so $J_w = \init (I_w) = \init (I(X_w))$.
 The inductive step follows by the same argument as in the proof of Theorem~\ref{ss-main-thm},
 changing what needs to be changed and using Lemma~\ref{lem:transition-containment} in place of Lemma~\ref{lem:transition-containment-ss}.
While in the skew-symmetric case it was nontrivial to show that
$\SSJ_z \subseteq \init(\SSI_z)$,  now we have $J_w \subseteq \init(I_w)$ by definition.

The claim that $I_w = I(X_w)$ is prime corresponds to Theorem~\ref{ss-main-thm2}
and likewise follows from Proposition~\ref{prop:init-facts}(d), since we already know that  $I_w \subseteq I(X_w)$ and that $X_w$ is irreducible.
The last statement, which gives a version of Theorem~\ref{ss-main-thm3}, is equivalent to  $J_w =\init (I(X_w))$.
 \end{proof}

We also sketch an alternate proof of the classical version of Theorem~\ref{thm:primary-decomposition-ss}.
Modifying our earlier notation, given a subset $D \subseteq [m]\times [n]$,
we write $(u_{ij} : (i,j) \in D)$ for the corresponding ideal in $\KK[u_{ij} : (i,j) \in [m] \times [n]]$.
We have such an ideal for each pipe dream of each
 $w \in S^{m,n}_\infty$, since if $D\in\RP(w)$ then
 $D \subseteq [m]\times [n]$ by  \cite[Thm. 3.7]{bergeron-billey},
and by definition $|D| = |D(w)| = \ell(w)$.

\begin{thm}[{\cite[Thm. B]{KnutsonMiller}}]
 \label{thm:primary-decomposition}
 Let  $w \in S^{m,n}_\infty$. Then $J_w = \bigcap_{D \in \RP(w)} (u_{ij} : (i,j) \in D)$.
  \end{thm}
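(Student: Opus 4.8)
The plan is to mirror, essentially verbatim, the proof of the skew-symmetric analogue Theorem~\ref{thm:primary-decomposition-ss}, replacing the fpf-involution machinery with its permutation counterpart. First I would set up the same downward induction on the size of the complement of $D(w)$ in $[m]\times [n]$, writing $\J{D} := (u_{ij} : (i,j)\in D)$ for $D\subseteq [m]\times [n]$. The base case is $w = w_\square = (n+1)(n+2)\cdots(n+m)\,1\,2\cdots n$, for which $D(w_\square) = [m]\times[n]$; here the unique pipe dream is $\RP(w_\square) = \{[m]\times[n]\}$ (it is the unique reduced pipe dream because $\ell(w_\square) = mn$ forces $D = [m]\times[n]$), and $J_{w_\square} = \J{[m]\times[n]}$ exactly as in the proof of Theorem~\ref{thm:initial-ideal}, so the formula holds.

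For the inductive step, take $w \in S^{m,n}_\infty \setminus \{w_\square\}$, so $\dom(w)$ is a proper subset of $[m]\times[n]$ and therefore has an outer corner $(p,q) \in [m]\times[n]$, with $w(p) = q$. Using the second identity in Lemma~\ref{lem:transition-containment} together with the inductive hypothesis applied to each $v \in \Phi(w,p)$ (valid since each such $v$ lies in $S^{m,n}_\infty$ and has $\ell(v) = \ell(w)+1 > \ell(w)$, hence a strictly smaller complement of $D(v)$), I would write
\[
J_w + (u_{pq}) = \bigcap_{v \in \Phi(w,p)} J_v = \bigcap_{v \in \Phi(w,p)} \bigcap_{D \in \RP(v)} \J{D}.
\]
The combinatorial engine here is the permutation analogue of \cite[Thm. 4.33]{HMPdreams}, namely the Lascoux--Sch\"utzenberger transition recurrence for reduced pipe dreams: the map $D \mapsto D \sqcup \{(p,q)\}$ is a bijection $\RP(w) \to \bigcup_{v \in \Phi(w,p)} \RP(v)$, and $(p,q)$ lies in no pipe dream of $w$ (this is exactly the statement that transitions refine $\RP$; see \cite{LascouxSchutzenbergerTransitions} or \cite{bergeron-billey}). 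This lets me rewrite the double intersection as $\bigcap_{D \in \RP(w)} \J{D \sqcup \{(p,q)\}} = \bigcap_{D \in \RP(w)} (\J{D} + (u_{pq}))$, which by Lemma~\ref{lem:intersection-of-sums} equals $\left(\bigcap_{D\in\RP(w)} \J{D}\right) + (u_{pq})$. So I obtain $J_w + (u_{pq}) = \left(\bigcap_{D \in \RP(w)} \J{D}\right) + (u_{pq})$.

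To conclude $J_w = \bigcap_{D\in\RP(w)} \J{D}$ I would apply Lemma~\ref{lem:hilbert} with $J = J_w$, $I = \bigcap_{D\in\RP(w)} \J{D}$, and $f = u_{pq}$. Two hypotheses must be checked: (i) $J_w \subseteq I$, which is the permutation version of Lemma~\ref{lem:Jw-in-JD-ss} --- for a generator $u_{AB}$ of $J_w$ coming from $(A,B) \in \binom{[i]}{q}\times\binom{[j]}{q}$ with $q = \rank w_{[i][j]}+1$, one uses \cite[Thm. B]{KnutsonMiller} (or equivalently Theorem~\ref{thm:initial-ideal} together with Theorem~\ref{thm:primary-decomposition} at smaller rank) to see that $A \odot B$ meets every $D \in \RP(w)$, hence $u_{AB} \in \J{D}$; (ii) $I \cap (u_{pq}) = u_{pq} I$, which holds because $(p,q) \notin D$ for every $D \in \RP(w)$, so $u_{pq}$ is a non-zerodivisor modulo $\J{D}$ for each $D$, and this property is inherited by the intersection (a monomial $u_{pq} g$ in the intersection has $u_{pq} g \in \J{D}$ with $(p,q)\notin D$, forcing $g \in \J{D}$, for all $D$). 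The main obstacle is simply locating the correct permutation analogues of the pipe-dream transition statements in \cite{HMPdreams}/\cite{LascouxSchutzenbergerTransitions}: the skew-symmetric proof leans on \cite[Thm. 4.33]{HMPdreams} and \cite[Lem. 4.31]{HMPdreams}, and one must cite the ordinary (not fpf-involution) versions --- but these are exactly the classical transition equations, so no new difficulty arises beyond bookkeeping. Everything else is a transcription of the argument for Theorem~\ref{thm:primary-decomposition-ss} with $\FPF_n(\I_n)$, $\Psi(z,p)$, $\FP(z)$, $\ltriang_n$, $\SSJ$ replaced by $S^{m,n}_\infty$, $\Phi(w,p)$, $\RP(w)$, $[m]\times[n]$, $J$.
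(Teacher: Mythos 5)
Your high-level plan is the same as the paper's, and most of the pieces match: the downward induction on $|([m]\times[n])\setminus D(w)|$, the base case $w_\square$, the use of Lemma~\ref{lem:transition-containment}, the bijection $D\mapsto D\sqcup\{(p,q)\}$ (the paper cites \cite[Lem.~3.3]{Knutson} for this, not just ``the classical transition equations''), Lemma~\ref{lem:intersection-of-sums}, and the appeal to Lemma~\ref{lem:hilbert} with $f=u_{pq}$. Your verification of $I\cap(u_{pq})=u_{pq}I$ is also correct.

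There is, however, a genuine gap in how you handle the hypothesis $J_w\subseteq\bigcap_{D\in\RP(w)}(u_{ij}:(i,j)\in D)$. You propose to invoke \cite[Thm.~B]{KnutsonMiller}, but that reference \emph{is} the statement you are trying to give a new proof of --- the whole point of Section~\ref{new-proofs-sect} is to derive Knutson and Miller's theorems independently by this inductive method, so citing \cite[Thm.~B]{KnutsonMiller} here would be circular. Your fallback parenthetical (``Theorem~\ref{thm:initial-ideal} together with Theorem~\ref{thm:primary-decomposition} at smaller rank'') does not close the gap either: the inductive hypothesis only gives $J_v=\bigcap_{D\in\RP(v)}\J{D}$ for $v\in\Phi(w,p)$, which yields $J_w\subseteq\bigcap_{D\in\RP(w)}\J{D\sqcup\{(p,q)\}}$; since $\J{D\sqcup\{(p,q)\}}\supsetneq\J{D}$, this containment goes the wrong way and does not imply $J_w\subseteq\bigcap_{D\in\RP(w)}\J{D}$. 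Note that in the skew-symmetric Lemma~\ref{lem:Jw-in-JD-ss} the citation of \cite[Thm.~B]{KnutsonMiller} is legitimate, because there the goal is a result about fpf-involutions and one may freely use the known permutation theory; that license disappears when the goal is to reprove the permutation theorem itself.

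The paper fills this gap with a self-contained combinatorial argument that you would need to reproduce: fix $D\in\RP(w)$ and a generator $u_{AB}$ of $J_w$ with $(A,B)\in\binom{[i]}{q}\times\binom{[j]}{q}$ and $q=\rank w_{[i][j]}+1$, and show that every antidiagonal in $([i]\times[j])\setminus D$ has size less than $q$, hence some $(a,b)\in D$ lies in $A\odot B$. The key device is the $321$-avoiding permutation $v$ with $D(v)=[q,i]\times[q,j]$: its bottom pipe dream $D_{\mathsf{bot}}(v)=[q,i]\times[j+1-q]$ has the desired antidiagonal property, the property is preserved under basic ladder moves (which generate $\RP(v)$ by \cite[Thm.~3.7]{bergeron-billey}), and since $v\leq w$ in Bruhat order every $D\in\RP(w)$ contains some $E\in\RP(v)$. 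Without this (or an equivalent substitute), the $J\subseteq I$ hypothesis of Lemma~\ref{lem:hilbert} is unproved and the induction does not close.

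Two smaller points. In the base case, ``$\ell(w_\square)=mn$ forces $D=[m]\times[n]$'' tacitly assumes every $D\in\RP(w_\square)$ is already a subset of $[m]\times[n]$; that containment itself requires an appeal to \cite[Thm.~3.7]{bergeron-billey}. And while referring to ``the permutation analogue of \cite[Thm.~4.33]{HMPdreams}'' is the right idea, the paper points to \cite[Lem.~3.3]{Knutson} for the precise statement that $D\mapsto D\sqcup\{(p,q)\}$ is a bijection $\RP(w)\to\bigcup_{v\in\Phi(w,p)}\RP(v)$ and $(p,q)$ lies in no $D\in\RP(w)$.
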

 
 \begin{proof}[Proof sketch]
 We apply our usual inductive strategy.
 The base case corresponds to $w_\square =(n+1)(n+2)\cdots (n+m)12 \cdots n$,
for which $J_{w_\square} = (u_{ij} : (i,j) \in [m] \times [n])$ and 
 $\RP(w_\square) = \{ [m]\times [n]\}$ by \cite[Thm. 3.7]{bergeron-billey}.
If $w \in S_\infty^{m,n}\setminus\{w_\square\}$,
then $\dom(w)$ has an outer corner $(p,q) \in [m]\times [n]$
 and we wish to show that 
 $J_w + (u_{pq}) = \bigcap_{D \in \RP(w)} (u_{ij} : (i,j) \in D) + (u_{pq})$.
 Our argument is the same as in the proof of Theorem~\ref{thm:primary-decomposition-ss},
 except we
 swap out $z$, $\SSJ_z$, $\FP(z)$, $\Psi(z,p)$ for $w$, $J_w$, $\RP(w)$, $\Phi(w,p)$, and then
  replace Lemma~\ref{lem:transition-containment-ss} by Lemma~\ref{lem:transition-containment}
 and  \cite[Thm. 4.33]{HMPdreams} by \cite[Lem. 3.3]{Knutson}.
 The latter result implies that 
$D \mapsto D \sqcup \{(p,q)\}$ is a well-defined bijection 
$\RP(w) \to \bigcup_{v \in \Phi(w,p)} \RP(v)$.

We now wish to apply Lemma~\ref{lem:hilbert} 
    with $J = J_w $, $I =  \bigcap_{D \in \RP(w)} (u_{ij} : (i,j) \in D)$, and $f=u_{pq}$.
 Checking the hypothesis $I \cap (f) = f I$ for this lemma is straightforward.
What remains is to provide an analogue of Lemma~\ref{lem:Jw-in-JD-ss} showing that $J \subseteq I$.
We outline a mostly self-contained proof of this property.
Consider a pipe dream $D \in \RP(w)$ and choose a pair $(A,B) \in \binom{[i]}{q} \times \binom{[j]}{q}$ for some $(i,j) \in [m]\times [n]$ with $q= \rank w_{[i][j]}+1$, so that $u_{AB}$ is an arbitrary generator of $J_w$.
It suffices to show that $u_{ij} \mid u_{AB}$ for some $(i,j) \in D$, and for this is enough to check that 
the largest antidiagonal contained in $([i]\times [j])\setminus D$ has size less than $q$.

To prove this claim,
consider the permutation $v \in S^{m,n}_\infty$ with Rothe diagram 
$[q,i]\times[q,j]$.
The permutation matrix of $v$ is formed from the identity matrix by shifting rows $q,\dots,i$ to the right
by $j+1-q$ columns and then shifting rows $i+1,\dots,i+j+1-q$ to the left by $i+1-q$ columns.
Because $v$ is $321$-avoiding,
none of its reduced words are connected by any nontrivial braid relations,
and it follows from \cite[Thm. 3.7]{bergeron-billey}
that every pipe dream for $v$ is obtained by starting with the \emph{bottom pipe dream} $D_{\mathsf{bot}}(v) := [q,i] \times [j+1-q]$
and applying a sequence of \emph{basic ladder moves}
that each replace a single element of the form $(a+1,a)$ by $(a,a+1)$ if the pairs $(a,a),(a,a+1),(a+1,a+1)$ are all not present.

The largest antidiagonal in $([i]\times [j])\setminus E$ has size less than $q$ if $E=D_{\mathsf{bot}}(v)=[q,i] \times [j+1-q]$,
and this property is preserved by all basic ladder moves.
On the other hand, one can check using Proposition~\ref{ss-bruhat-prop} that $v \leq w$ in Bruhat order,
so the subword characterization of Bruhat order implies that each pipe dream $D \in \RP(w)$ contains some pipe dream $E\in \RP(v)$ as a subset
(see \cite[Lem. 4.8]{HMPdreams}).
But if $E\subseteq D$ then the size of
 largest antidiagonal  in $([i]\times [j])\setminus D$ cannot exceed that of
 $([i]\times [j])\setminus E$, which is already less than $q$ as desired. 
 \end{proof}

\end{document}